\title{Pluriassociative and polydendriform algebras}
\keywords{Tree; Rewrite rule; Associative algebra; Operad;
Diassociative operad; Dendriform operad; Koszul duality.}
\subjclass[2010]{05E99, 05C05, 18D50.}
\date{\today}
\author{Samuele Giraudo}
\address{Laboratoire d'Informatique Gaspard-Monge, Université Paris-Est
    Marne-la-Vallée, 5 boulevard Descartes, Champs-sur-Marne,
    77454 Marne-la-Vallée cedex 2, France}
\email{samuele.giraudo@univ-mlv.fr}
\numberwithin{equation}{subsection}
\renewcommand{\leq}{\leqslant}
\renewcommand{\geq}{\geqslant}
\newtheorem{Theoreme}{Theorem}[subsection]
\newtheorem{Proposition}[Theoreme]{Proposition}
\newtheorem{Lemme}[Theoreme]{Lemma}
\newcommand{\Aca}{\mathcal{A}}
\newcommand{\Cca}{\mathcal{C}}
\newcommand{\Dca}{\mathcal{D}}
\newcommand{\Fca}{\mathcal{F}}
\newcommand{\Gca}{\mathcal{G}}
\newcommand{\Pca}{\mathcal{P}}
\newcommand{\Hca}{\mathcal{H}}
\newcommand{\Oca}{\mathcal{O}}
\newcommand{\Mca}{\mathcal{M}}
\newcommand{\Vca}{\mathcal{V}}
\newcommand{\Zca}{\mathcal{Z}}
\newcommand{\Cfr}{\mathfrak{c}}
\newcommand{\Efr}{\mathfrak{e}}
\newcommand{\Rfr}{\mathfrak{r}}
\newcommand{\Sfr}{\mathfrak{s}}
\newcommand{\Tfr}{\mathfrak{t}}
\newcommand{\Gfr}{\mathfrak{g}}
\newcommand{\Ksf}{{\mathsf{K}}}
\newcommand{\Tbb}{\mathbb{T}}
\newcommand{\Pbb}{\mathbb{P}}
\newcommand{\La}{\mathtt{a}}
\newcommand{\Lb}{\mathtt{b}}
\newcommand{\Lc}{\mathtt{c}}
\newcommand{\K}{\mathbb{K}}
\newcommand{\EnsNat}{\mathbb{N}}
\newcommand{\Gen}{\mathbb{G}}
\newcommand{\GenLibre}{\mathfrak{G}}
\newcommand{\RelLibre}{\mathfrak{R}}
\newcommand{\GenDias}{\GenLibre_{\Dias_\gamma}}
\newcommand{\GenDendr}{\GenLibre_{\Dendr_\gamma}}
\newcommand{\GenTDendr}{\GenLibre_{\TDendr_\gamma}}
\newcommand{\GenAs}{\GenLibre_{\As_\gamma}}
\newcommand{\GenDAs}{\GenLibre_{\DAs_\gamma}}
\newcommand{\GenDup}{\GenLibre_{\Dup_\gamma}}
\newcommand{\GenTrias}{\GenLibre_{\Trias_\gamma}}
\newcommand{\RelDias}{\RelLibre_{\Dias_\gamma}}
\newcommand{\RelDendr}{\RelLibre_{\Dendr_\gamma}}
\newcommand{\RelAs}{\RelLibre_{\As_\gamma}}
\newcommand{\RelDAs}{\RelLibre_{\DAs_\gamma}}
\newcommand{\RelDup}{\RelLibre_{\Dup_\gamma}}
\newcommand{\RelTrias}{\RelLibre_{\Trias_\gamma}}
\newcommand{\RelTDendr}{\RelLibre_{\TDendr_\gamma}}
\newcommand{\Alg}{\Aca}
\newcommand{\AlgLibre}{\Fca}
\newcommand{\Dias}{\mathsf{Dias}}
\newcommand{\Dendr}{\mathsf{Dendr}}
\newcommand{\As}{\mathsf{As}}
\newcommand{\DAs}{\mathsf{DAs}}
\newcommand{\Dup}{\mathsf{Dup}}
\newcommand{\DupDendr}{\mathsf{D}}
\newcommand{\DeuxAs}{\mathit{2as}}
\newcommand{\Trias}{\mathsf{Trias}}
\newcommand{\TDendr}{\mathsf{TDendr}}
\newcommand{\Com}{\mathsf{Com}}
\newcommand{\Zin}{\mathsf{Zin}}
\newcommand{\Leib}{\mathsf{Leib}}
\newcommand{\Lie}{\mathsf{Lie}}
\newcommand{\AlgPos}{\mathsf{Pos}}
\newcommand{\AlgEns}{\mathsf{Sets}}
\newcommand{\AlgMots}{\mathsf{Words}}
\newcommand{\AlgMotsCom}{\mathsf{CWords}}
\newcommand{\AlgMotsMarq}{\mathsf{MWords}}
\newcommand{\Unite}{\mathds{1}}
\newcommand{\OpLibre}{\mathbf{Free}}
\newcommand{\Eval}{\mathrm{eval}}
\newcommand{\T}{\mathsf{T}}
\newcommand{\Vect}{\mathrm{Vect}}
\newcommand{\GDias}{\dashv}
\newcommand{\DDias}{\vdash}
\newcommand{\GDiasA}{\rotatebox[origin=c]{180}{$\Vdash$}}
\newcommand{\DDiasA}{\Vdash}
\newcommand{\GDendrA}{\leftharpoonup}
\newcommand{\DDendrA}{\rightharpoonup}
\newcommand{\GDendr}{\prec}
\newcommand{\DDendr}{\succ}
\newcommand{\MAs}{\star}
\newcommand{\MAsA}{\triangle}
\newcommand{\MDAsA}{\wasylozenge}
\newcommand{\MDAs}{\diamond}
\newcommand{\GDup}{\hookleftarrow}
\newcommand{\DDup}{\hookrightarrow}
\newcommand{\MTrias}{\perp}
\newcommand{\MTDendr}{\wedge}
\newcommand{\OpAsDendrA}{\bullet}
\newcommand{\OpAsDendr}{\odot}
\newcommand{\ProdZin}{\shuffle}
\newcommand{\Min}{\downarrow}
\newcommand{\Max}{\uparrow}
\newcommand{\Augm}{\mathrm{h}}
\newcommand{\Rel}{\leftrightarrow}
\newcommand{\Congr}{\equiv}
\newcommand{\Recr}{\to}
\newcommand{\Racine}{\mathrm{root}}
\newcommand{\Mot}{\mathrm{word}}
\newcommand{\MotT}{\mathrm{wordt}}
\newcommand{\Equerre}{\mathrm{hook}}
\newcommand{\EquerreT}{\mathrm{hookt}}
\newcommand{\OrdDias}{\preccurlyeq}
\newcommand{\Halo}{\mathrm{Halo}}
\newcommand{\Cat}{\mathrm{cat}}
\newcommand{\Nar}{\mathrm{nar}}
\newcommand{\Schr}{\mathrm{schr}}
\newcommand{\MProjVersPluri}{\mathrm{M}}
\newcommand{\Incr}{\mathrm{Incr}}
\newcommand{\Hamming}{\mathrm{ham}}
\newcommand{\Corolle}{\mathrm{cor}}
\newcommand{\Action}{\cdot}
\newcommand{\Cacher}[1]{}
\newcommand{\Sloane}[1]{\href{http://oeis.org/#1}{{\bf #1}}}
\newcommand{\Feuille}{%
\begin{tikzpicture}
    \node(1)[Feuille]at(0,0){};
    \draw[Arete](1)--(0,.2);
\end{tikzpicture}}
\newcommand{\Noeud}{%
\begin{split}
\begin{tikzpicture}[xscale=.5,yscale=.4]
    \node[Feuille](0)at(0.50,-.75){};
    \node[Feuille](2)at(1.5,-.75){};
    \node[Noeud](1)at(1.00,0.00){};
    \draw[Arete](0)--(1);
    \draw[Arete](2)--(1);
    \node(r)at(1.00,.85){};
    \draw[Arete](r)--(1);
\end{tikzpicture}
\end{split}}
\newcommand{\NoeudTexte}{%
\raisebox{-.22em}{\scalebox{.7}{%
\begin{tikzpicture}[xscale=.4,yscale=.35]
    \node[Feuille](0)at(0.50,-.75){};
    \node[Feuille](2)at(1.5,-.75){};
    \node[Noeud](1)at(1.00,0.00){};
    \draw[Arete](0)--(1);
    \draw[Arete](2)--(1);
    \node(r)at(1.00,.85){};
    \draw[Arete](r)--(1);
\end{tikzpicture}}}}
\newcommand{\ArbreBinGDeux}[1]{%
\begin{split}
\begin{tikzpicture}[yscale=.25,xscale=.25]
    \node[Feuille](0)at(0.00,-3.33){};
    \node[Feuille](2)at(2.00,-3.33){};
    \node[Feuille](4)at(4.00,-1.67){};
    \node[Noeud](1)at(1.00,-1.67){};
    \node[Noeud](3)at(3.00,0.00){};
    \draw[Arete](0)--(1);
    \draw[Arete](1)edge[]node[EtiqArete]
            {\begin{math}#1\end{math}}(3);
    \draw[Arete](2)--(1);
    \draw[Arete](4)--(3);
    \node(r)at(3.00,1.25){};
    \draw[Arete](r)--(3);
\end{tikzpicture}
\end{split}}
\newcommand{\ArbreBinDDeux}[1]{%
\begin{split}
\begin{tikzpicture}[yscale=.25,xscale=.25]
    \node[Feuille](0)at(0.00,-1.67){};
    \node[Feuille](2)at(2.00,-3.33){};
    \node[Feuille](4)at(4.00,-3.33){};
    \node[Noeud](1)at(1.00,0.00){};
    \node[Noeud](3)at(3.00,-1.67){};
    \draw[Arete](0)--(1);
    \draw[Arete](2)--(3);
    \draw[Arete](3)edge[]node[EtiqArete]
            {\begin{math}#1\end{math}}(1);
    \draw[Arete](4)--(3);
    \node(r)at(1.00,1.25){};
    \draw[Arete](r)--(1);
\end{tikzpicture}
\end{split}}
\newcommand{\ArbreBinValue}[4]{%
\begin{split}
\begin{tikzpicture}[xscale=.5,yscale=.4]
    \node(0)at(-.50,-1.50){\begin{math}#3\end{math}};
    \node(2)at(2.50,-1.50){\begin{math}#4\end{math}};
    \node[Noeud](1)at(1.00,.50){};
    \draw[Arete](0)edge[]node[EtiqArete]
        {\begin{math}#1\end{math}}(1);
    \draw[Arete](2)edge[]node[EtiqArete]
        {\begin{math}#2\end{math}}(1);
    \node(r)at(1.00,1.5){};
    \draw[Arete](r)--(1);
\end{tikzpicture}
\end{split}}
\newcommand{\ArbreBin}[2]{%
\begin{split}
\begin{tikzpicture}[xscale=.4,yscale=.3]
    \node(0)at(-.50,-1.50){\begin{math}#1\end{math}};
    \node(2)at(2.50,-1.50){\begin{math}#2\end{math}};
    \node[Noeud](1)at(1.00,.50){};
    \draw[Arete](0)--(1);
    \draw[Arete](2)--(1);
    \node(r)at(1.00,1.75){};
    \draw[Arete](r)--(1);
\end{tikzpicture}
\end{split}}
\definecolor{Noir}{RGB}{0,0,0}
\definecolor{Blanc}{RGB}{255,255,255}
\definecolor{Rouge}{RGB}{205,35,38}
\definecolor{Bleu}{RGB}{2,60,195}
\definecolor{Vert}{RGB}{23,163,1}
\definecolor{Violet}{RGB}{181,18,225}
\definecolor{Orange}{RGB}{255,113,15}
\definecolor{Marron}{RGB}{52,46,0}
\tikzstyle{Noeud}=[circle,draw=Bleu!80,fill=Bleu!8,inner sep=1pt,
\tikzstyle{Arete}=[Rouge!80,cap=round,line width=1.25pt]
\tikzstyle{Feuille}=[rectangle,draw=Noir!70,fill=Noir!16,
\tikzstyle{Clair}=[fill=Blanc]
\tikzstyle{Marque1}=[draw=Vert!100,fill=Vert!30]
\tikzstyle{Marque2}=[draw=Orange!100,fill=Orange!40]
\tikzstyle{Marque3}=[draw=Rouge!100,fill=Rouge!50]
\tikzstyle{EtiqArete}=[regular polygon,regular polygon sides=6,
\tikzstyle{NoeudSchr}=[Noeud,draw=Vert!80,fill=Vert!8]
\tikzstyle{NoeudCor}=[Noeud,draw=Marron!80,fill=Marron!8]
\begin{document}

\maketitle

%%%%%%%%%%%%%%%%%%%%%%%%%%%%%%%%%%%%%%%%%%%%%%%%%%%%%%%%%%%%%%%%%%%%%%%%
%%%%%%%%%%%%%%%%%%%%%%%%%%%%%%%%%%%%%%%%%%%%%%%%%%%%%%%%%%%%%%%%%%%%%%%%
%%%%%%%%%%%%%%%%%%%%%%%%%%%%%%%%%%%%%%%%%%%%%%%%%%%%%%%%%%%%%%%%%%%%%%%%
\begin{abstract}
    We introduce, by adopting the point of view and the tools offered by
    the theory of operads, a generalization on a nonnegative integer
    parameter $\gamma$ of diassociative algebras of Loday, called
    $\gamma$-pluriassociative algebras. By Koszul duality of operads, we
    obtain a generalization of dendriform algebras, called
    $\gamma$-polydendriform algebras. In the same manner as dendriform
    algebras are suitable devices to split associative operations into
    two parts, $\gamma$-polydendriform algebras seem adapted structures
    to split associative operations into $2 \gamma$ operations so that
    some partial sums of these operations are associative. We provide a
    complete study of the operads governing our generalizations of the
    diassociative and dendriform operads. Among other, we exhibit several
    presentations by generators and relations, compute their Hilbert
    series, show that they are Koszul, and construct free objects in the
    corresponding categories. We also provide consistent generalizations
    on a nonnegative integer of the duplicial, triassociative and
    tridendriform operads, and of some operads of the operadic butterfly.
\end{abstract}

\begin{scriptsize}
    \tableofcontents
\end{scriptsize}

%%%%%%%%%%%%%%%%%%%%%%%%%%%%%%%%%%%%%%%%%%%%%%%%%%%%%%%%%%%%%%%%%%%%%%%%
%%%%%%%%%%%%%%%%%%%%%%%%%%%%%%%%%%%%%%%%%%%%%%%%%%%%%%%%%%%%%%%%%%%%%%%%
%%%%%%%%%%%%%%%%%%%%%%%%%%%%%%%%%%%%%%%%%%%%%%%%%%%%%%%%%%%%%%%%%%%%%%%%
\section*{Introduction}
Associative algebras play an obvious and primary role in algebraic
combinatorics. In recent years, the study of natural operations on
certain sets of combinatorial objects has given rise to more or less
complicated algebraic structures on the vector spaces spanned by these
sets. A primordial point to observe is that these structures maintain
furthermore many links with combinatorics, combinatorial Hopf algebra
theory, representation theory, and theoretical physics. Let us cite for
instance the algebra of symmetric functions~\cite{Mac95} involving
integer partitions, the algebra of noncommutative symmetric
functions~\cite{GKLLRT94} involving integer compositions, the
Malvenuto-Reutenauer algebra of free quasi-symmetric functions~\cite{MR95}
(see also~\cite{DHT02}) involving permutations, the Loday-Ronco Hopf
algebra of binary trees~\cite{LR98} (see also~\cite{HNT05}), and the
Connes-Kreimer Hopf algebra of forests of rooted trees~\cite{CK98}.
\medskip

There are several ways to understand and to gather information about such
structures. A very fruitful strategy consists in splitting their associative
products $\star$ into two separate operations $\GDendr$ and $\DDendr$ in
such a way that $\star$ turns to be the sum of $\GDendr$ and $\DDendr$.
To be more precise, if $\Vca$ is a vector space endowed with an associative
product $\star$, splitting $\star$ consists in providing two operations
$\GDendr$ and $\DDendr$ defined on $\Vca$ and such that for all elements
$x$ and $y$ of $\Vca$,
\begin{equation}
    x \star y = x \GDendr y + x \DDendr y.
\end{equation}
This splitting property is more concisely denoted by
\begin{equation}
    \star = \GDendr + \DDendr.
\end{equation}
One of the most obvious example occurs by considering the shuffle product
on words. Indeed, this product can be separated into two operations
according to the origin (first or second operand) of the last letter of
the words appearing in the result~\cite{Ree58}. Other main examples
include the split of the shifted shuffle product of permutations of the
Malvenuto-Reutenauer Hopf algebra and of the product of binary trees of
the Loday-Ronco Hopf algebra~\cite{Foi07}. The original formalization
and the germs of generalization of these notions, due to Loday~\cite{Lod01},
lead to the introduction of dendriform algebras. Dendriform algebras
are vector spaces endowed with two operations $\GDendr$ and $\DDendr$
so that $\GDendr + \DDendr$ is associative and satisfy few other relations.
Since any dendriform algebra is a quotient of a certain free dendriform
algebra, the study of free dendriform algebras is worthwhile. Besides,
the description of free dendriform algebras has a nice combinatorial
interpretation involving binary trees and shuffle of binary trees.
\medskip

In recent years, several generalizations of dendriform algebras were
introduced and studied. Among these, one can cite dendriform
trialgebras~\cite{LR04}, quadri-algebras~\cite{AL04},
ennea-algebras~\cite{Ler04}, $m$-dendriform algebras of Leroux~\cite{Ler07},
and $m$-dendriform algebras of Novelli~\cite{Nov14}, all providing new
ways to split associative products into more than two pieces. Besides,
free objects in the corresponding categories of these algebras can be
described by relatively complex combinatorial objects and more or less
tricky operations on these. For instance, free dendriform trialgebras
involve Schröder trees, free quadri-algebras involve noncrossing connected
graphs on a circle, and free $m$-dendriform algebras of Leroux and free
$m$-dendriform algebras of Novelli involves planar rooted trees where
internal nodes have a constant number of children.
\medskip

The theory of operads (see~\cite{LV12} for a complete exposition and
also~\cite{Cha08}) seems to be one of the best tools to put all these
algebraic structures under a same roof. Informally, an operad is a space
of abstract operators that can be composed. The main interest of this
theory is that any operad encodes a category of algebras and working with
an operad amounts to work with the algebras all together of this category.
Moreover, this theory gives a nice translation of connections that may
exist between {\em a priori} two very different sorts of algebras. Indeed,
any morphism between operads gives rise to a functor between the both
encoded categories. We have to point out that operads were first
introduced in the context of algebraic topology~\cite{May72,BV73} but
they are more and more present in combinatorics~\cite{Cha08}.
\medskip

The first goal of this work is to define and justify a new generalization
of dendriform algebras. Our long term primary objective is to develop
new implements to split associative products in smaller pieces. Our main
tool is the Koszul duality of operads, an important part of the theory
introduced by Ginzburg and Kapranov~\cite{GK94}. We use the approach
consisting in considering the diassociative operad $\Dias$~\cite{Lod01},
the Koszul dual of the dendriform operad $\Dendr$, rather that focusing
on $\Dendr$. Since $\Dias$ admits a description far simpler than $\Dendr$,
starting by constructing a generalization of $\Dias$ to obtain a
generalization of $\Dendr$ by Koszul duality is a convenient path to
explore.
\medskip

To obtain a generalization of the diassociative operad, we exploit a
general functorial construction $\T$ introduced by the
author~\cite{Gir12,Gir15} producing an operad from any monoid. We showed
in these papers that this functor $\T$ provides an original construction
for the diassociative operad. In the present paper, we rely on $\T$ to
construct the operads $\Dias_\gamma$, where $\gamma$ is a nonnegative
integer, in such a way that $\Dias_1 = \Dias$. The operads $\Dias_\gamma$,
called $\gamma$-pluriassociative operads, are set-operads involving words
on the alphabet $\{0, 1, \dots, \gamma\}$ with exactly one occurrence of
$0$. Then, by computing the Koszul dual of $\Dias_\gamma$, we obtain the
operads $\Dendr_\gamma$, satisfying $\Dendr_1 = \Dendr$. The operads
$\Dendr_\gamma$ govern the category of the so-called $\gamma$-polydendriform
algebras, that are algebras with $2\gamma$ operations $\GDendrA_a$,
$\DDendrA_a$, $a \in \{1, \dots, \gamma\}$, satisfying some relations.
Free objects in these categories involve binary trees where all edges
connecting two internal nodes are labeled on $\{1, \dots, \gamma\}$.
Moreover, the introduction of $\gamma$-polydendriform algebras offers to
split an associative product $\star$ by
\begin{equation}
    \star = \GDendrA_1 + \DDendrA_1 + \dots + \GDendrA_\gamma
    + \DDendrA_\gamma,
\end{equation}
with, among others, the stiffening conditions that all partial sums
\begin{equation}
    \GDendrA_1 + \DDendrA_1 + \dots + \GDendrA_a + \DDendrA_a
\end{equation}
are associative for all $a \in \{1, \dots, \gamma\}$.
\medskip

This work naturally leads to the consideration and the definition of
numerous operads. Table~\ref{tab:operades_introduites} summarizes some
information about these.
\begin{table}
    \centering
    \begin{tabular}{c|c|c|c}
        Operad & Objects & Dimensions & Symm. \\ \hline \hline
        $\Dias_\gamma$ & Some words on $\{0, 1, \dots, \gamma\}$
            & $n \gamma^{n - 1}$ & No \\ \hline
        $\Dendr_\gamma$ & $\gamma$-edge valued binary trees
            & $\gamma^{n - 1} \frac{1}{n + 1} \binom{2n}{n}$ & No \\ \hline
        $\As_\gamma$ & $\gamma$-corollas & $\gamma$ & No \\ \hline
        $\DAs_\gamma$ & $\gamma$-alternating Schröder trees
            & $\sum\limits_{k = 0}^{n - 2} \gamma^{k + 1}
            (\gamma - 1)^{n - k - 2} \frac{1}{k + 1} \binom{n - 2}{k}
            \binom{n - 1}{k}$ & No \\ \hline
        $\Dup_\gamma$ & $\gamma$-edge valued binary trees
            & $\gamma^{n - 1} \frac{1}{n + 1} \binom{2n}{n}$ & No \\ \hline
        $\Trias_\gamma$ & Some words on $\{0, 1, \dots, \gamma\}$
            & $(\gamma + 1)^n - \gamma^n$ & No \\ \hline
        $\TDendr_\gamma$ & $\gamma$-edge valued Schröder trees
            & $\sum\limits_{k = 0}^{n - 1} (\gamma + 1)^k \gamma^{n - k - 1}
            \frac{1}{k + 1} \binom{n - 1}{k} \binom{n}{k}$ & No \\ \hline
        $\Com_\gamma$ & --- & --- & Yes \\ \hline
        $\Zin_\gamma$ & --- & --- & Yes
    \end{tabular}
    \bigskip
    \caption{The main operads defined in this paper. All these operads
    depend on a nonnegative integer parameter $\gamma$. The shown
    dimensions are the ones of the homogeneous components of
    arities $n \geq 2$ of the operads.}
    \label{tab:operades_introduites}
\end{table}
\medskip

This work is organized as follows. Section~\ref{sec:outils} contains a
conspectus of the tools used in this paper. We recall here the definition of
the construction $\T$~\cite{Gir12,Gir15} and provide a reformulation of
results of Hoffbeck~\cite{Hof10} and Dotsenko and Khoroshkin~\cite{DK10}
to prove that an operad is Koszul by using convergent rewrite rules.
Besides, this part provides self-contained definitions about nonsymmetric
operads, algebras over operads, free operads, rewrite rules on trees,
and Koszul duality. This section ends by some recalls about the
diassociative and dendriform operads.
\medskip

Section~\ref{sec:dias_gamma} is devoted to the introduction and the study
of the operad $\Dias_\gamma$. We begin by detailing the construction of
$\Dias_\gamma$ as a suboperad of the operad obtained by the construction
$\T$ applied on the monoid $\Mca_\gamma$ on $\{0, 1, \dots, \gamma\}$
with the operation
$\max$ as product. More precisely, $\Dias_\gamma$ is defined as the
suboperad of $\T \Mca_\gamma$ generated by the words $0a$ and $a0$ for
all $a \in \{1, \dots, \gamma\}$. We then provide a presentation by
generators and
relations of $\Dias_\gamma$ (Theorem~\ref{thm:presentation_dias_gamma}),
and show that it is a Koszul operad (Theorem~\ref{thm:koszulite_dias_gamma}).
We also establish some more properties of this operad: we compute its
group of symmetries (Proposition~\ref{prop:symetries_dias_gamma}), show
that it is a basic operad in the sense of~\cite{Val07}
(Proposition~\ref{prop:dias_gamma_basique}), and show that it is a
rooted operad in the sense of~\cite{Cha14}
(Proposition~\ref{prop:dias_gamma_basique}). We end this section by
introducing an alternating basis of $\Dias_\gamma$, the $\Ksf$-basis,
defined through a partial ordering relation over the words indexing the
bases of $\Dias_\gamma$. After describing how the partial composition of
$\Dias_\gamma$ expresses over the $\Ksf$-basis
(Theorem~\ref{thm:composition_base_K}), we provide a presentation of
$\Dias_\gamma$ over this basis
(Proposition~\ref{prop:presentation_alternative_dias_gamma}). Despite
the fact that this alternative presentation is more complex than the
original one of $\Dias_\gamma$ provided by
Theorem~\ref{thm:presentation_dias_gamma}, the computation of the Koszul
dual $\Dendr_\gamma$ of $\Dias_\gamma$ from this second presentation
leads to a surprisingly plain presentation of $\Dendr_\gamma$
considered later in Section~\ref{sec:dendr_gamma}.
\medskip

In Section~\ref{sec:algebres_dias_gamma}, algebras over $\Dias_\gamma$,
called $\gamma$-pluriassociative algebras, are studied. The free
$\gamma$-pluriassociative algebra over one generator is described as a
vector space of words on the alphabet $\{0, 1, \dots, \gamma\}$ with
exactly one occurrence of $0$, endowed with $2\gamma$ binary operations
(Proposition~\ref{prop:algebre_dias_gamma_libre}). We next study two
different notions of units in $\gamma$-pluriassociative algebras, the
bar-units and the wire-units, that are generalizations of definitions of
Loday introduced into the context of diassociative algebras~\cite{Lod01}.
We show that the presence of a wire-unit in a $\gamma$-pluriassociative
algebra leads to many consequences on its structure
(Proposition~\ref{prop:unite_fil}). Besides, we describe a general
construction $\MProjVersPluri$ to obtain $\gamma$-pluriassociative
algebras by starting from $\gamma$-multiprojection algebras, that are
algebraic structures with $\gamma$ associative products and endowed with
$\gamma$ endomorphisms with extra relations
(Theorem~\ref{thm:algebre_multiprojections_vers_dias_gamma}). The
main interest of the construction $\MProjVersPluri$ is that
$\gamma$-multiprojection algebras are simpler algebraic structures than
$\gamma$-pluriassociative algebras. The bar-units and wire-units of the
$\gamma$-pluriassociative algebras obtained by this construction are then
studied
(Proposition~\ref{prop:algebre_multiprojections_vers_dias_gamma_proprietes}).
We end this section by listing five examples of $\gamma$-pluriassociative
algebras constructed from $\gamma$-multiprojection algebras, including
the free $\gamma$-pluriassociative algebra over one generator considered
in Section~\ref{subsubsec:algebre_dias_gamma_libre}.
\medskip

Then, the operad $\Dendr_\gamma$ is introduced in Section~\ref{sec:dendr_gamma}
as the Koszul dual of $\Dias_\gamma$
(Theorem~\ref{thm:presentation_dendr_gamma}). Since $\Dias_\gamma$ is a
Koszul operad, $\Dendr_\gamma$ also is, and then, by using results of
Ginzburg and Kapranov~\cite{GK94}, the alternating versions of the
Hilbert series of $\Dias_\gamma$ and $\Dendr_\gamma$ are the inverses
for each other for series composition. This leads to an expression for
the Hilbert series of $\Dendr_\gamma$
(Proposition~\ref{prop:serie_hilbert_dendr_gamma}). Motivated by the
knowledge of the dimensions of $\Dendr_\gamma$, we consider binary trees
where internal edges are labelled on $\{1, \dots, \gamma\}$, called $\gamma$-edge
valued binary trees. These trees form a generalization of the common
binary trees indexing the bases of $\Dendr$, and index the bases of
$\Dendr_\gamma$. We continue the study of this operad by providing a
new presentation obtained by considering the Koszul dual of $\Dias_\gamma$
over its $\Ksf$-basis (Theorem~\ref{thm:autre_presentation_dendr_gamma}).
This presentation of $\Dendr_\gamma$ is very compact since its space of
relations can be expressed only by three sorts of relations
(\eqref{equ:relation_dendr_gamma_1_concise},
\eqref{equ:relation_dendr_gamma_2_concise},
and~\eqref{equ:relation_dendr_gamma_3_concise}), each one involving two
or three terms. We also describe all the associative elements of
$\Dendr_\gamma$ over its two bases
(Propositions~\ref{prop:operateur_associatif_dendr_gamma_autre},
\ref{prop:operateur_associatif_dendr_gamma},
and~\ref{prop:description_operateurs_associatifs_dendr_gamma}). We end
this section by constructing the free $\gamma$-polydendriform algebra
over one generator (Theorem~\ref{thm:algebre_dendr_gamma_libre}). Its
underlying vector space is the vector space of the $\gamma$-edge valued
binary trees and is endowed with $2 \gamma$ products described by
induction. These products are kinds of shuffle of trees, generalizing the
shuffle of trees introduced by Loday~\cite{Lod01} intervening in the
construction of free dendriform algebras.
\medskip

Section~\ref{sec:as_gamma} extends a part of the operadic
butterfly~\cite{Lod01,Lod06}, a diagram of operads gathering the most
classical ones together, including the diassociative, dendriform, and
associative operads. To extends this diagram into our context, we
introduce a one-parameter nonnegative integer generalization $\As_\gamma$
of the associative operad. This operad, called $\gamma$-multiassociative
operad, has $\gamma$ associative generating operations, subjected to
precise relations. We prove that this operad can be seen as a vector
space of corollas labeled on $\{1, \dots, \gamma\}$ and that is Koszul
(Proposition~\ref{prop:realisation_koszulite_as_gamma}). Unlike the
associative operad which is self-dual for Koszul duality, $\As_\gamma$
is not when $\gamma \geq 2$. The Koszul dual of $\As_\gamma$, denoted
by $\DAs_\gamma$, is described by its presentation
(Proposition~\ref{prop:presentation_as_gamma_duale}) and is realized by
means of $\gamma$-alternating Schröder trees, that are Schröder trees
where internal nodes are labeled on $\{1, \dots, \gamma\}$ with an alternating
condition (Proposition~\ref{prop:realisation_das_gamma}). In passing, we
provide an alternative and simpler basis for the space of relations of
$\DAs_\gamma$ than the one obtained directly by considering the Koszul
dual of $\As_\gamma$ (Proposition~\ref{prop:autre_presentation_das_gamma}).
We end this section by establishing a new version of the diagram gathering
the diassociative, dendriform, and associative operads for the operads
$\Dias_\gamma$, $\As_\gamma$, $\DAs_\gamma$, and $\Dendr_\gamma$
(Theorem~\ref{thm:diagramme_dias_as_das_dendr_gamma}) by defining
appropriate morphisms between these.
\medskip

Finally, in Section~\ref{sec:generalisation_supplementaires}, we sustain
our previous ideas to propose one-parameter nonnegative integer
generalizations of some more operads. We start by proposing a new operad
$\Dup_\gamma$ generalizing the duplicial operad~\cite{Lod08}, called
$\gamma$-multiplicial operad. We prove that $\Dup_\gamma$ is Koszul and,
like the bases of $\Dendr_\gamma$, that the bases of $\Dup_\gamma$ are
indexed by $\gamma$-edge valued binary trees
(Proposition~\ref{prop:proprietes_dup_gamma}). The operads $\Dendr_\gamma$
and $\Dup_\gamma$ are nevertheless not isomorphic because there are
$2\gamma$ associative elements in $\Dup_\gamma$
(Proposition~\ref{prop:description_operateurs_associatifs_dup_gamma})
against only $\gamma$ in $\Dendr_\gamma$. Then, the free
$\gamma$-multiplicial algebra over one generator is constructed
(Theorem~\ref{thm:algebre_dup_gamma_libre}). Its underlying vector space
is the vector space of the $\gamma$-edge valued binary trees and is
endowed with $2 \gamma$ products, similar to the over and under products
on binary trees of Loday and Ronco~\cite{LR02}. Next, by using almost
the same tools as the ones used in Sections~\ref{sec:dias_gamma}
and~\ref{sec:dendr_gamma}, we propose a one-parameter nonnegative
integer generalization $\Trias_\gamma$ of the triassociative operad
$\Trias$~\cite{LR04} and of its Koszul dual, the tridendriform operad
$\TDendr$. This follows a very simple idea: like $\Dias_\gamma$,
$\Trias_\gamma$ is defined as a suboperad of $\T \Mca_\gamma$ generated
by the same generators as those of $\Dias_\gamma$, plus the word $00$.
In a previous work~\cite{Gir12,Gir15}, we showed that $\Trias_1$ is the
triassociative operad. We provide here a presentation
(Theorem~\ref{thm:presentation_trias_gamma}) of $\Trias_\gamma$ and
deduce a presentation for its Koszul dual, denoted by $\TDendr_\gamma$
(Theorem~\ref{thm:presentation_tdendr_gamma}). Since $\TDendr$ is the
Koszul dual of $\Trias$, the operads $\TDendr_\gamma$ are generalizations
of $\TDendr$. The knowledge of the Hilbert series of $\TDendr_\gamma$
(Proposition~\ref{prop:serie_hilbert_tdendr_gamma}) leads to establish
the fact that the bases of $\TDendr_\gamma$ are indexed by $\gamma$-edge
valued Schröder trees, that are Schröder trees where internal edges are
labelled on $\{1, \dots, \gamma\}$. We end this work by providing a one-parameter
nonnegative integer generalization of all the operads intervening in the
operadic butterfly. We then define the operads $\Com_\gamma$, $\Lie_\gamma$,
$\Zin_\gamma$, and $\Leib_\gamma$, that are respective generalizations
of the commutative operad, the Lie operad, the Zinbiel operad~\cite{Lod95}
and the Leibniz operad~\cite{Lod93}. We provide analogous versions for
our context of the arrows between the commutative operad and the Zinbiel
operad (Proposition~\ref{prop:morphism_com_zin_gamma}), and between the
dendriform operad and the Zinbiel operad
(Proposition~\ref{prop:morphism_dendr_zin_gamma}).
\medskip

{\it Acknowledgements.} The author would like to thank, for interesting
discussions, Jean-Christophe Novelli about Koszul duality for operads
and Vincent Vong about strategies for constructing free objects in the
categories encoded by operads. The author thanks also Matthieu
Josuat-Vergès and Jean-Yves-Thibon for their pertinent remarks and
questions about this work when it was in progress. Thanks are
addressed to Frederic Chapoton and Eric Hoffbeck for answering some
questions of the author respectively about the dendriform and
diassociative operads, and Koszulity of operads. The author thanks also
Vladimir Dotsenko and Bruno Vallette for pertinent bibliographic
suggestions. Finally, the author warmly thanks the referee for his very
careful reading and his suggestions, improving the quality of the paper.
\medskip

{\it Notations and general conventions.}
All the algebraic structures of this article have a field of characteristic
zero $\K$ as ground field. If $S$ is a set, $\Vect(S)$ denotes the linear
span of the elements of $S$. For any integers $a$ and $c$, $[a, c]$ denotes
the set $\{b \in \EnsNat : a \leq b \leq c\}$ and $[n]$, the set $[1, n]$.
The cardinality of a finite set $S$ is denoted by $\# S$. If $u$ is a
word, its letters are indexed from left to right from $1$ to its length
$|u|$. For any $i \in [|u|]$, $u_i$ is the letter of $u$ at position $i$.
If $a$ is a letter and $n$ is a nonnegative integer, $a^n$ denotes the
word consisting in $n$ occurrences of $a$. Notice that $a^0$ is the
empty word $\epsilon$.
\medskip

%%%%%%%%%%%%%%%%%%%%%%%%%%%%%%%%%%%%%%%%%%%%%%%%%%%%%%%%%%%%%%%%%%%%%%%%
%%%%%%%%%%%%%%%%%%%%%%%%%%%%%%%%%%%%%%%%%%%%%%%%%%%%%%%%%%%%%%%%%%%%%%%%
%%%%%%%%%%%%%%%%%%%%%%%%%%%%%%%%%%%%%%%%%%%%%%%%%%%%%%%%%%%%%%%%%%%%%%%%
\section{Algebraic structures and main tools} \label{sec:outils}
This preliminary section sets our conventions and notations about operads
and algebras over an operad, and describes the main tools we will use.
The definitions of the diassociative and the dendriform operads are also
recalled. This section does not contains new results but it is a
self-contained set of definitions about operads intended to readers
familiar with algebra or combinatorics but not necessarily with operadic
theory.
\medskip

%%%%%%%%%%%%%%%%%%%%%%%%%%%%%%%%%%%%%%%%%%%%%%%%%%%%%%%%%%%%%%%%%%%%%%%%
%%%%%%%%%%%%%%%%%%%%%%%%%%%%%%%%%%%%%%%%%%%%%%%%%%%%%%%%%%%%%%%%%%%%%%%%
\subsection{Operads and algebras over an operad}
We list here several staple definitions about operads and algebras over
an operad. We present also an important tool for this work: the
construction $\T$ producing operads from monoids.
\medskip

%%%%%%%%%%%%%%%%%%%%%%%%%%%%%%%%%%%%%%%%%%%%%%%%%%%%%%%%%%%%%%%%%%%%%%%%
\subsubsection{Operads}
A {\em nonsymmetric operad in the category of vector spaces}, or a
{\em nonsymmetric operad} for short, is a graded vector space
$\Oca := \bigoplus_{n \geq 1} \Oca(n)$ together with linear maps
\begin{equation}
    \circ_i : \Oca(n) \otimes \Oca(m) \to \Oca(n + m - 1),
    \qquad n, m \geq 1, i \in [n],
\end{equation}
called {\em partial compositions}, and a distinguished element
$\Unite \in \Oca(1)$, the {\em unit} of $\Oca$. This data has to satisfy
the three relations
\begin{subequations}
\begin{equation}
    (x \circ_i y) \circ_{i + j - 1} z = x \circ_i (y \circ_j z),
    \qquad x \in \Oca(n), y \in \Oca(m),
    z \in \Oca(k), i \in [n], j \in [m],
\end{equation}
\begin{equation}
    (x \circ_i y) \circ_{j + m - 1} z = (x \circ_j z) \circ_i y,
    \qquad x \in \Oca(n), y \in \Oca(m),
    z \in \Oca(k), i < j \in [n],
\end{equation}
\begin{equation}
    \Unite \circ_1 x = x = x \circ_i \Unite,
    \qquad x \in \Oca(n), i \in [n].
\end{equation}
\end{subequations}
Since we shall consider in this paper mainly nonsymmetric operads, we
shall call these simply {\em operads}. Moreover, all considered operads
are such that $\Oca(1)$ has dimension~$1$.
\medskip

If $x$ is an element of $\Oca$ such that $x \in \Oca(n)$ for a $n \geq 1$,
we say that $n$ is the {\em arity} of $x$ and we denote it by $|x|$. An
element $x$ of $\Oca$ of arity $2$ is {\em associative} if
$x \circ_1 x = x \circ_2 x$. If $\Oca_1$ and $\Oca_2$ are operads, a
linear map $\phi : \Oca_1 \to \Oca_2$ is an {\em operad morphism} if it
respects arities, sends the unit of $\Oca_1$ to the unit of $\Oca_2$,
and commutes with partial composition maps. We say that $\Oca_2$ is a
{\em suboperad} of $\Oca_1$ if $\Oca_2$ is a graded subspace of $\Oca_1$,
and $\Oca_1$ and $\Oca_2$ have the same unit and the same partial
compositions. For any set $G \subseteq \Oca$, the {\em operad generated
by} $G$ is the smallest suboperad of $\Oca$ containing $G$. When the
operad generated by $G$ is $\Oca$ itself and $G$ is minimal with respect
to inclusion among the subsets of $\Oca$ satisfying this property, $G$
is a {\em generating set} of $\Oca$ and its elements are {\em generators}
of $\Oca$. An {\em operad ideal} of $\Oca$ is a graded subspace $I$ of
$\Oca$ such that, for any $x \in \Oca$ and $y \in I$, $x \circ_i y$ and
$y \circ_j x$ are in $I$ for all valid integers $i$ and $j$. Given an
operad ideal $I$ of $\Oca$, one can define the {\em quotient operad}
$\Oca/_I$ of $\Oca$ by $I$ in the usual way. When $\Oca$ is such that
all $\Oca(n)$ are finite for all $n \geq 1$, the {\em Hilbert series}
of $\Oca$ is the series $\Hca_\Oca(t)$ defined by
\begin{equation}
    \Hca_\Oca(t) := \sum_{n \geq 1} \dim \Oca(n) t^n.
\end{equation}
\medskip

Instead of working with the partial composition maps of $\Oca$, it is
something useful to work with the maps
\begin{equation}
    \circ : \Oca(n) \otimes \Oca(m_1) \otimes \dots \otimes \Oca(m_n)
    \to \Oca(m_1 + \dots + m_n),
    \qquad n, m_1, \dots, m_n \geq 1,
\end{equation}
linearly defined for any $x \in \Oca$ of arity $n$ and
$y_1, \dots, y_{n - 1}, y_n \in \Oca$ by
\begin{equation}
    x \circ (y_1, \dots, y_{n - 1}, y_n) :=
    (\dots ((x \circ_n y_n) \circ_{n - 1} y_{n - 1}) \dots) \circ_1 y_1.
\end{equation}
These maps are called {\em composition maps} of $\Oca$.
\medskip

%%%%%%%%%%%%%%%%%%%%%%%%%%%%%%%%%%%%%%%%%%%%%%%%%%%%%%%%%%%%%%%%%%%%%%%%
\subsubsection{Set-operads}
Instead of being a direct sum of vector spaces $\Oca(n)$, $n \geq 1$,
$\Oca$ can be a graded disjoint union of sets. In this context, $\Oca$
is a {\em set-operad}. All previous definitions remain valid by replacing
direct sums $\oplus$ by disjoint unions $\sqcup$, tensor products
$\otimes$ by Cartesian products $\times$, and vector space dimensions
$\dim$ by set cardinalities $\#$. Moreover, in the context of set-operads,
we work with {\em operad congruences} instead of operad ideals. An operad
congruence on a set-operad $\Oca$ is an equivalence relation $\Congr$ on
$\Oca$ such that all elements of a same $\Congr$-equivalence class have
the same arity and for all elements $x$, $x'$, $y$, and $y'$ of $\Oca$,
$x \Congr x'$ and $y \Congr y'$ imply $x \circ_i y \Congr x' \circ_i y'$
for all valid integers $i$. The {\em quotient operad} $\Oca/_\Congr$ of
$\Oca$ by $\Congr$ is the set-operad defined in the usual way.
\medskip

Any set-operad $\Oca$ gives naturally rise to an operad on $\Vect(\Oca)$
by extending the partial compositions of $\Oca$ by linearity. Besides
this, any equivalence relation $\Rel$ of $\Oca$ such that all elements
of a same $\Rel$-equivalence class have the same arity induces a subspace
of $\Vect(\Oca)$ generated by all $x - x'$ such that $x \Rel x'$, called
{\em space induced} by $\Rel$. In particular, any operad congruence
$\Congr$ on $\Oca$ induces an operad ideal of $\Vect(\Oca)$.
\medskip

%%%%%%%%%%%%%%%%%%%%%%%%%%%%%%%%%%%%%%%%%%%%%%%%%%%%%%%%%%%%%%%%%%%%%%%%
\subsubsection{From monoids to operads}%
\label{subsec:monoides_vers_operades}
In a previous work \cite{Gir12,Gir15}, the author introduced a
construction which, from any monoid, produces an operad. This
construction is described as follows. Let $\Mca$ be a monoid with an
associative product $\bullet$ admitting a unit $1$. We denote by
$\T \Mca$ the operad $\T \Mca := \bigoplus_{n \geq 1} \T \Mca(n)$ where
for all $n \geq 1$,
\begin{equation}
    \T \Mca(n) := \Vect\left(\left\{u_1 \dots u_n : u_i \in \Mca
    \mbox{ for all } i \in [n] \right\}\right).
\end{equation}
The partial composition of two words $u \in \T \Mca(n)$ and
$v \in \T \Mca(m)$ is linearly defined by
\begin{equation}
    u \circ_i v := u_1 \dots u_{i - 1}
    \, (u_i \bullet v_1) \, \dots \, (u_i \bullet v_m) \,
    u_{i + 1} \dots u_n,
    \qquad i \in [n].
\end{equation}
The unit of $\T \Mca$ is $\Unite := 1$. In other words, $\T \Mca$ is
the vector space of words on $\Mca$ seen as an alphabet and the partial
composition returns to insert a word $v$ onto the $i$th letter $u_i$ of
a word $u$ together with a left multiplication by $u_i$.
\medskip

%%%%%%%%%%%%%%%%%%%%%%%%%%%%%%%%%%%%%%%%%%%%%%%%%%%%%%%%%%%%%%%%%%%%%%%%
\subsubsection{Algebras over an operad}
Any operad $\Oca$ encodes a category of algebras whose objects are
called {\em $\Oca$-algebras}. An $\Oca$-algebra $\Alg_\Oca$ is a vector
space endowed with a right action
\begin{equation}
    \Action : \Alg_\Oca^{\otimes n} \otimes \Oca(n) \to \Alg_\Oca,
    \qquad n \geq 1,
\end{equation}
satisfying the relations imposed by the structure of $\Oca$, that are
\begin{multline} \label{equ:relation_algebre_sur_operade}
    (e_1 \otimes \dots \otimes e_{n + m - 1}) \Action (x \circ_i y)
    = \\
    (e_1 \otimes \dots
        \otimes e_{i - 1} \otimes
        (e_i \otimes \dots \otimes e_{i + m - 1}) \Action y
        \otimes e_{i + m} \otimes
        \dots \otimes e_{n + m - 1}) \Action x,
\end{multline}
for all
$e_1 \otimes \dots \otimes e_{n + m - 1} \in \Alg_\Oca^{\otimes {n + m - 1}}$,
$x \in \Oca(n)$, $y \in \Oca(m)$, and $i \in [n]$. Notice that,
by~\eqref{equ:relation_algebre_sur_operade}, if $G$ is a generating set
of $\Oca$, it is enough to define the action of each $x \in G$ on
$\Alg_\Oca^{\otimes |x|}$ to wholly define~$\Action$.
\medskip

In other words, any element $x$ of $\Oca$ of arity $n$ plays the role of
a linear operation
\begin{equation}
    x : \Alg_\Oca^{\otimes n}  \to \Alg_\Oca,
\end{equation}
taking $n$ elements of $\Alg_\Oca$ as inputs and computing an element of
$\Alg_\Oca$. By a slight but convenient abuse of notation, for any
$x \in \Oca(n)$, we shall denote by $x(e_1, \dots, e_n)$, or by
$e_1 \, x \, e_2$ if $x$ has arity $2$, the element
$(e_1 \otimes \dots \otimes e_n) \Action x$ of $\Alg_\Oca$, for any
$e_1 \otimes \dots \otimes e_n \in \Alg_\Oca^{\otimes n}$.
Observe that by~\eqref{equ:relation_algebre_sur_operade}, any associative
element of $\Oca$ gives rise to an associative operation on $\Alg_\Oca$.
\medskip

Arrows in the category of $\Oca$-algebras are
{\em $\Oca$-algebra morphisms}, that are linear maps
$\phi : \Alg_1 \to \Alg_2$ between two $\Oca$-algebras $\Alg_1$ and
$\Alg_2$ such that
\begin{equation}
    \phi(x(e_1, \dots, e_n)) = x(\phi(e_1), \dots, \phi(e_n)),
\end{equation}
for all $e_1,  \dots, e_n \in \Alg_1$ and $x \in \Oca(n)$. We say that
$\Alg_2$ is an {\em $\Oca$-subalgebra} of $\Alg_1$ if $\Alg_2$ is a
subspace of $\Alg_1$ and $\Alg_1$ and $\Alg_2$ are endowed with the same
right action of $\Oca$. If $G$ is a set of elements of an $\Oca$-algebra
$\Alg$, the {\em $\Oca$-algebra generated by} $G$ is the smallest
$\Oca$-subalgebra of $\Alg$ containing $G$. When the $\Oca$-algebra
generated by $G$ is $\Alg$ itself and $G$ is minimal with respect to
inclusion among the subsets of $\Alg$ satisfying this property, $G$ is
a {\em generating set} of $\Alg$ and its elements are {\em generators}
of $\Alg$. An {\em $\Oca$-algebra ideal} of $\Alg$ is a subspace $I$ of
$\Alg$ such that for all operation $x$ of $\Oca$ of arity $n$ and elements
$e_1$, \dots, $e_n$ of $\Oca$, $x(e_1, \dots, e_n)$ is in $I$ whenever
there is a $i \in [n]$ such that $e_i$ is in $I$.
\medskip

The {\em free $\Oca$-algebra over one generator} is the $\Oca$-algebra
$\AlgLibre_\Oca$ defined in the following way. We set
$\AlgLibre_\Oca := \oplus_{n \geq 1} \AlgLibre_\Oca(n)
:= \oplus_{n \geq 1} \Oca(n)$,
and for any
$e_1, \dots, e_n \in \AlgLibre_\Oca$ and $x \in \Oca(n)$, the right
action of $x$ on $e_1 \otimes \dots \otimes e_n$ is defined by
\begin{equation}
    x(e_1, \dots, e_n) := x \circ (e_1, \dots, e_n).
\end{equation}
Then, any element $x$ of $\Oca(n)$ endows $\AlgLibre_\Oca$ with an
operation
\begin{equation}
    x : \AlgLibre_\Oca(m_1) \otimes \dots \otimes \AlgLibre_\Oca(m_n)
    \to \AlgLibre_\Oca(m_1 + \dots + m_n)
\end{equation}
respecting the graduation of $\AlgLibre_\Oca$.
\medskip

%%%%%%%%%%%%%%%%%%%%%%%%%%%%%%%%%%%%%%%%%%%%%%%%%%%%%%%%%%%%%%%%%%%%%%%%
%%%%%%%%%%%%%%%%%%%%%%%%%%%%%%%%%%%%%%%%%%%%%%%%%%%%%%%%%%%%%%%%%%%%%%%%
\subsection{Free operads, rewrite rules, and Koszul duality}
We recall here a description of free operads through syntax trees and
presentations of operads by generators and relations. The Koszul duality
and the Koszul property for operads are very important tools and notions
in this paper. We recall these and describe an already known criterion
to prove that a set-operad is Koszul by passing by rewrite rules on syntax
trees.
\medskip

%%%%%%%%%%%%%%%%%%%%%%%%%%%%%%%%%%%%%%%%%%%%%%%%%%%%%%%%%%%%%%%%%%%%%%%%
\subsubsection{Syntax trees}
Unless otherwise specified, we use in the sequel the standard
terminology ({\em i.e.}, {\em node}, {\em edge}, {\em root},
{\em parent}, {\em child}, {\em path}, {\em ancestor}, {\em etc.}) about
planar rooted trees~\cite{Knu97}. Let $\Tfr$ be a planar rooted tree.
The {\em arity} of a node of $\Tfr$ is its number of children. An
{\em internal node} (resp. a {\em leaf}) of $\Tfr$ is a node with a
nonzero (resp. null) arity. Given an internal node $x$ of $\Tfr$, due to
the planarity of $\Tfr$, the children of $x$ are totally ordered from
left to right and are thus indexed from $1$ to the arity of $x$. If $y$
is a child of $x$, $y$ defines a {\em subtree} of $\Tfr$, that is the
planar rooted tree with root $y$ and consisting in the nodes of $\Tfr$
that have $y$ as ancestor. We shall call {\em $i$th subtree} of $x$ the
subtree of $\Tfr$ rooted at the $i$th child of $x$. A {\em partial subtree}
of $\Tfr$ is a subtree of $\Tfr$ in which some internal nodes have been
replaced by leaves and its descendants has been forgotten. Besides, due
to the planarity of $\Tfr$, its leaves are totally ordered from left to
right and thus are indexed from $1$ to the arity of $\Tfr$. In our
graphical representations, each tree is depicted so that its root is the
uppermost node.
\medskip

Let $S := \sqcup_{n \geq 1} S(n)$ be a graded set. By extension, we say
that the {\em arity} of an element $x$ of $S$ is $n$ provided that
$x \in S(n)$. A {\em syntax tree on $S$} is a planar rooted tree such
that its internal nodes of arity $n$ are labeled on elements of arity
$n$ of $S$. The {\em degree} (resp. {\em arity}) of a syntax tree is its
number of internal nodes (resp. leaves). For instance, if
$S := S(2) \sqcup S(3)$ with $S(2) := \{\La, \Lc\}$ and $S(3) := \{\Lb\}$,
\begin{equation}
    \begin{split}
    \begin{tikzpicture}[xscale=.35,yscale=.17]
        \node(0)at(0.00,-6.50){};
        \node(10)at(8.00,-9.75){};
        \node(12)at(10.00,-9.75){};
        \node(2)at(2.00,-6.50){};
        \node(4)at(3.00,-3.25){};
        \node(5)at(4.00,-9.75){};
        \node(7)at(5.00,-9.75){};
        \node(8)at(6.00,-9.75){};
        \node(1)at(1.00,-3.25){\begin{math}\Lc\end{math}};
        \node(11)at(9.00,-6.50){\begin{math}\La\end{math}};
        \node(3)at(3.00,0.00){\begin{math}\Lb\end{math}};
        \node(6)at(5.00,-6.50){\begin{math}\Lb\end{math}};
        \node(9)at(7.00,-3.25){\begin{math}\La\end{math}};
        \node(r)at(3.00,2.75){};
        \draw(0)--(1); \draw(1)--(3); \draw(10)--(11); \draw(11)--(9);
        \draw(12)--(11); \draw(2)--(1); \draw(4)--(3); \draw(5)--(6);
        \draw(6)--(9); \draw(7)--(6); \draw(8)--(6); \draw(9)--(3);
        \draw(r)--(3);
    \end{tikzpicture}
    \end{split}
\end{equation}
is a syntax tree on $S$ of degree $5$ and arity $8$. Its root is labeled
by $\Lb$ and has arity $3$.
\medskip

%%%%%%%%%%%%%%%%%%%%%%%%%%%%%%%%%%%%%%%%%%%%%%%%%%%%%%%%%%%%%%%%%%%%%%%%
\subsubsection{Free operads}
Let $S$ be a graded set. The {\em free operad $\OpLibre(S)$ over $S$} is
the operad wherein for any $n \geq 1$, $\OpLibre(S)(n)$ is the vector
space of syntax trees on $S$ of arity $n$, the partial composition
$\Sfr \circ_i \Tfr$ of two syntax trees $\Sfr$ and $\Tfr$ on $S$
consists in grafting the root of $\Tfr$ on the $i$th leaf of $\Sfr$, and
its unit is the tree consisting in one leaf. For instance, if
$S := S(2) \sqcup S(3)$ with $S(2) := \{\La, \Lc\}$ and
$S(3) := \{\Lb\}$, one has in $\OpLibre(S)$,
\begin{equation}\begin{split}\end{split}
    \begin{split}
    \begin{tikzpicture}[xscale=.4,yscale=.2]
        \node(0)at(0.00,-5.33){};
        \node(2)at(2.00,-5.33){};
        \node(4)at(4.00,-5.33){};
        \node(6)at(5.00,-5.33){};
        \node(7)at(6.00,-5.33){};
        \node[text=Bleu](1)at(1.00,-2.67){\begin{math}\La\end{math}};
        \node[text=Bleu](3)at(3.00,0.00){\begin{math}\La\end{math}};
        \node[text=Bleu](5)at(5.00,-2.67){\begin{math}\Lb\end{math}};
        \node(r)at(3.00,2.25){};
        \draw[draw=Bleu](0)--(1); \draw[draw=Bleu](1)--(3);
        \draw[draw=Bleu](2)--(1); \draw[draw=Bleu](4)--(5);
        \draw[draw=Bleu](5)--(3); \draw[draw=Bleu](6)--(5);
        \draw[draw=Bleu](7)--(5); \draw[draw=Bleu](r)--(3);
    \end{tikzpicture}
    \end{split}
    \circ_3
    \begin{split}
    \begin{tikzpicture}[xscale=.4,yscale=.27]
        \node(0)at(0.00,-1.67){};
        \node(2)at(2.00,-3.33){};
        \node(4)at(4.00,-3.33){};
        \node[text=Rouge](1)at(1.00,0.00){\begin{math}\Lc\end{math}};
        \node[text=Rouge](3)at(3.00,-1.67){\begin{math}\La\end{math}};
        \node(r)at(1.00,1.75){};
        \draw[draw=Rouge](0)--(1); \draw[draw=Rouge](2)--(3);
        \draw[draw=Rouge](3)--(1); \draw[draw=Rouge](4)--(3);
        \draw[draw=Rouge](r)--(1);
    \end{tikzpicture}
    \end{split}
    =
    \begin{split}
    \begin{tikzpicture}[xscale=.4,yscale=.2]
        \node(0)at(0.00,-4.80){};
        \node(10)at(9.00,-4.80){};
        \node(11)at(10.00,-4.80){};
        \node(2)at(2.00,-4.80){};
        \node(4)at(4.00,-7.20){};
        \node(6)at(6.00,-9.60){};
        \node(8)at(8.00,-9.60){};
        \node[text=Bleu](1)at(1.00,-2.40){\begin{math}\La\end{math}};
        \node[text=Bleu](3)at(3.00,0.00){\begin{math}\La\end{math}};
        \node[text=Rouge](5)at(5.00,-4.80){\begin{math}\Lc\end{math}};
        \node[text=Rouge](7)at(7.00,-7.20){\begin{math}\La\end{math}};
        \node[text=Bleu](9)at(9.00,-2.40){\begin{math}\Lb\end{math}};
        \node(r)at(3.00,2.40){};
        \draw[draw=Bleu](0)--(1); \draw[draw=Bleu](1)--(3);
        \draw[draw=Bleu](10)--(9); \draw[draw=Bleu](11)--(9);
        \draw[draw=Bleu](2)--(1); \draw[draw=Rouge](4)--(5);
        \draw[draw=Bleu!50!Rouge!50](5)--(9); \draw[draw=Rouge](6)--(7);
        \draw[draw=Rouge](7)--(5); \draw[draw=Rouge](8)--(7);
        \draw[draw=Bleu](9)--(3); \draw[draw=Bleu](r)--(3);
    \end{tikzpicture}
    \end{split}\,.
\end{equation}
\medskip

We denote by $\Corolle : S \to \OpLibre(S)$ the inclusion map, sending
any $x$ of $S$ to the {\em corolla} labeled by $x$, that is the syntax
tree consisting in one internal node labeled by $x$ attached to a
required number of leaves. In the sequel, if required by the context, we
shall implicitly see any element $x$ of $S$ as the corolla $\Corolle(x)$
of $\OpLibre(S)$. For instance, when $x$ and $y$ are two elements of $S$,
we shall simply denote by $x \circ_i y$ the syntax tree
$\Corolle(x) \circ_i \Corolle(y)$ for all valid integers $i$.
\medskip

For any operad $\Oca$, by seeing $\Oca$ as a graded set, $\OpLibre(\Oca)$
is the free operad of the syntax trees linearly labeled by elements of
$\Oca$. The {\em evaluation map} of $\Oca$ is the map
\begin{equation}
    \Eval_\Oca : \OpLibre(\Oca) \to \Oca,
\end{equation}
recursively defined by
\begin{equation}
    \Eval_\Oca(\Tfr) :=
    \begin{cases}
        \Unite & \mbox{if } \Tfr \mbox{ is the leaf}, \\
        x \circ (\Eval_\Oca(\Sfr_1), \dots, \Eval_\Oca(\Sfr_n)) &
        \mbox{otherwise},
    \end{cases}
\end{equation}
where $\Unite$ is the unit of $\Oca$, $x$ is the label of the root of
$\Tfr$, and $\Sfr_1$, \dots, $\Sfr_n$ are, from left to right, the
subtrees of the root of $\Tfr$. In other words, any tree $\Tfr$ of
$\OpLibre(\Oca)$ can be seen as a tree-like expression for an element
$\Eval_\Oca(\Tfr)$ of $\Oca$. Moreover, by induction on the degree of
$\Tfr$, it appears that $\Eval_\Oca$ is a well-defined surjective operad
morphism.
\medskip

%%%%%%%%%%%%%%%%%%%%%%%%%%%%%%%%%%%%%%%%%%%%%%%%%%%%%%%%%%%%%%%%%%%%%%%%
\subsubsection{Presentations by generators and relations}
A {\em presentation} of an operad $\Oca$ consists in a pair
$(\GenLibre, \RelLibre)$ such that
$\GenLibre := \sqcup_{n \geq 1} \GenLibre(n)$ is a graded set,
$\RelLibre$ is a subspace of $\OpLibre(\GenLibre)$, and  $\Oca$ is
isomorphic to $\OpLibre(\GenLibre)/_{\langle \RelLibre \rangle}$,  where
$\langle \RelLibre \rangle$ is the operad ideal of $\OpLibre(\GenLibre)$
generated by $\RelLibre$. We call $\GenLibre$ the {\em set of generators}
and $\RelLibre$ the {\em space of relations} of $\Oca$. We say that
$\Oca$ is {\em quadratic} if one can exhibit a presentation
$(\GenLibre, \RelLibre)$ of $\Oca$ such that $\RelLibre$ is a homogeneous
subspace of $\OpLibre(\GenLibre)$ consisting in syntax trees of degree
$2$. Besides, we say that $\Oca$ is {\em binary} if one can exhibit a
presentation $(\GenLibre, \RelLibre)$ of $\Oca$ such that $\GenLibre$ is
concentrated in arity~$2$.
\medskip

With knowledge of a presentation $(\GenLibre, \RelLibre)$ of $\Oca$, it
is easy to describe the category of the $\Oca$-algebras. Indeed, by
denoting by
$\pi : \OpLibre(\GenLibre) \to \OpLibre(\GenLibre)/_{\langle \RelLibre \rangle}$
the canonical surjection map, the category of $\Oca$-algebras is the
category of vector spaces $\Alg_\Oca$ endowed with maps $\pi(g)$,
$g \in \GenLibre$, satisfying for all $r \in \RelLibre$ the relations
\begin{equation}
    r(e_1,\dots, e_n) = 0,
\end{equation}
for all $e_1, \dots, e_n \in \Alg_\Oca$, where $n$ is the arity of $r$.
\medskip

%%%%%%%%%%%%%%%%%%%%%%%%%%%%%%%%%%%%%%%%%%%%%%%%%%%%%%%%%%%%%%%%%%%%%%%%
\subsubsection{Rewrite rules}
Let $S$ be a graded set. A {\em rewrite rule} on syntax trees on $S$ is
a binary relation $\Recr$ on $\OpLibre(S)$ whenever for all trees $\Sfr$
and $\Tfr$ of $\OpLibre(S)$, $\Sfr \Recr \Tfr$ only if $\Sfr$ and $\Tfr$
have the same arity. When $\Recr$ involves only syntax trees of degree
two, $\Recr$ is {\em quadratic}. We say that a syntax tree $\Sfr'$ can
be {\em rewritten} by $\Recr$ into $\Tfr'$ if there exist two syntax
trees $\Sfr$ and $\Tfr$ satisfying $\Sfr \Recr \Tfr$ and $\Sfr'$ has a
partial subtree equal to $\Sfr$ such that, by replacing it by $\Tfr$ in
$\Sfr'$, we obtain $\Tfr'$. By a slight but convenient abuse of notation,
we denote by $\Sfr' \Recr \Tfr'$ this property. When a syntax tree $\Tfr$
can be obtained by performing a sequence of $\Recr$-rewritings from a
syntax tree $\Sfr$, we say that $\Sfr$ is {\em rewritable} by $\Recr$
into $\Tfr$ and we denote this property by $\Sfr \overset{*}{\Recr} \Tfr$.
For instance, for $S := S(2) \sqcup S(3)$ with $S(2) := \{\La, \Lc\}$
and $S(3) := \{\Lb\}$, consider the rewrite rule $\Recr$ on $\OpLibre(S)$
satisfying
\begin{equation}
    \begin{split}
    \begin{tikzpicture}[xscale=.4,yscale=.25]
        \node(0)at(0.00,-2.00){};
        \node(2)at(1.00,-2.00){};
        \node(3)at(2.00,-2.00){};
        \node(1)at(1.00,0.00){\begin{math}\Lb\end{math}};
        \draw(0)--(1);
        \draw(2)--(1);
        \draw(3)--(1);
        \node(r)at(1.00,1.75){};
        \draw(r)--(1);
    \end{tikzpicture}
    \end{split}
    \begin{split} \enspace \Recr \enspace \end{split}
    \begin{split}
    \begin{tikzpicture}[xscale=.3,yscale=.3]
        \node(0)at(0.00,-3.33){};
        \node(2)at(2.00,-3.33){};
        \node(4)at(4.00,-1.67){};
        \node(1)at(1.00,-1.67){\begin{math}\La\end{math}};
        \node(3)at(3.00,0.00){\begin{math}\La\end{math}};
        \draw(0)--(1);
        \draw(1)--(3);
        \draw(2)--(1);
        \draw(4)--(3);
        \node(r)at(3.00,1.5){};
        \draw(r)--(3);
    \end{tikzpicture}
    \end{split}
    \qquad \mbox{and} \qquad
    \begin{split}
    \begin{tikzpicture}[xscale=.3,yscale=.3]
        \node(0)at(0.00,-3.33){};
        \node(2)at(2.00,-3.33){};
        \node(4)at(4.00,-1.67){};
        \node(1)at(1.00,-1.67){\begin{math}\La\end{math}};
        \node(3)at(3.00,0.00){\begin{math}\Lc\end{math}};
        \draw(0)--(1);
        \draw(1)--(3);
        \draw(2)--(1);
        \draw(4)--(3);
        \node(r)at(3.00,1.5){};
        \draw(r)--(3);
    \end{tikzpicture}
    \end{split}
    \begin{split} \enspace \Recr \enspace \end{split}
    \begin{split}
    \begin{tikzpicture}[xscale=.3,yscale=.3]
        \node(0)at(0.00,-1.67){};
        \node(2)at(2.00,-3.33){};
        \node(4)at(4.00,-3.33){};
        \node(1)at(1.00,0.00){\begin{math}\La\end{math}};
        \node(3)at(3.00,-1.67){\begin{math}\Lc\end{math}};
        \draw(0)--(1);
        \draw(2)--(3);
        \draw(3)--(1);
        \draw(4)--(3);
        \node(r)at(1.00,1.5){};
        \draw(r)--(1);
    \end{tikzpicture}
    \end{split}\,.
\end{equation}
We then have the following sequence of rewritings
\begin{equation}
    \begin{split}
    \begin{tikzpicture}[xscale=.22,yscale=.17]
        \node(0)at(0.00,-6.50){};
        \node(10)at(8.00,-6.50){};
        \node(12)at(10.00,-6.50){};
        \node(2)at(1.00,-9.75){};
        \node(4)at(3.00,-9.75){};
        \node(5)at(4.00,-9.75){};
        \node(7)at(5.00,-9.75){};
        \node(8)at(6.00,-9.75){};
        \node[text=Rouge](1)at(2.00,-3.25){\begin{math}\Lb\end{math}};
        \node(11)at(9.00,-3.25){\begin{math}\La\end{math}};
        \node(3)at(2.00,-6.50){\begin{math}\Lc\end{math}};
        \node(6)at(5.00,-6.50){\begin{math}\Lb\end{math}};
        \node(9)at(7.00,0.00){\begin{math}\Lc\end{math}};
        \draw[draw=Rouge](0)--(1);
        \draw[draw=Rouge](1)--(9);
        \draw(10)--(11);
        \draw(11)--(9);
        \draw(12)--(11);
        \draw(2)--(3);
        \draw[draw=Rouge](3)--(1);
        \draw(4)--(3);
        \draw(5)--(6);
        \draw[draw=Rouge](6)--(1);
        \draw(7)--(6);
        \draw(8)--(6);
        \node(r)at(7.00,2.5){};
        \draw(r)--(9);
    \end{tikzpicture}
    \end{split}
    \begin{split} \enspace \Recr \enspace \end{split}
    \begin{split}
    \begin{tikzpicture}[xscale=.22,yscale=.17]
        \node(0)at(0.00,-8.40){};
        \node(11)at(10.00,-5.60){};
        \node(13)at(12.00,-5.60){};
        \node(2)at(2.00,-11.20){};
        \node(4)at(4.00,-11.20){};
        \node(6)at(6.00,-8.40){};
        \node(8)at(7.00,-8.40){};
        \node(9)at(8.00,-8.40){};
        \node(1)at(1.00,-5.60){\begin{math}\La\end{math}};
        \node[text=Rouge](10)at(9.00,0.00){\begin{math}\Lc\end{math}};
        \node(12)at(11.00,-2.80){\begin{math}\La\end{math}};
        \node(3)at(3.00,-8.40){\begin{math}\Lc\end{math}};
        \node[text=Rouge](5)at(5.00,-2.80){\begin{math}\La\end{math}};
        \node(7)at(7.00,-5.60){\begin{math}\Lb\end{math}};
        \draw(0)--(1);
        \draw[draw=Rouge](1)--(5);
        \draw(11)--(12);
        \draw[draw=Rouge](12)--(10);
        \draw(13)--(12);
        \draw(2)--(3);
        \draw(3)--(1);
        \draw(4)--(3);
        \draw[draw=Rouge](5)--(10);
        \draw(6)--(7);
        \draw[draw=Rouge](7)--(5);
        \draw(8)--(7);
        \draw(9)--(7);
        \node(r)at(9.00,2.25){};
        \draw[draw=Rouge](r)--(10);
    \end{tikzpicture}
    \end{split}
    \begin{split} \enspace \Recr \enspace \end{split}
    \begin{split}
    \begin{tikzpicture}[xscale=.22,yscale=.17]
        \node(0)at(0.00,-7.00){};
        \node(11)at(10.00,-10.50){};
        \node(13)at(12.00,-10.50){};
        \node(2)at(2.00,-10.50){};
        \node(4)at(4.00,-10.50){};
        \node(6)at(6.00,-10.50){};
        \node(8)at(7.00,-10.50){};
        \node(9)at(8.00,-10.50){};
        \node(1)at(1.00,-3.50){\begin{math}\La\end{math}};
        \node(10)at(9.00,-3.50){\begin{math}\Lc\end{math}};
        \node(12)at(11.00,-7.00){\begin{math}\La\end{math}};
        \node(3)at(3.00,-7.00){\begin{math}\Lc\end{math}};
        \node(5)at(5.00,0.00){\begin{math}\La\end{math}};
        \node[text=Rouge](7)at(7.00,-7.00){\begin{math}\Lb\end{math}};
        \draw(0)--(1);
        \draw(1)--(5);
        \draw(10)--(5);
        \draw(11)--(12);
        \draw(12)--(10);
        \draw(13)--(12);
        \draw(2)--(3);
        \draw(3)--(1);
        \draw(4)--(3);
        \draw[draw=Rouge](6)--(7);
        \draw[draw=Rouge](7)--(10);
        \draw[draw=Rouge](8)--(7);
        \draw[draw=Rouge](9)--(7);
        \node(r)at(5.00,2.5){};
        \draw(r)--(5);
    \end{tikzpicture}
    \end{split}
    \begin{split} \enspace \Recr \enspace \end{split}
    \begin{split}
    \begin{tikzpicture}[xscale=.22,yscale=.17]
        \node(0)at(0.00,-6.00){};
        \node(10)at(10.00,-9.00){};
        \node(12)at(12.00,-9.00){};
        \node(14)at(14.00,-9.00){};
        \node(2)at(2.00,-9.00){};
        \node(4)at(4.00,-9.00){};
        \node(6)at(6.00,-12.00){};
        \node(8)at(8.00,-12.00){};
        \node(1)at(1.00,-3.00){\begin{math}\La\end{math}};
        \node(11)at(11.00,-3.00){\begin{math}\Lc\end{math}};
        \node(13)at(13.00,-6.00){\begin{math}\La\end{math}};
        \node(3)at(3.00,-6.00){\begin{math}\Lc\end{math}};
        \node(5)at(5.00,0.00){\begin{math}\La\end{math}};
        \node(7)at(7.00,-9.00){\begin{math}\La\end{math}};
        \node(9)at(9.00,-6.00){\begin{math}\La\end{math}};
        \draw(0)--(1);
        \draw(1)--(5);
        \draw(10)--(9);
        \draw(11)--(5);
        \draw(12)--(13);
        \draw(13)--(11);
        \draw(14)--(13);
        \draw(2)--(3);
        \draw(3)--(1);
        \draw(4)--(3);
        \draw(6)--(7);
        \draw(7)--(9);
        \draw(8)--(7);
        \draw(9)--(11);
        \node(r)at(5.00,2.25){};
        \draw(r)--(5);
    \end{tikzpicture}
    \end{split}\,.
\end{equation}
We shall use the standard terminology ({\em confluent}, {\em terminating},
{\em convergent}, {\em normal form}, {\em critical pair}, {\em etc.})
about rewrite rules (see~\cite{BN98}).
\medskip

Any rewrite rule $\Recr$ on $\OpLibre(S)$ defines an operad congruence
$\Congr_\Recr$ on $\OpLibre(S)$ seen as a set-operad, the
{\em operad congruence induced} by $\Recr$, as the finest operad
congruence on $\OpLibre(S)$ containing the reflexive, symmetric, and
transitive closure of $\Recr$.
\medskip

%%%%%%%%%%%%%%%%%%%%%%%%%%%%%%%%%%%%%%%%%%%%%%%%%%%%%%%%%%%%%%%%%%%%%%%%
\subsubsection{Koszul duality and Koszulity}%
\label{subsubsec:dual_de_Koszul}
In~\cite{GK94}, Ginzburg and Kapranov extended the notion of Koszul
duality of quadratic associative algebras to quadratic operads. Starting
with a binary and quadratic operad $\Oca$ admitting a presentation
$(\GenLibre, \RelLibre)$, the {\em Koszul dual} of $\Oca$ is the operad
$\Oca^!$, isomorphic to the operad admitting the presentation
$\left(\GenLibre, \RelLibre^\perp\right)$ where $\RelLibre^\perp$ is the
annihilator of $\RelLibre$ in $\OpLibre(\GenLibre)$ with respect to the
scalar product
\begin{equation}
    \langle -, - \rangle :
    \OpLibre(\GenLibre)(3) \otimes \OpLibre(\GenLibre)(3) \to \K
\end{equation}
linearly defined, for all $x, x', y, y' \in \GenLibre(2)$, by
\begin{equation}
    \left\langle x \circ_i y, x' \circ_{i'} y' \right\rangle :=
    \begin{cases}
        1 & \mbox{if }
            x = x', y = y', \mbox{ and } i = i' = 1, \\
        -1 & \mbox{if }
            x = x', y = y', \mbox{ and } i = i' = 2, \\
        0 & \mbox{otherwise}.
    \end{cases}
\end{equation}
Then, knowing a presentation of $\Oca$, one can compute a presentation
of~$\Oca^!$.
\medskip

Besides, we say a quadratic operad $\Oca$ is {\em Koszul} if its Koszul
complex is acyclic~\cite{GK94,LV12}. In this work, to prove the
Koszulity of an operad $\Oca$, we shall make use of a combinatorial tool
introduced by Hoffbeck~\cite{Hof10} (see also~\cite{LV12}) consisting
in exhibiting a particular basis of $\Oca$, a so-called
{\em Poincaré-Birkhoff-Witt basis}.
\medskip

In this paper, we shall use this tool only in the context of set-operads,
which reformulates, thanks to the work of Dotsenko and
Khoroshkin~\cite{DK10}, as follows. A set-operad $\Oca$ is Kosuzl if
there is a graded set $S$ and a rewrite rule $\Recr$ on $\OpLibre(S)$
such that $\Oca$ is isomorphic to $\OpLibre(S)/_{\Congr_\Recr}$ and
$\Recr$ is a convergent quadratic rewrite rule. Moreover, the set of
normal forms of $\Recr$ forms a Poincaré-Birkhoff-Witt basis of $\Oca$.
\medskip

Furthermore, when $\Oca$ and $\Oca^!$ are two operads Koszul dual one of
the other, and moreover, when they are Koszul operads and admit Hilbert
series, their Hilbert series satisfy~\cite{GK94}
\begin{equation} \label{equ:relation_series_hilbert_operade_duale}
    \Hca_\Oca\left(-\Hca_{\Oca^!}(-t)\right) = t.
\end{equation}
We shall make use of~\eqref{equ:relation_series_hilbert_operade_duale}
to compute the dimensions of Koszul operads defined as Koszul duals of
known ones.
\medskip

%%%%%%%%%%%%%%%%%%%%%%%%%%%%%%%%%%%%%%%%%%%%%%%%%%%%%%%%%%%%%%%%%%%%%%%%
%%%%%%%%%%%%%%%%%%%%%%%%%%%%%%%%%%%%%%%%%%%%%%%%%%%%%%%%%%%%%%%%%%%%%%%%
\subsection{Diassociative and dendriform operads}%
\label{subsec:dias_et_dendr}
We recall here, by using the notions presented during the previous
sections, the definitions and some properties of the diassociative and
dendriform operads.
\medskip

%%%%%%%%%%%%%%%%%%%%%%%%%%%%%%%%%%%%%%%%%%%%%%%%%%%%%%%%%%%%%%%%%%%%%%%%
\subsubsection{Diassociative operad and diassociative algebras}%
\label{subsubsec:dias}
The {\em diassociative operad} $\Dias$ was introduced by
Loday~\cite{Lod01} as the operad admitting the presentation
$\left(\GenLibre_{\Dias}, \RelLibre_{\Dias}\right)$ where
$\GenLibre_{\Dias} := \GenLibre_{\Dias}(2) := \{\GDias, \DDias\}$
and $\RelLibre_{\Dias}$ is the space induced by the equivalence
relation $\Congr$ satisfying
\begin{subequations}
\begin{equation} \label{equ:relation_dias_1}
    \GDias \circ_1 \DDias \; \Congr \;
    \DDias \circ_2 \GDias,
\end{equation}
\begin{equation} \label{equ:relation_dias_2}
    \GDias \circ_1 \GDias \; \Congr \;
    \GDias \circ_2 \GDias \; \Congr \;
    \GDias \circ_2 \DDias,
\end{equation}
\begin{equation} \label{equ:relation_dias_3}
    \DDias \circ_1 \GDias \; \Congr \;
    \DDias \circ_1 \DDias \; \Congr \;
    \DDias \circ_2 \DDias.
\end{equation}
\end{subequations}
Note that $\Dias$ is a binary and quadratic operad.
\medskip

This operad admits the following realization~\cite{Cha05}. For any
$n \geq 1$, $\Dias(n)$ is the linear span of the $\Efr_{n, k}$,
$k \in [n]$, and the partial compositions linearly satisfy, for all
$n, m \geq 1$, $k \in [n]$, $\ell \in [m]$, and $i \in [n]$,
\begin{equation}
    \Efr_{n, k} \circ_i \Efr_{m, \ell} =
    \begin{cases}
        \Efr_{n + m - 1, k + m - 1}
            & \mbox{if } i < k, \\
        \Efr_{n + m - 1, k + \ell - 1}
            & \mbox{if } i = k, \\
        \Efr_{n + m - 1, k}
            & \mbox{otherwise (} i > k \mbox{)}.
    \end{cases}
\end{equation}
Since the partial composition of two basis elements of $\Dias$ produces
exactly one basis element, $\Dias$ is well-defined as a set-operad.
Moreover, this realization shows that $\dim \Dias(n) = n$ and hence,
the Hilbert series of $\Dias$ satisfies
\begin{equation}
    \Hca_\Dias(t) = \frac{t}{(1 - t)^2}.
\end{equation}
\medskip

From the presentation of $\Dias$, we deduce that any $\Dias$-algebra,
also called {\em diassociative algebra}, is a vector space $\Alg_\Dias$
endowed with linear operations $\GDias$ and $\DDias$ satisfying the
relations encoded by~\eqref{equ:relation_dias_1}---%
\eqref{equ:relation_dias_3}.
\medskip

From the realization of $\Dias$, we deduce that the free diassociative
algebra $\AlgLibre_\Dias$ over one generator is the vector space $\Dias$
endowed with the linear operations
\begin{equation}
    \GDias, \DDias :
    \AlgLibre_\Dias \otimes \AlgLibre_\Dias \to \AlgLibre_\Dias,
\end{equation}
satisfying, for all $n, m \geq 1$, $k \in [n]$, $\ell \in [m]$,
\begin{equation}
    \Efr_{n, k} \GDias \Efr_{m, \ell}
        = (\Efr_{n, k} \otimes \Efr_{m, \ell}) \Action \Efr_{2, 1}
        = (\Efr_{2, 1} \circ_2 \Efr_{m, \ell}) \circ_1 \Efr_{n, k}
        = \Efr_{n + m, k},
\end{equation}
and
\begin{equation}
    \Efr_{n, k} \DDias \Efr_{m, \ell}
        = (\Efr_{n, k} \otimes \Efr_{m, \ell}) \Action \Efr_{2, 2}
        = (\Efr_{2, 2} \circ_2 \Efr_{m, \ell}) \circ_1 \Efr_{n, k}
        = \Efr_{n + m, n + \ell}.
\end{equation}
\medskip

As shown in~\cite{Gir12,Gir15}, the diassociative operad is isomorphic
to the suboperad of $\T \Mca$ generated by $01$ and $10$ where $\Mca$ is
the multiplicative monoid on $\{0, 1\}$. The concerned isomorphism sends
any $\Efr_{n, k}$ of $\Dias$ to the word $0^{k - 1} \, 1 \, 0^{n - k}$ of
$\T \Mca$.
\medskip

%%%%%%%%%%%%%%%%%%%%%%%%%%%%%%%%%%%%%%%%%%%%%%%%%%%%%%%%%%%%%%%%%%%%%%%%
\subsubsection{Dendriform operad and dendriform algebras}
The {\em dendriform operad} $\Dendr$ was also introduced by
Loday~\cite{Lod01}. It is the operad admitting the presentation
$\left(\GenLibre_{\Dendr}, \RelLibre_{\Dendr}\right)$ where
$\GenLibre_{\Dendr} := \GenLibre_{\Dendr}(2) := \{\GDendr, \DDendr\}$
and $\RelLibre_{\Dendr}$ is the vector space generated by
\begin{subequations}
\begin{equation} \label{equ:relation_dendr_1}
    \GDendr \circ_1 \DDendr - \DDendr \circ_2 \GDendr,
\end{equation}
\begin{equation} \label{equ:relation_dendr_2}
    \GDendr \circ_1 \GDendr -
    \GDendr \circ_2 \GDendr -
    \GDendr \circ_2 \DDendr,
\end{equation}
\begin{equation} \label{equ:relation_dendr_3}
    \DDendr \circ_1 \GDendr +
    \DDendr \circ_1 \DDendr -
    \DDendr \circ_2 \DDendr.
\end{equation}
\end{subequations}
Note that $\Dendr$ is a binary and quadratic operad.
\medskip

This operad admits a quite complicated realization~\cite{Lod01}. For all
$n \geq 1$, the $\Dendr(n)$ are vector spaces of binary trees with $n$
internal nodes. The partial composition of two binary trees can be
described by means of intervals of the Tamari order~\cite{HT72}, a
partial order relation involving binary trees. This realization shows
that $\dim \Dendr(n) = \Cat(n)$ where
\begin{equation}
    \Cat(n) := \frac{1}{n + 1} \binom{2n}{n}
\end{equation}
is the $n$th {\em Catalan number}, counting the binary trees with
respect to their number of internal nodes. Therefore, the Hilbert series
of $\Dendr$ satisfies
\begin{equation}
    \Hca_\Dendr(t) = \frac{1 - \sqrt{1 - 4t} - 2t}{2t}.
\end{equation}
\medskip

Throughout this article, we shall graphically represent binary trees in
a slightly different manner than syntax trees. We represent the leaves
of binary trees by squares $\Feuille$, internal nodes by circles
\tikz{\node[Noeud]{};}, and edges by thick segments
\tikz{\draw[Arete](0,0)--(0,.27);}.
\medskip

From the presentation of $\Dendr$, we deduce that any $\Dendr$-algebra,
also called {\em dendriform algebra}, is a vector space $\Alg_\Dendr$
endowed with linear operations $\GDendr$ and $\DDendr$ satisfying the
relations encoded by~\eqref{equ:relation_dendr_1}---%
\eqref{equ:relation_dendr_3}. Classical examples of dendriform algebras
include Rota-Baxter algebras~\cite{Agu00} and shuffle algebras~\cite{Lod01}.
\medskip

The operation obtained by summing $\GDendr$ and $\DDendr$ is associative.
Therefore, we can see a dendriform algebra as an associative algebra in
which its associative product has been split into two parts satisfying
Relations~\eqref{equ:relation_dendr_1}, \eqref{equ:relation_dendr_2},
and~\eqref{equ:relation_dendr_3}. More precisely, we say that an
associative algebra $\Alg$ admits a {\em dendriform structure} if there
exist two nonzero binary operations $\GDendr$ and $\DDendr$ such that
the associative operation $\MAs$ of $\Alg$ satisfies
$\MAs = \GDendr + \DDendr$, and $\Alg$ endowed with the operations
$\GDendr$ and $\DDendr$, is a dendriform algebra
\medskip

The free dendriform algebra $\AlgLibre_\Dendr$ over one generator is the
vector space $\Dendr$ of binary trees with at least one internal node
endowed with the linear operations
\begin{equation}
    \GDendr, \DDendr :
    \AlgLibre_\Dendr \otimes \AlgLibre_\Dendr \to \AlgLibre_\Dendr,
\end{equation}
defined recursively, for any binary tree $\Sfr$ with at least one
internal node, and binary trees $\Tfr_1$ and $\Tfr_2$ by
\begin{equation}
    \Sfr \GDendr \Feuille
    := \Sfr =:
    \Feuille \DDendr \Sfr,
\end{equation}
\begin{equation}
    \Feuille \GDendr \Sfr := 0 =: \Sfr \DDendr \Feuille,
\end{equation}
\begin{equation}
    \ArbreBin{\Tfr_1}{\Tfr_2} \GDendr \Sfr :=
    \ArbreBin{\Tfr_1}{\Tfr_2 \GDendr \Sfr}
    + \ArbreBin{\Tfr_1}{\Tfr_2 \DDendr \Sfr}\,,
\end{equation}
\begin{equation}
    \begin{split} \Sfr \DDendr \end{split}
    \ArbreBin{\Tfr_1}{\Tfr_2} :=
    \ArbreBin{\Sfr \DDendr \Tfr_1}{\Tfr_2}
    + \ArbreBin{\Sfr \GDendr \Tfr_1}{\Tfr_2}.
\end{equation}
Note that neither $\Feuille \GDendr \Feuille$ nor
$\Feuille \DDendr \Feuille$ are defined.
\medskip

We have for instance,
\begin{equation}
    \begin{split}
    \begin{tikzpicture}[xscale=.2,yscale=.16]
        \node[Feuille](0)at(0.00,-4.50){};
        \node[Feuille](2)at(2.00,-4.50){};
        \node[Feuille](4)at(4.00,-6.75){};
        \node[Feuille](6)at(6.00,-6.75){};
        \node[Feuille](8)at(8.00,-4.50){};
        \node[Noeud](1)at(1.00,-2.25){};
        \node[Noeud](3)at(3.00,0.00){};
        \node[Noeud](5)at(5.00,-4.50){};
        \node[Noeud](7)at(7.00,-2.25){};
        \draw[Arete](0)--(1);
        \draw[Arete](1)--(3);
        \draw[Arete](2)--(1);
        \draw[Arete](4)--(5);
        \draw[Arete](5)--(7);
        \draw[Arete](6)--(5);
        \draw[Arete](7)--(3);
        \draw[Arete](8)--(7);
        \node(r)at(3.00,2.25){};
        \draw[Arete](r)--(3);
    \end{tikzpicture}
    \end{split}
    \GDendr
    \begin{split}
    \begin{tikzpicture}[xscale=.2,yscale=.2]
        \node[Feuille](0)at(0.00,-3.50){};
        \node[Feuille](2)at(2.00,-5.25){};
        \node[Feuille](4)at(4.00,-5.25){};
        \node[Feuille](6)at(6.00,-1.75){};
        \node[Noeud](1)at(1.00,-1.75){};
        \node[Noeud](3)at(3.00,-3.50){};
        \node[Noeud](5)at(5.00,0.00){};
        \draw[Arete](0)--(1);
        \draw[Arete](1)--(5);
        \draw[Arete](2)--(3);
        \draw[Arete](3)--(1);
        \draw[Arete](4)--(3);
        \draw[Arete](6)--(5);
        \node(r)at(5.00,1.75){};
        \draw[Arete](r)--(5);
    \end{tikzpicture}
    \end{split}
    =
    \begin{split}
    \begin{tikzpicture}[xscale=.18,yscale=.13]
        \node[Feuille](0)at(0.00,-5.00){};
        \node[Feuille](10)at(10.00,-12.50){};
        \node[Feuille](12)at(12.00,-12.50){};
        \node[Feuille](14)at(14.00,-7.50){};
        \node[Feuille](2)at(2.00,-5.00){};
        \node[Feuille](4)at(4.00,-7.50){};
        \node[Feuille](6)at(6.00,-7.50){};
        \node[Feuille](8)at(8.00,-10.00){};
        \node[Noeud](1)at(1.00,-2.50){};
        \node[Noeud](11)at(11.00,-10.00){};
        \node[Noeud](13)at(13.00,-5.00){};
        \node[Noeud](3)at(3.00,0.00){};
        \node[Noeud](5)at(5.00,-5.00){};
        \node[Noeud](7)at(7.00,-2.50){};
        \node[Noeud](9)at(9.00,-7.50){};
        \draw[Arete](0)--(1);
        \draw[Arete](1)--(3);
        \draw[Arete](10)--(11);
        \draw[Arete](11)--(9);
        \draw[Arete](12)--(11);
        \draw[Arete](13)--(7);
        \draw[Arete](14)--(13);
        \draw[Arete](2)--(1);
        \draw[Arete](4)--(5);
        \draw[Arete](5)--(7);
        \draw[Arete](6)--(5);
        \draw[Arete](7)--(3);
        \draw[Arete](8)--(9);
        \draw[Arete](9)--(13);
        \node(r)at(3.00,2.50){};
        \draw[Arete](r)--(3);
    \end{tikzpicture}
    \end{split}
    +
    \begin{split}
    \begin{tikzpicture}[xscale=.18,yscale=.13]
        \node[Feuille](0)at(0.00,-5.00){};
        \node[Feuille](10)at(10.00,-12.50){};
        \node[Feuille](12)at(12.00,-12.50){};
        \node[Feuille](14)at(14.00,-5.00){};
        \node[Feuille](2)at(2.00,-5.00){};
        \node[Feuille](4)at(4.00,-10.00){};
        \node[Feuille](6)at(6.00,-10.00){};
        \node[Feuille](8)at(8.00,-10.00){};
        \node[Noeud](1)at(1.00,-2.50){};
        \node[Noeud](11)at(11.00,-10.00){};
        \node[Noeud](13)at(13.00,-2.50){};
        \node[Noeud](3)at(3.00,0.00){};
        \node[Noeud](5)at(5.00,-7.50){};
        \node[Noeud](7)at(7.00,-5.00){};
        \node[Noeud](9)at(9.00,-7.50){};
        \draw[Arete](0)--(1);
        \draw[Arete](1)--(3);
        \draw[Arete](10)--(11);
        \draw[Arete](11)--(9);
        \draw[Arete](12)--(11);
        \draw[Arete](13)--(3);
        \draw[Arete](14)--(13);
        \draw[Arete](2)--(1);
        \draw[Arete](4)--(5);
        \draw[Arete](5)--(7);
        \draw[Arete](6)--(5);
        \draw[Arete](7)--(13);
        \draw[Arete](8)--(9);
        \draw[Arete](9)--(7);
        \node(r)at(3.00,2.50){};
        \draw[Arete](r)--(3);
    \end{tikzpicture}
    \end{split}
    +
    \begin{split}
    \begin{tikzpicture}[xscale=.18,yscale=.13]
        \node[Feuille](0)at(0.00,-5.00){};
        \node[Feuille](10)at(10.00,-10.00){};
        \node[Feuille](12)at(12.00,-10.00){};
        \node[Feuille](14)at(14.00,-5.00){};
        \node[Feuille](2)at(2.00,-5.00){};
        \node[Feuille](4)at(4.00,-12.50){};
        \node[Feuille](6)at(6.00,-12.50){};
        \node[Feuille](8)at(8.00,-10.00){};
        \node[Noeud](1)at(1.00,-2.50){};
        \node[Noeud](11)at(11.00,-7.50){};
        \node[Noeud](13)at(13.00,-2.50){};
        \node[Noeud](3)at(3.00,0.00){};
        \node[Noeud](5)at(5.00,-10.00){};
        \node[Noeud](7)at(7.00,-7.50){};
        \node[Noeud](9)at(9.00,-5.00){};
        \draw[Arete](0)--(1);
        \draw[Arete](1)--(3);
        \draw[Arete](10)--(11);
        \draw[Arete](11)--(9);
        \draw[Arete](12)--(11);
        \draw[Arete](13)--(3);
        \draw[Arete](14)--(13);
        \draw[Arete](2)--(1);
        \draw[Arete](4)--(5);
        \draw[Arete](5)--(7);
        \draw[Arete](6)--(5);
        \draw[Arete](7)--(9);
        \draw[Arete](8)--(7);
        \draw[Arete](9)--(13);
        \node(r)at(3.00,2.50){};
        \draw[Arete](r)--(3);
    \end{tikzpicture}
    \end{split}\,,
\end{equation}
and
\begin{equation}
    \begin{split}
    \begin{tikzpicture}[xscale=.2,yscale=.16]
        \node[Feuille](0)at(0.00,-4.50){};
        \node[Feuille](2)at(2.00,-4.50){};
        \node[Feuille](4)at(4.00,-6.75){};
        \node[Feuille](6)at(6.00,-6.75){};
        \node[Feuille](8)at(8.00,-4.50){};
        \node[Noeud](1)at(1.00,-2.25){};
        \node[Noeud](3)at(3.00,0.00){};
        \node[Noeud](5)at(5.00,-4.50){};
        \node[Noeud](7)at(7.00,-2.25){};
        \draw[Arete](0)--(1);
        \draw[Arete](1)--(3);
        \draw[Arete](2)--(1);
        \draw[Arete](4)--(5);
        \draw[Arete](5)--(7);
        \draw[Arete](6)--(5);
        \draw[Arete](7)--(3);
        \draw[Arete](8)--(7);
        \node(r)at(3.00,2.25){};
        \draw[Arete](r)--(3);
    \end{tikzpicture}
    \end{split}
    \DDendr
    \begin{split}
    \begin{tikzpicture}[xscale=.2,yscale=.2]
        \node[Feuille](0)at(0.00,-3.50){};
        \node[Feuille](2)at(2.00,-5.25){};
        \node[Feuille](4)at(4.00,-5.25){};
        \node[Feuille](6)at(6.00,-1.75){};
        \node[Noeud](1)at(1.00,-1.75){};
        \node[Noeud](3)at(3.00,-3.50){};
        \node[Noeud](5)at(5.00,0.00){};
        \draw[Arete](0)--(1);
        \draw[Arete](1)--(5);
        \draw[Arete](2)--(3);
        \draw[Arete](3)--(1);
        \draw[Arete](4)--(3);
        \draw[Arete](6)--(5);
        \node(r)at(5.00,1.75){};
        \draw[Arete](r)--(5);
    \end{tikzpicture}
    \end{split}
    =
    \begin{split}
    \begin{tikzpicture}[xscale=.18,yscale=.13]
        \node[Feuille](0)at(0.00,-7.50){};
        \node[Feuille](10)at(10.00,-12.50){};
        \node[Feuille](12)at(12.00,-12.50){};
        \node[Feuille](14)at(14.00,-2.50){};
        \node[Feuille](2)at(2.00,-7.50){};
        \node[Feuille](4)at(4.00,-10.00){};
        \node[Feuille](6)at(6.00,-10.00){};
        \node[Feuille](8)at(8.00,-10.00){};
        \node[Noeud](1)at(1.00,-5.00){};
        \node[Noeud](11)at(11.00,-10.00){};
        \node[Noeud](13)at(13.00,0.00){};
        \node[Noeud](3)at(3.00,-2.50){};
        \node[Noeud](5)at(5.00,-7.50){};
        \node[Noeud](7)at(7.00,-5.00){};
        \node[Noeud](9)at(9.00,-7.50){};
        \draw[Arete](0)--(1);
        \draw[Arete](1)--(3);
        \draw[Arete](10)--(11);
        \draw[Arete](11)--(9);
        \draw[Arete](12)--(11);
        \draw[Arete](14)--(13);
        \draw[Arete](2)--(1);
        \draw[Arete](3)--(13);
        \draw[Arete](4)--(5);
        \draw[Arete](5)--(7);
        \draw[Arete](6)--(5);
        \draw[Arete](7)--(3);
        \draw[Arete](8)--(9);
        \draw[Arete](9)--(7);
        \node(r)at(13.00,2.50){};
        \draw[Arete](r)--(13);
        \end{tikzpicture}
    \end{split}
    +
    \begin{split}
    \begin{tikzpicture}[xscale=.18,yscale=.13]
        \node[Feuille](0)at(0.00,-7.50){};
        \node[Feuille](10)at(10.00,-10.00){};
        \node[Feuille](12)at(12.00,-10.00){};
        \node[Feuille](14)at(14.00,-2.50){};
        \node[Feuille](2)at(2.00,-7.50){};
        \node[Feuille](4)at(4.00,-12.50){};
        \node[Feuille](6)at(6.00,-12.50){};
        \node[Feuille](8)at(8.00,-10.00){};
        \node[Noeud](1)at(1.00,-5.00){};
        \node[Noeud](11)at(11.00,-7.50){};
        \node[Noeud](13)at(13.00,0.00){};
        \node[Noeud](3)at(3.00,-2.50){};
        \node[Noeud](5)at(5.00,-10.00){};
        \node[Noeud](7)at(7.00,-7.50){};
        \node[Noeud](9)at(9.00,-5.00){};
        \draw[Arete](0)--(1);
        \draw[Arete](1)--(3);
        \draw[Arete](10)--(11);
        \draw[Arete](11)--(9);
        \draw[Arete](12)--(11);
        \draw[Arete](14)--(13);
        \draw[Arete](2)--(1);
        \draw[Arete](3)--(13);
        \draw[Arete](4)--(5);
        \draw[Arete](5)--(7);
        \draw[Arete](6)--(5);
        \draw[Arete](7)--(9);
        \draw[Arete](8)--(7);
        \draw[Arete](9)--(3);
        \node(r)at(13.00,2.50){};
        \draw[Arete](r)--(13);
        \end{tikzpicture}
    \end{split}
    +
    \begin{split}
    \begin{tikzpicture}[xscale=.18,yscale=.13]
        \node[Feuille](0)at(0.00,-10.00){};
        \node[Feuille](10)at(10.00,-7.50){};
        \node[Feuille](12)at(12.00,-7.50){};
        \node[Feuille](14)at(14.00,-2.50){};
        \node[Feuille](2)at(2.00,-10.00){};
        \node[Feuille](4)at(4.00,-12.50){};
        \node[Feuille](6)at(6.00,-12.50){};
        \node[Feuille](8)at(8.00,-10.00){};
        \node[Noeud](1)at(1.00,-7.50){};
        \node[Noeud](11)at(11.00,-5.00){};
        \node[Noeud](13)at(13.00,0.00){};
        \node[Noeud](3)at(3.00,-5.00){};
        \node[Noeud](5)at(5.00,-10.00){};
        \node[Noeud](7)at(7.00,-7.50){};
        \node[Noeud](9)at(9.00,-2.50){};
        \draw[Arete](0)--(1);
        \draw[Arete](1)--(3);
        \draw[Arete](10)--(11);
        \draw[Arete](11)--(9);
        \draw[Arete](12)--(11);
        \draw[Arete](14)--(13);
        \draw[Arete](2)--(1);
        \draw[Arete](3)--(9);
        \draw[Arete](4)--(5);
        \draw[Arete](5)--(7);
        \draw[Arete](6)--(5);
        \draw[Arete](7)--(3);
        \draw[Arete](8)--(7);
        \draw[Arete](9)--(13);
        \node(r)at(13.00,2.50){};
        \draw[Arete](r)--(13);
        \end{tikzpicture}
    \end{split}\,.
\end{equation}
\medskip

As shown in~\cite{Lod01}, the dendriform operad is the Koszul dual of
the diassociative operad. This can be checked by a simple computation
following what is explained in Section~\ref{subsubsec:dual_de_Koszul}.
Besides that, since theses two operads are Koszul operads, the alternating
versions of their Hilbert series are the inverses for each other for
series composition.
\medskip

We invite the reader to take a look
at~\cite{LR98,Agu00,Lod02,Foi07,EMP08,EM09,LV12} for a supplementary
review of properties of dendriform algebras and of the dendriform operad.
\medskip

%%%%%%%%%%%%%%%%%%%%%%%%%%%%%%%%%%%%%%%%%%%%%%%%%%%%%%%%%%%%%%%%%%%%%%%%
%%%%%%%%%%%%%%%%%%%%%%%%%%%%%%%%%%%%%%%%%%%%%%%%%%%%%%%%%%%%%%%%%%%%%%%%
%%%%%%%%%%%%%%%%%%%%%%%%%%%%%%%%%%%%%%%%%%%%%%%%%%%%%%%%%%%%%%%%%%%%%%%%
\section{Pluriassociative operads}%
\label{sec:dias_gamma}
In this section, we define the main object of this work: a generalization
on a nonnegative integer parameter $\gamma$ of the diassociative operad.
We provide a complete study of this new operad.
\medskip

%%%%%%%%%%%%%%%%%%%%%%%%%%%%%%%%%%%%%%%%%%%%%%%%%%%%%%%%%%%%%%%%%%%%%%%%
%%%%%%%%%%%%%%%%%%%%%%%%%%%%%%%%%%%%%%%%%%%%%%%%%%%%%%%%%%%%%%%%%%%%%%%%
\subsection{Construction and first properties}
We define here our generalization of the diassociative operad using the
functor $\T$ (whose definition is recalled in
Section~\ref{subsec:monoides_vers_operades}). We then describe the
elements and establish the Hilbert series of our generalization.
\medskip

%%%%%%%%%%%%%%%%%%%%%%%%%%%%%%%%%%%%%%%%%%%%%%%%%%%%%%%%%%%%%%%%%%%%%%%%
\subsubsection{Construction} \label{subsubsec:construction_dias_gamma}
For any integer $\gamma \geq 0$, let $\Mca_\gamma$ be the monoid
$\{0\} \cup [\gamma]$ with the binary operation $\max$ as product,
denoted by $\Max$. We define $\Dias_\gamma$ as the suboperad of
$\T \Mca_\gamma$ generated by
\begin{equation} \label{equ:generateurs_dias_gamma}
    \{0a, a0 : a \in [\gamma]\}.
\end{equation}
By definition, $\Dias_\gamma$ is the vector space of words that can be
obtained by partial compositions of words
of~\eqref{equ:generateurs_dias_gamma}. We have, for instance,
\begin{equation}
    \Dias_2(1)
    =\Vect(\{0\}),
\end{equation}
\begin{equation}
    \Dias_2(2)
    =\Vect(\{01, 02, 10, 20\}),
\end{equation}
\begin{equation}
    \Dias_2(3)
    =\Vect(\{011, 012, 021, 022, 101, 102, 201, 202, 110, 120, 210, 220\}),
\end{equation}
and
\begin{equation}
    \textcolor{Bleu}{211} {\bf 2} \textcolor{Bleu}{01}
        \circ_4 \textcolor{Rouge}{31103}
    = \textcolor{Bleu}{211} \textcolor{Rouge}{32223} \textcolor{Bleu}{01},
\end{equation}
\begin{equation}
    \textcolor{Bleu}{11} {\bf 1} \textcolor{Bleu}{101}
        \circ_3 \textcolor{Rouge}{20}
    = \textcolor{Bleu}{11} \textcolor{Rouge}{21} \textcolor{Bleu}{101},
\end{equation}
\begin{equation}
    \textcolor{Bleu}{1} {\bf 0} \textcolor{Bleu}{13}
        \circ_2 \textcolor{Rouge}{210}
    = \textcolor{Bleu}{1} \textcolor{Rouge}{210} \textcolor{Bleu}{13}.
\end{equation}
\medskip

It follows immediately from the definition of $\Dias_\gamma$ as a
suboperad of $\T \Mca_\gamma$ that $\Dias_\gamma$ is a set-operad. Indeed,
any partial composition of two basis elements of $\Dias_\gamma$ gives
rises to exactly one basis element. We then shall see $\Dias_\gamma$ as
a set-operad over all Section~\ref{sec:dias_gamma}.
\medskip

Notice that $\Dias_\gamma(2)$ is the set~\eqref{equ:generateurs_dias_gamma}
of generators of $\Dias_\gamma$. Besides, observe that $\Dias_0$ is the
trivial operad and that $\Dias_\gamma$ is a suboperad of $\Dias_{\gamma + 1}$.
We call $\Dias_\gamma$ the {\em $\gamma$-pluriassociative operad}.
\medskip

%%%%%%%%%%%%%%%%%%%%%%%%%%%%%%%%%%%%%%%%%%%%%%%%%%%%%%%%%%%%%%%%%%%%%%%%
\subsubsection{Elements and dimensions}
\begin{Proposition} \label{prop:elements_dias_gamma}
    For any integer $\gamma \geq 0$, as a set-operad, the underlying set
    of $\Dias_\gamma$ is the set of the words on the alphabet
    $\{0\} \cup [\gamma]$ containing exactly one occurrence of $0$.
\end{Proposition}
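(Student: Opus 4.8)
The plan is to prove a double inclusion between the underlying set of $\Dias_\gamma$ and the set $W$ of words on $\{0\} \cup [\gamma]$ containing exactly one occurrence of $0$. Throughout, recall that $\Dias_\gamma$ is by construction the suboperad of $\T\Mca_\gamma$ generated by $G := \{0a, a0 : a \in [\gamma]\}$, so every basis element of $\Dias_\gamma$ is obtained from words of $G$ by iterated partial compositions.

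\emph{First inclusion: $\Dias_\gamma \subseteq W$.} I would argue by induction on the number of partial compositions used to build a word $u$. The generators $0a$ and $a0$ are all in $W$ (length two, exactly one $0$). For the inductive step, suppose $u \in W$ with $u \in \T\Mca_\gamma(n)$ and $v \in W$ with $v \in \T\Mca_\gamma(m)$, and consider $u \circ_i v = u_1 \dots u_{i-1}\,(u_i \Max v_1)\,\dots\,(u_i \Max v_m)\,u_{i+1}\dots u_n$. One must track the occurrences of $0$. Since $0$ is the identity of $\Max$, we have $u_i \Max v_j = 0$ if and only if $u_i = 0$ \emph{and} $v_j = 0$. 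There are two cases: if $u_i = 0$, the single $0$ of $u$ sits at position $i$, so the letters $u_\ell$ for $\ell \neq i$ are all nonzero, and the inserted block $(0 \Max v_1)\dots(0 \Max v_m) = v_1 \dots v_m$ reproduces $v$ exactly, which contains a single $0$; hence $u \circ_i v$ has exactly one $0$. If $u_i \neq 0$, then all the letters $u_i \Max v_j$ are nonzero, so the occurrences of $0$ in $u \circ_i v$ are precisely those of $u$ outside position $i$, of which there is exactly one. Either way $u \circ_i v \in W$, completing the induction.

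\emph{Second inclusion: $W \subseteq \Dias_\gamma$.} Here the task is to show every $w \in W$ can be produced from $G$ by partial compositions. I would induct on the length $n$ of $w$; the base cases $n = 1$ (the word $0$, which is the unit $\Unite$ of $\T\Mca_\gamma$ and hence lies in every suboperad) and $n = 2$ (the four-way family $G$ itself) are immediate. For $n \geq 3$, write $w = w_1 \dots w_n$ and let $k \in [n]$ be the position of its unique $0$. The idea is to split off one nonzero letter from an end. Suppose first $k < n$, so $w_n = a \in [\gamma]$; I claim $w = w' \circ_{n-1} (b\,a)$ for a suitable shorter word $w' \in W$ and $b \in [\gamma]$. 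Concretely, using the composition formula $w' \circ_{n-1} v$ replaces the $(n-1)$th letter $w'_{n-1}$ of $w'$ by $(w'_{n-1}\Max v_1)(w'_{n-1}\Max v_2)$; choosing $w' = w_1 \dots w_{n-2}\,w_{n-1}$ (length $n-1$, still one $0$, so $w' \in W$ by the inductive hypothesis it lies in $\Dias_\gamma$) and $v = (1)(a)$ gives $w'_{n-1}\Max 1 = w_{n-1}$ and $w'_{n-1}\Max a = \max(w_{n-1},a)$; so this works directly when $\max(w_{n-1},a) = a$, i.e. $w_{n-1} \leq a$. The general case requires a small extra maneuver: one can also grow the word from the \emph{left} using a generator $0a$ or $a0$ composed at position $1$, or peel off a letter at whichever end carries a letter that is $\geq$ its neighbour. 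Since $w$ has length $\geq 3$ and only one $0$, at least one of its two end letters is nonzero; by choosing the end whose outermost letter is maximal among $\{w_1, w_n\}$ (or handling the symmetric situation with the generator $a0$ composed at the last slot when the $0$ is at position $1$), one can always peel off one nonzero boundary letter as a $\Max$-composition, reducing to a length-$(n-1)$ word in $W$, which is in $\Dias_\gamma$ by induction.

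\emph{Main obstacle.} The delicate point is the second inclusion: the $\Max$ product is not injective, so when we try to reconstruct $w$ as $w' \circ_i (b\,a)$ the letter recovered in place of the split slot is $\max(w'_i, b)$, which need not equal the target letter unless $w'_i$ is small enough. The resolution is to choose \emph{which} boundary letter to peel and \emph{from which side}, exploiting that $\Max$-composition with the generator whose nonzero letter equals the larger of the relevant pair always reproduces the smaller one faithfully — together with the symmetry of $G$ under reversing words. I expect the bulk of the write-up to consist of carefully organizing this case analysis (position of the $0$ at an interior versus a boundary slot, and the comparison between the two outermost letters), while the first inclusion is a routine induction on the occurrences of $0$ under the explicit partial-composition formula of $\T\Mca_\gamma$.
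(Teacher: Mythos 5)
Your first inclusion (closure of the set of words with exactly one $0$ under the partial compositions of $\T \Mca_\gamma$) is correct and is in substance the same induction as the paper's. The genuine gap is in the second inclusion. First, the decompositions you actually write down compose with words that are not generators: $w = w' \circ_{n-1} (b\,a)$ with $b \in [\gamma]$, and later $v = 1a$, have two nonzero letters, hence lie neither in $\{0a, a0 : a \in [\gamma]\}$ nor (by your own first inclusion) in $\Dias_\gamma$ at all, so such a factorization cannot certify membership in the suboperad. Second, the fallback maneuver you describe --- peel a nonzero end letter while keeping its neighbour unchanged, at ``whichever end carries a letter $\geq$ its neighbour'' or at the end whose outermost letter is maximal --- fails on explicit words: for $\gamma \geq 2$ take $w = 12021$. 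Both end letters equal $1$ and both neighbours equal $2$; writing $w = (1202) \circ_4 v$ forces $2 \Max v_2 = 1$, which is impossible, and the left end fails symmetrically. So the announced case analysis does not cover all words and the induction does not go through as proposed.

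The statement is of course true, and the repair is easy. The paper's proof sidesteps the comparison problem entirely by peeling the nonzero letter \emph{adjacent to the unique $0$}: if $x_i x_{i+1} = a0$ (or $0a$), let $y$ be $x$ with the letter $a$ erased, so that $y_i = 0$; then $x = y \circ_i a0$ holds with no constraint, because all the $\Max$-products involved are against $0$. Alternatively one may put the generator on the outside: if $w_1 \neq 0$ then $w = (w_1 0) \circ_2 (w_2 \cdots w_n)$, and otherwise $w_n \neq 0$ and $w = (0 w_n) \circ_1 (w_1 \cdots w_{n-1})$; in both cases the shorter word still has exactly one $0$. Your end-peeling could also be salvaged by allowing the last letter of $w'$ to differ from $w_{n-1}$ (for instance $12021 = (1201) \circ_4 20$), but your proposal neither states nor proves this, so as written the second inclusion is not established.
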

\begin{proof}
    Let us show that any word $x$ of $\Dias_\gamma$ satisfies the
    statement of the proposition by induction on the length $n$ of $x$.
    This is true when $n = 1$ because we necessarily have $x = 0$.
    Otherwise, when $n \geq 2$, there is a word $y$ of $\Dias_\gamma$
    of length $n - 1$ and a generator $g$ of $\Dias_\gamma$ such that
    $x = y \circ_i g$ for a $i \in [n - 1]$. Then, $x$ is obtained by
    replacing the $i$th letter $a$ of $y$ by the factor $u := u_1 u_2$
    where $u_1 := a \Max g_1$ and $u_2 := a \Max g_2$. Since $g$ contains
    exactly one $0$, this operation consists in inserting a nonzero
    letter of $[\gamma]$ into $y$. Since by induction hypothesis $y$
    contains exactly one $0$, it follows that $x$ satisfies the
    statement of the proposition.
    \smallskip

    Conversely, let us show that any word $x$ satisfying the statement
    of the proposition belongs to $\Dias_\gamma$ by induction on the
    length $n$ of $x$. This is true when $n = 1$ because we necessarily
    have $x = 0$ and $0$ belongs to $\Dias_\gamma$ since it is its unit.
    Otherwise, when $n \geq 2$, there is an integer $i \in [n - 1]$ such
    that $x_i x_{i + 1} \in \{0a, a0\}$ for an $a \in [\gamma]$. Let us
    suppose without loss of generality that $x_i x_{i + 1} = a0$. By
    setting $y$ as the word obtained by erasing the $i$th letter of $x$,
    we have $x = y \circ_i a0$. Thus, since by induction hypothesis $y$
    is an element of $\Dias_\gamma$, it follows that $x$ also is.
\end{proof}
\medskip

We deduce from Proposition~\ref{prop:elements_dias_gamma} that the
Hilbert series of $\Dias_\gamma$ satisfies
\begin{equation} \label{equ:serie_hilbert_dias_gamma}
    \Hca_{\Dias_\gamma}(t) = \frac{t}{(1 - \gamma t)^2}
\end{equation}
and that for all $n \geq 1$, $\dim \Dias_\gamma(n) = n \gamma^{n - 1}$.
For instance, the first dimensions of $\Dias_1$, $\Dias_2$, $\Dias_3$,
and $\Dias_4$ are respectively
\begin{equation}
    1, 2, 3, 4, 5, 6, 7, 8, 9, 10, 11,
\end{equation}
\begin{equation}
    1, 4, 12, 32, 80, 192, 448, 1024, 2304, 5120, 11264,
\end{equation}
\begin{equation}
    1, 6, 27, 108, 405, 1458, 5103, 17496, 59049, 196830, 649539,
\end{equation}
\begin{equation}
    1, 8, 48, 256, 1280, 6144, 28672, 131072, 589824, 2621440, 11534336.
\end{equation}
The second one is Sequence~\Sloane{A001787}, the third one is
Sequence~\Sloane{A027471}, and the last one is Sequence~\Sloane{A002697}
of~\cite{Slo}.
\medskip

%%%%%%%%%%%%%%%%%%%%%%%%%%%%%%%%%%%%%%%%%%%%%%%%%%%%%%%%%%%%%%%%%%%%%%%%
%%%%%%%%%%%%%%%%%%%%%%%%%%%%%%%%%%%%%%%%%%%%%%%%%%%%%%%%%%%%%%%%%%%%%%%%
%%%%%%%%%%%%%%%%%%%%%%%%%%%%%%%%%%%%%%%%%%%%%%%%%%%%%%%%%%%%%%%%%%%%%%%%
\subsection{Presentation by generators and relations}%
\label{subsec:presentation_dias_gamma}
To establish a presentation of $\Dias_\gamma$, we shall start by defining
a morphism $\Mot_\gamma$ from a free operad to $\Dias_\gamma$. Then,
after showing that $\Mot_\gamma$ is a surjection, we will show that
$\Mot_\gamma$ induces an operad isomorphism between a quotient of
a free operad by a certain operad congruence $\Congr_\gamma$ and
$\Dias_\gamma$. The space of relations of $\Dias_\gamma$ of its
presentation will be induced by $\Congr_\gamma$.
\medskip

%%%%%%%%%%%%%%%%%%%%%%%%%%%%%%%%%%%%%%%%%%%%%%%%%%%%%%%%%%%%%%%%%%%%%%%%
\subsubsection{From syntax trees to words}%
\label{subsubsec:arbres_vers_mots}
For any integer $\gamma \geq 0$, let $\GenDias := \GenDias(2)$ be the
graded set where
\begin{equation}
    \GenDias(2) := \{\GDias_a, \DDias_a : a \in [\gamma]\}.
\end{equation}
\medskip

Let $\Tfr$ be a syntax tree of $\OpLibre\left(\GenDias\right)$ and $x$
be a leaf of $\Tfr$. We say that an integer $a \in \{0\} \cup [\gamma]$
is {\em eligible} for $x$ if $a = 0$ or there is an ancestor $y$ of $x$
labeled by $\GDias_a$ (resp. $\DDias_a$) and $x$ is in the right (resp.
left) subtree of $y$. The {\em image} of $x$ is its greatest eligible
integer. Moreover, let
\begin{equation} \label{equ:application_mot_gamma}
    \Mot_\gamma : \OpLibre\left(\GenDias\right)(n) \to \Dias_\gamma(n),
    \qquad n \geq 1,
\end{equation}
the map where $\Mot_\gamma(\Tfr)$ is the word obtained by considering,
from left to right, the images of the leaves of $\Tfr$ (see
Figure~\ref{fig:exemple_mot_gamma}).
\begin{figure}[ht]
    \centering
     \begin{tikzpicture}[xscale=.28,yscale=.18]
        \node(0)at(0.00,-11.50){};
        \node(10)at(10.00,-7.67){};
        \node(12)at(12.00,-15.33){};
        \node(14)at(14.00,-15.33){};
        \node(16)at(16.00,-19.17){};
        \node(18)at(18.00,-19.17){};
        \node(2)at(2.00,-11.50){};
        \node(20)at(20.00,-19.17){};
        \node(22)at(22.00,-19.17){};
        \node(4)at(4.00,-11.50){};
        \node(6)at(6.00,-15.33){};
        \node(8)at(8.00,-15.33){};
        \node(1)at(1.00,-7.67){\begin{math}\GDias_4\end{math}};
        \node(11)at(11.00,-3.83){\begin{math}\DDias_2\end{math}};
        \node(13)at(13.00,-11.50){\begin{math}\DDias_1\end{math}};
        \node(15)at(15.00,-7.67){\begin{math}\DDias_3\end{math}};
        \node(17)at(17.00,-15.33){\begin{math}\DDias_2\end{math}};
        \node(19)at(19.00,-11.50){\begin{math}\GDias_1\end{math}};
        \node(21)at(21.00,-15.33){\begin{math}\DDias_4\end{math}};
        \node(3)at(3.00,-3.83){\begin{math}\DDias_3\end{math}};
        \node(5)at(5.00,-7.67){\begin{math}\GDias_1\end{math}};
        \node(7)at(7.00,-11.50){\begin{math}\GDias_2\end{math}};
        \node(9)at(9.00,0.00){\begin{math}\GDias_2\end{math}};
        \draw(0)--(1); \draw(1)--(3); \draw(10)--(11); \draw(11)--(9);
        \draw(12)--(13); \draw(13)--(15); \draw(14)--(13); \draw(15)--(11);
        \draw(16)--(17); \draw(17)--(19); \draw(18)--(17); \draw(19)--(15);
        \draw(2)--(1); \draw(20)--(21); \draw(21)--(19); \draw(22)--(21);
        \draw(3)--(9); \draw(4)--(5); \draw(5)--(3); \draw(6)--(7);
        \draw(7)--(5); \draw(8)--(7);
        \node(r)at(9,3){};
        \draw(9)--(r);
        \node[below of=0,node distance=3mm]
            {\small \begin{math}\textcolor{Bleu}{3}\end{math}};
        \node[below of=2,node distance=3mm]
            {\small \begin{math}\textcolor{Bleu}{4}\end{math}};
        \node[below of=4,node distance=3mm]
            {\small \begin{math}\textcolor{Bleu}{0}\end{math}};
        \node[below of=6,node distance=3mm]
            {\small \begin{math}\textcolor{Bleu}{1}\end{math}};
        \node[below of=8,node distance=3mm]
            {\small \begin{math}\textcolor{Bleu}{2}\end{math}};
        \node[below of=10,node distance=3mm]
            {\small \begin{math}\textcolor{Bleu}{2}\end{math}};
        \node[below of=12,node distance=3mm]
            {\small \begin{math}\textcolor{Bleu}{3}\end{math}};
        \node[below of=14,node distance=3mm]
            {\small \begin{math}\textcolor{Bleu}{3}\end{math}};
        \node[below of=16,node distance=3mm]
            {\small \begin{math}\textcolor{Bleu}{2}\end{math}};
        \node[below of=18,node distance=3mm]
            {\small \begin{math}\textcolor{Bleu}{2}\end{math}};
        \node[below of=20,node distance=3mm]
            {\small \begin{math}\textcolor{Bleu}{4}\end{math}};
        \node[below of=22,node distance=3mm]
            {\small \begin{math}\textcolor{Bleu}{2}\end{math}};
    \end{tikzpicture}
    \caption{A syntax tree $\Tfr$ of $\OpLibre\left(\GenDias\right)$
    where images of its leaves are shown. This tree satisfies
    $\Mot_\gamma(\Tfr) = \textcolor{Bleu}{340122332242}$.}
    \label{fig:exemple_mot_gamma}
\end{figure}
\medskip

\begin{Lemme} \label{lem:mot_gamma_morphisme}
    For any integer $\gamma \geq 0$, the map $\Mot_\gamma$ is an
    operad morphism from $\OpLibre\left(\GenDias\right)$ to
    $\Dias_\gamma$.
\end{Lemme}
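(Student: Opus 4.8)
The plan is to verify directly that $\Mot_\gamma$ respects arities, sends the unit to the unit, and commutes with partial compositions. The first two points are immediate: by definition $\Mot_\gamma$ maps $\OpLibre(\GenDias)(n)$ to $\Dias_\gamma(n)$, so it preserves arities, and the unit of $\OpLibre(\GenDias)$ is the leaf, whose unique leaf has image $0$ (the only eligible integer, since there are no ancestors), so $\Mot_\gamma$ sends it to the word $0$, which is the unit of $\Dias_\gamma$ by Proposition~\ref{prop:elements_dias_gamma} and the construction of Section~\ref{subsubsec:construction_dias_gamma}. The substance of the lemma is therefore the identity $\Mot_\gamma(\Sfr \circ_i \Tfr) = \Mot_\gamma(\Sfr) \circ_i \Mot_\gamma(\Tfr)$ for all syntax trees $\Sfr \in \OpLibre(\GenDias)(n)$, $\Tfr \in \OpLibre(\GenDias)(m)$, and $i \in [n]$.

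To prove this identity I would unwind what each side produces letter by letter. Recall that $\Sfr \circ_i \Tfr$ is obtained by grafting the root of $\Tfr$ onto the $i$th leaf of $\Sfr$; its leaves are, from left to right, the first $i-1$ leaves of $\Sfr$, then the $m$ leaves of $\Tfr$, then the last $n-i$ leaves of $\Sfr$. Let $a$ be the image in $\Sfr$ of the $i$th leaf of $\Sfr$, so that $\Mot_\gamma(\Sfr)_i = a$; on the right-hand side, $\Mot_\gamma(\Sfr) \circ_i \Mot_\gamma(\Tfr)$ replaces this letter $a$ by the word $(a \Max \Mot_\gamma(\Tfr)_1)\cdots(a\Max\Mot_\gamma(\Tfr)_m)$, leaving all other letters of $\Mot_\gamma(\Sfr)$ untouched. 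So I must check two things about the left-hand side: first, that for a leaf $x$ of $\Sfr$ other than its $i$th one, the image of the corresponding leaf in $\Sfr\circ_i\Tfr$ is unchanged; second, that for the $j$th leaf of $\Tfr$ (with image $b_j := \Mot_\gamma(\Tfr)_j$ computed inside $\Tfr$), its image in $\Sfr\circ_i\Tfr$ equals $a \Max b_j = \max(a, b_j)$.

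The first point is essentially immediate: the set of ancestors of a leaf $x\neq x_i$ of $\Sfr$, together with their labels and the relevant left/right subtree memberships, is identical in $\Sfr$ and in $\Sfr\circ_i\Tfr$ (the grafting happens at a leaf, so it does not lie on any root-to-$x$ path and does not alter which subtree of an ancestor contains $x$), hence the set of eligible integers for $x$, and thus its image, is unchanged. The second point is the crux: the ancestors of the $j$th leaf of $\Tfr$, viewed inside $\Sfr\circ_i\Tfr$, are exactly its ancestors inside $\Tfr$ together with all the ancestors of the $i$th leaf of $\Sfr$, viewed inside $\Sfr$. Consequently an integer $c\in[\gamma]$ is eligible for that leaf in $\Sfr\circ_i\Tfr$ if and only if it is eligible for the $j$th leaf of $\Tfr$ inside $\Tfr$ \emph{or} it is eligible for the $i$th leaf of $\Sfr$ inside $\Sfr$; together with the fact that $0$ is always eligible, taking the greatest such integer gives $\max(a, b_j)$. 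Here one should be slightly careful that the left/right subtree membership of the new leaf relative to an ancestor coming from $\Sfr$ is the same as that of the $i$th leaf of $\Sfr$ relative to that ancestor — this holds because grafting $\Tfr$ at the $i$th leaf of $\Sfr$ places all of $\Tfr$'s leaves in exactly the position formerly occupied by that leaf. Assembling these two observations shows that $\Mot_\gamma(\Sfr\circ_i\Tfr)$ is letter-for-letter equal to $\Mot_\gamma(\Sfr)\circ_i\Mot_\gamma(\Tfr)$, which completes the proof.

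The main obstacle is purely bookkeeping: making precise and rigorous the claim about which ancestors (with which subtree-side data) a leaf of $\Sfr\circ_i\Tfr$ inherits from $\Sfr$ versus from $\Tfr$, and checking that the "greatest eligible integer" operation then turns the concatenation of ancestor sets into the $\max$ that defines the product in $\Mca_\gamma$. Once that correspondence of ancestors is set up cleanly, everything else follows formally; it may be convenient to do an induction on the degree of $\Tfr$ (or of $\Sfr$) if a direct argument about ancestor paths feels too informal, but no genuinely new idea is needed beyond the one above.
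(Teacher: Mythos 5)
Your proof is correct, but it takes a different route from the paper. You verify the morphism property head-on, by a leaf-by-leaf analysis of how the eligible-integer sets behave under grafting: leaves of $\Sfr$ other than the $i$th keep their ancestors and side-data, hence their images, while each leaf of $\Tfr$ inherits the ancestors of the $i$th leaf of $\Sfr$ with the same left/right membership, so its image becomes the $\Max$ of the two images --- which is exactly the composition rule of $\T \Mca_\gamma$. The paper instead sidesteps this bookkeeping: it relabels each internal node $\GDias_a$ (resp.\ $\DDias_a$) of $\Tfr$ by the word $0a$ (resp.\ $a0$) and shows by induction on the degree that $\Mot_\gamma(\Tfr) = \Eval_{\Dias_\gamma}(\Tfr')$, so that the morphism property is inherited from the evaluation map, which is an operad morphism by construction. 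Your argument is more self-contained and makes the combinatorial mechanism explicit; the paper's is shorter because it delegates the composition identity to a general fact about $\Eval$. One small point to tighten: you dismiss well-definedness as holding ``by definition,'' but the declared codomain $\Dias_\gamma(n)$ is a proper subspace of $\T\Mca_\gamma(n)$, and one must check that the word of leaf images contains exactly one $0$ (the paper does this via the unique maximal root-to-leaf path determined by the node labels, together with Proposition~\ref{prop:elements_dias_gamma}). This is easily repaired inside your own framework --- your composition identity holds verbatim in $\T\Mca_\gamma$, the corollas map to the generators $0a$ and $a0$, and the unit maps to $0$, so an induction on the degree shows every image lies in the suboperad $\Dias_\gamma$ --- but as written it is asserted rather than proved.
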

\begin{proof}
    Let us first show that $\Mot_\gamma$ is a well-defined map. Let
    $\Tfr$ be a syntax tree of $\OpLibre\left(\GenDias\right)$ of arity
    $n$. Observe that by starting from the root of $\Tfr$, there is a
    unique maximal path obtained by following the directions specified
    by its internal nodes (a $\GDias_a$ means to go the left child while
    a $\DDias_a$ means to go to the right child). Then, the leaf at the
    end of this path is the only leaf with $0$ as image. Others $n - 1$
    leaves have integers of $[\gamma]$ as images. By
    Proposition~\ref{prop:elements_dias_gamma}, this implies that
    $\Mot_\gamma(\Tfr)$ is an element of $\Dias_\gamma(n)$.
    \smallskip

    To prove that $\Mot_\gamma$ is an operad morphism, we consider its
    following alternative description. If $\Tfr$ is a syntax tree of
    $\OpLibre\left(\GenDias\right)$, we can consider the tree $\Tfr'$
    obtained by replacing in $\Tfr$ each label $\GDias_a$ (resp. $\DDias_a$)
    by the word $0a$ (resp. $a0$), where $a \in [\gamma]$. Then, by a
    straightforward induction on the number of internal nodes of $\Tfr$,
    we obtain that $\Eval_{\Dias_\gamma}(\Tfr')$, where $\Tfr'$ is seen
    as a syntax tree of $\OpLibre\left(\Dias_\gamma(2)\right)$, is
    $\Mot_\gamma(\Tfr)$. It then follows that $\Mot_\gamma$ is an operad
    morphism.
\end{proof}
\medskip

%%%%%%%%%%%%%%%%%%%%%%%%%%%%%%%%%%%%%%%%%%%%%%%%%%%%%%%%%%%%%%%%%%%%%%%%
\subsubsection{Hook syntax trees}
Let us now consider the map
\begin{equation} \label{equ:application_equerre_gamma}
    \Equerre_\gamma : \Dias_\gamma(n) \to \OpLibre\left(\GenDias\right)(n),
    \qquad n \geq 1,
\end{equation}
defined for any word $x$ of $\Dias_\gamma$ by
\begin{equation} \label{equ:definition_application_equerre_gamma}
    \begin{split}\Equerre_\gamma(x)\end{split} :=
    \begin{split}
    \begin{tikzpicture}[xscale=.5,yscale=.45]
        \node(0)at(0.00,-5.40){};
        \node(2)at(3.00,-7.20){};
        \node(4)at(5.00,-7.20){};
        \node(6)at(6.00,-3.60){};
        \node(8)at(9.00,-1.80){};
        \node(1)at(1.00,-3.60){\begin{math}\DDias_{u_1}\end{math}};
        \node(3)at(4.00,-5.40){\begin{math}\DDias_{u_{|u|}}\end{math}};
        \node(5)at(5.00,-1.80){\begin{math}\GDias_{v_1}\end{math}};
        \node(7)at(8.00,0.00){\begin{math}\GDias_{v_{|v|}}\end{math}};
        \node(r)at(8,1.5){};
        \draw(0)--(1);
        \draw(1)--(5);
        \draw(2)--(3);
        \draw[densely dashed](3)--(1);
        \draw(4)--(3);
        \draw[densely dashed](5)--(7);
        \draw(6)--(5);
        \draw(8)--(7);
        \draw(7)--(r);
    \end{tikzpicture}
    \end{split}\,,
\end{equation}
where $x$ decomposes, by Proposition~\ref{prop:elements_dias_gamma},
uniquely in $x = u0v$ where $u$ and $v$ are words on the alphabet
$[\gamma]$. The dashed edges denote, depending on their orientation,
a right comb (wherein internal nodes are labeled, from top to bottom
by $\DDias_{u_1}$, \dots, $\DDias_{u_{|u|}}$) or a left comb
(wherein internal nodes are labeled, from bottom to top, by
$\GDias_{v_1}$, \dots, $\GDias_{v_{|v|}}$). We shall call any syntax
tree of the form \eqref{equ:definition_application_equerre_gamma} a
{\em hook syntax tree}.
\medskip

\begin{Lemme} \label{lem:mot_gamma_surjection}
    For any integer $\gamma \geq 0$, the map $\Mot_\gamma$ is a
    surjective operad morphism from $\OpLibre\left(\GenDias\right)$
    onto $\Dias_\gamma$. Moreover, for any element $x$ of $\Dias_\gamma$,
    $\Equerre_\gamma(x)$ belongs to the fiber of $x$ under $\Mot_\gamma$.
\end{Lemme}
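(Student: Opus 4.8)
The plan is to reduce the whole statement to the single identity $\Mot_\gamma\left(\Equerre_\gamma(x)\right) = x$ for every word $x$ of $\Dias_\gamma$. This identity yields everything at once: it shows that $\Mot_\gamma$ is surjective, since each $x \in \Dias_\gamma(n)$ is the image of $\Equerre_\gamma(x) \in \OpLibre\left(\GenDias\right)(n)$, hence that $\Mot_\gamma$ is a surjective operad morphism by Lemma~\ref{lem:mot_gamma_morphisme}; and it is precisely the assertion that $\Equerre_\gamma(x)$ lies in the fiber of $x$ under $\Mot_\gamma$.

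To prove the identity I would write $x = u0v$, the unique decomposition with $u$ and $v$ words on $[\gamma]$ afforded by Proposition~\ref{prop:elements_dias_gamma}, and proceed by induction on $|u| + |v|$. In the base case $u = v = \epsilon$, the tree $\Equerre_\gamma(0)$ of \eqref{equ:definition_application_equerre_gamma} degenerates to the one-leaf tree, which $\Mot_\gamma$, being an operad morphism, sends to the unit $0$ of $\Dias_\gamma$. For the inductive step I would peel one letter off the hook tree. Reading the shape of \eqref{equ:definition_application_equerre_gamma}, if $v = v'v_{|v|}$ is nonempty then
\begin{equation}
    \Equerre_\gamma(u0v) = \GDias_{v_{|v|}} \circ_1 \Equerre_\gamma(u0v'),
\end{equation}
that is, $\Equerre_\gamma(u0v')$ grafted onto the left leaf of the corolla $\GDias_{v_{|v|}}$, the new topmost node of the left comb; and if $v = \epsilon$ while $u = u'u_{|u|}$ is nonempty then
\begin{equation}
    \Equerre_\gamma(u0) = \Equerre_\gamma(u'0) \circ_{|u|} \DDias_{u_{|u|}},
\end{equation}
that is, the corolla $\DDias_{u_{|u|}}$ grafted onto the rightmost (that is, $|u|$-th) leaf of $\Equerre_\gamma(u'0)$, the one carrying image $0$, which becomes the new bottommost node of the right comb. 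In both cases $|u| + |v|$ strictly decreases.

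Next I would apply the morphism property of $\Mot_\gamma$ together with the immediate values $\Mot_\gamma\left(\GDias_a\right) = 0a$ and $\Mot_\gamma\left(\DDias_a\right) = a0$ on corollas (read directly off the rule defining the images of leaves, or off the reformulation of $\Mot_\gamma$ used in the proof of Lemma~\ref{lem:mot_gamma_morphisme}), and invoke the induction hypothesis. The two cases become
\begin{equation}
    \Mot_\gamma\left(\Equerre_\gamma(u0v)\right) = (0\,v_{|v|}) \circ_1 (u0v')
    \qquad \text{and} \qquad
    \Mot_\gamma\left(\Equerre_\gamma(u0)\right) = (u'0) \circ_{|u|} (u_{|u|}0).
\end{equation}
It then remains to evaluate these partial compositions inside $\T\Mca_\gamma$: since $0$ is the unit of $\left(\Mca_\gamma, \Max\right)$, substituting a word $z_1 \dots z_m$ for a letter equal to $0$ simply reinserts $(0 \Max z_1)\dots(0 \Max z_m) = z_1\dots z_m$ in its place, so the first expression equals $u0v'v_{|v|} = u0v = x$ and the second equals $u'u_{|u|}0 = u0 = x$, closing the induction.

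The one delicate point is the verification of the two grafting identities for $\Equerre_\gamma$: one must check that removing the outermost node of the $\GDias$-comb (respectively the innermost node of the $\DDias$-comb) of the hook tree $\Equerre_\gamma(u0v)$ gives back exactly $\Equerre_\gamma$ of the shortened word, and in particular that the grafting is carried out at the correct leaf. This is a routine unwinding of the picture \eqref{equ:definition_application_equerre_gamma}; everything after it is a short computation with the product $\Max$.
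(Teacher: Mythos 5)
Your proposal is correct and takes essentially the same route as the paper: everything is reduced to the identity $\Mot_\gamma\left(\Equerre_\gamma(x)\right) = x$ (using the unique decomposition $x = u0v$ from Proposition~\ref{prop:elements_dias_gamma}), which gives surjectivity, and the morphism property is imported from Lemma~\ref{lem:mot_gamma_morphisme}. The only difference is one of detail: the paper treats this identity as immediate from the definitions (reading the leaf images directly off the hook tree), whereas you verify it by induction on $|u|+|v|$ through the grafting decompositions of the hook trees and the partial composition of $\T\Mca_\gamma$; your verification is sound.
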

\begin{proof}
    The fact that $x$ belongs to the fiber of $x$ under $\Mot_\gamma$ is
    an immediate consequence of the definitions of $\Mot_\gamma$ and
    $\Equerre_\gamma$, and the fact that by
    Proposition~\ref{prop:elements_dias_gamma}, any word $x$ of
    $\Dias_\gamma$ decomposes uniquely in $x = u0v$ where $u$ and $v$
    are words on the alphabet~$[\gamma]$. Then, $\Mot_\gamma$ is
    surjective as a map. Moreover, since by
    Lemma~\ref{lem:mot_gamma_morphisme}, $\Mot_\gamma$ is an operad
    morphism, it is a surjective operad morphism.
\end{proof}
\medskip

%%%%%%%%%%%%%%%%%%%%%%%%%%%%%%%%%%%%%%%%%%%%%%%%%%%%%%%%%%%%%%%%%%%%%%%%
\subsubsection{A rewrite rule on syntax trees}
Let $\Recr_\gamma$ be the quadratic rewrite rule on
$\OpLibre\left(\GenDias\right)$ satisfying
\begin{subequations}
\begin{equation} \label{equ:reecriture_dias_gamma_1}
    \DDias_{a'} \circ_2 \GDias_a
    \enspace \Recr_\gamma \enspace
    \GDias_a \circ_1 \DDias_{a'},
    \qquad a, a' \in [\gamma],
\end{equation}
\begin{equation} \label{equ:reecriture_dias_gamma_2}
    \GDias_a \circ_2 \DDias_b
    \enspace \Recr_\gamma \enspace
    \GDias_a \circ_1 \GDias_b,
    \qquad a < b \in [\gamma],
\end{equation}
\begin{equation} \label{equ:reecriture_dias_gamma_3}
    \DDias_a \circ_1 \GDias_b
    \enspace \Recr_\gamma \enspace
    \DDias_a \circ_2 \DDias_b,
    \qquad a < b \in [\gamma],
\end{equation}
\begin{equation} \label{equ:reecriture_dias_gamma_4}
    \GDias_a \circ_2 \GDias_b
    \enspace \Recr_\gamma \enspace
    \GDias_b \circ_1 \GDias_a,
    \qquad a < b \in [\gamma],
\end{equation}
\begin{equation} \label{equ:reecriture_dias_gamma_5}
    \DDias_a \circ_1 \DDias_b
    \enspace \Recr_\gamma \enspace
    \DDias_b \circ_2 \DDias_a,
    \qquad a < b \in [\gamma],
\end{equation}
\begin{equation} \label{equ:reecriture_dias_gamma_6}
    \GDias_d \circ_2 \GDias_c
    \enspace \Recr_\gamma \enspace
    \GDias_d \circ_1 \GDias_d,
    \qquad c \leq d \in [\gamma],
\end{equation}
\begin{equation} \label{equ:reecriture_dias_gamma_7}
    \GDias_d \circ_2 \DDias_c
    \enspace \Recr_\gamma \enspace
    \GDias_d \circ_1 \GDias_d,
    \qquad c \leq d \in [\gamma],
\end{equation}
\begin{equation} \label{equ:reecriture_dias_gamma_8}
    \DDias_d \circ_1 \GDias_c
    \enspace \Recr_\gamma \enspace
    \DDias_d \circ_2 \DDias_d,
    \qquad c \leq d \in [\gamma],
\end{equation}
\begin{equation} \label{equ:reecriture_dias_gamma_9}
    \DDias_d \circ_1 \DDias_c
    \enspace \Recr_\gamma \enspace
    \DDias_d \circ_2 \DDias_d,
    \qquad c \leq d \in [\gamma],
\end{equation}
\end{subequations}
and denote by $\Congr_\gamma$ the operadic congruence on
$\OpLibre\left(\GenDias\right)$ induced by $\Recr_\gamma$.
\medskip

\begin{Lemme} \label{lem:mot_gamma_stable_classes_equivalence}
    For any integer $\gamma \geq 0$ and any syntax trees $\Tfr_1$ and
    $\Tfr_2$ of $\OpLibre\left(\GenDias\right)$,
    $\Tfr_1 \Congr_\gamma \Tfr_2$ implies
    $\Mot_\gamma(\Tfr_1) = \Mot_\gamma(\Tfr_2)$.
\end{Lemme}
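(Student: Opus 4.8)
The plan is to deduce everything from the fact, established in Lemma~\ref{lem:mot_gamma_morphisme}, that $\Mot_\gamma$ is a morphism of set-operads from $\OpLibre\left(\GenDias\right)$ to $\Dias_\gamma$. Consider the binary relation $\sim$ on $\OpLibre\left(\GenDias\right)$ defined by $\Tfr_1 \sim \Tfr_2$ if and only if $\Mot_\gamma(\Tfr_1) = \Mot_\gamma(\Tfr_2)$. It is an equivalence relation; two $\sim$-equivalent trees have the same arity since $\Mot_\gamma$ preserves arities and an element of $\Dias_\gamma$ has a well-defined arity; and if $\Tfr_1 \sim \Tfr_1'$ and $\Tfr_2 \sim \Tfr_2'$, then $\Mot_\gamma(\Tfr_1 \circ_i \Tfr_2) = \Mot_\gamma(\Tfr_1) \circ_i \Mot_\gamma(\Tfr_2) = \Mot_\gamma(\Tfr_1') \circ_i \Mot_\gamma(\Tfr_2') = \Mot_\gamma(\Tfr_1' \circ_i \Tfr_2')$, whence $\Tfr_1 \circ_i \Tfr_2 \sim \Tfr_1' \circ_i \Tfr_2'$. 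Therefore $\sim$ is an operad congruence on $\OpLibre\left(\GenDias\right)$. Since $\Congr_\gamma$ is by definition the finest operad congruence on $\OpLibre\left(\GenDias\right)$ containing the reflexive, symmetric, and transitive closure of $\Recr_\gamma$, it suffices to prove that $\sim$ contains $\Recr_\gamma$: once this is done, $\Congr_\gamma$ is contained in $\sim$, which is exactly the statement. Observe that this argument automatically handles rewritings performed inside an arbitrary context, since this is absorbed by the notion of operad congruence.

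It then remains to check, for each of the nine rewrite rules \eqref{equ:reecriture_dias_gamma_1}--\eqref{equ:reecriture_dias_gamma_9}, that its left and right members have the same image under $\Mot_\gamma$. Every such member is a syntax tree of degree $2$ and arity $3$, so it is enough to compute, from left to right, the images of its three leaves, recalling that a node labeled by $\GDias_a$ (resp. $\DDias_a$) makes the integer $a$ eligible exactly for the leaves of its right (resp. left) subtree, and that the image of a leaf is the maximum of $0$ together with all integers made eligible by its ancestors. For instance, for~\eqref{equ:reecriture_dias_gamma_1}, both $\DDias_{a'} \circ_2 \GDias_a$ and $\GDias_a \circ_1 \DDias_{a'}$ have leaf images $a'$, $0$, $a$, hence both map to the word $a'0a$; and for~\eqref{equ:reecriture_dias_gamma_6}, the tree $\GDias_d \circ_2 \GDias_c$ has leaf images $0$, $d$, $d$ (for the rightmost leaf the eligible integers are $c$ and $d$, and here the hypothesis $c \leq d$ is used), while $\GDias_d \circ_1 \GDias_d$ has leaf images $0$, $d$, $d$ as well. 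The same kind of direct inspection disposes of the seven remaining rules, the side conditions $a < b$ or $c \leq d$ being invoked precisely where a maximum has to be resolved.

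The only real work, and hence the main (though modest) obstacle, is this bookkeeping over the nine cases, the crucial point being that the inequality constraints attached to \eqref{equ:reecriture_dias_gamma_2}--\eqref{equ:reecriture_dias_gamma_9} are exactly what is needed for the two words to coincide; everything structural has already been taken care of by the fact that $\Mot_\gamma$ is an operad morphism and that $\Congr_\gamma$ is minimal among the operad congruences containing $\Recr_\gamma$.
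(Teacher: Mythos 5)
Your proof is correct, and it reaches the conclusion by a somewhat different route than the paper. The paper argues directly on rewriting sequences: it first observes that for each generating rule of $\Recr_\gamma$ the two degree-two trees have, leaf by leaf, the same greatest eligible integer, and then notes that performing such a rewriting inside any larger tree leaves the images of all leaves of that tree unchanged, so that a sequence of rewritings connecting $\Tfr_1$ to $\Tfr_2$ forces $\Mot_\gamma(\Tfr_1)=\Mot_\gamma(\Tfr_2)$. You instead introduce the kernel relation $\sim$ of $\Mot_\gamma$, use Lemma~\ref{lem:mot_gamma_morphisme} to see that $\sim$ is an operad congruence, and invoke the minimality in the definition of $\Congr_\gamma$ to reduce the statement to the purely local check that each of the nine rules \eqref{equ:reecriture_dias_gamma_1}---\eqref{equ:reecriture_dias_gamma_9} has both members with the same image; your two sample computations are right, and the remaining seven cases are indeed of the same kind, the side conditions $a<b$ or $c\leq d$ being exactly what makes the relevant maxima agree. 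The local verification is thus common to both proofs; what your argument buys is that the propagation to arbitrary contexts and to sequences of rewritings is absorbed by general principles (the kernel of an operad morphism is a congruence, and the induced congruence is the smallest one containing the rules), so you never have to discuss how a rewriting performed inside a larger tree affects eligible integers --- a point the paper treats rather informally, and where its phrasing about ``the same eligible integers'' is really about equal maxima. The paper's version, in exchange, is more self-contained combinatorially and exhibits the step-by-step invariance of the leaf images along the rewriting sequence.
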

\begin{proof}
    Let us denote by $\Rel_\gamma$ the symmetric closure of $\Recr_\gamma$.
    In the first place, observe that for any relation
    $\Sfr_1 \Rel_\gamma \Sfr_2$ where $\Sfr_1$ and $\Sfr_2$ are syntax
    trees of $\OpLibre\left(\GenDias\right)(3)$, for any $i \in [3]$,
    the eligible integers for the $i$th leaves of $\Sfr_1$ and $\Sfr_2$
    are the same. Besides, by definition of $\Congr_\gamma$, since
    $\Tfr_1 \Congr_\gamma \Tfr_2$, one can obtain $\Tfr_2$ from $\Tfr_1$
    by performing a sequence of $\Rel_\gamma$-rewritings. According to
    the previous observation, a $\Rel_\gamma$-rewriting preserve the
    eligible integers of all leaves of the tree on which they are
    performed. Therefore, the images of the leaves of $\Tfr_2$ are, from
    left to right, the same as the images of the leaves of $\Tfr_1$ and
    hence, $\Mot_\gamma(\Tfr_1) = \Mot_\gamma(\Tfr_2)$.
\end{proof}
\medskip

Lemma~\ref{lem:mot_gamma_stable_classes_equivalence} implies that the map
\begin{equation}
    \bar\Mot_\gamma :
    \OpLibre\left(\GenDias\right)(n)/_{\Congr_\gamma}
    \to \Dias_\gamma(n),
    \qquad n \geq 1,
\end{equation}
satisfying, for any $\Congr_\gamma$-equivalence class
$[\Tfr]_{\Congr_\gamma}$,
\begin{equation}
    \bar\Mot_\gamma\left([\Tfr]_\gamma\right) = \Mot_\gamma(\Tfr),
\end{equation}
where $\Tfr$ is any tree of $[\Tfr]_{\Congr_\gamma}$ is well-defined.
\medskip

\begin{Lemme} \label{lem:reecriture_dias_gamma}
    For any integer $\gamma \geq 0$, any syntax tree $\Tfr$ of
    $\OpLibre\left(\GenDias\right)$ can be rewritten, by a sequence of
    $\Recr_\gamma$-rewritings, into a hook syntax tree. Moreover, this
    hook syntax tree is $\Equerre_\gamma(\Mot_\gamma(\Tfr))$.
\end{Lemme}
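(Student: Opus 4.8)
The plan is to prove both assertions simultaneously, by strong induction on the degree (number of internal nodes) of $\Tfr$. If $\Tfr$ is the leaf, then it coincides with the hook syntax tree $\Equerre_\gamma(0) = \Equerre_\gamma(\Mot_\gamma(\Tfr))$ and there is nothing to do. So assume $\deg \Tfr = n \geq 1$; let $g$ be the label of the root of $\Tfr$ and let $\Tfr_L$, $\Tfr_R$ be the left and right subtrees of the root.

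The first step is to peel the root in the appropriate direction. Suppose $g = \GDias_a$. Whenever the right subtree of the root is not a single leaf, its root carries a label $\GDias_b$ or $\DDias_b$, so $\Tfr$ contains, as a partial subtree, a tree of the form $\GDias_a \circ_2 \GDias_b$ or $\GDias_a \circ_2 \DDias_b$; one of the rules~\eqref{equ:reecriture_dias_gamma_2}, \eqref{equ:reecriture_dias_gamma_4}, \eqref{equ:reecriture_dias_gamma_6}, \eqref{equ:reecriture_dias_gamma_7} applies to it (which one depends on whether the second factor is a $\GDias$ or a $\DDias$ and on whether $a < b$ or $b \leq a$). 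The right member of each of these four rules has the shape $\GDias \circ_1 \GDias$, so performing the rewriting keeps a $\GDias$ label at the root and strictly decreases the number of internal nodes of the right subtree of the root. Iterating, $\Tfr \overset{*}{\Recr_\gamma} \GDias_{a'}(\Sfr, \ell)$, where $\ell$ denotes the leaf, $a' \in [\gamma]$, and $\deg \Sfr = n - 1$ (the tree with root labeled $\GDias_{a'}$, left subtree $\Sfr$, right subtree $\ell$). Symmetrically, if $g = \DDias_a$, then using~\eqref{equ:reecriture_dias_gamma_3}, \eqref{equ:reecriture_dias_gamma_5}, \eqref{equ:reecriture_dias_gamma_8}, \eqref{equ:reecriture_dias_gamma_9} (whose right members all have the shape $\DDias \circ_2 \DDias$), one gets $\Tfr \overset{*}{\Recr_\gamma} \DDias_{a'}(\ell, \Sfr)$ with $\deg \Sfr = n - 1$.

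By the induction hypothesis, $\Sfr \overset{*}{\Recr_\gamma} \Equerre_\gamma(\Mot_\gamma(\Sfr))$, and Proposition~\ref{prop:elements_dias_gamma} lets us write $\Mot_\gamma(\Sfr) = u\,0\,v$ with $u, v$ words on $[\gamma]$. If $g = \GDias_a$, then inspecting definition~\eqref{equ:definition_application_equerre_gamma} shows that $\GDias_{a'}(\Equerre_\gamma(u\,0\,v), \ell)$ is exactly the hook $\Equerre_\gamma(u\,0\,v\,a')$, so $\Tfr$ rewrites into a hook. If $g = \DDias_a$ and $v = \epsilon$, then $\DDias_{a'}(\ell, \Equerre_\gamma(u\,0))$ is literally $\Equerre_\gamma(a'\,u\,0)$. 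The only remaining case is $g = \DDias_a$ with $v \neq \epsilon$: then the root of $\Equerre_\gamma(u\,0\,v)$ carries a $\GDias$ label, so rule~\eqref{equ:reecriture_dias_gamma_1} applies at the root of $\DDias_{a'}(\ell, \Equerre_\gamma(u\,0\,v))$, and a short auxiliary induction on the length of $w$ shows that $\DDias_{a'}(\ell, \Equerre_\gamma(u\,0\,w)) \overset{*}{\Recr_\gamma} \Equerre_\gamma(a'\,u\,0\,w)$ for every word $w$ on $[\gamma]$ (base case $w = \epsilon$ as above; in the inductive step one applies~\eqref{equ:reecriture_dias_gamma_1} at the root, which lifts the topmost node of the $\GDias$-comb of $\Equerre_\gamma(u\,0\,w)$ above $\DDias_{a'}$, and then uses the hypothesis inside the resulting left subtree). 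Hence $\Tfr$ rewrites into a hook in all cases.

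It remains to identify the hook obtained. If $\Tfr \overset{*}{\Recr_\gamma} H$ with $H$ a hook, write $H = \Equerre_\gamma(x)$; Lemma~\ref{lem:mot_gamma_surjection} gives $\Mot_\gamma(H) = x$, so $\Equerre_\gamma(\Mot_\gamma(H)) = H$, while $\Recr_\gamma$ is contained in the operadic congruence $\Congr_\gamma$, so $\Tfr \Congr_\gamma H$ and Lemma~\ref{lem:mot_gamma_stable_classes_equivalence} gives $\Mot_\gamma(H) = \Mot_\gamma(\Tfr)$; therefore $H = \Equerre_\gamma(\Mot_\gamma(\Tfr))$. The delicate part is the first step: verifying that partial subtrees of the relevant four shapes really are sources of rules of $\Recr_\gamma$, that the chosen rewriting preserves the family ($\GDias$ or $\DDias$) of the root label and shrinks the targeted subtree, and that the residual rotations in the $\DDias$ case can be organized into the auxiliary induction above; the rest is bookkeeping with definition~\eqref{equ:definition_application_equerre_gamma} and the two previous lemmas.
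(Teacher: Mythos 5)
Your proof is correct, and it takes a genuinely different route from the paper's. The paper inducts on the arity, applies the induction hypothesis to \emph{both} subtrees of the root so as to reduce to merging two hook syntax trees, and the merge itself requires a second invocation of the induction hypothesis on an upper portion of the tree together with a sub-case analysis (combing the bottom part with the schematic forms of \eqref{equ:reecriture_dias_gamma_3}--\eqref{equ:reecriture_dias_gamma_5} and \eqref{equ:reecriture_dias_gamma_8}--\eqref{equ:reecriture_dias_gamma_9}, and, when the first subtree is a leaf, first rewriting the root pattern to fall back on the previous case). You instead peel the root first: your key observation is that every pattern $\GDias_* \circ_2 \GDias_*$ or $\GDias_* \circ_2 \DDias_*$ (resp. $\DDias_* \circ_1 \GDias_*$ or $\DDias_* \circ_1 \DDias_*$) is the source of one of \eqref{equ:reecriture_dias_gamma_2}, \eqref{equ:reecriture_dias_gamma_4}, \eqref{equ:reecriture_dias_gamma_6}, \eqref{equ:reecriture_dias_gamma_7} (resp. \eqref{equ:reecriture_dias_gamma_3}, \eqref{equ:reecriture_dias_gamma_5}, \eqref{equ:reecriture_dias_gamma_8}, \eqref{equ:reecriture_dias_gamma_9}) whose target keeps the root in the same family $\GDias$ or $\DDias$ and strictly shrinks the designated subtree of the root, so the peeling loop terminates and leaves a tree $\GDias_{a'}(\Sfr,\text{leaf})$ or $\DDias_{a'}(\text{leaf},\Sfr)$ with $\Sfr$ of degree $n-1$; then a single recursive call and a direct reattachment of the root suffice, the only nontrivial case being a $\DDias$-root over a hook with nonempty $\GDias$-comb, which your auxiliary induction via \eqref{equ:reecriture_dias_gamma_1} handles correctly. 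Your identification of the resulting hook as $\Equerre_\gamma(\Mot_\gamma(\Tfr))$ coincides with the paper's argument (Lemmas~\ref{lem:mot_gamma_surjection} and~\ref{lem:mot_gamma_stable_classes_equivalence} plus the fact that a hook is determined by its word). What your organization buys is a simpler recursion (one recursive call, no nested use of the induction hypothesis inside a merge) at the modest price of the explicit termination argument for the root-peeling; the paper's version keeps every step a literal degree-two rewriting listed schematically, but with a more intricate case structure.
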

\begin{proof}
    In the following, to gain readability, we shall denote by $\GDias_*$
    (resp. $\DDias_*$) any element $\GDias_a$ (resp. $\DDias_a$) of
    $\GenDias$ when taking into account the value of $a \in [\gamma]$
    is not necessary. Using this notation,
    from~\eqref{equ:reecriture_dias_gamma_1}---%
    \eqref{equ:reecriture_dias_gamma_9}, we observe that
    $\Recr_\gamma$
    expresses as
    \begin{subequations}
    \begin{equation} \label{equ:reecriture_sans_etiq_dias_gamma_1}
        \DDias_* \circ_2 \GDias_*
        \enspace \Recr_\gamma \enspace
        \GDias_* \circ_1 \DDias_*,
    \end{equation}
    \begin{equation} \label{equ:reecriture_sans_etiq_dias_gamma_2}
        \GDias_* \circ_2 \DDias_*
        \enspace \Recr_\gamma \enspace
        \GDias_* \circ_1 \GDias_*,
    \end{equation}
    \begin{equation} \label{equ:reecriture_sans_etiq_dias_gamma_3}
        \DDias_* \circ_1 \GDias_*
        \enspace \Recr_\gamma \enspace
        \DDias_* \circ_2 \DDias_*,
    \end{equation}
    \begin{equation} \label{equ:reecriture_sans_etiq_dias_gamma_4}
        \GDias_* \circ_2 \GDias_*
        \enspace \Recr_\gamma \enspace
        \GDias_* \circ_1 \GDias_*,
    \end{equation}
    \begin{equation} \label{equ:reecriture_sans_etiq_dias_gamma_5}
        \DDias_* \circ_1 \DDias_*
        \enspace \Recr_\gamma \enspace
        \DDias_* \circ_2 \DDias_*.
    \end{equation}
    \end{subequations}
    \smallskip

    Let us first focus on the first part of the statement of the lemma
    to show that $\Tfr$ is rewritable by $\Recr_\gamma$ into a hook
    syntax tree. We reason by induction on the arity $n$ of $\Tfr$. When
    $n \leq 2$, $\Tfr$ is immediately a hook syntax tree. Otherwise,
    $\Tfr$ has at least two internal nodes. Then, $\Tfr$ is made of a
    root connected to a first subtree $\Tfr_1$ and a second subtree
    $\Tfr_2$. By induction hypothesis, $\Tfr$ is rewritable by
    $\Recr_\gamma$ into a tree made of a root $r$ of the same label as
    the one of the root of $\Tfr$, connected to a first subtree $\Sfr_1$
    such that $\Tfr_1 \overset{*}{\Recr_\gamma} \Sfr_1$ and a second
    subtree $\Sfr_2$ such that $\Tfr_2 \overset{*}{\Recr_\gamma} \Sfr_2$,
    both being hook syntax trees. We have to deal two cases following
    the number of internal nodes of $\Tfr_1$.
    \smallskip

    \begin{enumerate}[label={\it Case \arabic*.},fullwidth]
        \item \label{item:reecriture_dias_gamma_cas_1}
        If $\Tfr_1$ has at least one internal node, we have the two
        $\overset{*}{\Recr_\gamma}$-relations
        \begin{equation} \label{equ:reecriture_dias_gamma_cas_1}
            \begin{split}
                \Tfr \enspace \overset{*}{\Recr_\gamma} \enspace
            \end{split}
            \begin{split}
            \begin{tikzpicture}[xscale=.4,yscale=.3]
                \node(0)at(0.00,-7.43){};
                \node(10)at(10.00,-3.71){};
                \node(12)at(12.00,-1.86){\begin{math}\Sfr_2\end{math}};
                \node(2)at(2.00,-9.29){};
                \node(4)at(4.00,-11.14){};
                \node(6)at(6.00,-11.14){};
                \node(8)at(8.00,-5.57){};
                \node(1)at(1.00,-5.57){\begin{math}x\end{math}};
                \node(11)at(11.00,0.00){\begin{math}r\end{math}};
                \node(3)at(3.00,-7.43){\begin{math}\DDias_*\end{math}};
                \node(5)at(5.00,-9.29){\begin{math}\DDias_*\end{math}};
                \node(7)at(7.00,-3.71){\begin{math}\GDias_*\end{math}};
                \node(9)at(9.00,-1.86){\begin{math}\GDias_*\end{math}};
                \draw(0)--(1); \draw(1)--(7); \draw(10)--(9);
                \draw(12)--(11); \draw(2)--(3); \draw(3)--(1);
                \draw(4)--(5); \draw[densely dashed](5)--(3);
                \draw(6)--(5); \draw[densely dashed](7)--(9);
                \draw(8)--(7); \draw(9)--(11);
                \node(r)at(11,1.5){}; \draw[](r)--(11);
            \end{tikzpicture}
            \end{split}
            \begin{split}
                \enspace \overset{*}{\Recr_\gamma} \enspace
            \end{split}
            \begin{split}
            \begin{tikzpicture}[xscale=.4,yscale=.3]
                \node(0)at(0.00,-8.57){};
                \node(10)at(10.00,-8.57){};
                \node(12)at(12.00,-4.29){};
                \node(14)at(14.00,-2.14){};
                \node(2)at(2.00,-10.71){};
                \node(4)at(4.00,-12.86){};
                \node(6)at(6.00,-12.86){};
                \node(8)at(8.00,-8.57){};
                \node(1)at(1.00,-6.43){\begin{math}x\end{math}};
                \node(11)at(11.00,-2.14){\begin{math}\GDias_*\end{math}};
                \node(13)at(13.00,0.00){\begin{math}\GDias_*\end{math}};
                \node(3)at(3.00,-8.57){\begin{math}\DDias_*\end{math}};
                \node(5)at(5.00,-10.71){\begin{math}\DDias_*\end{math}};
                \node(7)at(7.00,-4.29){\begin{math}\DDias_*\end{math}};
                \node(9)at(9.00,-6.43){\begin{math}\DDias_*\end{math}};
                \draw(0)--(1); \draw(1)--(7); \draw(10)--(9);
                \draw[densely dashed](11)--(13); \draw(12)--(11);
                \draw(14)--(13); \draw(2)--(3); \draw(3)--(1);
                \draw(4)--(5); \draw[densely dashed](5)--(3);
                \draw(6)--(5); \draw(7)--(11); \draw(8)--(9);
                \draw[densely dashed](9)--(7); \node(r)at(13,1.5){};
                \draw(r)--(13);
            \end{tikzpicture}
            \end{split}\,.
        \end{equation}
        The first $\overset{*}{\Recr_\gamma}$-relation
        of~\eqref{equ:reecriture_dias_gamma_cas_1} has just been
        explained. The second one comes from the application of the
        induction hypothesis on the upper part of the tree of the middle
        of~\eqref{equ:reecriture_dias_gamma_cas_1} obtained by cutting
        the edge connecting the node $x$ to its father. When the
        rightmost tree of~\eqref{equ:reecriture_dias_gamma_cas_1} is not
        already a hook syntax tree, one has two cases following the
        label of $x$.
        \smallskip

        \begin{enumerate}[label={\it Case \arabic{enumi}.\arabic*.},fullwidth]
            \item If $x$ is labeled by $\DDias_*$,
            by~\eqref{equ:reecriture_sans_etiq_dias_gamma_5}, the bottom
            part of the rightmost tree
            of~\eqref{equ:reecriture_dias_gamma_cas_1} consisting
            in internal nodes labeled by $\DDias_*$ is rewritable by
            $\Recr_\gamma$ into a right comb tree wherein internal nodes
            are labeled by $\DDias_*$. Then, the rightmost tree
            of~\eqref{equ:reecriture_dias_gamma_cas_1} is rewritable
            by $\Recr_\gamma$ into a hook syntax tree, and then $\Tfr$
            also is.
            \smallskip

            \item Otherwise, $x$ is labeled by $\GDias_*$. By definition
            of $\Equerre_\gamma$, the second subtree of $x$ is a leaf.
            By~\eqref{equ:reecriture_sans_etiq_dias_gamma_3}, the bottom
            part of the rightmost tree
            of~\eqref{equ:reecriture_dias_gamma_cas_1} consisting
            in $x$ and internal nodes labeled by $\DDias_*$ can be
            rewritten by $\Recr_\gamma$ into a right comb tree wherein
            internal nodes are labeled by $\DDias_*$. Then, the rightmost
            tree of \eqref{equ:reecriture_dias_gamma_cas_1} is
            rewritable by $\Recr_\gamma$ into a hook syntax tree, and
            then $\Tfr$ also is.
        \end{enumerate}
        \smallskip

        \item Otherwise, $\Tfr_1$ is the leaf. We then have the
        $\overset{*}{\Recr_\gamma}$-relation
        \begin{equation} \label{equ:reecriture_dias_gamma_cas_2}
            \begin{split}
                \Tfr \enspace \overset{*}{\Recr_\gamma} \enspace
            \end{split}
            \begin{split}
            \begin{tikzpicture}[xscale=.45,yscale=.4]
                \node(0)at(0.00,-1.67){};
                \node(2)at(2.00,-3.33){\begin{math}\Sfr_{21}\end{math}};
                \node(4)at(4.00,-3.33){\begin{math}\Sfr_{22}\end{math}};
                \node(1)at(1.00,0.00){\begin{math}r\end{math}};
                \node(3)at(3.00,-1.67){\begin{math}r'\end{math}};
                \draw(0)--(1); \draw(2)--(3); \draw(3)--(1); \draw(4)--(3);
                \node(r)at(1,1.25){};
                \draw(r)--(1);
            \end{tikzpicture}
            \end{split}\,,
        \end{equation}
        where $\Sfr_{21}$ is the first subtree of the root of $\Sfr_2$,
        $\Sfr_{22}$ is the second subtree of the root of $\Sfr_2$, and
        $r'$ is a node with the same label as the root of $\Sfr_2$.
        \smallskip

        \begin{enumerate}[label={\it Case \arabic{enumi}.\arabic*.},fullwidth]
            \item If $r \circ_2 r'$ is equal to
            $\DDias_* \circ_2 \GDias_*$, $\GDias_* \circ_2 \DDias_*$, or
            $\GDias_* \circ_2 \GDias_*$, respectively
            by~\eqref{equ:reecriture_sans_etiq_dias_gamma_1},
            \eqref{equ:reecriture_sans_etiq_dias_gamma_2},
            and~\eqref{equ:reecriture_sans_etiq_dias_gamma_4},
            the rightmost tree
            of~\eqref{equ:reecriture_dias_gamma_cas_2} can be rewritten
            by $\Recr_\gamma$ into a tree $\Rfr$ having a first subtree
            with at least one internal node. Hence, $\Rfr$ is of the
            form required to be treated
            by~\ref{item:reecriture_dias_gamma_cas_1}, implying that
            $\Tfr$ is rewritable by $\Recr_\gamma$ into a hook syntax tree.
            \smallskip

            \item Otherwise, $r \circ_2 r'$ is equal to
            $\DDias_* \circ_2 \DDias_*$. Since $\Sfr_2$ is by hypothesis
            a hook syntax tree, it is necessarily a right comb tree
            whose internal nodes are labeled by $\DDias_*$. Hence, the
            rightmost tree of~\eqref{equ:reecriture_dias_gamma_cas_2} is
            already a hook syntax tree, showing that $\Tfr$ is rewritable
            by $\Recr_\gamma$ into a hook syntax tree.
        \end{enumerate}
    \end{enumerate}
    \smallskip

    Let us finally show the last part of the statement of the lemma.
    Observe that, by definition of $\Equerre_\gamma$ and $\Mot_\gamma$,
    if $\Sfr_1$ and $\Sfr_2$ are two different hook syntax trees,
    $\Mot_\gamma(\Sfr_1) \ne \Mot_\gamma(\Sfr_2)$. We have just shown
    that $\Tfr$ is rewritable by $\Recr_\gamma$ into a hook syntax tree
    $\Sfr$. Besides, by Lemma~\ref{lem:mot_gamma_stable_classes_equivalence},
    one has $\Mot_\gamma(\Tfr) = \Mot_\gamma(\Sfr)$. Then, $\Sfr$ is
    necessarily the hook syntax tree $\Equerre_\gamma(\Mot_\gamma(\Tfr))$.
\end{proof}
\medskip

%%%%%%%%%%%%%%%%%%%%%%%%%%%%%%%%%%%%%%%%%%%%%%%%%%%%%%%%%%%%%%%%%%%%%%%%
\subsubsection{Presentation by generators and relations}

\begin{Lemme} \label{lem:mot_gamma_quotient_bijection}
    For any integers $\gamma \geq 0$ and $n \geq 1$, the map
    $\bar\Mot_\gamma$ defines a bijection between
    $\OpLibre\left(\GenDias\right)(n)/_{\Congr_\gamma}$ and
    $\Dias_\gamma(n)$.
\end{Lemme}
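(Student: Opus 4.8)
The plan is to obtain this lemma by combining the three lemmas of this subsection, with no calculation of substance left to do. The map $\Mot_\gamma$ is arity-preserving by construction, and by Lemma~\ref{lem:mot_gamma_stable_classes_equivalence} it is constant on $\Congr_\gamma$-equivalence classes; hence the induced map $\bar\Mot_\gamma \colon \OpLibre\left(\GenDias\right)(n)/_{\Congr_\gamma} \to \Dias_\gamma(n)$ is a well-defined map of sets, and it only remains to prove that it is surjective and injective.

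For surjectivity, I would use Lemma~\ref{lem:mot_gamma_surjection}: given $x \in \Dias_\gamma(n)$, the hook syntax tree $\Equerre_\gamma(x)$ lies in the fiber of $x$ under $\Mot_\gamma$, so $\bar\Mot_\gamma\left(\left[\Equerre_\gamma(x)\right]_{\Congr_\gamma}\right) = \Mot_\gamma\left(\Equerre_\gamma(x)\right) = x$. For injectivity, take $\Tfr_1, \Tfr_2 \in \OpLibre\left(\GenDias\right)(n)$ with $\Mot_\gamma(\Tfr_1) = \Mot_\gamma(\Tfr_2)$. By Lemma~\ref{lem:reecriture_dias_gamma} each $\Tfr_i$ is rewritable by $\Recr_\gamma$ into the hook syntax tree $\Equerre_\gamma(\Mot_\gamma(\Tfr_i))$, and these two hook syntax trees coincide since $\Mot_\gamma(\Tfr_1) = \Mot_\gamma(\Tfr_2)$. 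Because $\Congr_\gamma$ is the operad congruence induced by $\Recr_\gamma$, it contains the relation $\overset{*}{\Recr_\gamma}$; therefore $\Tfr_1 \Congr_\gamma \Equerre_\gamma(\Mot_\gamma(\Tfr_1)) = \Equerre_\gamma(\Mot_\gamma(\Tfr_2)) \Congr_\gamma \Tfr_2$, so that $\left[\Tfr_1\right]_{\Congr_\gamma} = \left[\Tfr_2\right]_{\Congr_\gamma}$. This yields injectivity and finishes the proof.

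I do not expect any real obstacle here: the whole weight of the argument has already been carried by Lemma~\ref{lem:reecriture_dias_gamma}, which identifies the hook syntax trees as a complete system of $\Recr_\gamma$-normal forms and pins down the normal form of $\Tfr$ as $\Equerre_\gamma(\Mot_\gamma(\Tfr))$. In effect, that lemma shows that each fiber of $\Mot_\gamma$ is exactly one $\Congr_\gamma$-class, namely the class of the corresponding hook syntax tree, and the present statement is just the translation of that fact into the language of the quotient. The only point to keep an eye on is the direction $\Tfr \Congr_\gamma \Equerre_\gamma(\Mot_\gamma(\Tfr))$, which rests on the rewrite relation being contained in the induced congruence, together with the converse inclusion (same image for $\Congr_\gamma$-equivalent trees), which is exactly Lemma~\ref{lem:mot_gamma_stable_classes_equivalence}.
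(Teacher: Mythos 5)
Your proof is correct and follows essentially the same route as the paper: well-definedness from Lemma~\ref{lem:mot_gamma_stable_classes_equivalence}, surjectivity from Lemma~\ref{lem:mot_gamma_surjection} via $\Equerre_\gamma(x)$, and injectivity by rewriting both trees to the common hook syntax tree $\Equerre_\gamma(\Mot_\gamma(\Tfr_1)) = \Equerre_\gamma(\Mot_\gamma(\Tfr_2))$ using Lemma~\ref{lem:reecriture_dias_gamma}. Nothing of substance differs from the paper's argument.
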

\begin{proof}
    Let us show that $\bar\Mot_\gamma$ is injective. Let $\Tfr_1$ and
    $\Tfr_2$ be two syntax trees of $\OpLibre\left(\GenDias\right)$ such
    that $\Mot_\gamma(\Tfr_1) = \Mot_\gamma(\Tfr_2)$ and let
    $\Sfr := \Equerre_\gamma(\Mot_\gamma(\Tfr_1)) =
    \Equerre_\gamma(\Mot_\gamma(\Tfr_2))$.
    By Lemma~\ref{lem:reecriture_dias_gamma}, one has
    $\Tfr_1 \overset{*}{\Recr_\gamma} \Sfr$ and $
    \Tfr_2 \overset{*}{\Recr_\gamma} \Sfr$, and hence,
    $\Tfr_1 \Congr_\gamma \Tfr_2$. By the definition of the map
    $\bar \Mot_\gamma$ from the map $\Mot_\gamma$, this show that
    $\bar\Mot_\gamma$ is injective. Besides, by
    Lemma~\ref{lem:mot_gamma_surjection}, $\bar\Mot_\gamma$ is surjective,
    whence the statement of the lemma.
\end{proof}
\medskip

\begin{Theoreme} \label{thm:presentation_dias_gamma}
    For any integer $\gamma \geq 0$, the operad $\Dias_\gamma$
    admits the following presentation. It is generated by $\GenDias$ and
    its space of relations $\RelDias$ is the space induced by the
    equivalence relation $\Rel_\gamma$ satisfying
    \begin{subequations}
    \begin{equation} \label{equ:relation_dias_gamma_1}
        \GDias_a \circ_1 \DDias_{a'}
        \enspace \Rel_\gamma \enspace
        \DDias_{a'} \circ_2 \GDias_a,
        \qquad a, a' \in [\gamma],
    \end{equation}
    \begin{equation} \label{equ:relation_dias_gamma_2}
        \GDias_a \circ_1 \GDias_b
        \enspace \Rel_\gamma \enspace
        \GDias_a \circ_2 \DDias_b,
        \qquad a < b \in [\gamma],
    \end{equation}
    \begin{equation} \label{equ:relation_dias_gamma_3}
        \DDias_a \circ_1 \GDias_b
        \enspace \Rel_\gamma \enspace
        \DDias_a \circ_2 \DDias_b,
        \qquad a < b \in [\gamma],
    \end{equation}
    \begin{equation} \label{equ:relation_dias_gamma_4}
        \GDias_b \circ_1 \GDias_a
        \enspace \Rel_\gamma \enspace
        \GDias_a \circ_2 \GDias_b,
        \qquad a < b \in [\gamma],
    \end{equation}
    \begin{equation} \label{equ:relation_dias_gamma_5}
        \DDias_a \circ_1 \DDias_b
        \enspace \Rel_\gamma \enspace
        \DDias_b \circ_2 \DDias_a,
        \qquad a < b \in [\gamma],
    \end{equation}
    \begin{equation} \label{equ:relation_dias_gamma_6}
        \GDias_d \circ_1 \GDias_d
        \enspace \Rel_\gamma \enspace
        \GDias_d \circ_2 \GDias_c
        \enspace \Rel_\gamma \enspace
        \GDias_d \circ_2 \DDias_c,
        \qquad c \leq d \in [\gamma],
    \end{equation}
    \begin{equation} \label{equ:relation_dias_gamma_7}
        \DDias_d \circ_1 \GDias_c
        \enspace \Rel_\gamma \enspace
        \DDias_d \circ_1 \DDias_c
        \enspace \Rel_\gamma \enspace
        \DDias_d \circ_2 \DDias_d,
        \qquad c \leq d \in [\gamma].
    \end{equation}
    \end{subequations}
\end{Theoreme}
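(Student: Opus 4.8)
The plan is to produce an operad isomorphism $\OpLibre\left(\GenDias\right)/_{\langle \RelDias \rangle} \cong \Dias_\gamma$, where $\langle \RelDias \rangle$ denotes the operad ideal of $\OpLibre\left(\GenDias\right)$ generated by $\RelDias$. The starting point is the observation that, among the degree-$2$ syntax trees, the equivalence relation $\Rel_\gamma$ of the statement is exactly the symmetric closure of the rewrite rule $\Recr_\gamma$: relations~\eqref{equ:relation_dias_gamma_1}--\eqref{equ:relation_dias_gamma_5} are~\eqref{equ:reecriture_dias_gamma_1}--\eqref{equ:reecriture_dias_gamma_5} read as equalities, relation~\eqref{equ:relation_dias_gamma_6} collects~\eqref{equ:reecriture_dias_gamma_6} and~\eqref{equ:reecriture_dias_gamma_7}, and relation~\eqref{equ:relation_dias_gamma_7} collects~\eqref{equ:reecriture_dias_gamma_8} and~\eqref{equ:reecriture_dias_gamma_9}. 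In particular $\Sfr \Rel_\gamma \Tfr$ entails $\Sfr \Congr_\gamma \Tfr$, so Lemma~\ref{lem:mot_gamma_stable_classes_equivalence} gives $\Mot_\gamma(\Sfr) = \Mot_\gamma(\Tfr)$; hence each generator $\Sfr - \Tfr$ of $\RelDias$ lies in the kernel of $\Mot_\gamma$, and so does $\langle \RelDias \rangle$. Since $\Mot_\gamma$ is a surjective operad morphism onto $\Dias_\gamma$ by Lemma~\ref{lem:mot_gamma_surjection}, it factors through a surjective operad morphism $\psi : \OpLibre\left(\GenDias\right)/_{\langle \RelDias \rangle} \twoheadrightarrow \Dias_\gamma$ that sends the class of $\GDias_a$ to $0a$ and the class of $\DDias_a$ to $a0$.

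It remains to check that $\psi$ is injective, for which I would bound the dimension of the quotient. If a syntax tree $\Tfr$ is rewritten by one step of $\Recr_\gamma$, replacing a degree-$2$ partial subtree $\Sfr'$ with a degree-$2$ tree $\Tfr'$ such that $\Sfr' \Rel_\gamma \Tfr'$, then, by multilinearity of the partial compositions and the definition of an operad ideal, the difference of $\Tfr$ and of the tree so obtained lies in $\langle \RelDias \rangle$. Telescoping along the reduction of $\Tfr$ to a hook syntax tree provided by Lemma~\ref{lem:reecriture_dias_gamma} yields $\Tfr - \Equerre_\gamma(\Mot_\gamma(\Tfr)) \in \langle \RelDias \rangle$ for every $\Tfr$. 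Consequently, modulo $\langle \RelDias \rangle$, every element of arity $n$ is a linear combination of classes of hook syntax trees, so $\dim \left(\OpLibre\left(\GenDias\right)/_{\langle \RelDias \rangle}\right)(n)$ is at most the number of hook syntax trees of arity $n$.

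By Lemma~\ref{lem:mot_gamma_surjection} together with the proof of Lemma~\ref{lem:reecriture_dias_gamma}, the map $\Equerre_\gamma$ is a bijection from $\Dias_\gamma(n)$ onto the set of hook syntax trees of arity $n$ (with inverse the restriction of $\Mot_\gamma$); hence, by Proposition~\ref{prop:elements_dias_gamma}, there are precisely $n \gamma^{n - 1} = \dim \Dias_\gamma(n)$ hook syntax trees of arity $n$. Therefore $\dim \left(\OpLibre\left(\GenDias\right)/_{\langle \RelDias \rangle}\right)(n) \leq \dim \Dias_\gamma(n)$, and since $\psi$ is surjective and all the spaces in play are finite-dimensional, $\psi$ is an operad isomorphism; thus $\left(\GenDias, \RelDias\right)$ is a presentation of $\Dias_\gamma$. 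Equivalently, one may identify $\langle \RelDias \rangle$ with the operad ideal induced by $\Congr_\gamma$ and conclude from the bijectivity of $\bar\Mot_\gamma$ established in Lemma~\ref{lem:mot_gamma_quotient_bijection}.

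The genuinely delicate part of this argument is not the linear algebra, which is routine once the rewrite rule is available, but the bookkeeping of the first paragraph: one must verify that the unoriented relations~\eqref{equ:relation_dias_gamma_1}--\eqref{equ:relation_dias_gamma_7} and the oriented rules~\eqref{equ:reecriture_dias_gamma_1}--\eqref{equ:reecriture_dias_gamma_9} span the same subspace of $\OpLibre\left(\GenDias\right)(3)$, and in particular that the clauses ``$a < b$'' of~\eqref{equ:relation_dias_gamma_2}--\eqref{equ:relation_dias_gamma_5} together with the clauses ``$c \leq d$'' of~\eqref{equ:relation_dias_gamma_6}--\eqref{equ:relation_dias_gamma_7} reproduce~\eqref{equ:reecriture_dias_gamma_2}--\eqref{equ:reecriture_dias_gamma_9} with nothing omitted and no superfluous relation. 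The deeper combinatorial content --- the convergence of $\Recr_\gamma$ toward hook syntax trees --- has already been established in Lemma~\ref{lem:reecriture_dias_gamma}.
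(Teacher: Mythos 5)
Your proof is correct and follows essentially the same route as the paper: both rest on the morphism $\Mot_\gamma$, the identification of the stated relations with the symmetric closure of the rewrite rule $\Recr_\gamma$, and Lemma~\ref{lem:reecriture_dias_gamma} reducing every syntax tree to a hook syntax tree. The only difference is cosmetic: where the paper closes the argument by citing the bijectivity of $\bar\Mot_\gamma$ (Lemma~\ref{lem:mot_gamma_quotient_bijection}), you re-derive injectivity by a spanning-set and dimension count on hook syntax trees, which is the same content in the linear rather than the set-operad quotient (and you note the equivalence yourself).
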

\begin{proof}
    By Lemma~\ref{lem:mot_gamma_quotient_bijection}, the map
    $\bar\Mot_\gamma$ is, for any $n \geq 1$, a bijection between the
    sets $\OpLibre\left(\GenDias\right)(n)/_{\Congr_\gamma}$ and
    $\Dias_\gamma(n)$. Moreover, by Lemma~\ref{lem:mot_gamma_morphisme},
    $\Mot_\gamma$ is an operad morphism, and then $\bar\Mot_\gamma$ also
    is. Hence, $\bar\Mot_\gamma$ is an operad isomorphism between
    $\OpLibre\left(\GenDias\right)/_{\Congr_\gamma}$ and $\Dias_\gamma$.
    Therefore, since $\RelLibre_{\Dias_\gamma}$ is the space induced by
    $\Congr_\gamma$, $\Dias_\gamma$ admits the stated presentation.
\end{proof}
\medskip

The space of relations $\RelDias$ of $\Dias_\gamma$ exhibited by
Theorem~\ref{thm:presentation_dias_gamma} can be rephrased in a more
compact way as the space generated by
\begin{subequations}
\begin{equation} \label{equ:relation_dias_gamma_1_concise}
    \GDias_a \circ_1 \DDias_{a'} - \DDias_{a'} \circ_2 \GDias_a,
    \qquad a, a' \in [\gamma],
\end{equation}
\begin{equation} \label{equ:relation_dias_gamma_2_concise}
    \GDias_a \circ_1 \GDias_{a \Max a'} - \GDias_a \circ_2 \DDias_{a'},
    \qquad a, a' \in [\gamma],
\end{equation}
\begin{equation} \label{equ:relation_dias_gamma_3_concise}
    \DDias_a \circ_1 \GDias_{a'} - \DDias_a \circ_2 \DDias_{a \Max a'},
    \qquad a, a' \in [\gamma],
\end{equation}
\begin{equation} \label{equ:relation_dias_gamma_4_concise}
    \GDias_{a \Max a'} \circ_1 \GDias_a - \GDias_a \circ_2 \GDias_{a'},
    \qquad a, a' \in [\gamma],
\end{equation}
\begin{equation} \label{equ:relation_dias_gamma_5_concise}
    \DDias_a \circ_1 \DDias_{a'} - \DDias_{a \Max a'} \circ_2 \DDias_a,
    \qquad a, a' \in [\gamma].
\end{equation}
\end{subequations}
\medskip

Observe that, by Theorem \ref{thm:presentation_dias_gamma}, $\Dias_1$
and the diassociative operad (see~\cite{Lod01} or
Section~\ref{subsubsec:dias}) admit the same presentation.
Then, for all integers $\gamma \geq 0$, the operads $\Dias_\gamma$
are generalizations of the diassociative operad.
\medskip

%%%%%%%%%%%%%%%%%%%%%%%%%%%%%%%%%%%%%%%%%%%%%%%%%%%%%%%%%%%%%%%%%%%%%%%%
%%%%%%%%%%%%%%%%%%%%%%%%%%%%%%%%%%%%%%%%%%%%%%%%%%%%%%%%%%%%%%%%%%%%%%%%
\subsection{Miscellaneous properties}
From the description of the elements of $\Dias_\gamma$ and its structure
revealed by its presentation, we develop here some of its properties.
Unless otherwise specified, $\Dias_\gamma$ is still considered in this
section as a set-operad.
\medskip

%%%%%%%%%%%%%%%%%%%%%%%%%%%%%%%%%%%%%%%%%%%%%%%%%%%%%%%%%%%%%%%%%%%%%%%%
\subsubsection{Koszulity}

\begin{Theoreme} \label{thm:koszulite_dias_gamma}
    For any integer $\gamma \geq 0$, $\Dias_\gamma$ is a Koszul operad.
    Moreover, the set of hook syntax trees of $\OpLibre\left(\GenDias\right)$
    forms a Poincaré-Birkhoff-Witt basis of $\Dias_\gamma$.
\end{Theoreme}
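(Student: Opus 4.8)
\textit{Plan.} The plan is to deduce the statement from the criterion for Koszulity of set-operads recalled in Section~\ref{subsubsec:dual_de_Koszul}: a set-operad isomorphic to $\OpLibre(S)/_{\Congr_\Recr}$ for a \emph{convergent} quadratic rewrite rule $\Recr$ is Koszul, and the set of its $\Recr$-normal forms is then a Poincaré-Birkhoff-Witt basis. By the proof of Theorem~\ref{thm:presentation_dias_gamma}, $\bar\Mot_\gamma$ is an operad isomorphism between $\OpLibre\left(\GenDias\right)/_{\Congr_\gamma}$ and $\Dias_\gamma$, and $\Recr_\gamma$ is quadratic by construction. So everything reduces to showing that $\Recr_\gamma$ is convergent, i.e. both terminating and confluent; the PBW basis will then be the set of $\Recr_\gamma$-normal forms, which I shall identify with the hook syntax trees along the way.

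Confluence is, once termination is known, essentially a corollary of the preceding lemmas. First, no left member of~\eqref{equ:reecriture_dias_gamma_1}--\eqref{equ:reecriture_dias_gamma_9} can be a partial subtree of a hook syntax tree: in such a tree every node labelled by some $\GDias_a$ has a leaf as its right child and every node labelled by some $\DDias_a$ has a leaf as its left child and a leaf or a node labelled by some $\DDias_b$ as its right child, whereas each of the nine patterns exhibits a node labelled by some $\GDias_a$ with a non-leaf right child, or a node labelled by some $\DDias_a$ with a non-leaf left child, or a node labelled by some $\DDias_{a'}$ having a node labelled by some $\GDias_a$ as right child. Hence hook syntax trees are $\Recr_\gamma$-irreducible; conversely, by Lemma~\ref{lem:reecriture_dias_gamma} every syntax tree $\Recr_\gamma$-rewrites into a hook syntax tree, so the $\Recr_\gamma$-normal forms are exactly the hook syntax trees. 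Since, by Lemma~\ref{lem:mot_gamma_stable_classes_equivalence}, $\Mot_\gamma$ is constant along $\Recr_\gamma$, and since $\Equerre_\gamma$ is a section of $\Mot_\gamma$ on the hook syntax trees (Lemma~\ref{lem:mot_gamma_surjection}), any normal form reachable from a tree $\Tfr$ must equal $\Equerre_\gamma(\Mot_\gamma(\Tfr))$. Together with termination this shows that every tree has $\Equerre_\gamma(\Mot_\gamma(\Tfr))$ as its unique normal form, whence $\Recr_\gamma$ is confluent.

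It remains to prove termination, which is where the real work lies. As for each arity $n$ there are only finitely many syntax trees on $\GenDias$, it suffices to attach to each syntax tree an element of a well-founded set that strictly drops at every $\Recr_\gamma$-step performed inside an arbitrary ambient tree. I would use the lexicographic pair $(\Psi(\Tfr), Q(\Tfr))$, where $\Psi(\Tfr)$ is the sum, over all nodes of $\Tfr$ labelled by some $\GDias_a$, of the number of leaves of their right subtree, plus the analogous sum over all nodes labelled by some $\DDias_a$ for their left subtree, and where $Q(\Tfr)$ counts the pairs consisting of a node labelled by some $\DDias_a$ together with a strict descendant labelled by some $\GDias_b$. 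A local rewriting alters only the contributions of the two nodes of the degree-two subtree being rewritten (the total arity of that subtree, hence the subtree sizes of all other nodes, being preserved), so checking each rule is a short computation: for each of~\eqref{equ:reecriture_dias_gamma_2}--\eqref{equ:reecriture_dias_gamma_9} a non-leaf right subtree of a node labelled $\GDias_a$ or a non-leaf left subtree of a node labelled $\DDias_a$ is either shortened or split and moved to the correct side, so $\Psi$ strictly decreases; the one exception is~\eqref{equ:reecriture_dias_gamma_1}, which leaves $\Psi$ unchanged but makes $Q$ drop by at least one, the node labelled $\GDias_a$ ceasing to lie below the node labelled $\DDias_{a'}$. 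Hence $(\Psi, Q)$ strictly decreases for the lexicographic order on $\EnsNat \times \EnsNat$, ruling out infinite $\Recr_\gamma$-sequences.

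Assembling the pieces, $\Recr_\gamma$ is convergent, so by the criterion of Section~\ref{subsubsec:dual_de_Koszul} the operad $\Dias_\gamma$ is Koszul and its PBW basis is the set of $\Recr_\gamma$-normal forms, which has been identified with the set of hook syntax trees of $\OpLibre\left(\GenDias\right)$. I expect the termination step to be the main obstacle, and within it the point that~\eqref{equ:reecriture_dias_gamma_1} is the unique size-preserving rule and must be controlled by the secondary, ancestry-based statistic $Q$; the rest is routine verification or already provided by the earlier lemmas.
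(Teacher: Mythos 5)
Your proposal is correct and follows the same overall route as the paper: realize $\Dias_\gamma$ as $\OpLibre\left(\GenDias\right)/_{\Congr_\gamma}$ via Theorem~\ref{thm:presentation_dias_gamma}, show that the quadratic rewrite rule $\Recr_\gamma$ is convergent with the hook syntax trees as normal forms, and invoke the Koszulity criterion of Section~\ref{subsubsec:dual_de_Koszul}. The confluence step is the paper's: uniqueness of the reachable normal form via Lemmas~\ref{lem:mot_gamma_stable_classes_equivalence}, \ref{lem:mot_gamma_surjection} and \ref{lem:reecriture_dias_gamma}, since $\Mot_\gamma$ is constant along rewritings and injective on hook trees. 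Where you genuinely diverge is termination. The paper disposes of it in one line, deducing it from the irreducibility of hook trees together with Lemma~\ref{lem:reecriture_dias_gamma}; strictly speaking that lemma only gives weak normalization (every tree reaches some normal form), which does not by itself imply termination, so the paper's phrasing is terse at exactly the point you flagged as the main obstacle. Your lexicographic measure $\left(\Psi, Q\right)$ fills this in cleanly: a rule-by-rule check (which I verified) shows that each of~\eqref{equ:reecriture_dias_gamma_2}--\eqref{equ:reecriture_dias_gamma_9} strictly decreases $\Psi$ by at least the size of one attached subtree, while~\eqref{equ:reecriture_dias_gamma_1} preserves $\Psi$ and strictly decreases $Q$, and contributions of nodes outside the rewritten partial subtree are unaffected; well-foundedness of the lexicographic order then rules out infinite rewriting sequences. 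So your argument buys a fully explicit strong-normalization proof at the cost of introducing and checking the auxiliary statistics, whereas the paper's shorter argument leans on the already-proved rewriting lemma and leaves the termination inference implicit; both then conclude identically via the Hoffbeck and Dotsenko--Khoroshkin criterion, with the hook syntax trees as the Poincaré-Birkhoff-Witt basis.
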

\begin{proof}
    From the definition of hook syntax trees, it appears that no hook
    syntax tree can be rewritten by $\Recr_\gamma$ into another syntax
    tree. Hence, and by Lemma~\ref{lem:reecriture_dias_gamma},
    $\Recr_\gamma$ is a terminating rewrite rule and its normal forms
    are hook syntax trees. Moreover, again by
    Lemma~\ref{lem:reecriture_dias_gamma}, since any syntax tree is
    rewritable by $\Recr_\gamma$ into a unique hook syntax tree,
    $\Recr_\gamma$ is a confluent rewrite rule, and hence, $\Recr_\gamma$
    is convergent. Now, since by Theorem~\ref{thm:presentation_dias_gamma},
    the space of relations of $\Dias_\gamma$ is the space induced by the
    operad congruence induced by $\Recr_\gamma$, by the Koszulity
    criterion~\cite{Hof10,DK10,LV12} we have reformulated in
    Section~\ref{subsubsec:dual_de_Koszul}, $\Dias_\gamma$ is a Koszul
    operad and the set of of hook syntax trees of
    $\OpLibre\left(\GenDias\right)$ forms a Poincaré-Birkhoff-Witt basis
    of $\Dias_\gamma$.
\end{proof}
\medskip

%%%%%%%%%%%%%%%%%%%%%%%%%%%%%%%%%%%%%%%%%%%%%%%%%%%%%%%%%%%%%%%%%%%%%%%%
\subsubsection{Symmetries}
If $\Oca_1$ and $\Oca_2$ are two operads, a linear map
$\phi : \Oca_1 \to \Oca_2$ is an {\em operad antimorphism} if it
respects arities and anticommutes with partial composition maps, that is,
\begin{equation}
    \phi(x \circ_i y) = \phi(x) \circ_{n - i + 1} \phi(y),
    \qquad x \in \Oca(n), y \in \Oca, i \in [n].
\end{equation}
A {\em symmetry} of an operad $\Oca$ is either an automorphism or an
antiautomorphism. The set of all symmetries of $\Oca$ form a group for
the composition, called the {\em group of symmetries} of $\Oca$.
\medskip

\begin{Proposition} \label{prop:symetries_dias_gamma}
    For any integer $\gamma \geq 0$, the group of symmetries of
    $\Dias_\gamma$ as a set-operad contains two elements: the identity
    map and the linear map sending any word of $\Dias_\gamma$ to its
    mirror image.
\end{Proposition}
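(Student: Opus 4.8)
The plan is to first check that the mirror map is a symmetry, then to reduce the whole classification to the automorphism case, and finally to kill every nontrivial automorphism by means of two elementary cardinality invariants attached to the arity-two part.

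First I would verify that the map $\mathrm{m}$ sending a word $\Xfr$ of $\Dias_\gamma$ to its mirror image $\overline{\Xfr}$ is a well-defined symmetry. Reversing a word preserves the number of occurrences of each letter, so by Proposition~\ref{prop:elements_dias_gamma}, $\mathrm{m}$ restricts to an involutive bijection of the underlying set of the set-operad $\Dias_\gamma$. A direct computation from the expression of the partial composition of $\T \Mca_\gamma$ (recalled in Section~\ref{subsec:monoides_vers_operades}) shows that $\overline{u \circ_i v} = \overline{u} \circ_{n - i + 1} \overline{v}$ for all words $u$ of arity $n$ and $v$ of $\Dias_\gamma$ and all $i \in [n]$; hence $\mathrm{m}$ is an operad antiautomorphism, so a symmetry, and $\mathrm{m} \ne \mathrm{id}$ as soon as $\gamma \geq 1$ (for instance $01 \ne 10$). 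When $\gamma = 0$ the operad $\Dias_0$ is trivial and the two maps of the statement coincide.

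Next, since the composite of two operad antiautomorphisms is an operad automorphism (this requires a short verification of the arity bookkeeping, the only slightly delicate point of the argument), every antiautomorphism $\phi$ of $\Dias_\gamma$ equals $\mathrm{m} \circ \psi$ where $\psi := \mathrm{m} \circ \phi$ is an automorphism; so it is enough to show that the only automorphism of $\Dias_\gamma$ is the identity, which yields that the group of symmetries is exactly $\{\mathrm{id}, \mathrm{m}\}$. Let $\phi$ be an automorphism of $\Dias_\gamma$. Being arity-preserving and commuting with partial compositions, and since $\Dias_\gamma$ is generated by $\Dias_\gamma(2) = \{0a, a0 : a \in [\gamma]\}$, the map $\phi$ is entirely determined by the permutation it induces on $\Dias_\gamma(2)$.

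To conclude I would introduce, for $w \in \Dias_\gamma(2)$ and $i \in \{1, 2\}$, the integer $c_i(w) := \# \{w \circ_i w' : w' \in \Dias_\gamma(2)\}$. Since $\phi$ restricts to a bijection of $\Dias_\gamma(2)$ and commutes with $\circ_i$, one has $c_i(\phi(w)) = c_i(w)$ for all $w$ and $i$. A routine computation with the partial composition of $\T \Mca_\gamma$ gives, for all $a \in [\gamma]$,
\begin{equation*}
    \bigl(c_1(0a), c_2(0a)\bigr) = \bigl(2\gamma,\ 2(\gamma - a) + 1\bigr)
    \qquad \text{and} \qquad
    \bigl(c_1(a0), c_2(a0)\bigr) = \bigl(2(\gamma - a) + 1,\ 2\gamma\bigr).
\end{equation*}
These $2\gamma$ pairs are pairwise distinct: the first coordinate is $2\gamma$ exactly for the words $0a$, and among them the second coordinate $2(\gamma - a) + 1$ pins down $a$; symmetrically for the words $a0$. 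Hence the invariant $w \mapsto (c_1(w), c_2(w))$ separates all elements of $\Dias_\gamma(2)$, so $\phi$ fixes each of them, and therefore $\phi = \mathrm{id}$. The main labor is thus establishing the two displayed formulas for $(c_1, c_2)$; everything else in the argument is formal.
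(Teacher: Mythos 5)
Your proof is correct, but the heart of it differs from the paper's. Both arguments start the same way: a symmetry is determined by the permutation it induces on the generating set $\Dias_\gamma(2) = \{0a, a0 : a \in [\gamma]\}$. From there the paper kills nontrivial automorphisms by a case analysis — either $\phi$ exchanges a generator $0a$ with some $a'0$, or it permutes within each type nontrivially — and in each case exhibits an identity such as $b0 \circ_2 0a = 0a \circ_1 b0$ or $0a \circ_2 0b = 0b \circ_1 0a$ whose two sides get different images under $\phi$, contradicting compatibility with the partial compositions; the antiautomorphism half is then dismissed with ``similar arguments.'' You instead dispose of the antiautomorphism half first, and more rigorously, by the group-theoretic observation that the composite of two antiautomorphisms is an automorphism, so everything reduces to showing $\mathrm{Aut}(\Dias_\gamma) = \{\mathrm{id}\}$; and for that you avoid any case analysis by means of the fiber-counting invariants $c_i(w) = \#\{w \circ_i w' : w' \in \Dias_\gamma(2)\}$, which are preserved by any automorphism and whose values $\left(2\gamma,\, 2(\gamma - a) + 1\right)$ for $0a$ and $\left(2(\gamma - a) + 1,\, 2\gamma\right)$ for $a0$ (I checked these against the composition of $\T \Mca_\gamma$; they are right, and the parity argument $2(\gamma - a) + 1 \ne 2\gamma$ correctly separates the two types) pin down each generator. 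What the paper's route buys is an explicit view of which diassociative-type relations obstruct nontrivial symmetries; what yours buys is the elimination of case analysis and an honest treatment of the antiautomorphism case instead of an appeal to analogy, at the price of the two routine cardinality computations. Your handling of the degenerate case $\gamma = 0$, where the mirror map coincides with the identity, is also a point the paper glosses over.
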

\begin{proof}
    Let us denote by $\Gen_\gamma$ the set $\{0a, a0 : a \in [\gamma]\}$.
    Since $\Dias_\gamma$ is generated by $\Gen_\gamma$, any automorphism
    or antiautomorphism $\phi$ of $\Dias_\gamma$ is wholly determined by
    the images of the elements of $\Gen_\gamma$. Besides let us observe
    that $\phi$ is in particular a permutation of $\Gen_\gamma$.
    \smallskip

    By contradiction, assume that $\phi$ is an automorphism of
    $\Dias_\gamma$ different from the identity map. We have two cases to
    explore.
    \smallskip

    \begin{enumerate}[label={\it Case \arabic*.},fullwidth]
        \item If there are $a, a' \in [\gamma]$ satisfying
        $\phi(0a) = a'0$, since $\phi$ is a permutation of $\Gen_\gamma$,
        there are $b, b' \in [\gamma]$ satisfying $\phi(b0) = 0b'$. Then,
        we have at the same time $b0 \circ_2 0a = b0a = 0a \circ_1 b0$,
        \begin{equation}
            \phi(b0 \circ_2 0a) = \phi(b0) \circ_2 \phi(0a)
            = 0b' \circ_2 a'0 = 0 \, (b'\Max a') \, b',
        \end{equation}
        and
        \begin{equation}
            \phi(0a \circ_1 b0) = \phi(0a) \circ_1 \phi(b0)
            = a'0 \circ_1 0b' = a' \, (a' \Max b') \, 0.
        \end{equation}
        This shows that $\phi(b0 \circ_2 0a) \ne \phi(0a \circ_1 b0)$
        and hence, $\phi$ is not an operad morphism. By a similar
        argument, one can show that there are no $a, a' \in [\gamma]$
        such that $\phi(a0) = 0a'$.
        \smallskip

        \item Otherwise, for all $a \in [\gamma]$, we have $\phi(0a) = 0a'$
        and $\phi(a0) = a''0$ for some $a', a'' \in [\gamma]$. Since, by
        hypothesis, $\phi$ is not the identity map, there exist
        $a \ne a' \in [\gamma]$ such that $\phi(0a) = 0a'$ or
        $\phi(a0) = a'0$. Let us assume, without loss of generality,
        that $\phi(0a) = 0a'$. Since $\phi$ is a permutation of
        $\Gen_\gamma$, there exist $b \ne b' \in [\gamma]$ such that
        $\phi(0b) = 0b'$. One can assume, without loss of generality,
        that $a < b$ and $b' < a'$. Then, we have at the same time
        $0a \circ_2 0b = 0ab = 0b \circ_1 0a$,
        \begin{equation}
            \phi(0a \circ_2 0b) = \phi(0a) \circ_2 \phi(0b)
            = 0a' \circ_2 0b' = 0a'a',
        \end{equation}
        and
        \begin{equation}
            \phi(0b \circ_1 0a) = \phi(0b) \circ_1 \phi(0a)
            = 0b' \circ_1 0a' = 0a'b'.
        \end{equation}
        This shows that $\phi(0a \circ_2 0b) \ne \phi(0b \circ_1 0a)$
        and hence, that $\phi$ is not an operad morphism. By a similar
        argument, one can show that there are no
        $a \ne a' \in [\gamma]$ such that $\phi(a0) = \phi(a'0)$.
    \end{enumerate}
    \smallskip

    We then have shown that if $\phi$ is an automorphism of $\Dias_\gamma$,
    it is necessarily the identity map.
    \smallskip

    Finally, by Proposition~\ref{prop:elements_dias_gamma}, if $x$ is
    an element of $\Dias_\gamma$, its mirror image also is in
    $\Dias_\gamma$. Moreover, it is immediate to see that the map sending
    a word to its mirror image is an antiautomorphism of $\Dias_\gamma$.
    Similar arguments as the ones developed previously show that it is
    the only.
\end{proof}
\medskip

%%%%%%%%%%%%%%%%%%%%%%%%%%%%%%%%%%%%%%%%%%%%%%%%%%%%%%%%%%%%%%%%%%%%%%%%
\subsubsection{Basic operad}
A set-operad $\Oca$ is {\em basic} if for all $y_1, \dots, y_n \in \Oca$,
all the maps
\begin{equation}
    \circ^{y_1, \dots, y_n} :
    \Oca(n) \to \Oca(|y_1| + \dots + |y_n|)
\end{equation}
defined by
\begin{equation}
    \circ^{y_1, \dots, y_n}(x) := x \circ (y_1, \dots, y_n),
    \qquad x \in \Oca(n),
\end{equation}
are injective. This property for set-operads introduced by
Vallette~\cite{Val07} is a very relevant one since there is a general
construction producing a family of posets (see~\cite{MY91} and~\cite{CL07})
from a basic set-operad. This family of posets leads to the definition
of an incidence Hopf algebra by a construction of Schmitt~\cite{Sch94}.
\medskip

\begin{Proposition} \label{prop:dias_gamma_basique}
    For any integer $\gamma \geq 0$, $\Dias_\gamma$ is a basic operad.
\end{Proposition}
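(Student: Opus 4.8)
The plan is to prove directly that, for every $n \geq 1$ and all $y_1, \dots, y_n \in \Dias_\gamma$, the map $\circ^{y_1, \dots, y_n}$ is injective, by showing that $x \in \Dias_\gamma(n)$ can be reconstructed from the word $x \circ (y_1, \dots, y_n)$ together with the (fixed) operands $y_1, \dots, y_n$.

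First I would make explicit the combinatorial shape of a full composition in $\T \Mca_\gamma$. Writing $m_i := |y_i|$, an immediate induction on $n$ — using that in $(\dots((x \circ_n y_n) \circ_{n - 1} y_{n - 1}) \dots) \circ_1 y_1$ the partial compositions are performed from right to left, together with the formula for $\circ_i$ in $\T \Mca_\gamma$ — shows that $x \circ (y_1, \dots, y_n)$ is the concatenation $B_1 \dots B_n$, where the block $B_i$ has length $m_i$ and its $j$th letter is $x_i \Max (y_i)_j$. Indeed, when $\circ_i y_i$ is applied, the first $i$ letters of the current word have not yet been touched, so its $i$th letter is still $x_i$, and it is replaced by $(x_i \Max (y_i)_1) \dots (x_i \Max (y_i)_{m_i})$.

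The key step is then to invoke Proposition~\ref{prop:elements_dias_gamma}: each $y_i$ contains exactly one occurrence of $0$, at a position $\ell_i \in [m_i]$ depending only on $y_i$. Since $0$ is the minimum of $\{0\} \cup [\gamma]$, one has $x_i \Max 0 = x_i$, so the $\ell_i$th letter of $B_i$ is exactly $x_i$; equivalently, the letter of $x \circ (y_1, \dots, y_n)$ at position $m_1 + \dots + m_{i - 1} + \ell_i$ is $x_i$. Hence every letter of $x$ is determined by $x \circ (y_1, \dots, y_n)$ and the $y_i$, which gives the injectivity of $\circ^{y_1, \dots, y_n}$. As $n$ and $y_1, \dots, y_n$ are arbitrary, $\Dias_\gamma$ is basic.

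I do not anticipate any real obstacle: the argument is a short direct computation. The only point requiring a little care is the first step, namely the verification of the block description of $x \circ (y_1, \dots, y_n)$, but this follows routinely from the definition of $\T$ recalled in Section~\ref{subsec:monoides_vers_operades}.
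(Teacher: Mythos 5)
Your argument is correct and is essentially the paper's proof: both reduce injectivity of $\circ^{y_1, \dots, y_n}$ to the observation that the composite word has letters $x_i \Max (y_i)_j$ and that, by Proposition~\ref{prop:elements_dias_gamma}, each $y_i$ contains a $0$, so $x_i \Max 0 = x_i$ recovers every letter of $x$. Your explicit block description of the full composition is just a slightly more detailed phrasing of the same computation, with the reconstruction stated directly rather than via two words $x$, $x'$ having equal images.
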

\begin{proof}
    Let $n \geq 1$, $y_1, \dots, y_n$ be words of $\Dias_\gamma$,
    and $x$ and $x'$ be two words of $\Dias_\gamma(n)$ such that
    $\circ^{y_1, \dots, y_n}(x) = \circ^{y_1, \dots, y_n}(x')$. Then,
    for all $i \in [n]$ and $j \in [|y_i|]$, we have
    $x_i \Max y_{i, j} = x'_i \Max y_{i, j}$ where $y_{i, j}$ is the
    $j$th letter of $y_i$. Since by
    Proposition~\ref{prop:elements_dias_gamma}, any word $y_i$ contains
    a $0$, we have in particular $x_i \Max 0 = x'_i \Max 0$ for all
    $i \in [n]$. This implies $x = x'$ and thus, that
    $\circ^{y_1, \dots, y_n}$ is injective.
\end{proof}
\medskip

%%%%%%%%%%%%%%%%%%%%%%%%%%%%%%%%%%%%%%%%%%%%%%%%%%%%%%%%%%%%%%%%%%%%%%%%
\subsubsection{Rooted operad}
We restate here a property on operads introduced by
Chapoton~\cite{Cha14}. An operad $\Oca$ is {\em rooted} if there is a map
\begin{equation}
    \Racine : \Oca(n) \to [n],
    \qquad n \geq 1,
\end{equation}
satisfying, for all $x \in \Oca(n)$, $y \in \Oca(m)$, and $i \in [n]$,
\begin{equation} \label{equ:operade_enracinee}
    \Racine(x \circ_i y) =
    \begin{cases}
        \Racine(x) + m - 1 & \mbox{if } i \leq \Racine(x) - 1, \\
        \Racine(x) + \Racine(y) - 1 & \mbox{if } i = \Racine(x), \\
        \Racine(x) & \mbox{otherwise (} i \geq \Racine(x) + 1 \mbox{)}.
    \end{cases}
\end{equation}
We call such a map a {\em root map}. More intuitively, the root map of a
rooted operad associates a particular input with any of its elements and
this input is preserved by partial compositions.
\medskip

It is immediate that any operad $\Oca$ is a rooted operad for the root
maps $\Racine_{\mathrm{L}}$ and $\Racine_{\mathrm{R}}$, which send
respectively all elements $x$ of arity $n$ to $1$ or to $n$. For this
reason, we say that an operad~$\Oca$ is {\em nontrivially rooted} if
it can be endowed with a root map different from $\Racine_{\mathrm{L}}$
and~$\Racine_{\mathrm{R}}$.
\medskip

\begin{Proposition} \label{prop:dias_gamma_enracinee}
    For any integer $\gamma \geq 0$, $\Dias_\gamma$ is a nontrivially
    rooted operad for the root map sending any word of $\Dias_\gamma$ to
    the position of its $0$.
\end{Proposition}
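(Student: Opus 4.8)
The plan is to verify that the proposed map $\Racine$, sending a word $x$ of $\Dias_\gamma$ to the position of its unique $0$ (which is well defined by Proposition~\ref{prop:elements_dias_gamma}), satisfies the three cases of~\eqref{equ:operade_enracinee}, and then to observe that this map is genuinely different from $\Racine_{\mathrm{L}}$ and $\Racine_{\mathrm{R}}$. First I would fix $x \in \Dias_\gamma(n)$ with its $0$ at position $p := \Racine(x)$, and $y \in \Dias_\gamma(m)$ with its $0$ at position $q := \Racine(y)$, and recall from the definition of the construction $\T$ (Section~\ref{subsec:monoides_vers_operades}) that $x \circ_i y$ is obtained from $x$ by replacing the letter $x_i$ by the factor $(x_i \Max y_1)\,(x_i \Max y_2) \dots (x_i \Max y_m)$. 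The crucial elementary remark, already exploited in the proof of Proposition~\ref{prop:dias_gamma_basique}, is that for any letter $a \in \{0\} \cup [\gamma]$ one has $a \Max 0 = a$; in particular $a \Max y_j = 0$ forces $a = 0$ and $y_j = 0$.

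Now I would split into the three cases according to the comparison of $i$ with $p$. If $i < p$, then $x_i \neq 0$, so none of the $m$ letters inserted in place of $x_i$ can be $0$; the single $0$ of $x \circ_i y$ is therefore the copy of the original $0$ of $x$, which has been pushed to the right by $m - 1$ positions, giving $\Racine(x \circ_i y) = p + m - 1$, matching the case $i \leq \Racine(x) - 1$. If $i > p$, the insertion happens strictly to the right of the $0$ of $x$, so that $0$ keeps its position $p$ and, since $x_i \neq 0$, no new $0$ is created; hence $\Racine(x \circ_i y) = p$, matching the last case. If $i = p$, then $x_i = 0$ and the inserted factor is $(0 \Max y_1) \dots (0 \Max y_m) = y_1 \dots y_m = y$; the unique $0$ of the result is thus the $0$ of the copy of $y$, occurring at position $(p - 1) + q = \Racine(x) + \Racine(y) - 1$, matching the middle case. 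This exhausts~\eqref{equ:operade_enracinee}, so $\Racine$ is a root map and $\Dias_\gamma$ is rooted.

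Finally, to see that the rooting is nontrivial when $\gamma \geq 1$ (note that for $\gamma = 0$ the operad $\Dias_0$ is trivial, so there is nothing beyond $\Racine_{\mathrm{L}} = \Racine_{\mathrm{R}}$ to distinguish), I would simply exhibit two elements on which $\Racine$ disagrees with each of the trivial maps: the generator $a0 \in \Dias_\gamma(2)$ has $\Racine(a0) = 2 \neq 1 = \Racine_{\mathrm{L}}(a0)$, while $0a \in \Dias_\gamma(2)$ has $\Racine(0a) = 1 \neq 2 = \Racine_{\mathrm{R}}(0a)$. Hence $\Racine$ coincides with neither $\Racine_{\mathrm{L}}$ nor $\Racine_{\mathrm{R}}$, which concludes the proof. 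I do not anticipate any real obstacle here: the whole argument is a direct unfolding of the definition of partial composition in $\T \Mca_\gamma$ together with the trivial identity $a \Max 0 = a$; the only point requiring a line of care is bookkeeping the position shift in the case $i = p$, i.e.\ that the $0$ lands at position $p + q - 1$ rather than $p + q$.
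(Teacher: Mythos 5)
Your proposal is correct and follows essentially the same route as the paper: well-definedness of the map via Proposition~\ref{prop:elements_dias_gamma}, verification of the three cases of~\eqref{equ:operade_enracinee} using that $0$ is neutral for $\Max$ and occurs exactly once, and nontriviality by comparison with $\Racine_{\mathrm{L}}$ and $\Racine_{\mathrm{R}}$. Your version merely spells out the case analysis and the explicit witnesses $a0$, $0a$ (and flags the degenerate case $\gamma = 0$), which the paper leaves implicit.
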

\begin{proof}
    Thanks to Proposition~\ref{prop:elements_dias_gamma}, the map of the
    statement of the proposition is well-defined. The fact that $0$ is
    the neutral element for the $\Max$ operation and the fact that any
    word of $\Dias_\gamma$ contains exactly one $0$ imply that this map
    satisfies~\eqref{equ:operade_enracinee}. Finally, this map is
    obviously different from $\Racine_{\mathrm{L}}$ and
    $\Racine_{\mathrm{R}}$, whence the statement of the proposition.
\end{proof}
\medskip

%%%%%%%%%%%%%%%%%%%%%%%%%%%%%%%%%%%%%%%%%%%%%%%%%%%%%%%%%%%%%%%%%%%%%%%%
\subsubsection{Alternative basis} \label{subsubsec:base_K}
In this section, $\Dias_\gamma$ is considered as an operad in the
category of vector spaces.
\medskip

Let $\OrdDias_\gamma$ be the order relation on the underlying set of
$\Dias_\gamma(n)$, $n \geq 1$, where for all words $x$ and $y$ of
$\Dias_\gamma$ of a same arity $n$, we have
\begin{equation}
    x \OrdDias_\gamma y
    \qquad \mbox{ if } x_i \leq y_i \mbox{ for all } i \in [n].
\end{equation}
This order relation allows to define
for all word $x$ of $\Dias_\gamma$ the elements
\begin{equation} \label{equ:base_K_vers_mots}
    \Ksf^{(\gamma)}_x :=
    \sum_{x \OrdDias_\gamma x'} \mu_\gamma(x, x') \, x',
\end{equation}
where $\mu_\gamma$ is the Möbius function of the poset defined by
$\OrdDias_\gamma$. For instance,
\begin{equation}
    \Ksf^{(2)}_{102} = 102 - 202,
\end{equation}
\begin{equation}
    \Ksf^{(3)}_{102} = \Ksf^{(4)}_{102} = 102 - 103 - 202 + 203,
\end{equation}
\begin{equation}
    \Ksf^{(3)}_{23102}
    = 23102 - 23103 - 23202 + 23203 - 33102 + 33103 + 33202 - 33203.
\end{equation}
\medskip

Since, by Möbius inversion, for any word $x$ of $\Dias_\gamma$ one has
\begin{equation} \label{equ:mots_vers_base_K}
    x = \sum_{x \OrdDias_\gamma x'} \Ksf^{(\gamma)}_{x'},
\end{equation}
the family of all $\Ksf^{(\gamma)}_x$, where the $x$ are words of
$\Dias_\gamma$, forms by triangularity a basis of $\Dias_\gamma$, called
the {\em $\Ksf$-basis}.
\medskip

If $u$ and $v$ are two words of a same length $n$, we denote by
$\Hamming(u, v)$ the {\em Hamming distance} between $u$ and $v$ that is
the number of positions $i \in [n]$ such that $u_i \ne v_i$. Moreover,
for any word $x$ of $\Dias_\gamma$ of length $n$ and any subset $J$ of
$[n]$, we denote by $\Incr_\gamma(x, J)$ the set of words obtained by
incrementing by one some letters of $x$ smaller than $\gamma$ and
greater than $0$ whose positions are in $J$. We shall simply denote by
$\Incr_\gamma(x)$ the set $\Incr_\gamma(x, [n])$.
Proposition~\ref{prop:elements_dias_gamma} ensures that all
$\Incr_\gamma(x, J)$ are sets of words of $\Dias_\gamma$.
\medskip

\begin{Lemme} \label{lem:expression_directe_base_K}
    For any integer $\gamma \geq 0$ and any word $x$ of $\Dias_\gamma$,
    \begin{equation} \label{equ:expression_directe_base_K}
        \Ksf^{(\gamma)}_x =
        \sum_{x' \in \Incr_\gamma(x)} (-1)^{\Hamming(x, x')} \, x'.
    \end{equation}
\end{Lemme}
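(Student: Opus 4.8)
The plan is to prove the formula by Möbius inversion together with a direct identification of the Möbius function of the poset $(\Dias_\gamma(n), \OrdDias_\gamma)$. The first observation is that this poset is, on each arity $n$, isomorphic to a product of chains: fix a word $x = u 0 v$ of $\Dias_\gamma(n)$ with its unique $0$ at position $p$; then any $y \OrdDias_\gamma x$ (or $y \OrdDias_\gamma$-comparable) must also have its $0$ at position $p$, since $0$ is the minimum of $\{0\} \cup [\gamma]$ and comparing componentwise forces $y_p = 0$. Thus the interval above $x$ decomposes as a product, over the $n-1$ positions $i \ne p$, of the chain $[x_i, \gamma]$. Consequently the Möbius function factorizes: $\mu_\gamma(x, x')$ equals $0$ unless $x'$ agrees with $x$ except that some letters $x_i < \gamma$ (with $i \ne p$) have been increased, in which case it equals $(-1)$ to the number of positions where the increase is exactly by one and $0$ if any increase is by two or more. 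This is exactly the statement that $\mu_\gamma(x,x') = (-1)^{\Hamming(x,x')}$ when $x' \in \Incr_\gamma(x)$ and $0$ otherwise, because $\Incr_\gamma(x)$ was defined as the set of words obtained from $x$ by incrementing by one some letters smaller than $\gamma$ (and, implicitly, greater than $0$, which rules out position $p$).

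The key steps, in order, are: (i) show that for $x, x'$ of the same arity $n$, $x \OrdDias_\gamma x'$ forces the $0$ of $x'$ to sit at the same position as the $0$ of $x$, so that the interval $[x, \Max^{\dots}]$ above $x$ in $\Dias_\gamma(n)$ is the product of the chains $[x_i,\gamma]$ over $i$ ranging over the non-$0$ positions; (ii) recall that the Möbius function of a chain $\{a < a+1 < \dots < \gamma\}$ takes value $1$ on the diagonal, $-1$ on pairs $(j, j+1)$, and $0$ elsewhere; (iii) invoke the product formula for the Möbius function of a product of posets to get $\mu_\gamma(x, x') = \prod_{i} \mu_{[x_i,\gamma]}(x_i, x'_i)$, which vanishes unless each $x'_i \in \{x_i, x_i + 1\}$ and then equals $(-1)^{\Hamming(x,x')}$; (iv) substitute this into the defining sum~\eqref{equ:base_K_vers_mots} and restrict the index set from $\{x' : x \OrdDias_\gamma x'\}$ to $\Incr_\gamma(x)$, which is legitimate precisely because the Möbius function is supported there.

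None of these steps presents a genuine obstacle; the only point requiring a little care is step~(i), namely verifying that the poset above $x$ really is a product of chains and that $\Incr_\gamma(x)$ — as defined in the paragraph preceding the lemma — coincides with the support of $\mu_\gamma(x,-)$. In particular one must note that incrementing is forbidden at the position of the $0$ (since that letter is not ``greater than $0$''), which matches the structural fact that the $0$ cannot move, and that only letters strictly smaller than $\gamma$ may be incremented, which matches the top of each chain. Once this bookkeeping is in place, the formula~\eqref{equ:expression_directe_base_K} is immediate from~\eqref{equ:base_K_vers_mots}.
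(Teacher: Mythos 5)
Your proposal is correct and follows essentially the same route as the paper's proof: identify the interval above $x$ in $(\Dias_\gamma(n), \OrdDias_\gamma)$ with a Cartesian product of chains (one factor $[x_i,\gamma]$ per nonzero position, the $0$ being frozen), use the Möbius function of a chain and the product formula for Möbius functions, and substitute into~\eqref{equ:base_K_vers_mots}. The only difference is cosmetic: the paper makes the product-of-chains identification explicit via an isomorphism $\phi^{(\gamma)}_x$ onto $\Tbb(\gamma-x_1)\times\dots\times\Tbb(0)\times\dots\times\Tbb(\gamma-x_n)$, which is exactly your step~(i).
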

\begin{proof}
    Let $n$ be the arity of $x$. To compute $\Ksf^{(\gamma)}_x$ from its
    definition~\eqref{equ:base_K_vers_mots}, it is enough to know the
    Möbius function $\mu_\gamma$ of the poset $\Pbb^{(\gamma)}_x$
    consisting in the words $x'$ of $\Dias_\gamma$ satisfying
    $x \OrdDias_\gamma x'$. Immediately from the definition of
    $\OrdDias_\gamma$, it appears that $\Pbb^{(\gamma)}_x$ is isomorphic
    to the Cartesian product poset
    \begin{equation} \label{equ:expression_directe_base_K_poset}
        \Tbb^{(\gamma)}_x :=
        \Tbb\left(\gamma - x_1\right)
        \times \dots \times
        \Tbb\left(\gamma - x_{r - 1}\right)
        \times
        \Tbb(0)
        \times
        \Tbb\left(\gamma - x_{r + 1}\right)
        \times \dots \times
        \Tbb\left(\gamma - x_n\right),
    \end{equation}
    where for any nonnegative integer $k$, $\Tbb(k)$ denotes the poset
    over $\{0\} \cup [k]$ with the natural total order relation, and $r$
    is the position of, by Proposition~\ref{prop:elements_dias_gamma},
    the only $0$ of $x$. The map
    $\phi^{(\gamma)}_x : \Pbb^{(\gamma)}_x \to \Tbb^{(\gamma)}_x$
    defined for all words $x'$ of $\Pbb^{(\gamma)}_x$ by
    \begin{equation}
        \phi^{(\gamma)}_x(x') := \left(x'_1 - x_1, \dots,
        x'_{r - 1} - x_{r - 1}, 0, x'_{r + 1} - x_{r + 1},
        \dots, x'_n - x_n\right)
    \end{equation}
    is an isomorphism of posets.
    \smallskip

    Recall that the Möbius function $\mu$ of $\Tbb(k)$ satisfies, for
    all $a, a' \in \Tbb(k)$,
    \begin{equation}
        \mu(a, a') =
        \begin{cases}
            1 & \mbox{if } a' = a, \\
            -1 & \mbox{if } a' = a + 1, \\
            0 & \mbox{otherwise}.
        \end{cases}
    \end{equation}
    Moreover, since by~\cite{Sta11}, the Möbius function of a Cartesian
    product poset is the product of the Möbius functions of the posets
    involved in the product, through the isomorphism $\phi^{(\gamma)}_x$,
    we obtain that when $x'$ is in $\Incr_\gamma(x)$,
    $\mu_\gamma(x, x') = (-1)^{\Hamming(x, x')}$ and that when $x'$ is
    not in $\Incr_\gamma$, $\mu_\gamma(x, x') = 0$. Therefore,
    \eqref{equ:expression_directe_base_K} is established.
\end{proof}
\medskip

\begin{Lemme} \label{lem:somme_alternee_incr}
    For any integer $\gamma \geq 0$, any word $x$ of $\Dias_\gamma$, and
    any nonempty set $J$ of positions of letters of $x$ that are greater
    than $0$ and smaller than $\gamma$,
    \begin{equation}
        \sum_{x' \in \Incr_\gamma(x, J)} (-1)^{\Hamming(x, x')} = 0.
    \end{equation}
\end{Lemme}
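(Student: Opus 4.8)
The plan is to turn the alternating sum into the binomial expansion of $(1-1)^{\# J}$ by exhibiting an explicit parametrization of $\Incr_\gamma(x,J)$ by the subsets of $J$.

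First I would fix a word $x$ of $\Dias_\gamma$ and a nonempty set $J$ of positions at which $x$ has a letter greater than $0$ and smaller than $\gamma$. For each subset $S \subseteq J$, let $x^{(S)}$ be the word obtained from $x$ by replacing the letter $x_j$ by $x_j + 1$ for every $j \in S$ and keeping all other letters unchanged. Since each position in $J$ carries a letter strictly between $0$ and $\gamma$, this increment keeps all these letters in $\{0\} \cup [\gamma]$ and nonzero, so the unique $0$ of $x$ (whose position is not in $J$) is the unique $0$ of $x^{(S)}$; by Proposition~\ref{prop:elements_dias_gamma}, $x^{(S)}$ lies in $\Dias_\gamma$. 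By the definition of $\Incr_\gamma(x,J)$, the assignment $S \mapsto x^{(S)}$ is a surjection onto $\Incr_\gamma(x,J)$, and it is injective because two distinct subsets $S$ and $S'$ yield words that differ at any position of $S \triangle S'$. Hence $S \mapsto x^{(S)}$ is a bijection from the subsets of $J$ to $\Incr_\gamma(x,J)$.

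Next I would note that $\Hamming(x, x^{(S)}) = \# S$: the letters of $x^{(S)}$ and $x$ differ exactly at the positions belonging to $S$, since incrementing a letter by one always changes it. Combining this with the bijection above, the sum rewrites as
\[
    \sum_{x' \in \Incr_\gamma(x, J)} (-1)^{\Hamming(x, x')}
    = \sum_{S \subseteq J} (-1)^{\# S}
    = \sum_{k = 0}^{\# J} \binom{\# J}{k} (-1)^k
    = (1 - 1)^{\# J} = 0,
\]
the final equality using that $J$ is nonempty, so $\# J \geq 1$.

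There is essentially no obstacle in this argument; the only point deserving a line of care is verifying that $S \mapsto x^{(S)}$ is a genuine bijection — well-definedness of $x^{(S)}$ resting on the hypothesis that the positions in $J$ carry letters strictly between $0$ and $\gamma$, and injectivity on the fact that an increment always alters a letter — after which the computation is immediate.
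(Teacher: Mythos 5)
Your proof is correct: parametrizing $\Incr_\gamma(x,J)$ by the subsets $S \subseteq J$ (including $S = \emptyset$, consistent with how $\Incr_\gamma$ is used elsewhere in the paper) and noting $\Hamming(x, x^{(S)}) = \# S$ reduces the sum to $(1-1)^{\# J} = 0$. The paper disposes of the lemma in one line by induction on the nonzero cardinality of $J$, which is just another phrasing of the same binomial identity, so your argument is essentially the same, merely spelled out directly.
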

\begin{proof}
    The statement of the lemma follows by induction on the nonzero
    cardinality of $J$.
\end{proof}
\medskip

To compute a direct expression for the partial composition of
$\Dias_\gamma$ over the $\Ksf$-basis, we have to introduce two notations.
If $x$ is a word of $\Dias_\gamma$ of length nonsmaller than $2$, we
denote by $\min(x)$ the smallest letter of $x$ among its letters
different from $0$. Proposition~\ref{prop:elements_dias_gamma} ensures
that $\min(x)$ is well-defined. Moreover, for all words $x$ and $y$ of
$\Dias_\gamma$, a position $i$ such that $x_i \ne 0$, and $a \in [\gamma]$,
we denote by $x \circ_{a, i} y$ the word $x \circ_i y$ in which the $0$
coming from $y$ is replaced by $a$ instead of $x_i$.
\medskip

\begin{Theoreme} \label{thm:composition_base_K}
    For any integer $\gamma \geq 0$, the partial composition of
    $\Dias_\gamma$ over the $\Ksf$-basis satisfies, for all words $x$
    and $y$ of $\Dias_\gamma$ of arities nonsmaller than $2$,
    \begin{equation}
        \Ksf^{(\gamma)}_x \circ_i \Ksf^{(\gamma)}_y =
        \begin{cases}
            \Ksf^{(\gamma)}_{x \circ_i y} & \mbox{if } \min(y) > x_i, \\
            \sum_{a \in [x_i, \gamma]} \Ksf^{(\gamma)}_{x \circ_{a, i} y}
                & \mbox{if } \min(y) = x_i, \\
            0 & \mbox{otherwise (} \min(y) < x_i \mbox{)}.
        \end{cases}
    \end{equation}
\end{Theoreme}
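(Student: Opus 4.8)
The plan is to compute both sides in the word basis of $\Dias_\gamma$, expanding each $\Ksf$-element with Lemma~\ref{lem:expression_directe_base_K} and using the explicit composition of $\T \Mca_\gamma$. For a word $w$, write $\tau_a(w)$ for the word $(a \Max w_1) \dots (a \Max w_m)$, so that in $\T \Mca_\gamma$ one has $z \circ_i w = z_1 \dots z_{i - 1}\, \tau_{z_i}(w)\, z_{i + 1} \dots z_n$. Fix words $x$ and $y$ of $\Dias_\gamma$ of arities $n, m \geq 2$, an index $i \in [n]$, and put $a := x_i$. Call a position of a word \emph{incrementable} if its letter lies in $[1, \gamma - 1]$, and for a set $R$ of incrementable positions of $w$ let $w^{+R}$ be $w$ with those letters incremented by one. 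By Lemma~\ref{lem:expression_directe_base_K},
\[
    \Ksf^{(\gamma)}_x \circ_i \Ksf^{(\gamma)}_y
    = \sum_{S, T} (-1)^{|S| + |T|}\, \bigl(x^{+S} \circ_i y^{+T}\bigr),
\]
$S$ and $T$ ranging over the sets of incrementable positions of $x$ and of $y$. The block produced inside $x^{+S} \circ_i y^{+T}$ is $\tau_{a'}(y^{+T})$ with $a' = a$ when $i \notin S$ and $a' = a + 1$ when $i \in S$; by Proposition~\ref{prop:elements_dias_gamma} the former $0$ of $y$ sits at a single position $\rho$, which the block fills with $a'$, while all other block positions receive values $\geq a'$. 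Hence incrementing a letter of $y$ that is $< a'$ leaves $x^{+S} \circ_i y^{+T}$ unchanged; this observation drives all three cases.

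If $\min(y) < x_i$, pick a position $j_0$ of $y$ with $y_{j_0} = \min(y)$; then $y_{j_0}$ is incrementable and $y_{j_0} < a \leq a'$, so incrementing $j_0$ never affects $x^{+S} \circ_i y^{+T}$. For each fixed $S$, grouping the sum over $T$ according to whether $j_0 \in T$ produces pairs of equal words with opposite signs, so the total is $0$ --- this is the cancellation of Lemma~\ref{lem:somme_alternee_incr}. If $\min(y) > x_i$, every nonzero letter of $y$ exceeds $a$, so $\tau_{a'}(y^{+T})$ is just $y^{+T}$ with its $0$ turned into $a'$; thus $x^{+S} \circ_i y^{+T}$ is $x \circ_i y$ with the letters in the positions of $(S \setminus \{i\}) \cup T$ incremented and with $\rho$ incremented exactly when $i \in S$. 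The map sending $(S, T)$ to $(S \setminus \{i\}) \cup T$, adjoined with $\{\rho\}$ when $i \in S$, is a cardinality-preserving bijection onto the subsets of incrementable positions of $x \circ_i y$, so comparison with Lemma~\ref{lem:expression_directe_base_K} yields $\Ksf^{(\gamma)}_x \circ_i \Ksf^{(\gamma)}_y = \Ksf^{(\gamma)}_{x \circ_i y}$.

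The case $\min(y) = x_i$ is the core, and it is where the sum over $[x_i, \gamma]$ is created. Assume first $a < \gamma$, and let $P$ be the nonempty set of positions of $y$ carrying $a = \min(y)$; write $T = T_0 \sqcup T_P$ with $T_P := T \cap P$, and separate whether $i \in S$. When $i \in S$ (so $a' = a + 1$), $\tau_{a + 1}$ pushes every letter in a position of $P$, together with the letter at $\rho$, to the value $a + 1$ regardless of $T_P$; summing over $T_P \subseteq P$ with alternating sign then annihilates all terms with $i \in S$. When $i \notin S$, the composite $x^{+S} \circ_i y^{+T}$ is $x \circ_i y$ with the letters in positions of $S \cup T$ incremented while the letter at $\rho$ stays equal to $a$. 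Collecting, one gets
\[
    \Ksf^{(\gamma)}_x \circ_i \Ksf^{(\gamma)}_y
    = \sum_{x'' \in \Incr_\gamma(x \circ_i y,\, Q)} (-1)^{\Hamming(x \circ_i y,\, x'')}\, x'',
\]
where $Q$ is the set of all incrementable positions of $x \circ_i y$ other than $\rho$. On the other hand, expand each $\Ksf^{(\gamma)}_{x \circ_{b, i} y}$ for $b \in [a, \gamma]$ by Lemma~\ref{lem:expression_directe_base_K}; for a fixed choice of increments on $Q$, write $w_b$ for the resulting word whose letter at $\rho$ equals $b$. Since $\rho$ is incrementable in $x \circ_{b, i} y$ precisely for $b < \gamma$, the $\rho$-dependent part of the expansion telescopes as $\sum_{b = a}^{\gamma - 1}(w_b - w_{b + 1}) + w_\gamma = w_a$, so $\sum_{b = a}^{\gamma} \Ksf^{(\gamma)}_{x \circ_{b, i} y}$ is exactly the signed sum over $\Incr_\gamma(x \circ_i y, Q)$ above, which establishes the identity.

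The main obstacle is the bookkeeping in this last case: one has to recognise simultaneously the vanishing of the $i \in S$ contributions (through the alternating sum over $T_P \subseteq P$) and the telescoping over the inserted value $b$, and then treat separately the boundary $a = \gamma$ --- where $P$ consists of non-incrementable positions, $T$ is forced to be empty, $x \circ_{\gamma, i} y = x \circ_i y$, and both sides collapse to $\Ksf^{(\gamma)}_{x \circ_i y}$, in agreement with the first case. The uniqueness, granted by Proposition~\ref{prop:elements_dias_gamma}, of the position of the $0$ in a word of $\Dias_\gamma$ is what makes $\rho$ and all the bijections between $(S, T)$-data and subsets of incrementable positions of the composite well-defined; and Lemma~\ref{lem:somme_alternee_incr} can be cited directly for each vanishing step rather than re-running the pairing argument by hand.
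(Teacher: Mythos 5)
Your proof is correct. It shares the paper's starting point --- expanding $\Ksf^{(\gamma)}_x \circ_i \Ksf^{(\gamma)}_y$ in the word basis via Lemma~\ref{lem:expression_directe_base_K}, the trichotomy on $\min(y)$ versus $x_i$, and the key observation that incrementing a letter of $y$ strictly smaller than the threshold $a'$ does not change the composite --- but the concluding comparison runs along a genuinely different track. The paper re-expands each word $x' \circ_i y'$ back into the $\Ksf$-basis through \eqref{equ:mots_vers_base_K} and extracts the coefficient of every $\Ksf^{(\gamma)}_z$, delegating all the vanishing statements to Lemma~\ref{lem:somme_alternee_incr} and checking that only $(x', y') = (x, y)$ contributes to the surviving coefficients. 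You never leave the word basis: in the case $\min(y) > x_i$ you match the two signed sums by a cardinality-preserving bijection between pairs of increment sets $(S, T)$ and subsets of incrementable positions of $x \circ_i y$, and in the case $\min(y) = x_i$ you expand the right-hand side $\sum_{b \in [x_i, \gamma]} \Ksf^{(\gamma)}_{x \circ_{b, i} y}$ by the same lemma and let the contributions at the position receiving the $0$ of $y$ telescope, $\sum_{b = x_i}^{\gamma - 1} (w_b - w_{b + 1}) + w_\gamma = w_{x_i}$, recovering exactly the signed sum over $\Incr_\gamma(x \circ_i y, Q)$ produced on the left (with the boundary case $x_i = \gamma$ checked separately, where it correctly collapses onto the first case). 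What the paper's route buys is uniformity --- every case is a coefficient computation and the cancellations are all instances of one lemma --- while yours avoids the Möbius re-expansion and the sum over $z$ at the price of the explicit bijection and telescoping bookkeeping, including the separate annihilation of the $i \in S$ terms via the alternating sum over $T \cap P$, a step the paper absorbs into the claim that only $x' = x$, $y' = y$ contribute. Both arguments are complete; your telescoping identity is a pleasant self-contained explanation of why the insertion sum over $b \in [x_i, \gamma]$ must appear.
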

\begin{proof}
    First of all, by Lemma~\ref{lem:expression_directe_base_K} together
    with~\eqref{equ:mots_vers_base_K}, we obtain
    \begin{equation} \label{equ:demo_composition_base_K} \begin{split}
        \Ksf^{(\gamma)}_x \circ_i \Ksf^{(\gamma)}_y
            & = \sum_{\substack{x' \in \Incr_\gamma(x) \\
                                y' \in \Incr_\gamma(y)}}
            (-1)^{\Hamming(x, x') + \Hamming(y, y')}
            \left(\sum_{x' \circ_i y' \OrdDias_\gamma z}
            \Ksf^{(\gamma)}_z\right) \\
            & = \sum_{x \circ_i y \OrdDias_\gamma z} \;
            \sum_{\substack{x' \in \Incr_\gamma(x) \\
                                  y' \in \Incr_\gamma(y) \\
                                  x' \circ_i y' \OrdDias_\gamma z}}
            (-1)^{\Hamming(x, x') + \Hamming(y, y')}
            \, \Ksf^{(\gamma)}_z.
    \end{split} \end{equation}
    \smallskip

    Let us denote by $n$ (resp. $m$) the arity of $x$ (resp. $y$) and
    let $z$ be a word of $\Dias_\gamma$ such that
    $x \circ_i y \OrdDias_\gamma z$. Let $x' \in \Incr_\gamma(x)$ and
    $y' \in \Incr_\gamma(y)$. We have, by definition of the partial
    composition of $\Dias_\gamma$,
    \begin{equation}
        x \circ_i y =
        x_1 \dots x_{i - 1}
            \, t_1 \dots t_{r - 1} \, x_i \, t_{r + 1} \dots t_m \,
        x_{i + 1} \dots x_n,
    \end{equation}
    and
    \begin{equation}
        x' \circ_i y' =
        x'_1 \dots x'_{i - 1}
            \, t'_1 \dots t'_{r - 1} \, x'_i \, t'_{r + 1} \dots t'_m \,
        x'_{i + 1} \dots x'_n,
    \end{equation}
    where $r$ denotes the position of the only, by
    Proposition~\ref{prop:elements_dias_gamma}, $0$ of $y$ and for all
    $j \in [m] \setminus \{r\}$, $t_j := x_i \Max y_j$ and
    $t'_j := x'_i \Max y'_j$. By~\eqref{equ:demo_composition_base_K},
    the pair $(x', y')$ contributes to the coefficient of $\Ksf^{(\gamma)}_z$
    in~\eqref{equ:demo_composition_base_K} if and only if
    $x \circ_i y \OrdDias_\gamma x' \circ_i y' \OrdDias z$. To compute
    this coefficient, we have three cases to consider following the value
    of $\min(y)$ compared to the value of~$x_i$.
    \smallskip

    \begin{enumerate}[label={\it Case \arabic*.},fullwidth]
        \item Assume first that $\min(y) < x_i$. Then, there is at least
        a $s \in [m] \setminus \{r\}$ such that $y_s < x_i$. This implies
        that $t_s = x_i$ and that $y'_s$ has no influence on $t'_s$ and
        then, on $x' \circ_i y'$. Thus, the word
        $y'' := y'_1 \dots y'_{s - 1} a y'_{s + 1} \dots y'_m$ where $a$
        is the only possible letter such that $y'' \in \Incr_\gamma(y)$
        and $a \ne y'_s$ satisfies $x' \circ_i y'' = x' \circ_i y'$.
        Therefore, since $\Hamming(y', y'') = 1$, the contribution of
        the pair $(x', y')$ for the coefficient of $\Ksf^{(\gamma)}_z$
        in~\eqref{equ:demo_composition_base_K} is compensated by the
        contribution of the pair $(x', y'')$. This shows that this
        coefficient is $0$ and hence,
        $\Ksf^{(\gamma)}_x \circ_i \Ksf^{(\gamma)}_y = 0$.
        \smallskip

        \item \label{item:composition_base_K_cas_2}
        Assume now that $\min(y) > x_i$. Then, for all
        $j \in [m] \setminus \{r\}$, we have $y_j > x_i$ and thus,
        $t_j = y_j$. When $z = x \circ_i y$, we necessarily have $x' = x$
        and $y' = y$. Hence, the coefficient of $\Ksf^{(\gamma)}_{x \circ_i y}$
        in~\eqref{equ:demo_composition_base_K} is $1$. Else, when
        $z \ne x \circ_i y$, we have
        $x' \circ_i y' \in \Incr_\gamma(x \circ_i y, J)$, where $J$ is
        the nonempty set of the positions of letters of $z$ different
        from letters of $x \circ_i y$. Now,
        from~\eqref{equ:demo_composition_base_K}, the coefficient of
        $\Ksf^{(\gamma)}_z$ in~\eqref{equ:demo_composition_base_K} is
        \begin{equation}
            \sum_{x' \circ_i y' \in \Incr_\gamma(x \circ_i y, J)}
            (-1)^{\Hamming(x, x') + \Hamming(y, y')}.
        \end{equation}
        Lemma~\ref{lem:somme_alternee_incr} implies that this coefficient
        is $0$. This shows that
        $\Ksf^{(\gamma)}_x \circ_i \Ksf^{(\gamma)}_y =
        \Ksf^{(\gamma)}_{x \circ_i y}$.
        \smallskip

        \item The last case occurs when $\min(y) = x_i$. Then, for all
        $j \in [m] \setminus \{r\}$, we have $y_j \geq x_i$ and thus,
        $t_j = y_j$. Moreover, there is at least a $s \in [m] \setminus \{r\}$
        such that $y_s = x_i$. When $z = x \circ_{a, i} y$ with
        $a \in [x_i, \gamma]$, we necessarily have $x' = x$ and $y' = y$.
        Therefore, for all $a \in [x_i, \gamma]$, the
        $\Ksf^{(\gamma)}_{x \circ_{a, i}}$ have coefficient~$1$
        in~\eqref{equ:demo_composition_base_K}. The same argument as the
        one exposed for \ref{item:composition_base_K_cas_2} shows that
        when $z \ne x \circ_{a, i} y$ for all $a \in [x_i, \gamma]$, the
        coefficient of $\Ksf^{(\gamma)}_z$ is zero. Hence,
        $\Ksf^{(\gamma)}_x \circ_i \Ksf^{(\gamma)}_y =
        \sum_{a \in [x_i, \gamma]} \Ksf^{(\gamma)}_{x \circ_{a, i} y}$.
    \end{enumerate}
\end{proof}
\medskip

We have for instance
\begin{equation}
    \Ksf^{(5)}_{20413} \circ_1 \Ksf^{(5)}_{304} = \Ksf^{(5)}_{3240413},
\end{equation}
\begin{equation}
    \Ksf^{(5)}_{20413} \circ_2 \Ksf^{(5)}_{304} = \Ksf^{(5)}_{2304413},
\end{equation}
\begin{equation}
    \Ksf^{(5)}_{20413} \circ_3 \Ksf^{(5)}_{304} = 0,
\end{equation}
\begin{equation}
    \Ksf^{(5)}_{20413} \circ_4 \Ksf^{(5)}_{304} = \Ksf^{(5)}_{2043143},
\end{equation}
\begin{equation}
    \Ksf^{(5)}_{20413} \circ_5 \Ksf^{(5)}_{304}
        = \Ksf^{(5)}_{2041334} + \Ksf^{(5)}_{2041344}
        + \Ksf^{(5)}_{2041354}.
\end{equation}
\medskip

Theorem~\ref{thm:composition_base_K} implies in particular that the
structure coefficients of the partial composition of $\Dias_\gamma$ over
the $\Ksf$-basis are $0$ or $1$. It is possible to define another bases
of $\Dias_\gamma$ by reversing in~\eqref{equ:base_K_vers_mots} the
relation $\OrdDias_\gamma$ and by suppressing or keeping the Möbius
function $\mu_\gamma$. This gives obviously rise to three other bases.
It worth to note that, as small computations reveal, over all these
additional bases, the structure coefficients of the partial composition
of $\Dias_\gamma$ can be negative or different from $1$. This observation
makes the $\Ksf$-basis even more particular and interesting. It has some
other properties, as next section will show.
\medskip

%%%%%%%%%%%%%%%%%%%%%%%%%%%%%%%%%%%%%%%%%%%%%%%%%%%%%%%%%%%%%%%%%%%%%%%%
\subsubsection{Alternative presentation}%
\label{subsubsec:presentation_dias_gamma_alternative}
The $\Ksf$-basis introduced in the previous section leads to state a new
presentation for $\Dias_\gamma$ in the following way.
\medskip

For any integer $\gamma \geq 0$, let $\GDiasA_a$ and $\DDiasA_a$,
$a \in [\gamma]$, be the elements of $\OpLibre\left(\GenDias\right)(2)$
defined by
\begin{subequations}
\begin{equation}
    \GDiasA_a :=
    \begin{cases}
        \GDias_\gamma & \mbox{if } a = \gamma, \\
        \GDias_a - \GDias_{a + 1} & \mbox{otherwise},
    \end{cases}
\end{equation}
and
\begin{equation}
    \DDiasA_a :=
    \begin{cases}
        \DDias_\gamma & \mbox{if } a = \gamma, \\
        \DDias_a - \DDias_{a + 1} & \mbox{otherwise}.
    \end{cases}
\end{equation}
\end{subequations}
Then, since for all $a \in [\gamma]$ we have
\begin{subequations}
\begin{equation}
    \GDias_a = \sum_{a \leq b \in [\gamma]} \GDiasA_b
\end{equation}
and
\begin{equation}
    \DDias_a = \sum_{a \leq b \in [\gamma]} \DDiasA_b,
\end{equation}
\end{subequations}
by triangularity, the family
$\GenDias' := \{\GDiasA_a, \DDiasA_a : a \in [\gamma]\}$
forms a basis of $\OpLibre\left(\GenDias\right)(2)$ and then, generates
$\OpLibre\left(\GenDias\right)$ as an operad. This change of basis
from $\OpLibre\left(\GenDias\right)$ to $\OpLibre(\GenDias')$
comes from the change of basis from the usual basis of $\Dias_\gamma$
to the $\Ksf$-basis. Let us now express a presentation of $\Dias_\gamma$
through the family $\GenDias'$.
\medskip

\begin{Proposition} \label{prop:presentation_alternative_dias_gamma}
    For any integer $\gamma \geq 0$, the operad $\Dias_\gamma$ admits
    the following presentation. It is generated by $\GenDias'$ and its
    space of relations is $\RelLibre'_{\Dias_\gamma}$ is generated by
    \begin{subequations}
    \begin{equation} \label{equ:relation_dias_gamma_1_alternative}
        \GDiasA_a \circ_1 \DDiasA_{a'} -
        \DDiasA_{a'} \circ_2 \GDiasA_a,
        \qquad a, a' \in [\gamma],
    \end{equation}
    \begin{equation} \label{equ:relation_dias_gamma_2_alternative}
        \DDiasA_b \circ_1 \DDiasA_a, \qquad a < b \in [\gamma],
    \end{equation}
    \begin{equation} \label{equ:relation_dias_gamma_3_alternative}
        \GDiasA_b \circ_2 \GDiasA_a, \qquad a < b \in [\gamma],
    \end{equation}
    \begin{equation} \label{equ:relation_dias_gamma_4_alternative}
        \DDiasA_b \circ_1 \GDiasA_a, \qquad a < b \in [\gamma],
    \end{equation}
    \begin{equation} \label{equ:relation_dias_gamma_5_alternative}
        \GDiasA_b \circ_2 \DDiasA_a, \qquad a < b \in [\gamma],
    \end{equation}
    \begin{equation} \label{equ:relation_dias_gamma_6_alternative}
        \DDiasA_a \circ_1 \DDiasA_b
        - \DDiasA_b \circ_2 \DDiasA_a,
        \qquad a < b \in [\gamma],
    \end{equation}
    \begin{equation} \label{equ:relation_dias_gamma_7_alternative}
        \GDiasA_b \circ_1 \GDiasA_a
        - \GDiasA_a \circ_2 \GDiasA_b,
        \qquad a < b \in [\gamma],
    \end{equation}
    \begin{equation} \label{equ:relation_dias_gamma_8_alternative}
        \DDiasA_a \circ_1 \GDiasA_b
        - \DDiasA_a \circ_2 \DDiasA_b,
        \qquad a < b \in [\gamma],
    \end{equation}
    \begin{equation} \label{equ:relation_dias_gamma_9_alternative}
        \GDiasA_a \circ_1 \GDiasA_b
        - \GDiasA_a \circ_2 \DDiasA_b,
        \qquad a < b \in [\gamma],
    \end{equation}
    \begin{equation} \label{equ:relation_dias_gamma_10_alternative}
        \DDiasA_a \circ_1 \DDiasA_a -
        \left(\sum_{a \leq b \in [\gamma]} \DDiasA_a \circ_2 \DDiasA_b\right),
        \qquad a \in [\gamma],
    \end{equation}
    \begin{equation} \label{equ:relation_dias_gamma_11_alternative}
        \left(\sum_{a \leq b \in [\gamma]} \GDiasA_a \circ_1 \GDiasA_b\right)
        - \GDiasA_a \circ_2 \GDiasA_a,
        \qquad a \in [\gamma],
    \end{equation}
    \begin{equation} \label{equ:relation_dias_gamma_12_alternative}
        \DDiasA_a \circ_1 \GDiasA_a
        - \left(\sum_{a \leq b \in [\gamma]} \DDiasA_b \circ_2 \DDiasA_a\right),
        \qquad a \in [\gamma],
    \end{equation}
    \begin{equation} \label{equ:relation_dias_gamma_13_alternative}
        \left(\sum_{a \leq b \in [\gamma]} \GDiasA_b \circ_1 \GDiasA_a\right)
        - \GDiasA_a \circ_2 \DDiasA_a,
        \qquad a \in [\gamma].
    \end{equation}
    \end{subequations}
\end{Proposition}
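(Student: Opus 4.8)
The plan is to derive this presentation from the one of Theorem~\ref{thm:presentation_dias_gamma} through the change of generators recorded above. Since $\GenDias'$ is obtained from $\GenDias$ by an invertible triangular change of basis in arity~$2$, it generates $\OpLibre\left(\GenDias'\right)$ and this change of basis extends to an operad isomorphism $\phi : \OpLibre\left(\GenDias'\right) \to \OpLibre\left(\GenDias\right)$. By Theorem~\ref{thm:presentation_dias_gamma} one then has $\Dias_\gamma \simeq \OpLibre\left(\GenDias'\right)/_{\langle \phi^{-1}(\RelDias) \rangle}$, and as both $\RelDias$ and the relations listed in~\eqref{equ:relation_dias_gamma_1_alternative}---\eqref{equ:relation_dias_gamma_13_alternative} are concentrated in arity~$3$, it suffices to prove that $\phi^{-1}(\RelDias)$ equals the span $\RelLibre'_{\Dias_\gamma}$ of the latter in $\OpLibre\left(\GenDias'\right)(3)$.

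First I would establish the inclusion $\RelLibre'_{\Dias_\gamma} \subseteq \phi^{-1}(\RelDias)$, namely that the image in $\Dias_\gamma$ of each element listed in~\eqref{equ:relation_dias_gamma_1_alternative}---\eqref{equ:relation_dias_gamma_13_alternative} vanishes. The efficient tool is the $\Ksf$-basis of Section~\ref{subsubsec:base_K}: identifying as there the word $0a$ with $\GDias_a$ and $a0$ with $\DDias_a$, Lemma~\ref{lem:expression_directe_base_K} gives $\Ksf^{(\gamma)}_{0a} = \GDias_a - \GDias_{a + 1}$ and $\Ksf^{(\gamma)}_{a0} = \DDias_a - \DDias_{a + 1}$ for $a < \gamma$, together with $\Ksf^{(\gamma)}_{0\gamma} = \GDias_\gamma$ and $\Ksf^{(\gamma)}_{\gamma 0} = \DDias_\gamma$. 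Hence the composite $\Mot_\gamma \circ \phi$, which realizes the quotient map onto $\Dias_\gamma$, sends $\GDiasA_a$ to $\Ksf^{(\gamma)}_{0a}$ and $\DDiasA_a$ to $\Ksf^{(\gamma)}_{a0}$, so each listed relation lies in $\phi^{-1}(\RelDias)$ once its image is seen to be zero in $\Dias_\gamma$. This is a direct case check with Theorem~\ref{thm:composition_base_K}: for instance, for $a < b$, its first and third cases give $\GDiasA_a \circ_1 \GDiasA_b \mapsto \Ksf^{(\gamma)}_{0ba}$ and $\GDiasA_a \circ_2 \DDiasA_b \mapsto \Ksf^{(\gamma)}_{0ba}$, which is~\eqref{equ:relation_dias_gamma_9_alternative}, while its middle case gives $\GDiasA_a \circ_2 \GDiasA_a \mapsto \sum_{a \leq b \in [\gamma]} \Ksf^{(\gamma)}_{0ba} = \sum_{a \leq b \in [\gamma]} \GDiasA_a \circ_1 \GDiasA_b$, which is~\eqref{equ:relation_dias_gamma_11_alternative}. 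The other families are handled in the same way, the sum relations~\eqref{equ:relation_dias_gamma_10_alternative}---\eqref{equ:relation_dias_gamma_13_alternative} always coming from the middle case of Theorem~\ref{thm:composition_base_K} and the others from the first and third; alternatively, one may just substitute $\GDias_a = \sum_{a \leq b \in [\gamma]} \GDiasA_b$ and $\DDias_a = \sum_{a \leq b \in [\gamma]} \DDiasA_b$ into~\eqref{equ:relation_dias_gamma_1_concise}---\eqref{equ:relation_dias_gamma_5_concise}.

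Then I would turn this inclusion into an equality by counting dimensions. The space $\OpLibre\left(\GenDias'\right)(3)$ has dimension $2(2\gamma)^2 = 8\gamma^2$, while $\dim \Dias_\gamma(3) = 3\gamma^2$ by Proposition~\ref{prop:elements_dias_gamma} and~\eqref{equ:serie_hilbert_dias_gamma}, so $\phi^{-1}(\RelDias)$ has dimension $5\gamma^2$; and the relations listed in~\eqref{equ:relation_dias_gamma_1_alternative}---\eqref{equ:relation_dias_gamma_13_alternative} number $\gamma^2 + 8\binom{\gamma}{2} + 4\gamma = 5\gamma^2$. So it only remains to prove that these $5\gamma^2$ elements are linearly independent in $\OpLibre\left(\GenDias'\right)(3)$, after which they form a basis of $\phi^{-1}(\RelDias)$ and the presentation is proved.

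That linear independence is the only real obstacle, and it is pure bookkeeping in the monomial basis $\{g \circ_i g' : g, g' \in \GenDias', \, i \in [2]\}$ of $\OpLibre\left(\GenDias'\right)(3)$. The plan is to split the listed relations into three packages supported on pairwise disjoint sets of monomials: relation~\eqref{equ:relation_dias_gamma_1_alternative}, which involves only the $\gamma^2$ monomials $\GDiasA_a \circ_1 \DDiasA_{a'}$ and $\DDiasA_{a'} \circ_2 \GDiasA_a$; a ``left'' package~\eqref{equ:relation_dias_gamma_3_alternative}, \eqref{equ:relation_dias_gamma_5_alternative}, \eqref{equ:relation_dias_gamma_7_alternative}, \eqref{equ:relation_dias_gamma_9_alternative}, \eqref{equ:relation_dias_gamma_11_alternative}, \eqref{equ:relation_dias_gamma_13_alternative}, involving only monomials $\GDiasA \circ_i \GDiasA$ and $\GDiasA \circ_2 \DDiasA$; and a mirror ``right'' package~\eqref{equ:relation_dias_gamma_2_alternative}, \eqref{equ:relation_dias_gamma_4_alternative}, \eqref{equ:relation_dias_gamma_6_alternative}, \eqref{equ:relation_dias_gamma_8_alternative}, \eqref{equ:relation_dias_gamma_10_alternative}, \eqref{equ:relation_dias_gamma_12_alternative}. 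Within the left package, independence follows by triangularity: the single-monomial relations~\eqref{equ:relation_dias_gamma_3_alternative} and~\eqref{equ:relation_dias_gamma_5_alternative} are obviously independent; modulo them, \eqref{equ:relation_dias_gamma_7_alternative} and~\eqref{equ:relation_dias_gamma_9_alternative} each carry a monomial ($\GDiasA_a \circ_2 \GDiasA_b$, resp. $\GDiasA_a \circ_2 \DDiasA_b$, with $a < b$) occurring nowhere else; and modulo all of these, \eqref{equ:relation_dias_gamma_11_alternative} and~\eqref{equ:relation_dias_gamma_13_alternative} carry the diagonal monomials $\GDiasA_a \circ_2 \GDiasA_a$, resp. $\GDiasA_a \circ_2 \DDiasA_a$, again occurring nowhere else. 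The right package is settled by the word-mirroring antiautomorphism of Proposition~\ref{prop:symetries_dias_gamma}. Beyond this bookkeeping I anticipate no difficulty.
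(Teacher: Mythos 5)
Your proposal is correct and follows essentially the same route as the paper: identify $\GDiasA_a$, $\DDiasA_a$ with $\Ksf^{(\gamma)}_{0a}$, $\Ksf^{(\gamma)}_{a0}$ and use Theorem~\ref{thm:composition_base_K} (equivalently, direct substitution into~\eqref{equ:relation_dias_gamma_1_concise}---\eqref{equ:relation_dias_gamma_5_concise}) to show each listed element vanishes in $\Dias_\gamma$, then conclude by the dimension count $5\gamma^2$ together with linear independence, which the paper merely asserts and you spell out. The only nit is your appeal to Proposition~\ref{prop:symetries_dias_gamma} for the ``right'' package: that proposition concerns symmetries of the set-operad $\Dias_\gamma$ itself, so you should instead invoke the obvious linear involution of $\OpLibre(\GenDias')(3)$ exchanging $\GDiasA_a \leftrightarrow \DDiasA_a$ and $\circ_1 \leftrightarrow \circ_2$, which sends the left package onto the right one up to sign.
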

\begin{proof}
    Let us show that $\RelLibre'_{\Dias_\gamma}$ is equal to the space
    of relations $\RelLibre_{\Dias_\gamma}$ of $\Dias_\gamma$ defined in
    the statement of Theorem~\ref{thm:presentation_dias_gamma}. First of
    all, recall that the map
    $\Mot_\gamma : \OpLibre\left(\GenDias\right) \to \Dias_\gamma$
    defined in Section~\ref{subsubsec:arbres_vers_mots} satisfies
    $\Mot_\gamma(\GDias_a) = 0a$ and $\Mot_\gamma(\DDias_a) = a0$ for
    all $a \in [\gamma]$. By Theorem~\ref{thm:presentation_dias_gamma},
    for any $x \in  \OpLibre\left(\GenDias\right)(3)$, $x$ is in
    $\RelLibre_{\Dias_\gamma}$ if and only if $\Mot_\gamma(x) = 0$.
    \smallskip

    Besides, by definition of $\GDiasA_a$, $\DDiasA_a$, $a \in [\gamma]$,
    and by making use of the $\Ksf$-basis of $\Dias_\gamma$, we have
    $\Mot_\gamma(\GDiasA_a) = \Ksf^{(\gamma)}_{0a}$ and
    $\Mot_\gamma(\DDiasA_a) = \Ksf^{(\gamma)}_{a0}$. By using the partial
    composition rules for $\Dias_\gamma$ over the $\Ksf$-basis of
    Theorem~\ref{thm:composition_base_K}, straightforward computations
    show that $\Mot_\gamma(x) = 0$ for all elements $x$
    among~\eqref{equ:relation_dias_gamma_1_alternative}---%
    \eqref{equ:relation_dias_gamma_13_alternative}.
    This implies that $\RelLibre'_{\Dias_\gamma}$ is a subspace
    of $\RelLibre_{\Dias_\gamma}$.
    \smallskip

    Now, one can observe that
    elements~\eqref{equ:relation_dias_gamma_1_alternative}---%
    \eqref{equ:relation_dias_gamma_13_alternative} are
    linearly independent. Then, $\RelLibre'_{\Dias_\gamma}$ has
    dimension $5 \gamma^2$ which is also, by
    Theorem~\ref{thm:presentation_dias_gamma}, the dimension of
    $\RelLibre_{\Dias_\gamma}$. Hence, $\RelLibre'_{\Dias_\gamma}$ and
    $\RelLibre_{\Dias_\gamma}$ are equal. The statement of the
    proposition follows.
\end{proof}
\medskip

Despite the apparent complexity of the presentation of $\Dias_\gamma$
exhibited by Proposition~\ref{prop:presentation_alternative_dias_gamma},
as we will see in Section~\ref{sec:dendr_gamma}, the Koszul dual
of $\Dias_\gamma$ computed from this presentation has a very simple
and manageable expression.
\medskip

%%%%%%%%%%%%%%%%%%%%%%%%%%%%%%%%%%%%%%%%%%%%%%%%%%%%%%%%%%%%%%%%%%%%%%%%
%%%%%%%%%%%%%%%%%%%%%%%%%%%%%%%%%%%%%%%%%%%%%%%%%%%%%%%%%%%%%%%%%%%%%%%%
%%%%%%%%%%%%%%%%%%%%%%%%%%%%%%%%%%%%%%%%%%%%%%%%%%%%%%%%%%%%%%%%%%%%%%%%
\section{Pluriassociative algebras} \label{sec:algebres_dias_gamma}
We now focus on algebras over $\gamma$-pluriassociative operads.
For this purpose, we construct free $\Dias_\gamma$-algebras over one
generator, and define and study two notions of units for
$\Dias_\gamma$-algebras. We end this section by introducing a convenient
way to define $\Dias_\gamma$-algebras and give several examples of such
algebras.
\medskip

%%%%%%%%%%%%%%%%%%%%%%%%%%%%%%%%%%%%%%%%%%%%%%%%%%%%%%%%%%%%%%%%%%%%%%%%
%%%%%%%%%%%%%%%%%%%%%%%%%%%%%%%%%%%%%%%%%%%%%%%%%%%%%%%%%%%%%%%%%%%%%%%%
\subsection{Category of pluriassociative algebras and free objects}
Let us study the category of $\Dias_\gamma$-algebras and the units for
algebras in this category.
\medskip

%%%%%%%%%%%%%%%%%%%%%%%%%%%%%%%%%%%%%%%%%%%%%%%%%%%%%%%%%%%%%%%%%%%%%%%%
\subsubsection{Pluriassociative algebras}
We call {\em $\gamma$-pluriassociative algebra} any
$\Dias_\gamma$-algebra. From the presentation of $\Dias_\gamma$ provided
by Theorem~\ref{thm:presentation_dias_gamma}, any
$\gamma$-pluriassociative algebra is a vector space endowed with linear
operations $\GDias_a, \DDias_a$, $a \in [\gamma]$, satisfying
the relations encoded by~\eqref{equ:relation_dias_gamma_1_concise}---%
\eqref{equ:relation_dias_gamma_5_concise}.
\medskip

%%%%%%%%%%%%%%%%%%%%%%%%%%%%%%%%%%%%%%%%%%%%%%%%%%%%%%%%%%%%%%%%%%%%%%%%
\subsubsection{General definitions}
Let $\Pca$ be a $\gamma$-pluriassociative algebra. We say that $\Pca$
is {\em commutative} if for all $x, y \in \Pca$ and $a \in [\gamma]$,
$x \GDias_a y = y \DDias_a x$. Besides, $\Pca$ is {\em pure} for all
$a, a' \in [\gamma]$, $a \ne a'$ implies $\GDias_a \ne \GDias_{a'}$ and
$\DDias_a \ne \DDias_{a'}$.
\medskip

Given a subset $C$ of $[\gamma]$, one can keep on the vector space $\Pca$
only the operations $\GDias_a$ and $\DDias_a$ such that $a \in C$.
By renumbering the indexes of these operations from $1$ to $\# C$ by
respecting their former relative numbering, we obtain a
$\# C$-pluriassociative algebra. We call it the
{\em $\# C$-pluriassociative subalgebra induced by} $C$ of $\Pca$.
\medskip

%%%%%%%%%%%%%%%%%%%%%%%%%%%%%%%%%%%%%%%%%%%%%%%%%%%%%%%%%%%%%%%%%%%%%%%%
\subsubsection{Free pluriassociative algebras}
\label{subsubsec:algebre_dias_gamma_libre}
Recall that $\AlgLibre_{\Dias_\gamma}$ denotes the free
$\Dias_\gamma$-algebra over one generator. By definition,
$\AlgLibre_{\Dias_\gamma}$ is the linear span of the set of the words on
$\{0\} \cup [\gamma]$ with exactly one occurrence of~$0$. Let us endow
this space with the linear operations
\begin{equation}
    \GDias_a, \DDias_a :
    \AlgLibre_{\Dias_\gamma} \otimes \AlgLibre_{\Dias_\gamma}
    \to \AlgLibre_{\Dias_\gamma},
    \qquad
    a \in [\gamma],
\end{equation}
satisfying, for any such words $u$ and $v$,
\begin{subequations}
\begin{equation}
    u \GDias_a v := u \: \Augm_a(v)
\end{equation}
and
\begin{equation}
    u \DDias_a v := \Augm_a(u) \: v,
\end{equation}
\end{subequations}
where $\Augm_a(u)$ (resp. $\Augm_a(v)$) is the word obtained by replacing
in $u$ (resp. $v$) any occurrence of a letter smaller than $a$ by $a$.
\medskip

\begin{Proposition} \label{prop:algebre_dias_gamma_libre}
    For any integer $\gamma \geq 0$, the vector space
    $\AlgLibre_{\Dias_\gamma}$ of nonempty words on $\{0\} \cup [\gamma]$
    containing exactly one occurrence of $0$ endowed with the operations
    $\GDias_a$, $\DDias_a$, $a \in [\gamma]$, is the free
    $\gamma$-pluriassociative algebra over one generator.
\end{Proposition}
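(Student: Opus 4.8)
The statement to be proved identifies the free $\gamma$-pluriassociative algebra $\AlgLibre_{\Dias_\gamma}$ over one generator with the explicit space of words on $\{0\} \cup [\gamma]$ having exactly one $0$, equipped with the operations $\GDias_a$ and $\DDias_a$ defined via the augmentation maps $\Augm_a$. Recall that by the general construction of free algebras over an operad (recalled in Section~\ref{sec:outils}), $\AlgLibre_{\Dias_\gamma}$ is, as a vector space, $\bigoplus_{n \geq 1} \Dias_\gamma(n)$, with the action of an operad element $x$ on a tuple given by operadic composition $x \circ (e_1, \dots, e_n)$. By Proposition~\ref{prop:elements_dias_gamma}, the underlying set of $\Dias_\gamma$ is precisely the set of words on $\{0\} \cup [\gamma]$ with exactly one $0$, so the vector spaces match. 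What must be checked is that, under this identification, the operad-induced binary operations coming from the generators $0a$ and $a0$ of $\Dias_\gamma$ coincide with the operations $u \GDias_a v = u\,\Augm_a(v)$ and $u \DDias_a v = \Augm_a(u)\,v$ defined in the statement.

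First I would recall, from Section~\ref{subsec:monoides_vers_operades}, the explicit formula for partial composition in $\T \Mca_\gamma$: for words $x$ and $y$, the composite $x \circ_i y$ replaces the $i$th letter $x_i$ of $x$ by the word $(x_i \Max y_1)\,(x_i \Max y_2)\cdots(x_i \Max y_m)$, i.e.\ it inserts $y$ at position $i$ while taking the coordinatewise maximum of $x_i$ with each letter of $y$. The binary operation on $\AlgLibre_{\Dias_\gamma}$ associated to the generator $x \in \Dias_\gamma(2)$ sends $(u, v)$ to $x \circ (u, v) = (x \circ_2 v) \circ_1 u$. So the core computation is: for $x = 0a$, evaluate $(0a \circ_2 v) \circ_1 u$, and for $x = a0$, evaluate $(a0 \circ_2 v) \circ_1 u$. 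Take $x = 0a$: since the second letter of $0a$ is $a$, the composite $0a \circ_2 v$ inserts $v$ at the second position with $a$-maxima, giving $0\,(a \Max v_1)\cdots(a \Max v_{|v|})$; the word $(a \Max v_1)\cdots(a \Max v_{|v|})$ is exactly $\Augm_a(v)$ since $\Augm_a$ replaces letters smaller than $a$ by $a$, which is pointwise $\max$ with $a$. Then composing with $u$ at position $1$: the first letter of $0\,\Augm_a(v)$ is $0$, so $\circ_1 u$ inserts $u$ with $0$-maxima, i.e.\ just inserts $u$ unchanged (since $0$ is neutral for $\Max$), yielding $u\,\Augm_a(v)$. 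This is exactly $u \GDias_a v$. Symmetrically, for $x = a0$: $a0 \circ_2 v$ inserts $v$ at position $2$ with $0$-maxima, giving $a\,v$; then $(a v) \circ_1 u$ inserts $u$ at position $1$ with $a$-maxima, giving $\Augm_a(u)\,v = u \DDias_a v$.

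Having verified that the operations agree on generators, I would then invoke the remark after~\eqref{equ:relation_algebre_sur_operade}: since $\GenDias = \Dias_\gamma(2)$ generates the operad $\Dias_\gamma$, an $\Oca$-algebra structure on a given vector space is completely determined by the action of the generators on pairs. Therefore the operations $\GDias_a$, $\DDias_a$ given in the statement, which we have just shown coincide with the operad-induced operations on the free algebra, do define on the space of words the structure of $\AlgLibre_{\Dias_\gamma}$, and this structure is the free one. To be fully rigorous one should also note that the space of words with exactly one $0$, together with these operations, is indeed generated as a $\Dias_\gamma$-algebra by the single generator $0 \in \Dias_\gamma(1)$, which follows from Proposition~\ref{prop:elements_dias_gamma} (or from the explicit insertion description, which shows every word is obtained from $0$ by repeatedly inserting nonzero letters).

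I do not expect any serious obstacle here: the statement is essentially a bookkeeping translation of the operadic composition formula for $\T \Mca_\gamma$ into the language of words, and the only subtlety is to keep the order of the two partial compositions straight (first $\circ_2 v$, then $\circ_1 u$) and to observe that maximizing with $0$ is the identity while maximizing with $a$ reproduces $\Augm_a$. The mildest care is needed in matching left/right: the generator $0a$ (whose $0$ is in the \emph{left} position) yields the operation $\GDias_a$ that augments the \emph{right} operand, and vice versa; this is consistent with the hook-tree conventions of Section~\ref{subsubsec:arbres_vers_mots}, where $\GDias_a$ corresponds to the word $0a$. The proof is thus a short direct verification.
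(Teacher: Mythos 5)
Your proposal is correct and follows essentially the same route as the paper: identify the underlying vector space via Proposition~\ref{prop:elements_dias_gamma}, then compute $(0a \circ_2 v) \circ_1 u = u\,\Augm_a(v)$ and $(a0 \circ_2 v) \circ_1 u = \Augm_a(u)\,v$ directly from the partial composition of $\T\Mca_\gamma$, freeness being automatic from the general description of the free $\Oca$-algebra over one generator. Your extra remarks (generation by $0$, the order of the partial compositions) are just a more detailed spelling-out of the same verification.
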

\begin{proof}
    The fact that $\AlgLibre_{\Dias_\gamma}$ is the stated vector space
    is a consequence of the description of the elements of $\Dias_\gamma$
    provided by Proposition~\ref{prop:elements_dias_gamma}. Since
    $\Dias_\gamma$ is by definition the suboperad of $\T \Mca_\gamma$
    generated by $\{0a, a0 : a \in [\gamma]\}$, $\AlgLibre_{\Dias_\gamma}$
    is endowed with $2\gamma$ binary operations where any generator
    $0a$ (resp. $a0$) gives rise to the operation $\GDias_a$ (resp.
    $\DDias_a$) of $\AlgLibre_{\Dias_\gamma}$. Moreover, by making use
    of the realization of $\Dias_\gamma$, we have for all
    $u, v \in \AlgLibre_{\Dias_\gamma}$ and $a \in [\gamma]$,
    \begin{subequations}
    \begin{equation}
        u \GDias_a v
        = (u \otimes v) \Action 0a
        = (0a \circ_2 v) \circ_1 u = u \: \Augm_a(v)
    \end{equation}
    and
    \begin{equation}
        u \DDias_a v
        = (u \otimes v) \Action a0
        = (a0 \circ_2 v) \circ_1 u = \Augm_a(u) \: v.
    \end{equation}
    \end{subequations}
\end{proof}
\medskip

One has for instance in $\AlgLibre_{\Dias_4}$,
\begin{equation}
    \textcolor{Bleu}{101241} \GDias_2 \textcolor{Rouge}{203} =
    \textcolor{Bleu}{101241}\textcolor{Rouge}{2{\bf 2}3}
\end{equation}
and
\begin{equation}
    \textcolor{Bleu}{101241} \DDias_3 \textcolor{Rouge}{203} =
    \textcolor{Bleu}{{\bf 3333}4{\bf 3}}\textcolor{Rouge}{203}.
\end{equation}
\medskip

%%%%%%%%%%%%%%%%%%%%%%%%%%%%%%%%%%%%%%%%%%%%%%%%%%%%%%%%%%%%%%%%%%%%%%%%
%%%%%%%%%%%%%%%%%%%%%%%%%%%%%%%%%%%%%%%%%%%%%%%%%%%%%%%%%%%%%%%%%%%%%%%%
\subsection{Bar and wire-units}
Loday has defined in~\cite{Lod01} some notions of units in diassociative
algebras. We generalize here these definitions to the context of
$\gamma$-pluriassociative algebras.
\medskip

%%%%%%%%%%%%%%%%%%%%%%%%%%%%%%%%%%%%%%%%%%%%%%%%%%%%%%%%%%%%%%%%%%%%%%%%
\subsubsection{Bar-units}
Let $\Pca$ be a $\gamma$-pluriassociative algebra and $a \in [\gamma]$.
We say that an element $e$ of $\Pca$ is an {\em $a$-bar-unit}, or
simply a {\em bar-unit} when taking into account the value of $a$
is not necessary, of $\Pca$ if for all $x \in \Pca$,
\begin{equation}
    x \GDias_a e = x = e \DDias_a x.
\end{equation}
As we shall see below, a $\gamma$-pluriassociative algebra can have,
for a given $a \in [\gamma]$, several $a$-bar-units. The {\em $a$-halo}
of $\Pca$, denoted by $\Halo_a(\Pca)$, is the set of the $a$-bar-units
of $\Pca$.
\medskip

%%%%%%%%%%%%%%%%%%%%%%%%%%%%%%%%%%%%%%%%%%%%%%%%%%%%%%%%%%%%%%%%%%%%%%%%
\subsubsection{Wire-units}
Let $\Pca$ be a $\gamma$-pluriassociative algebra and $a \in [\gamma]$.
We say that an element $e$ of $\Pca$ is an {\em $a$-wire-unit}, or
simply a {\em wire-unit} when taking into account the value of $a$
is not necessary, of $\Pca$ if for all $x \in \Pca$,
\begin{equation}
    e \GDias_a x = x = x \DDias_a e.
\end{equation}
As shows the following proposition, the presence of a wire-unit in
$\Pca$ has some implications.
\medskip

\begin{Proposition} \label{prop:unite_fil}
    Let $\gamma \geq 0$ be an integer and $\Pca$ be a
    $\gamma$-pluriassociative algebra admitting a $b$-wire-unit $e$
    for a $b \in [\gamma]$. Then
    \begin{enumerate}[label={\it (\roman*)}]
        \item \label{prop:unite_fil_1}
        for all $a \in [b]$, the operations $\GDias_a$,
        $\GDias_b$, $\DDias_a$, and $\DDias_b$ of $\Pca$ are equal;
        \item \label{prop:unite_fil_2}
        $e$ is also an $a$-wire-unit for all $a \in [b]$;
        \item \label{prop:unite_fil_3}
        $e$ is the only wire-unit of $\Pca$;
        \item \label{prop:unite_fil_4}
        if $e'$ is an $a$-bar unit for a $a \in [b]$, then $e' = e$.
        %the set of all $a$-bar-units of $\Pca$ satisfying $a \in [b]$
        %is $\{e\}$.
    \end{enumerate}
\end{Proposition}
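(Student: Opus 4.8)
The plan is to use the defining relations of $\gamma$-pluriassociative algebras, namely Relations~\eqref{equ:relation_dias_gamma_1_concise}---\eqref{equ:relation_dias_gamma_5_concise} translated into operations on $\Pca$, together with the wire-unit hypothesis $e \GDias_b x = x = x \DDias_b e$. I will establish the four items essentially in the order stated, since each leans on the previous ones.

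For~\ref{prop:unite_fil_1}: the key relation is~\eqref{equ:relation_dias_gamma_2_concise}, which on $\Pca$ reads $(x \GDias_a y') \GDias_{a \Max a'} z = x \GDias_a (y' \DDias_{a'} z)$ for all $a,a' \in [\gamma]$. Taking $y' = e$ and $a' = b$, the left side becomes $(x \GDias_a e) \GDias_{a \Max b} z$; if $a \leq b$ then $a \Max b = b$, and the right side is $x \GDias_a (e \DDias_b z)$. Here I will need to pin down $x \GDias_a e$ and $e \DDias_b z$. The wire-unit condition directly gives $e \DDias_b z = z$ only if $z$ itself plays the role of ``$x$'' — more precisely $x \DDias_b e = x$, not $e \DDias_b z$; so I will use the \emph{dual} relation~\eqref{equ:relation_dias_gamma_3_concise}, $(x \GDias_a y) \DDias_b z = (x \DDias_b z)$ with appropriate substitutions, or rather Relation~\eqref{equ:relation_dias_gamma_1_concise} in the form $(x \DDias_{a'} y) \GDias_a z = x \DDias_{a'}(y \GDias_a z)$ to shuffle $e$ into a position where its wire-unit property applies. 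The cleanest route: from~\eqref{equ:relation_dias_gamma_3_concise}, $(x \DDias_a y) \DDias_{a \Max a'} z$-type manipulations let me write, with one slot filled by $e$, an identity of the shape $x \GDias_a y = x \GDias_b y$ or $x \GDias_a y = x \DDias_a y$; iterating over the five relation families and using $e \GDias_b(\cdot) = (\cdot) = (\cdot)\DDias_b e$ should collapse all of $\GDias_a, \GDias_b, \DDias_a, \DDias_b$ into one operation for $a \le b$. This computational juggling is the main obstacle — one has to choose the right substitutions so that a wire-unit lands precisely on the side governed by its identity, and there are several relations and several slots to try.

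Item~\ref{prop:unite_fil_2} is then nearly immediate: by~\ref{prop:unite_fil_1}, for $a \le b$ we have $\GDias_a = \GDias_b$ and $\DDias_a = \DDias_b$ as operations on $\Pca$, so $e \GDias_a x = e \GDias_b x = x$ and $x \DDias_a e = x \DDias_b e = x$, i.e.\ $e$ is an $a$-wire-unit. For~\ref{prop:unite_fil_3}, suppose $e'$ is a $c$-wire-unit for some $c \in [\gamma]$; apply~\ref{prop:unite_fil_1} to $e$ (governing indices $\le b$) and to $e'$ (governing indices $\le c$), and compute $e \GDias_{\min(b,c)} e'$ two ways using the wire-unit property of each, obtaining $e' = e$ once $\min(b,c)$ is a valid index for both — which it is. For~\ref{prop:unite_fil_4}, if $e'$ is an $a$-bar-unit with $a \le b$, then $x \GDias_a e' = x = e' \DDias_a x$; compute $e \GDias_a e'$: as $e$ is an $a$-wire-unit (by~\ref{prop:unite_fil_2}), $e \GDias_a e' = e'$, while as $e'$ is an $a$-bar-unit, $e \GDias_a e' = e$; hence $e' = e$.

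Overall the skeleton is short; the one genuinely delicate step is item~\ref{prop:unite_fil_1}, where the bookkeeping of which of the five concise relations, in which of its two or three composition slots, should receive the wire-unit $e$ must be done carefully so that $e$'s one-sided identities fire on the correct side. I would present that step by fixing $a \le b$ and walking through~\eqref{equ:relation_dias_gamma_1_concise}--\eqref{equ:relation_dias_gamma_5_concise} one at a time with $e$ substituted into the slot that, after using $a \Max b = b$, makes one side reduce via the wire-unit axiom.
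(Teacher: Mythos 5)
There is a genuine gap, and it sits exactly where you flag it: item~\ref{prop:unite_fil_1} is never actually proved. Your one attempted substitution is based on a misreading of~\eqref{equ:relation_dias_gamma_2_concise}: that relation says $(x \GDias_{a \Max a'} y) \GDias_a z = x \GDias_a (y \DDias_{a'} z)$, with the index $a\Max a'$ on the \emph{inner} generator, not $(x \GDias_a y)\GDias_{a\Max a'} z = x \GDias_a(y\DDias_{a'} z)$ as you wrote. More importantly, placing $e$ in the middle or last slot of a $\GDias$-expression can only produce terms of the shape $x \GDias_a e$ or $e \DDias_b z$, which are bar-unit positions on which the hypothesis is silent, so no amount of shuffling with $e$ in those slots will close the argument; announcing that you would "walk through the five relations" until something fires is a search plan, not a proof. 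The missing idea is to put $e$ in the \emph{first} slot and wrap with the wire-unit: for $a \in [b]$ and all $y, z \in \Pca$, the wire-unit gives $y \GDias_a z = e \GDias_b (y \GDias_a z)$ and $y \DDias_a z = e \GDias_b (y \DDias_a z)$; then \eqref{equ:relation_dias_gamma_4_concise} applied to the pair $(b,a)$ (so $b \Max a = b$) turns the first into $(e \GDias_b y)\GDias_b z = y \GDias_b z$, and \eqref{equ:relation_dias_gamma_2_concise} turns the second into the same expression $(e \GDias_b y)\GDias_b z = y \GDias_b z$. Hence $\GDias_a = \GDias_b = \DDias_a$ for all $a \in [b]$, and taking $a = b$ gives $\DDias_b$ as well; these two three-step chains are precisely the content of the paper's proof of~\ref{prop:unite_fil_1}.

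Items~\ref{prop:unite_fil_2}--\ref{prop:unite_fil_4} of your plan are correct, granted~\ref{prop:unite_fil_1}. For~\ref{prop:unite_fil_3}, be aware that "compute $e \GDias_{b \Min c} e'$ two ways using the wire-unit property of each" is not quite literal: the wire-unit property of $e'$ applies to $e \DDias_{b \Min c} e'$, not to $e \GDias_{b\Min c} e'$, so you must first invoke~\ref{prop:unite_fil_1} (for $e$ or for $e'$) to identify $\GDias_{b \Min c}$ with $\DDias_{b\Min c}$ — you do say you apply~\ref{prop:unite_fil_1} to both, and this is exactly the paper's computation $e = e \DDias_{b'} e' = e \GDias_b e' = e'$. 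Your argument for~\ref{prop:unite_fil_4}, evaluating $e \GDias_a e'$ once via the $a$-wire-unit property of $e$ (from~\ref{prop:unite_fil_2}) and once via the $a$-bar-unit property of $e'$, is correct and in fact slightly more direct than the paper's route, which first shows that $e$ is a $b$-bar-unit and then computes $e = e' \DDias_a e = e' \DDias_b e = e'$.
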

\begin{proof}
    Let us show part~\ref{prop:unite_fil_1}. By
    Relation~\eqref{equ:relation_dias_gamma_4_concise} of
    $\gamma$-pluriassociative algebras and by the fact that $e$ is a
    $b$-wire-unit of $\Pca$, we have for all elements $y$ and $z$
    of $\Pca$ and all $a \in [b]$,
    \begin{equation}
        y \GDias_a z = e \GDias_b (y \GDias_a z) =
        e \GDias_b (y \DDias_a z) = y \DDias_a z.
    \end{equation}
    Thus, the operations $\GDias_a$ and $\DDias_a$ of $\Pca$ are equal.
    Moreover, for the same reasons, we have
    \begin{equation}
        y \GDias_a z = e \GDias_b (y \GDias_a z) =
        (e \GDias_b y) \GDias_b z = y \GDias_b z.
    \end{equation}
    Then, the operations $\GDias_a$ and $\GDias_b$ of $\Pca$ are equal,
    whence~\ref{prop:unite_fil_1}.
    \smallskip

    Now, by~\ref{prop:unite_fil_1} and by the fact that $e$ is a
    $b$-wire-unit, we have for all elements $x$ of $\Pca$ and all
    $a \in [b]$,
    \begin{equation}
        e \GDias_a x = e \GDias_b x = x = x \DDias_b e = x \DDias_a e,
    \end{equation}
    showing~\ref{prop:unite_fil_2}.
    \smallskip

    To prove~\ref{prop:unite_fil_3}, assume that $e'$ is a $b'$-wire-unit
    of $\Pca$ for a $b' \in [\gamma]$. By~\ref{prop:unite_fil_1} and by
    the fact that $e$ is a $b$-wire-unit, one has
    \begin{equation}
        e = e \DDias_{b'} e' = e \GDias_b e' = e',
    \end{equation}
    showing~\ref{prop:unite_fil_3}.
    \smallskip

    To establish~\ref{prop:unite_fil_4}, let us first prove that $e$ is
    a $b$-bar-unit. By~\ref{prop:unite_fil_1} and by the fact that $e$
    is a $b$-wire-unit, we have for all elements $x$ of $\Pca$,
    \begin{equation}
        e \DDias_b x = e \GDias_b x = x = x \DDias_b e = x \GDias_b e.
    \end{equation}
    Now, since $e'$ is an $a$-bar-unit for an $a \in [b]$,
    by~\ref{prop:unite_fil_1} and by the fact that $e$ is a $b$-wire-unit,
    \begin{equation}
        e = e' \DDias_a e = e' \DDias_b e = e'.
    \end{equation}
    This shows~\ref{prop:unite_fil_4}.
\end{proof}
\medskip

Relying on Proposition~\ref{prop:unite_fil}, we define the {\em height}
of a $\gamma$-pluriassociative algebra $\Pca$ as zero if $\Pca$ has no
wire-unit, otherwise as the greatest integer $h \in [\gamma]$ such that
the unique wire-unit $e$ of $\Pca$ is a $h$-wire-unit. Observe that any
pure $\gamma$-pluriassociative algebra has height $0$ or $1$.
\medskip

%%%%%%%%%%%%%%%%%%%%%%%%%%%%%%%%%%%%%%%%%%%%%%%%%%%%%%%%%%%%%%%%%%%%%%%%
%%%%%%%%%%%%%%%%%%%%%%%%%%%%%%%%%%%%%%%%%%%%%%%%%%%%%%%%%%%%%%%%%%%%%%%%
\subsection{Construction of pluriassociative algebras}
We now present a general way to construct $\gamma$-pluriassociative
algebras. Our construction is a natural generalization of some
constructions introduced by Loday~\cite{Lod01} in the context of
diassociative algebras. We introduce in this section new algebraic
structures, the so-called $\gamma$-multiprojection algebras, which are
inputs of our construction.
\medskip

%%%%%%%%%%%%%%%%%%%%%%%%%%%%%%%%%%%%%%%%%%%%%%%%%%%%%%%%%%%%%%%%%%%%%%%%
\subsubsection{Multiassociative algebras}%
\label{subsubsec:algebres_multiassociatives}
For any integer $\gamma \geq 0$, a {\em $\gamma$-multiassociative algebra}
is a vector space $\Mca$ endowed with linear operations
\begin{equation}
    \MAs_a : \Mca \otimes \Mca \to \Mca,
    \qquad a \in [\gamma],
\end{equation}
satisfying, for all $x, y, z \in \Mca$, the relations
\begin{equation} \label{equ:relation_algebre_multiassoc}
    (x \MAs_a y) \MAs_b z =
    (x \MAs_b y) \MAs_{a'} z =
    x \MAs_{a''} (y \MAs_b z) =
    x \MAs_b (y \MAs_{a'''} z),
    \qquad a, a', a'', a''' \leq b \in [\gamma].
\end{equation}
These algebras are obvious generalizations of associative algebras since
all of its operations are associative. Observe that
by~\eqref{equ:relation_algebre_multiassoc}, all bracketings of
an expression involving elements of a $\gamma$-multiassociative algebra
and some of its operations are equal. Then, since the bracketings of such
expressions are not significant, we shall denote these without parenthesis.
In Section~\ref{sec:as_gamma} we will study the operads governing
these for a very specific purpose.
\medskip

If $\Mca_1$ and $\Mca_2$ are two $\gamma$-multiassociative algebras,
a linear map $\phi : \Mca_1 \to \Mca_2$ is a
{\em $\gamma$-multiassociative algebra morphism} if it commutes with
the operations of $\Mca_1$ and $\Mca_2$. We say that $\Mca$ is
{\em commutative} when all operations of $\Mca$ are commutative.
Besides, for an $a \in [\gamma]$, an element $\Unite$ of $\Mca$ is an
{\em $a$-unit}, or simply a {\em unit} when taking into account the
value of $a$ is not necessary, of $\Mca$ if for all $x \in \Mca$,
$\Unite \MAs_a x = x = x \MAs_a \Unite$. When $\Mca$ admits a unit,
we say that $\Mca$ is {\em unital}. As shows the following proposition,
the presence of a unit in $\Mca$ has some implications.
\medskip

\begin{Proposition} \label{prop:unites_algebre_multiassociative}
    Let $\gamma \geq 0$ be an integer and $\Mca$ be a
    $\gamma$-multiassociative algebra admitting a $b$-unit $\Unite$
    for a $b \in [\gamma]$. Then
    \begin{enumerate}[label={\it (\roman*)}]
        \item \label{prop:unites_algebre_multiassociative_1}
        for all $a \in [b]$, the operations $\MAs_a$ and
        $\MAs_b$ of $\Mca$ are equal;
        \item \label{prop:unites_algebre_multiassociative_2}
        $\Unite$ is also an $a$-unit for all $a \in [b]$;
        \item \label{prop:unites_algebre_multiassociative_3}
        $\Unite$ is the only unit of $\Mca$.
    \end{enumerate}
\end{Proposition}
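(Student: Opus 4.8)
The plan is to follow the same strategy as in the proof of Proposition~\ref{prop:unite_fil}: each assertion will be obtained by suitably specializing the defining relations~\eqref{equ:relation_algebre_multiassoc} of $\gamma$-multiassociative algebras, by plugging the $b$-unit $\Unite$ into one of the three arguments and then erasing it via $\Unite \MAs_b x = x = x \MAs_b \Unite$.

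For part~\ref{prop:unites_algebre_multiassociative_1}, I would use in~\eqref{equ:relation_algebre_multiassoc} the equality between its second and fourth terms, namely $(x \MAs_b y) \MAs_{a'} z = x \MAs_b (y \MAs_{a'''} z)$, and specialize it with $x := \Unite$, with $a' := a$ for an arbitrary $a \in [b]$, and with $a''' := b$ (which is admissible since $b \leq b$). Because $\Unite$ is a $b$-unit, the left-hand side collapses to $y \MAs_a z$ and the right-hand side to $y \MAs_b z$; since $y$ and $z$ are arbitrary elements of $\Mca$, this yields $\MAs_a = \MAs_b$, which is exactly~\ref{prop:unites_algebre_multiassociative_1}.

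Part~\ref{prop:unites_algebre_multiassociative_2} is then immediate: for $a \in [b]$ and $x \in \Mca$, part~\ref{prop:unites_algebre_multiassociative_1} gives $\Unite \MAs_a x = \Unite \MAs_b x = x$ and $x \MAs_a \Unite = x \MAs_b \Unite = x$, so $\Unite$ is an $a$-unit. For part~\ref{prop:unites_algebre_multiassociative_3}, suppose $\Unite'$ is a $b'$-unit of $\Mca$ for some $b' \in [\gamma]$, and set $c := \min(b, b') \in [\gamma]$. By part~\ref{prop:unites_algebre_multiassociative_2} applied to $\Unite$ (with $c \leq b$) and to $\Unite'$ (with $c \leq b'$), both $\Unite$ and $\Unite'$ are $c$-units. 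Hence $\Unite = \Unite \MAs_c \Unite' = \Unite'$, the first equality using that $\Unite'$ is a $c$-unit and the second using that $\Unite$ is one; this proves uniqueness.

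There is no genuinely hard step here: the only point requiring care is selecting the right one of the three equalities packaged inside~\eqref{equ:relation_algebre_multiassoc}. Unlike the pluriassociative setting of Proposition~\ref{prop:unite_fil}, where several relations are available, here a single family of relations must do all the work, so one must choose the specialization that lets the $b$-unit cancel on both sides simultaneously; the equality between the $(x \MAs_b y) \MAs_{a'} z$ and $x \MAs_b (y \MAs_{a'''} z)$ terms is the one with that property.
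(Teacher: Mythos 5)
Your proof is correct and follows essentially the same route as the paper: specialize the defining relation~\eqref{equ:relation_algebre_multiassoc} with the $b$-unit to obtain~\ref{prop:unites_algebre_multiassociative_1}, deduce~\ref{prop:unites_algebre_multiassociative_2} immediately, and compare two evaluations of a product of the two units for~\ref{prop:unites_algebre_multiassociative_3}. The only cosmetic differences are that the paper proves~\ref{prop:unites_algebre_multiassociative_1} by writing $y \MAs_a z = y \MAs_a z \MAs_b \Unite = y \MAs_b z \MAs_b \Unite = y \MAs_b z$ (unit appended on the right) instead of your left-hand specialization, and in~\ref{prop:unites_algebre_multiassociative_3} it replaces $\MAs_{b'}$ by $\MAs_b$ directly, whereas your choice of $c = \min(b, b')$ handles the case $b' > b$ a bit more explicitly.
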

\begin{proof}
    By Relation~\eqref{equ:relation_algebre_multiassoc} of
    $\gamma$-multiassociative algebras and by the fact that $\Unite$ is
    a $b$-unit of $\Mca$, we have for all elements $y$ and $z$ of $\Mca$
    and all $a \in [b]$,
    \begin{equation}
        y \MAs_a z
            = y \MAs_a z \MAs_b \Unite
            = y \MAs_b z \MAs_b \Unite
            = y \MAs_b z.
    \end{equation}
    Therefore, $\MAs_a = \MAs_b$,
    showing~\ref{prop:unites_algebre_multiassociative_1}.
    \smallskip

    Now, by~\ref{prop:unites_algebre_multiassociative_1} and by the fact
    that $\Unite$ is a $b$-unit, we have for all elements $x$ of $\Mca$
    and all $a \in [b]$,
    \begin{equation}
        \Unite \MAs_a x
            = \Unite \MAs_b x
            = x
            = x \MAs_b \Unite = x \MAs_a \Unite,
    \end{equation}
    showing~\ref{prop:unites_algebre_multiassociative_2}.
    \smallskip

    To prove~\ref{prop:unites_algebre_multiassociative_3}, assume
    that $\Unite'$ is a $b'$-unit of $\Mca$ for a $b' \in [\gamma]$.
    By~\ref{prop:unites_algebre_multiassociative_1} and by the fact that
    $\Unite$ is a $b$-unit, one has
    \begin{equation}
        \Unite
            = \Unite \MAs_{b'} \Unite'
            = \Unite \MAs_b \Unite'
            = \Unite',
    \end{equation}
    establishing~\ref{prop:unites_algebre_multiassociative_3}.
\end{proof}
\medskip

Relying on Proposition~\ref{prop:unites_algebre_multiassociative},
similarly to the case of $\gamma$-pluriassociative algebras, we define
the {\em height} of a $\gamma$-multiassociative algebra $\Mca$ as zero
if $\Mca$ has no unit, otherwise as the greatest integer $h \in [\gamma]$
such that the unit $\Unite$ of $\Mca$ is an $h$-unit.
\medskip

%%%%%%%%%%%%%%%%%%%%%%%%%%%%%%%%%%%%%%%%%%%%%%%%%%%%%%%%%%%%%%%%%%%%%%%%
\subsubsection{Multiprojection algebras}
We call {\em $\gamma$-multiprojection algebra} any $\gamma$-multiassociative
algebra $\Mca$ endowed with endomorphisms
\begin{equation}
    \pi_a : \Mca \to \Mca,
    \qquad a \in [\gamma],
\end{equation}
satisfying
\begin{equation} \label{equ:relation_algebre_multiproj}
    \pi_a \circ \pi_{a'} = \pi_{a \Max a'},
    \qquad a, a' \in [\gamma].
\end{equation}
\medskip

By extension, the {\em height} of $\Mca$ is its height as a
$\gamma$-multiassociative algebra. We say that $\Mca$ is {\em unital} as
a $\gamma$-multiprojection algebra if $\Mca$ is unital as a
$\gamma$-multiassociative algebra and its only, by
Proposition~\ref{prop:unites_algebre_multiassociative}, unit $\Unite$
satisfies $\pi_a(\Unite) = \Unite$ for all $a \in [h]$ where $h$ is the
height of~$\Mca$.
\medskip

%%%%%%%%%%%%%%%%%%%%%%%%%%%%%%%%%%%%%%%%%%%%%%%%%%%%%%%%%%%%%%%%%%%%%%%%
\subsubsection{From multiprojection algebras to pluriassociative algebras}
Next result describes how to construct $\gamma$-pluriassociative
algebras from $\gamma$-multiprojection algebras.
\medskip

\begin{Theoreme} \label{thm:algebre_multiprojections_vers_dias_gamma}
    For any integer $\gamma \geq 0$ and any $\gamma$-multiprojection
    algebra $\Mca$, the vector space $\Mca$ endowed with binary linear
    operations $\GDias_a$, $\DDias_a$, $a \in [\gamma]$, defined for all
    $x, y \in \Mca$ by
    \begin{subequations}
    \begin{equation}
        x \GDias_a y := x \MAs_a \pi_a(y)
    \end{equation}
    and
    \begin{equation}
        x \DDias_a y := \pi_a(x) \MAs_a y,
    \end{equation}
    \end{subequations}
    where the $\MAs_a$, $a \in [\gamma]$, are the operations of $\Mca$ and
    the $\pi_a$, $a \in [\gamma]$, are its endomorphisms, is a
    $\gamma$-pluriassociative algebra, denoted by $\MProjVersPluri(\Mca)$.
\end{Theoreme}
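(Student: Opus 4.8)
The plan is to verify directly that the operations $\GDias_a,\DDias_a$, $a\in[\gamma]$, defined on $\Mca$ by $x\GDias_a y := x \MAs_a \pi_a(y)$ and $x\DDias_a y := \pi_a(x)\MAs_a y$, satisfy the five families of relations \eqref{equ:relation_dias_gamma_1_concise}---\eqref{equ:relation_dias_gamma_5_concise}, which by Theorem~\ref{thm:presentation_dias_gamma} is exactly what it means to be a $\gamma$-pluriassociative algebra. Using the algebra–operad dictionary \eqref{equ:relation_algebre_sur_operade}, a quadratic relation $\mu\circ_1\nu-\mu'\circ_2\nu'$ translates into the identity $\mu(\nu(x,y),z)=\mu'(x,\nu'(y,z))$ for all $x,y,z\in\Mca$, which I then expand with the above definitions. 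Three tools will be used throughout: (i) each $\pi_a$ is an endomorphism of the underlying $\gamma$-multiassociative algebra, so $\pi_a(u\MAs_b v)=\pi_a(u)\MAs_b\pi_a(v)$ for every $b$, which lets me push the projection on the right of a $\GDias$ (or on the left of a $\DDias$) all the way inside; (ii) $\pi_a\circ\pi_{a'}=\pi_{a\Max a'}$ by \eqref{equ:relation_algebre_multiproj}, which is precisely what produces the index $a\Max a'$ appearing in \eqref{equ:relation_dias_gamma_2_concise}---\eqref{equ:relation_dias_gamma_5_concise}; (iii) by \eqref{equ:relation_algebre_multiassoc}, in any ternary product built from two operations $\MAs_c,\MAs_d$ one may both re-bracket freely and replace whichever of $c,d$ is not the maximum by any index $\leq\max(c,d)$, provided the maximal index already occurs.

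Concretely: for \eqref{equ:relation_dias_gamma_1_concise} one gets $(\pi_{a'}(x)\MAs_{a'}y)\MAs_a\pi_a(z)=\pi_{a'}(x)\MAs_{a'}(y\MAs_a\pi_a(z))$, which is immediate from tool (iii) alone (splitting into the cases $a'\leq a$ and $a'>a$). For \eqref{equ:relation_dias_gamma_2_concise}, writing $m:=a\Max a'$, the left side is $(x\MAs_m\pi_m(y))\MAs_a\pi_a(z)$ and the right side is $x\MAs_a\pi_a(\pi_{a'}(y)\MAs_{a'}z)=x\MAs_a(\pi_m(y)\MAs_{a'}\pi_a(z))$ after applying (i) and (ii); by (iii) both rewrite to $x\MAs_m(\pi_m(y)\MAs_{a'}\pi_a(z))$. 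The relation \eqref{equ:relation_dias_gamma_4_concise} is handled the same way, and \eqref{equ:relation_dias_gamma_3_concise}, \eqref{equ:relation_dias_gamma_5_concise} are the mirror images of \eqref{equ:relation_dias_gamma_2_concise}, \eqref{equ:relation_dias_gamma_4_concise} under the anti-automorphism of $\Dias_\gamma$ exchanging $\GDias_a\leftrightarrow\DDias_a$ (Proposition~\ref{prop:symetries_dias_gamma}); since $\MProjVersPluri$ intertwines this anti-automorphism with the passage to the opposite $\gamma$-multiassociative algebra (which is again a $\gamma$-multiprojection algebra, with the same endomorphisms $\pi_a$), it suffices to treat \eqref{equ:relation_dias_gamma_1_concise}, \eqref{equ:relation_dias_gamma_2_concise}, \eqref{equ:relation_dias_gamma_4_concise}. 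Alternatively, and just as quickly, all five can simply be checked by the same mechanical expansion.

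The main obstacle is not conceptual but bookkeeping: relation \eqref{equ:relation_algebre_multiassoc} as stated only re-brackets a ternary product in which the \emph{inner} or \emph{outer} operation already carries the maximal index, so at each step one must first check that the maximum among the indices present does occur (it always does, after using (ii)), and one typically must first raise a non-maximal index up to the maximum before re-bracketing; this forces a short case split on which of $a$, $a'$, $a\Max a'$ coincide. Beyond this I expect no difficulty: once tools (i)--(iii) are in place the verification of each relation is a one-line rewriting.
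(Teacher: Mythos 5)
Your proposal is correct and follows essentially the same route as the paper: a direct check of Relations~\eqref{equ:relation_dias_gamma_1_concise}---\eqref{equ:relation_dias_gamma_5_concise} using that the $\pi_a$ are $\gamma$-multiassociative algebra endomorphisms, that $\pi_a \circ \pi_{a'} = \pi_{a \Max a'}$, and that \eqref{equ:relation_algebre_multiassoc} lets one rebracket and adjust non-maximal indices once the maximal index is present. The only cosmetic difference is your optional use of the mirror/opposite-algebra symmetry to deduce \eqref{equ:relation_dias_gamma_3_concise} and \eqref{equ:relation_dias_gamma_5_concise}, where the paper simply notes they follow ``for the same reasons'' as \eqref{equ:relation_dias_gamma_2_concise} and \eqref{equ:relation_dias_gamma_4_concise}.
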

\begin{proof}
    This is a verification of the relations of $\gamma$-pluriassociative
    algebras in $\MProjVersPluri(\Mca)$. Let $x$, $y$, and $z$ be three
    elements of $\MProjVersPluri(\Mca)$ and  $a, a' \in [\gamma]$.
    \smallskip

    By~\eqref{equ:relation_algebre_multiassoc}, we have
    \begin{equation}
        (x \DDias_{a'} y) \GDias_a z =
        \pi_{a'}(x) \MAs_{a'} y \MAs_a \pi_a(z) =
        x \DDias_{a'} (y \GDias_a z),
    \end{equation}
    showing that~\eqref{equ:relation_dias_gamma_1_concise} is
    satisfied in $\MProjVersPluri(\Mca)$.
    \smallskip

    Moreover, by~\eqref{equ:relation_algebre_multiassoc}
    and~\eqref{equ:relation_algebre_multiproj}, we have
    \begin{equation} \begin{split}
        x \GDias_a (y \DDias_{a'} z)
            & = x \MAs_a \pi_a(\pi_{a'}(y) \MAs_{a'} z) \\
            & = x \MAs_a \pi_{a \Max a'}(y) \MAs_{a'} \pi_a(z) \\
            & = x \MAs_{a \Max a'} \pi_{a \Max a'}(y) \MAs_a \pi_a(z) \\
            & = (x \GDias_{a \Max a'} y) \GDias_a z,
    \end{split} \end{equation}
    so that~\eqref{equ:relation_dias_gamma_2_concise},
    and for the same
    reasons~\eqref{equ:relation_dias_gamma_3_concise},
    check out in $\MProjVersPluri(\Mca)$.
    \smallskip

    Finally, again by~\eqref{equ:relation_algebre_multiassoc}
    and~\eqref{equ:relation_algebre_multiproj}, we have
    \begin{equation} \begin{split}
        x \GDias_a (y \GDias_{a'} z)
            & = x \MAs_a \pi_a(y \MAs_{a'} \pi_{a'}(z)) \\
            & = x \MAs_a \pi_a(y) \MAs_{a'} \pi_{a \Max a'}(z) \\
            & = x \MAs_a \pi_a(y) \MAs_{a \Max a'} \pi_{a \Max a'}(z) \\
            & = (x \GDias_a y) \GDias_{a \Max a'} z,
    \end{split} \end{equation}
    showing that~\eqref{equ:relation_dias_gamma_4_concise},
    and for the same
    reasons~\eqref{equ:relation_dias_gamma_5_concise}, are
    satisfied in $\MProjVersPluri(\Mca)$.
\end{proof}
\medskip

When $\Mca$ is commutative, since for all $x, y \in \MProjVersPluri(\Mca)$
and $a \in [\gamma]$,
\begin{equation}
    x \GDias_a y = x \MAs_a \pi_a(y) =
    \pi_a(y) \MAs_a x = y \DDias_a x,
\end{equation}
it appears that $\MProjVersPluri(\Mca)$ is a commutative
$\gamma$-pluriassociative algebra.
\medskip

When $\Mca$ is unital, $\MProjVersPluri(\Mca)$ has several properties,
summarized in the next proposition.
\medskip

\begin{Proposition}%
\label{prop:algebre_multiprojections_vers_dias_gamma_proprietes}
    Let $\gamma \geq 0$ be an integer, $\Mca$ be a unital
    $\gamma$-multiprojection algebra of height~$h$.
    Then, by denoting by $\Unite$ the unit of $\Mca$ and by $\pi_a$,
    $a \in [\gamma]$, its endomorphisms,
    \begin{enumerate}[label={\it (\roman*)}]
        \item
        \label{item:algebre_multiprojections_vers_dias_gamma_proprietes_1}
        for any $a \in [h]$, $\Unite$ is an $a$-bar-unit of
        $\MProjVersPluri(\Mca)$;
        \item
        \label{item:algebre_multiprojections_vers_dias_gamma_proprietes_2}
        for any $a \leq b \in [h]$,
        $\Halo_a(\MProjVersPluri(\Mca))$ is a subset of
        $\Halo_b(\MProjVersPluri(\Mca))$;
        \item
        \label{item:algebre_multiprojections_vers_dias_gamma_proprietes_3}
        for any $a \in [h]$, the linear span of $\Halo_a(\MProjVersPluri(\Mca))$
        forms an $h\!-\!a\!+\!1$-pluriassociative subalgebra of the
        $h\!-\!a\!+\!1$-pluriassociative subalgebra of
        $\MProjVersPluri(\Mca)$ induced by $[a, h]$;
        \item
        \label{item:algebre_multiprojections_vers_dias_gamma_proprietes_4}
        for any $a \in [h]$, $\pi_a$
        is the identity map if and only if $\Unite$ is an $a$-wire-unit
        of $\MProjVersPluri(\Mca)$.
    \end{enumerate}
\end{Proposition}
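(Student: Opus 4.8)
The plan is to funnel all four parts through a single characterization of the $a$-halo and then apply the identities $\pi_a \circ \pi_{a'} = \pi_{a \Max a'}$. Throughout I use that, since $\Mca$ has height $h$, its unit $\Unite$ is an $a$-unit of the underlying $\gamma$-multiassociative structure for every $a \in [h]$ (a consequence of Proposition~\ref{prop:unites_algebre_multiassociative}), and that unitality of $\Mca$ as a $\gamma$-multiprojection algebra gives $\pi_a(\Unite) = \Unite$ for all $a \in [h]$. Part~\ref{item:algebre_multiprojections_vers_dias_gamma_proprietes_1} is then immediate: for $a \in [h]$ and any $x$, $x \GDias_a \Unite = x \MAs_a \pi_a(\Unite) = x \MAs_a \Unite = x$ and $\Unite \DDias_a x = \pi_a(\Unite) \MAs_a x = \Unite \MAs_a x = x$.

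The heart of the argument is the equivalence, valid for $a \in [h]$: an element $e$ lies in $\Halo_a(\MProjVersPluri(\Mca))$ if and only if $\pi_a(e) = \Unite$. If $\pi_a(e) = \Unite$, then $x \GDias_a e = x \MAs_a \Unite = x$ and $e \DDias_a x = \Unite \MAs_a x = x$; conversely, setting $x = \Unite$ in $x \GDias_a e = x$ yields $\Unite \MAs_a \pi_a(e) = \Unite$, hence $\pi_a(e) = \Unite$ because $\Unite$ is an $a$-unit. From this, \ref{item:algebre_multiprojections_vers_dias_gamma_proprietes_2} follows at once: if $a \le b \in [h]$ and $\pi_a(e) = \Unite$, then $\pi_b(e) = \pi_b(\pi_a(e)) = \pi_b(\Unite) = \Unite$, using $\pi_b \circ \pi_a = \pi_{b \Max a} = \pi_b$. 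For \ref{item:algebre_multiprojections_vers_dias_gamma_proprietes_3}, given $e, e'$ with $\pi_a(e) = \pi_a(e') = \Unite$ and $c \in [a, h]$, the same identity gives $\pi_c(e') = \pi_c(\pi_a(e')) = \pi_c(\Unite) = \Unite$, whence $e \GDias_c e' = e \MAs_c \Unite = e$ and $e \DDias_c e' = \Unite \MAs_c e' = e'$, both again in $\Halo_a$; by bilinearity of the operations, $\Vect(\Halo_a)$ is then stable under every $\GDias_c$ and $\DDias_c$ with $c \in [a, h]$, which is precisely the claimed $(h-a+1)$-pluriassociative subalgebra statement.

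Finally, for \ref{item:algebre_multiprojections_vers_dias_gamma_proprietes_4}, I compute for $a \in [h]$ that $\Unite \GDias_a x = \Unite \MAs_a \pi_a(x) = \pi_a(x)$ and $x \DDias_a \Unite = \pi_a(x) \MAs_a \Unite = \pi_a(x)$; hence $\Unite$ is an $a$-wire-unit exactly when $\pi_a(x) = x$ for all $x$, that is, exactly when $\pi_a$ is the identity map. I do not anticipate a real obstacle here; the only point deserving care is in \ref{item:algebre_multiprojections_vers_dias_gamma_proprietes_3}, where one must notice that on $\Halo_a$ the operations $\GDias_c$ and $\DDias_c$ collapse to the projections onto the first and the second argument respectively, so that the linear span is genuinely closed rather than merely mapped into $\MProjVersPluri(\Mca)$.
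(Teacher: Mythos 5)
Your proposal is correct and follows essentially the same route as the paper: both hinge on the observation that for $a \in [h]$ an element $e$ is an $a$-bar-unit exactly when $\pi_a(e) = \Unite$, combined with $\pi_b \circ \pi_a = \pi_{b \Max a}$ and $\pi_b(\Unite) = \Unite$ to propagate this to larger indices, and the same direct computations for parts \ref{item:algebre_multiprojections_vers_dias_gamma_proprietes_1} and \ref{item:algebre_multiprojections_vers_dias_gamma_proprietes_4}. The only cosmetic difference is that you isolate this characterization as an explicit equivalence (deducing $\pi_a(e)=\Unite$ by evaluating at $x=\Unite$, where the paper invokes uniqueness of the unit for $\MAs_a$), whereas the paper proves the same facts inline within part \ref{item:algebre_multiprojections_vers_dias_gamma_proprietes_2}.
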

\begin{proof}
    Let us denote by $\MAs_a$, $a \in [\gamma]$, the operations of
    $\Mca$.
    \smallskip

    Since $\Unite$ is an $h$-unit of $\Mca$, for all elements $x$ of
    $\MProjVersPluri(\Mca)$ and all $a \in [h]$,
    \begin{equation}
        x \GDias_a \Unite
            = x \MAs_a \pi_a(\Unite) = x \MAs_a \Unite
            = x =
            \Unite \MAs_a x = \pi_a(\Unite) \MAs_a x
        = \Unite \DDias_a x,
    \end{equation}
    showing~\ref{item:algebre_multiprojections_vers_dias_gamma_proprietes_1}.
    \smallskip

    Assume that $e$ is an element of $\Halo_a(\MProjVersPluri(\Mca))$
    for an $a \in [h]$, that is, $e$ is an $a$-bar-unit of
    $\MProjVersPluri(\Mca)$. Then, for all elements $x$ of
    $\MProjVersPluri(\Mca)$,
    \begin{equation}
        x \GDias_a e
            = x \MAs_a \pi_a(e) = x = \pi_a(e) \MAs_a x
        = e \DDias_a x,
    \end{equation}
    showing that $\pi_a(e)$ is the unit for the operation $\MAs_a$ on
    $\MProjVersPluri(\Mca)$
    and therefore, $\pi_a(e) = \Unite$. Since $\Mca$ is unital,
    we have $\pi_b(\Unite) = \Unite$ for all $b \in [h]$. Hence,
    and by~\eqref{equ:relation_algebre_multiproj},
    for all $a \leq b \in [h]$,
    \begin{equation}
        \pi_b(e) = \pi_b(\pi_a(e)) = \pi_b(\Unite) = \Unite.
    \end{equation}
    Then, for all elements $x$ of $\MProjVersPluri(\Mca)$ and all
    $a \leq b \in [h]$,
    \begin{equation}
        x \GDias_b e
            = x \MAs_b \pi_b(e) = x \MAs_b \Unite = x
            = \Unite \MAs_b x = \pi_b(e) \MAs_b x
        = e \DDias_b x,
    \end{equation}
    showing that $e$ is also a $b$-bar-unit of $\MProjVersPluri(\Mca)$,
    whence~\ref{item:algebre_multiprojections_vers_dias_gamma_proprietes_2}.
    \smallskip

    Let $a \in [\gamma]$ and $e$ and $e'$ be elements of
    $\Halo_a(\MProjVersPluri(\Mca))$.
    By~\ref{item:algebre_multiprojections_vers_dias_gamma_proprietes_2},
    $e$ and $e'$ are $b$-bar-units of $\MProjVersPluri(\Mca)$ for all
    $a \leq b \in [h]$ and hence,
    \begin{equation}
        e \GDias_b e' = e = e' \DDias_b e.
    \end{equation}
    Therefore, the linear span of $\Halo_a(\MProjVersPluri(\Mca))$ is
    stable for the operations $\GDias_b$ and $\DDias_b$. This
    implies~\ref{item:algebre_multiprojections_vers_dias_gamma_proprietes_3}.
    \smallskip

    Finally, assume that $\pi_a$ is the identity map for an $a \in [h]$.
    Then, for all elements $x$ of $\MProjVersPluri(\Mca)$,
    \begin{equation}
        \Unite \GDias_a x
            = \Unite \MAs_a \pi_a(x) = \Unite \MAs_a x
            = x
            = x \MAs_a \Unite = \pi_a(x) \MAs_a \Unite
        = x \DDias_a \Unite,
    \end{equation}
    showing that $\Unite$ is an $a$-wire unit of $\MProjVersPluri(\Mca)$.
    Conversely, if $\Unite$ is an $a$-wire unit of $\MProjVersPluri(\Mca)$,
    for all elements $x$ of $\MProjVersPluri(\Mca)$, the relations
    $\Unite \GDias_a x = x = x \DDias_a \Unite$ imply
    $\Unite \MAs_a \pi_a(x) = x = \pi_a(x) \MAs_a \Unite$ and hence,
    $\pi_a(x) = x$. This
    shows~\ref{item:algebre_multiprojections_vers_dias_gamma_proprietes_4}.
\end{proof}
\medskip

%%%%%%%%%%%%%%%%%%%%%%%%%%%%%%%%%%%%%%%%%%%%%%%%%%%%%%%%%%%%%%%%%%%%%%%%
\subsubsection{Examples of constructions of pluriassociative algebras}
The construction $\MProjVersPluri$ of
Theorem~\ref{thm:algebre_multiprojections_vers_dias_gamma} allows
to build several $\gamma$-pluriassociative algebras. Here follows few
examples.
\medskip

\paragraph{\bf The $\gamma$-pluriassociative algebra of positive integers}
Let $\gamma \geq 1$ be an integer and consider the vector space $\AlgPos$
of positive integers, endowed with the operations $\MAs_a$, $a \in [\gamma]$,
all equal to the operation $\Max$ extended by linearity and with the
endomorphisms $\pi_a$, $a \in [\gamma]$, linearly defined for any
positive integer $x$ by $\pi_a(x) := a \Max x$. Then, $\AlgPos$ is a
non-unital $\gamma$-multiprojection algebra. By
Theorem~\ref{thm:algebre_multiprojections_vers_dias_gamma},
$\MProjVersPluri(\AlgPos)$ is a $\gamma$-pluriassociative algebra. We
have for instance
\begin{equation}
    \textcolor{Bleu}{2} \GDias_3 \textcolor{Rouge}{5} =
    \textcolor{Rouge}{5},
\end{equation}
and
\begin{equation}
    \textcolor{Bleu}{1} \DDias_3 \textcolor{Rouge}{2} = 3.
\end{equation}
We can observe that $\MProjVersPluri(\AlgPos)$ is commutative, pure, and
its $1$-halo is $\{1\}$. Moreover, when
$\gamma \geq 2$, $\MProjVersPluri(\AlgPos)$ has no wire-unit and no
$a$-bar-unit for $a \geq 2 \in [\gamma]$. This example is important
because it provides a counterexample
for~\ref{item:algebre_multiprojections_vers_dias_gamma_proprietes_2} of
Proposition~\ref{prop:algebre_multiprojections_vers_dias_gamma_proprietes}
in the case when the construction $\MProjVersPluri$ is applied
to a non-unital $\gamma$-multiprojection algebra.
\medskip

\paragraph{\bf The $\gamma$-pluriassociative algebra of finite sets}
Let $\gamma \geq 1$ be an integer and consider the vector space $\AlgEns$
of finite sets of positive integers, endowed with the operations $\MAs_a$,
$a \in [\gamma]$, all equal to the union operation $\cup$ extended by
linearity and with the endomorphisms $\pi_a$, $a \in [\gamma]$, linearly
defined for any finite set of positive integers $x$ by
$\pi_a(x) := x \cap [a, \gamma]$. Then, $\AlgEns$ is a $\gamma$-multiprojection
algebra. By Theorem~\ref{thm:algebre_multiprojections_vers_dias_gamma},
$\MProjVersPluri(\AlgEns)$ is a $\gamma$-pluriassociative algebra. We
have for instance
\begin{equation}
    \{\textcolor{Bleu}{2}, \textcolor{Bleu}{4}\}
    \GDias_3
    \{\textcolor{Rouge}{1}, \textcolor{Rouge}{3}, \textcolor{Rouge}{5}\}
    = \{\textcolor{Bleu}{2}, \textcolor{Rouge}{3},
    \textcolor{Bleu}{4}, \textcolor{Rouge}{5}\},
\end{equation}
and
\begin{equation}
    \{\textcolor{Bleu}{1}, \textcolor{Bleu}{2},
    \textcolor{Bleu}{4}\}
    \DDias_3
    \{\textcolor{Rouge}{1}, \textcolor{Rouge}{3}, \textcolor{Rouge}{5}\}
    = \{\textcolor{Rouge}{1}, \textcolor{Rouge}{3},
    \textcolor{Bleu}{4}, \textcolor{Rouge}{5}\}.
\end{equation}
We can observe that $\MProjVersPluri(\AlgEns)$ is commutative and pure.
Moreover, $\emptyset$ is a $1$-wire-unit of $\MProjVersPluri(\AlgEns)$ and,
by Proposition~\ref{prop:unite_fil}, it is its only wire-unit. Therefore,
$\MProjVersPluri(\AlgEns)$ has height $1$. Observe that for any
$a \in [\gamma]$, the $a$-halo of $\MProjVersPluri(\AlgEns)$ consists
in the subsets of $[a - 1]$. Besides, since $\AlgEns$ is a unital
$\gamma$-multiprojection algebra, $\MProjVersPluri(\AlgEns)$ satisfies
all properties exhibited by
Proposition~\ref{prop:algebre_multiprojections_vers_dias_gamma_proprietes}.
\medskip

\paragraph{\bf The $\gamma$-pluriassociative algebra of words}
Let $\gamma \geq 1$ be an integer and consider the vector space $\AlgMots$
of the words of positive integers. Let us endow $\AlgMots$ with the
operations $\MAs_a$, $a \in [\gamma]$, all equal to the concatenation
operation extended by linearity and with the endomorphisms $\pi_a$,
$a \in [\gamma]$, where for any word $x$ of positive integers, $\pi_a(x)$
is the longest subword of $x$ consisting in letters greater than or equal
to $a$. Then, $\AlgMots$ is a $\gamma$-multiprojection algebra. By
Theorem~\ref{thm:algebre_multiprojections_vers_dias_gamma},
$\MProjVersPluri(\AlgMots)$ is a $\gamma$-pluriassociative algebra. We
have for instance
\begin{equation}
    \textcolor{Bleu}{412} \GDias_3
    \textcolor{Rouge}{14231} =
    \textcolor{Bleu}{412}\textcolor{Rouge}{43},
\end{equation}
and
\begin{equation}
    \textcolor{Bleu}{11} \DDias_2 \textcolor{Rouge}{323} =
    \textcolor{Rouge}{323}.
\end{equation}
We can observe that $\MProjVersPluri(\AlgMots)$ is not commutative and is pure.
Moreover, $\epsilon$ is a $1$-wire-unit of $\MProjVersPluri(\AlgMots)$ and
by Proposition~\ref{prop:unite_fil}, it is its only wire-unit. Therefore,
$\MProjVersPluri(\AlgMots)$ has height $1$. Observe that for any
$a \in [\gamma]$, the $a$-halo of $\MProjVersPluri(\AlgMots)$ consists in
the words on the alphabet $[a - 1]$. Besides, since $\AlgMots$ is a unital
$\gamma$-multiprojection algebra, $\MProjVersPluri(\AlgMots)$ satisfies
all properties exhibited by
Proposition~\ref{prop:algebre_multiprojections_vers_dias_gamma_proprietes}.
\medskip

The $\gamma$-pluriassociative algebras $\MProjVersPluri(\AlgEns)$ and
$\MProjVersPluri(\AlgMots)$ are related in the following way. Let
$I_{\rm com}$ be the subspace of $\MProjVersPluri(\AlgMots)$ generated
by the $x - x'$ where $x$ and $x'$ are words of positive integers and
have the same commutative image. Since $I_{\rm com}$ is a
$\gamma$-pluriassociative algebra ideal of $\MProjVersPluri(\AlgMots)$,
one can consider the quotient $\gamma$-pluriassociative algebra
$\AlgMotsCom := \MProjVersPluri(\AlgMots)/_{I_{\rm com}}$. Its elements
can be seen as commutative words of positive integers.
\medskip

Moreover, let $I_{\rm occ}$ be the subspace of
$\MProjVersPluri(\AlgMotsCom)$ generated by the $x - x'$ where $x$ and $x'$
are commutative words of positive integers and for any letter
$a \in [\gamma]$, $a$ appears in $x$ if and only if $a$ appears in $x'$.
Since $I_{\rm occ}$ is a $\gamma$-pluriassociative algebra ideal of
$\MProjVersPluri(\AlgMotsCom)$, one can consider the quotient
$\gamma$-pluriassociative algebra
$\MProjVersPluri(\AlgMotsCom)/_{I_{\rm occ}}$. Its elements can be seen
as finite subsets of positive integers and we observe that
$\MProjVersPluri(\AlgMotsCom)/_{I_{\rm occ}} = \MProjVersPluri(\AlgEns)$.
\medskip

\paragraph{\bf The $\gamma$-pluriassociative algebra of marked words}
Let $\gamma \geq 1$ be an integer and consider the vector space
$\AlgMotsMarq$ of the words of positive integers where letters can be
marked or not, with at least one occurrence of a marked letter. We
denote by $\bar a$ any {\em marked letter} $a$ and we say that the
{\em value} of $\bar a$ is $a$. Let us endow $\AlgMotsMarq$ with the
linear operations $\MAs_a$, $a \in [\gamma]$, where for all words $u$
and $v$ of $\AlgMotsMarq$, $u \MAs_a v$ is obtained by concatenating $u$
and $v$, and by replacing therein all marked letters by $\bar c$ where
$c := \max(u) \Max a \Max \max(v)$ where $\max(u)$ (resp. $\max(v)$)
denotes the greatest value among the marked letters of $u$ (resp. $v$).
For instance,
\begin{equation}
    \textcolor{Bleu}{2 \bar 1 3 1 \bar 3}
    \MAs_2
    \textcolor{Rouge}{3 \bar 4 \bar 1 2 1}
    =
    \textcolor{Bleu}{2 {\bf \bar 4} 3 1 {\bf \bar 4}}
    \textcolor{Rouge}{3 \bar 4 {\bf \bar 4} 2 1},
\end{equation}
and
\begin{equation}
    \textcolor{Bleu}{\bar 2 1 1 \bar 1}
    \MAs_3
    \textcolor{Rouge}{3 4 \bar 2} =
    \textcolor{Bleu}{{\bf \bar 3} 1 1 {\bf \bar 3}}
    \textcolor{Rouge}{3 4 {\bf \bar 3}}.
\end{equation}
We also endow $\AlgMotsMarq$ with the endomorphisms $\pi_a$,
$a \in [\gamma]$, where for any word $u$ of $\AlgMotsMarq$, $\pi_a(u)$
is obtained by replacing in $u$ any occurrence of a nonmarked letter
smaller than $a$ by $a$. For instance,
\begin{equation}
    \pi_3\left(\textcolor{Rouge}{2} \textcolor{Bleu}{\bar 2}
        \textcolor{Rouge}{14} \textcolor{Bleu}{\bar 4}
        \textcolor{Rouge}{3} \textcolor{Bleu}{\bar 5}\right)
    = \textcolor{Rouge}{\bf 3} \textcolor{Bleu}{\bar 2}
        \textcolor{Rouge}{{\bf 3} 4} \textcolor{Bleu}{\bar 4}
        \textcolor{Rouge}{\bf 3} \textcolor{Bleu}{\bar 5}.
\end{equation}
One can show without difficulty that $\AlgMotsMarq$ is a
$\gamma$-multiprojection algebra. By
Theorem~\ref{thm:algebre_multiprojections_vers_dias_gamma},
$\MProjVersPluri(\AlgMotsMarq)$ is a $\gamma$-pluriassociative algebra.
We have for instance
\begin{equation}
    \textcolor{Bleu}{3 \bar 2 5}
    \GDias_3
    \textcolor{Rouge}{4 \bar 4 1}
    = \textcolor{Bleu}{3 {\bf \bar 4} 5}
        \textcolor{Rouge}{4 \bar 4 {\bf 3}},
\end{equation}
and
\begin{equation}
    \textcolor{Bleu}{1 \bar 3 4 \bar 1 3}
    \DDias_2
    \textcolor{Rouge}{3 1 \bar 2 3 \bar 1 1}
    = \textcolor{Bleu}{{\bf 2} \bar 3 4 {\bf \bar 3} 3}
      \textcolor{Rouge}{3 1 {\bf \bar 3} 3 {\bf \bar 3} 1}.
\end{equation}
We can observe that $\MProjVersPluri(\AlgMotsMarq)$ is not
commutative, pure, and has no wire-units neither bar-units.
\medskip

\paragraph{\bf The free $\gamma$-pluriassociative algebra over one generator}
Let $\gamma \geq 0$ be an integer. We give here a construction of the
free $\gamma$-pluriassociative algebra $\AlgLibre_{\Dias_\gamma}$ over one
generator described in Section~\ref{subsubsec:algebre_dias_gamma_libre}
passing through the following $\gamma$-multiprojection algebra
and the construction $\MProjVersPluri$. Consider the vector space of
nonempty words on the alphabet $\{0\} \cup [\gamma]$ with exactly one
occurrence of $0$, endowed with the operations $\MAs_a$, $a \in [\gamma]$,
all equal to the concatenation operation extended by linearity and with
the endomorphisms $\Augm_a$, $a \in [\gamma]$, defined in
Section~\ref{subsubsec:algebre_dias_gamma_libre}. This vector
space is a $\gamma$-multiprojection algebra. Therefore, by
Theorem~\ref{thm:algebre_multiprojections_vers_dias_gamma}, it gives
rise by the construction $\MProjVersPluri$ to a $\gamma$-pluriassociative
algebra and it appears that it is $\AlgLibre_{\Dias_\gamma}$. Besides,
we can now observe that $\AlgLibre_{\Dias_\gamma}$ is not commutative,
pure, and has no wire-units neither bar-units.
\medskip

%%%%%%%%%%%%%%%%%%%%%%%%%%%%%%%%%%%%%%%%%%%%%%%%%%%%%%%%%%%%%%%%%%%%%%%%
%%%%%%%%%%%%%%%%%%%%%%%%%%%%%%%%%%%%%%%%%%%%%%%%%%%%%%%%%%%%%%%%%%%%%%%%
%%%%%%%%%%%%%%%%%%%%%%%%%%%%%%%%%%%%%%%%%%%%%%%%%%%%%%%%%%%%%%%%%%%%%%%%
\section{Polydendriform operads} \label{sec:dendr_gamma}
At this point, the situation is ripe enough to introduce our
generalization on a nonnegative integer $\gamma$ of the dendriform operad
and dendriform algebras. We first construct this operad, compute its
dimensions, and give then two presentations by generators and relations.
This section ends by a description of free algebras over one generator
in the category encoded by our generalization.
\medskip

%%%%%%%%%%%%%%%%%%%%%%%%%%%%%%%%%%%%%%%%%%%%%%%%%%%%%%%%%%%%%%%%%%%%%%%%
%%%%%%%%%%%%%%%%%%%%%%%%%%%%%%%%%%%%%%%%%%%%%%%%%%%%%%%%%%%%%%%%%%%%%%%%
\subsection{Construction and properties}%
\label{subsec:construcion_dendr_gamma}
Theorem \ref{thm:presentation_dias_gamma}, by exhibiting a presentation
of $\Dias_\gamma$, shows that this operad is binary and quadratic.
It then admits a Koszul dual, denoted by $\Dendr_\gamma$ and called
{\em $\gamma$-polydendriform operad}.
\medskip

%%%%%%%%%%%%%%%%%%%%%%%%%%%%%%%%%%%%%%%%%%%%%%%%%%%%%%%%%%%%%%%%%%%%%%%%
\subsubsection{Definition and presentation}%
\label{subsubsec:presentation_dendr_gamma_alternative}
A description of $\Dendr_\gamma$ is provided by the following presentation
by generators and relations.
\medskip

\begin{Theoreme} \label{thm:presentation_dendr_gamma}
    For any integer $\gamma \geq 0$, the operad $\Dendr_\gamma$
    admits the following presentation. It is generated by
    $\GenDendr := \GenDendr(2):=
    \{\GDendrA_a, \DDendrA_a : a \in [\gamma]\}$ and its space of
    relations $\RelDendr$ is generated by
    \begin{subequations}
    \begin{equation} \label{equ:relation_dendr_gamma_1_alternative}
        \GDendrA_a \circ_1 \DDendrA_{a'} - \DDendrA_{a'} \circ_2 \GDendrA_a,
        \qquad a, a' \in [\gamma],
    \end{equation}
    \begin{equation} \label{equ:relation_dendr_gamma_2_alternative}
        \GDendrA_a \circ_1 \GDendrA_b - \GDendrA_a \circ_2 \DDendrA_b,
        \qquad a < b \in [\gamma],
    \end{equation}
    \begin{equation} \label{equ:relation_dendr_gamma_3_alternative}
        \DDendrA_a \circ_1 \GDendrA_b - \DDendrA_a \circ_2 \DDendrA_b,
        \qquad a < b \in [\gamma],
    \end{equation}
    \begin{equation} \label{equ:relation_dendr_gamma_4_alternative}
        \GDendrA_a \circ_1 \GDendrA_b - \GDendrA_a \circ_2 \GDendrA_b,
        \qquad a < b \in [\gamma],
    \end{equation}
    \begin{equation} \label{equ:relation_dendr_gamma_5_alternative}
        \DDendrA_a \circ_1 \DDendrA_b - \DDendrA_a \circ_2 \DDendrA_b,
        \qquad a < b \in [\gamma],
    \end{equation}
    \begin{equation} \label{equ:relation_dendr_gamma_6_alternative}
        \GDendrA_d \circ_1 \GDendrA_d -
        \left(\sum_{c \in [d]} \GDendrA_d \circ_2 \GDendrA_c
                + \GDendrA_d \circ_2 \DDendrA_c\right),
        \qquad d \in [\gamma],
    \end{equation}
    \begin{equation} \label{equ:relation_dendr_gamma_7_alternative}
        \left(\sum_{c \in [d]} \DDendrA_d \circ_1 \DDendrA_c
            + \DDendrA_d \circ_1 \GDendrA_c\right)
            - \DDendrA_d \circ_2 \DDendrA_d,
        \qquad d \in [\gamma].
    \end{equation}
    \end{subequations}
\end{Theoreme}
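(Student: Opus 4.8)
The plan is to compute the Koszul dual from the \emph{alternative} presentation of $\Dias_\gamma$ given by Proposition~\ref{prop:presentation_alternative_dias_gamma} rather than from the one of Theorem~\ref{thm:presentation_dias_gamma}; this is the path announced at the end of Section~\ref{subsubsec:presentation_dias_gamma_alternative}. That presentation is binary and quadratic, with generating set $\GenDias' = \{\GDiasA_a, \DDiasA_a : a \in [\gamma]\}$ and space of relations $\RelLibre'_{\Dias_\gamma}$ spanned by~\eqref{equ:relation_dias_gamma_1_alternative}--\eqref{equ:relation_dias_gamma_13_alternative}. Identifying the generating set $\GenDendr$ of $\Dendr_\gamma := \Dias_\gamma^!$ with $\GenDias'$ via $\GDendrA_a \leftrightarrow \GDiasA_a$ and $\DDendrA_a \leftrightarrow \DDiasA_a$, the definition of the Koszul dual recalled in Section~\ref{subsubsec:dual_de_Koszul} says that $\Dendr_\gamma$ admits the presentation $(\GenDendr, \RelDendr)$ with $\RelDendr = (\RelLibre'_{\Dias_\gamma})^\perp$, the annihilator of $\RelLibre'_{\Dias_\gamma}$ in $\OpLibre(\GenDendr)(3)$ for the scalar product $\langle -, - \rangle$. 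So the content of the theorem is that the space generated by~\eqref{equ:relation_dendr_gamma_1_alternative}--\eqref{equ:relation_dendr_gamma_7_alternative} is exactly that annihilator.

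I would first pin down dimensions. Since $\GenDendr$ is concentrated in arity $2$ with $\#\GenDendr(2) = 2\gamma$, the space $\OpLibre(\GenDendr)(3)$ has dimension $2(2\gamma)^2 = 8\gamma^2$ (two shapes of degree-two arity-three trees, each labeled by an ordered pair of generators). By the proof of Proposition~\ref{prop:presentation_alternative_dias_gamma}, $\dim \RelLibre'_{\Dias_\gamma} = 5\gamma^2$, hence $\dim (\RelLibre'_{\Dias_\gamma})^\perp = 3\gamma^2$. On the other side, the families~\eqref{equ:relation_dendr_gamma_1_alternative}--\eqref{equ:relation_dendr_gamma_7_alternative} contain respectively $\gamma^2$, $\binom{\gamma}{2}$, $\binom{\gamma}{2}$, $\binom{\gamma}{2}$, $\binom{\gamma}{2}$, $\gamma$, $\gamma$ elements, totalling $\gamma^2 + 2\gamma(\gamma - 1) + 2\gamma = 3\gamma^2$. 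It therefore suffices to show that each listed element lies in $(\RelLibre'_{\Dias_\gamma})^\perp$ and that the listed elements are linearly independent.

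The orthogonality check is the computational heart of the proof. The pairing $\langle -, - \rangle$ is \emph{diagonal}: it sends $(x \circ_1 y, x \circ_1 y)$ to $+1$, $(x \circ_2 y, x \circ_2 y)$ to $-1$, and every other pair of monomials to $0$. Hence $\langle \sum (a_{x,y}\, x \circ_1 y + b_{x,y}\, x \circ_2 y),\; \sum (c_{x,y}\, x \circ_1 y + d_{x,y}\, x \circ_2 y) \rangle = \sum a_{x,y} c_{x,y} - \sum b_{x,y} d_{x,y}$, and the verification reduces to running the $13$ families of $\Dias_\gamma$-relations against the $7$ candidate $\Dendr_\gamma$-relations, splitting each pairing into the sub-cases dictated by the index constraints ($a = a'$, $a < b$, $c \le d$, and the summations $\sum_{a \le b}$) and observing that in each sub-case the surviving monomials cancel in pairs. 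Linear independence of~\eqref{equ:relation_dendr_gamma_1_alternative}--\eqref{equ:relation_dendr_gamma_7_alternative} is then routine by triangularity: each family admits, for every choice of its indices, a pivot monomial occurring in no other family (for instance $\GDendrA_a \circ_1 \DDendrA_{a'}$ for~\eqref{equ:relation_dendr_gamma_1_alternative}, $\GDendrA_a \circ_2 \DDendrA_b$ with $a<b$ for~\eqref{equ:relation_dendr_gamma_2_alternative}, $\DDendrA_a \circ_1 \GDendrA_b$ with $a<b$ for~\eqref{equ:relation_dendr_gamma_3_alternative}, $\GDendrA_a \circ_2 \GDendrA_b$ with $a<b$ for~\eqref{equ:relation_dendr_gamma_4_alternative}, $\DDendrA_a \circ_1 \DDendrA_b$ with $a<b$ for~\eqref{equ:relation_dendr_gamma_5_alternative}, $\GDendrA_d \circ_1 \GDendrA_d$ for~\eqref{equ:relation_dendr_gamma_6_alternative}, and $\DDendrA_d \circ_2 \DDendrA_d$ for~\eqref{equ:relation_dendr_gamma_7_alternative}), and within a family these pivots are pairwise distinct. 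Thus the listed space has dimension $3\gamma^2$ and, being contained in $(\RelLibre'_{\Dias_\gamma})^\perp$, coincides with it, which gives the claimed presentation.

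I expect the only genuine obstacle to be the bookkeeping in the orthogonality step: one must keep the signs straight (a $\circ_1$-monomial pairs with weight $+1$, a $\circ_2$-monomial with weight $-1$) and organize the sub-cases so that the relations~\eqref{equ:relation_dias_gamma_10_alternative}--\eqref{equ:relation_dias_gamma_13_alternative}, which carry a sum over $b \ge a$, are handled without omission. An equivalent but longer alternative would be to recover $(\RelLibre'_{\Dias_\gamma})^\perp$ directly by solving the linear system $\langle r, - \rangle = 0$ over all generators $r$ of $\RelLibre'_{\Dias_\gamma}$; the theorem is just a compact description of that solution space.
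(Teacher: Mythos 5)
Your dimension bookkeeping and the independence-by-pivots part are fine, but the core of the plan rests on a mis-identification, and the central ``orthogonality check'' would in fact fail. The annihilator construction of Section~\ref{subsubsec:dual_de_Koszul} is basis-dependent: the scalar product is defined by declaring the chosen generating \emph{basis} self-dual, so when you pass from the presentation $(\GenDias,\RelLibre_{\Dias_\gamma})$ of Theorem~\ref{thm:presentation_dias_gamma} to the presentation $(\GenDias',\RelLibre'_{\Dias_\gamma})$ of Proposition~\ref{prop:presentation_alternative_dias_gamma}, the generators on the dual side must change by the contragredient of the change of basis. Since $\GDiasA_a = \GDias_a - \GDias_{a+1}$, the dual basis of $\{\GDiasA_a, \DDiasA_a\}$ is given by the partial sums, \emph{i.e.}, by $\GDendr_a = \sum_{b \leq a} \GDendrA_b$ and $\DDendr_a = \sum_{b \leq a} \DDendrA_b$, not by $\GDendrA_a$ and $\DDendrA_a$. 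Consequently, dualizing the alternative presentation with the naive identification $\GDendrA_a \leftrightarrow \GDiasA_a$ does not produce the relations \eqref{equ:relation_dendr_gamma_1_alternative}--\eqref{equ:relation_dendr_gamma_7_alternative}; it produces the relations of Theorem~\ref{thm:autre_presentation_dendr_gamma} (with $\GDendr_a$ renamed), which is exactly what the paper's remark following that theorem asserts, and which is a different subspace of the free operad in arity $3$ once $\gamma \geq 2$.

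Concretely, the inclusion you plan to verify is false: take $\gamma \geq 2$ and the element \eqref{equ:relation_dendr_gamma_2_alternative} with $a = 1 < b = 2$, read through your identification as $\GDiasA_1 \circ_1 \GDiasA_2 - \GDiasA_1 \circ_2 \DDiasA_2$. Pairing it with the generator \eqref{equ:relation_dias_gamma_11_alternative} of $\RelLibre'_{\Dias_\gamma}$ for $a = 1$, namely $\GDiasA_1 \circ_1 \GDiasA_1 + \GDiasA_1 \circ_1 \GDiasA_2 + \dots + \GDiasA_1 \circ_1 \GDiasA_\gamma - \GDiasA_1 \circ_2 \GDiasA_1$, the only common monomial is the $\circ_1$-monomial $\GDiasA_1 \circ_1 \GDiasA_2$, so the scalar product equals $+1 \neq 0$; similarly \eqref{equ:relation_dendr_gamma_6_alternative} with $d = 2$ pairs nontrivially with \eqref{equ:relation_dias_gamma_3_alternative}. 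So the candidate relations do not all lie in $(\RelLibre'_{\Dias_\gamma})^\perp$, and the ``containment plus equal dimension'' conclusion collapses. The paper's proof takes the other road: it dualizes the \emph{original} presentation of Theorem~\ref{thm:presentation_dias_gamma}, identifying $\GDendrA_a$ with $\GDias_a$ and $\DDendrA_a$ with $\DDias_a$, and checks by direct computation that $\RelLibre_{\Dias_\gamma}^\perp$ is the stated space $\RelLibre_{\Dendr_\gamma}$; the route through the $\Ksf$-basis presentation is reserved for Theorem~\ref{thm:autre_presentation_dendr_gamma}. Your dimension count and pivot argument can be recycled verbatim in that setting, but the orthogonality must be run against \eqref{equ:relation_dias_gamma_1}--\eqref{equ:relation_dias_gamma_7}, with each claimed dual relation mirroring the corresponding $\Dias_\gamma$ relation (note in particular that the two-term relations dual to \eqref{equ:relation_dias_gamma_4} and \eqref{equ:relation_dias_gamma_5} must have their $\circ_1$-monomials of the form $\GDendrA_b \circ_1 \GDendrA_a$ and $\DDendrA_a \circ_1 \DDendrA_b$ with $\circ_2$-parts $\GDendrA_a \circ_2 \GDendrA_b$ and $\DDendrA_b \circ_2 \DDendrA_a$, since a candidate sharing its $\circ_1$-monomial with \eqref{equ:relation_dias_gamma_2} but not its $\circ_2$-monomial cannot be orthogonal to it).
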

\begin{proof}
    By Theorem~\ref{thm:presentation_dias_gamma}, we know that
    $\Dias_\gamma$ is a binary and quadratic operad, and that its space
    of relations $\RelDias$ is the space induced by the equivalence
    relation $\Rel_\gamma$ defined
    by~\eqref{equ:relation_dias_gamma_1}--\eqref{equ:relation_dias_gamma_7}.
    Now, by a straightforward computation, and by identifying $\GDendrA_a$
    (resp. $\DDendrA_a$) with $\GDias_a$ (resp. $\DDias_a$) for any
    $a \in [\gamma]$, we obtain that the space $\RelDendr$ of the
    statement of the theorem satisfies $\RelDias^\perp = \RelDendr$.
    Hence, $\Dendr_\gamma$ admits the claimed presentation.
\end{proof}
\medskip

Theorem \ref{thm:presentation_dendr_gamma} provides a quite complicated
presentation of $\Dendr_\gamma$. We shall below define a more convenient
basis for the space of relations of $\Dendr_\gamma$.
\medskip

%%%%%%%%%%%%%%%%%%%%%%%%%%%%%%%%%%%%%%%%%%%%%%%%%%%%%%%%%%%%%%%%%%%%%%%%
\subsubsection{Elements and dimensions}
\begin{Proposition} \label{prop:serie_hilbert_dendr_gamma}
    For any integer $\gamma \geq 0$, the Hilbert series
    $\Hca_{\Dendr_\gamma}(t)$ of the operad $\Dendr_\gamma$ satisfies
    \begin{equation} \label{equ:serie_hilbert_dendr_gamma}
        \Hca_{\Dendr_\gamma}(t)
            = t + 2\gamma t \, \Hca_{\Dendr_\gamma}(t)
            + \gamma^2 t \, \Hca_{\Dendr_\gamma}(t)^2.
    \end{equation}
\end{Proposition}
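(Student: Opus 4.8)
The plan is to read off the functional equation directly from Koszul duality, using the Hilbert series of $\Dias_\gamma$ already computed in~\eqref{equ:serie_hilbert_dias_gamma}. First I would check that both $\Dias_\gamma$ and $\Dendr_\gamma$ admit Hilbert series: by Proposition~\ref{prop:elements_dias_gamma} one has $\dim \Dias_\gamma(n) = n\gamma^{n-1} < \infty$, and $\Dendr_\gamma$, being by Theorem~\ref{thm:presentation_dendr_gamma} a quotient of the free operad on the finite graded set $\GenDendr$ concentrated in arity $2$, also has finite-dimensional homogeneous components. Moreover $\Dias_\gamma$ is binary and quadratic by Theorem~\ref{thm:presentation_dias_gamma} and Koszul by Theorem~\ref{thm:koszulite_dias_gamma}; since $\Dendr_\gamma$ is its Koszul dual, $\Dendr_\gamma$ is Koszul as well.

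Consequently, relation~\eqref{equ:relation_series_hilbert_operade_duale} applies to the pair $\Dias_\gamma$, $\Dendr_\gamma$ and gives
\[
    \Hca_{\Dias_\gamma}\bigl(-\Hca_{\Dendr_\gamma}(-t)\bigr) = t .
\]
Writing $D(t) := \Hca_{\Dendr_\gamma}(t)$ and substituting $\Hca_{\Dias_\gamma}(t) = \dfrac{t}{(1-\gamma t)^2}$ from~\eqref{equ:serie_hilbert_dias_gamma}, this becomes $\dfrac{-D(-t)}{\bigl(1 + \gamma D(-t)\bigr)^2} = t$, that is, $-D(-t) = t\,\bigl(1 + \gamma D(-t)\bigr)^2$. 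Replacing $t$ by $-t$ and multiplying both sides by $-1$ yields $D(t) = t\,\bigl(1 + \gamma D(t)\bigr)^2$, and expanding the square produces precisely
\[
    \Hca_{\Dendr_\gamma}(t)
        = t + 2\gamma t\, \Hca_{\Dendr_\gamma}(t)
        + \gamma^2 t\, \Hca_{\Dendr_\gamma}(t)^2 ,
\]
which is~\eqref{equ:serie_hilbert_dendr_gamma}.

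The argument is entirely routine; the only points requiring minor care are the bookkeeping of the sign substitutions $t \mapsto -t$ in~\eqref{equ:relation_series_hilbert_operade_duale} and the preliminary verification that the Hilbert series are well-defined, both of which are immediate. I do not anticipate any genuine obstacle here: the substance of the statement is carried entirely by the Koszulity of $\Dias_\gamma$ established earlier.
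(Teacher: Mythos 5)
Your proposal is correct and follows essentially the same route as the paper: Koszulity of $\Dias_\gamma$, its known Hilbert series~\eqref{equ:serie_hilbert_dias_gamma}, and the inversion relation~\eqref{equ:relation_series_hilbert_operade_duale} for Koszul dual pairs. The only cosmetic difference is that you solve the inversion relation forward to derive the functional equation, whereas the paper verifies that the series defined by~\eqref{equ:serie_hilbert_dendr_gamma} has alternating version inverse to that of $\Hca_{\Dias_\gamma}$; the sign bookkeeping you carry out is exactly the computation in the paper's proof.
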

\begin{proof}
    By setting $\bar \Hca_{\Dendr_\gamma}(t) := \Hca_{\Dendr_\gamma}(-t)$,
    from~\eqref{equ:serie_hilbert_dendr_gamma}, we obtain
    \begin{equation}
        t = \frac{-\bar \Hca_{\Dendr_\gamma}(t)}
                 {\left(1 + \gamma \, \bar \Hca_{\Dendr_\gamma}(t)\right)^2}.
    \end{equation}
    Moreover, by setting
    $\bar \Hca_{\Dias_\gamma}(t) := \Hca_{\Dias_\gamma}(-t)$, where
    $\Hca_{\Dias_\gamma}(t)$ is defined by~\eqref{equ:serie_hilbert_dias_gamma},
    we have
    \begin{equation} \label{equ:serie_hilbert_dendr_gamma_demo}
        \bar \Hca_{\Dias_\gamma}\left(\bar \Hca_{\Dendr_\gamma}(t)\right)
            = \frac{-\bar \Hca_{\Dendr_\gamma}(t)}
                  {\left(1 + \gamma \, \bar \Hca_{\Dendr_\gamma}(t)\right)^2}
            = t,
    \end{equation}
    showing that $\bar \Hca_{\Dias_\gamma}(t)$ and
    $\bar \Hca_{\Dendr_\gamma}(t)$ are the inverses for each other for
    series composition.
    \smallskip

    Now, since  by Theorem~\ref{thm:koszulite_dias_gamma} and
    Proposition~\ref{prop:elements_dias_gamma}, $\Dias_\gamma$ is a Koszul
    operad and its Hilbert series is $\Hca_{\Dias_\gamma}(t)$, and since
    $\Dendr_\gamma$ is by definition the Koszul dual of $\Dias_\gamma$,
    the Hilbert series of these two operads
    satisfy~\eqref{equ:relation_series_hilbert_operade_duale}.
    Therefore, \eqref{equ:serie_hilbert_dendr_gamma_demo} implies that
    the Hilbert series of $\Dendr_\gamma$ is $\Hca_{\Dendr_\gamma}(t)$.
\end{proof}
\medskip

By examining the expression for $\Hca_{\Dendr_\gamma}(t)$ of the
statement of Proposition~\ref{prop:serie_hilbert_dendr_gamma}, we
observe that for any $n \geq 1$, $\Dendr_\gamma(n)$ can be seen as the
vector space $\AlgLibre_{\Dendr_\gamma}(n)$ of binary trees with $n$
internal nodes wherein its $n - 1$ edges connecting two internal nodes
are labeled on $[\gamma]$. We call these trees {\em $\gamma$-edge valued
binary trees}. In our graphical representations of $\gamma$-edge valued
binary trees, any edge label is drawn into a hexagon located half the
edge. For instance,
\begin{equation}
    \begin{split}
    \begin{tikzpicture}[xscale=.22,yscale=.15]
        \node[Feuille](0)at(0.00,-14.00){};
        \node[Feuille](10)at(10.00,-10.50){};
        \node[Feuille](12)at(12.00,-17.50){};
        \node[Feuille](14)at(14.00,-17.50){};
        \node[Feuille](16)at(16.00,-14.00){};
        \node[Feuille](18)at(18.00,-10.50){};
        \node[Feuille](2)at(2.00,-14.00){};
        \node[Feuille](20)at(20.00,-10.50){};
        \node[Feuille](4)at(4.00,-14.00){};
        \node[Feuille](6)at(6.00,-14.00){};
        \node[Feuille](8)at(8.00,-7.00){};
        \node[Noeud](1)at(1.00,-10.50){};
        \node[Noeud](11)at(11.00,-7.00){};
        \node[Noeud](13)at(13.00,-14.00){};
        \node[Noeud](15)at(15.00,-10.50){};
        \node[Noeud](17)at(17.00,-3.50){};
        \node[Noeud](19)at(19.00,-7.00){};
        \node[Noeud](3)at(3.00,-7.00){};
        \node[Noeud](5)at(5.00,-10.50){};
        \node[Noeud](7)at(7.00,-3.50){};
        \node[Noeud](9)at(9.00,0.00){};
        \draw[Arete](0)--(1);
        \draw[Arete](1)edge[]node[EtiqArete]{\begin{math}3\end{math}}(3);
        \draw[Arete](10)--(11);
        \draw[Arete](11)edge[]node[EtiqArete]{\begin{math}3\end{math}}(17);
        \draw[Arete](12)--(13);
        \draw[Arete](13)edge[]node[EtiqArete]{\begin{math}1\end{math}}(15);
        \draw[Arete](14)--(13);
        \draw[Arete](15)edge[]node[EtiqArete]{\begin{math}3\end{math}}(11);
        \draw[Arete](16)--(15);
        \draw[Arete](17)edge[]node[EtiqArete]{\begin{math}4\end{math}}(9);
        \draw[Arete](18)--(19);
        \draw[Arete](19)edge[]node[EtiqArete]{\begin{math}1\end{math}}(17);
        \draw[Arete](2)--(1);
        \draw[Arete](20)--(19);
        \draw[Arete](3)edge[]node[EtiqArete]{\begin{math}3\end{math}}(7);
        \draw[Arete](4)--(5);
        \draw[Arete](5)edge[]node[EtiqArete]{\begin{math}4\end{math}}(3);
        \draw[Arete](6)--(5);
        \draw[Arete](7)edge[]node[EtiqArete]{\begin{math}4\end{math}}(9);
        \draw[Arete](8)--(7);
        \node(r)at(9.00,2.50){};
        \draw[Arete](r)--(9);
    \end{tikzpicture}
    \end{split}
\end{equation}
is a $4$-edge valued binary tree and a basis element of $\Dendr_4(10)$.
\medskip

We deduce from Proposition~\ref{prop:serie_hilbert_dendr_gamma} that the
Hilbert series of $\Dendr_\gamma$ satisfies
\begin{equation}
    \Hca_{\Dendr_\gamma}(t) =
    \frac{1 - \sqrt{1 - 4 \gamma t} - 2 \gamma t}{2\gamma^2 t},
\end{equation}
and we also obtain that for all $n \geq 1$,
$\dim \Dendr_\gamma(n) = \gamma^{n - 1} \Cat(n)$.
For instance, the first dimensions of $\Dendr_1$, $\Dendr_2$, $\Dendr_3$,
and $\Dendr_4$ are respectively
\begin{equation}
    1, 2, 5, 14, 42, 132, 429, 1430, 4862, 16796, 58786,
\end{equation}
\begin{equation}
    1, 4, 20, 112, 672, 4224, 27456, 183040, 1244672, 8599552, 60196864,
\end{equation}
\begin{equation}
    1, 6, 45, 378, 3402, 32076, 312741, 3127410, 31899582, 330595668, 3471254514,
\end{equation}
\begin{equation}
    1, 8, 80, 896, 10752, 135168, 1757184, 23429120, 318636032,
    4402970624, 61641588736.
\end{equation}
The first one is Sequence~\Sloane{A000108}, the second one is
Sequence~\Sloane{A003645}, and the third one is Sequence~\Sloane{A101600}
of~\cite{Slo}. Last sequence is not listed in~\cite{Slo} at this time.
\medskip

%%%%%%%%%%%%%%%%%%%%%%%%%%%%%%%%%%%%%%%%%%%%%%%%%%%%%%%%%%%%%%%%%%%%%%%%
\subsubsection{Associative operations}
In the same manner as in the dendriform operad the sum of its two
operations produces an associative operation, in the $\gamma$-dendriform
operad there is a way to build associative operations, as shows next
statement.
\medskip

\begin{Proposition} \label{prop:operateur_associatif_dendr_gamma_autre}
    For any integers $\gamma \geq 0$ and $b \in [\gamma]$, the element
    \begin{equation}
        \OpAsDendrA_b :=
        \pi\left(\sum_{a \in [b]} \GDendrA_a + \DDendrA_a\right)
    \end{equation}
    of $\Dendr_\gamma$, where
    $\pi : \OpLibre\left(\GenDendr\right) \to \Dendr_\gamma$ is the
    canonical surjection map, is associative.
\end{Proposition}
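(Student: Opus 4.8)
The plan is to avoid manipulating relations inside $\Dendr_\gamma$ and instead to work on the Koszul-dual side, where the computation becomes almost trivial. Set $\xi := \sum_{a\in[b]}\left(\GDendrA_a + \DDendrA_a\right)\in\OpLibre\left(\GenDendr\right)(2)$, so that $\OpAsDendrA_b = \pi(\xi)$. Since the relations of $\Dendr_\gamma$ are concentrated in arity $3$, the surjection $\pi$ is injective in arity $2$ and, in arity $3$, has kernel exactly $\RelDendr$; and since $\pi$ is an operad morphism we have $\OpAsDendrA_b\circ_i\OpAsDendrA_b = \pi(\xi\circ_i\xi)$. Hence $\OpAsDendrA_b$ is associative if and only if $\xi\circ_1\xi - \xi\circ_2\xi\in\RelDendr$. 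By the computation carried out in the proof of Theorem~\ref{thm:presentation_dendr_gamma}, after identifying $\GDendrA_a$ with $\GDias_a$ and $\DDendrA_a$ with $\DDias_a$, one has $\RelDendr = \RelDias^\perp$ inside $\OpLibre\left(\GenDias\right)(3)$ for the Koszul scalar product; as that scalar product is non-degenerate, it suffices to check that $\xi\circ_1\xi - \xi\circ_2\xi$ pairs to $0$ with each element of the spanning family \eqref{equ:relation_dias_gamma_1_concise}--\eqref{equ:relation_dias_gamma_5_concise} of $\RelDias$, for all admissible $a,a'\in[\gamma]$.

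The core of the argument is a one-line computation of the pairing. Writing $\chi(e) := 1$ when the index of the generator $e$ is $\le b$ and $\chi(e) := 0$ otherwise, I would first observe that for any two generators $p,q$ and any $i\in\{1,2\}$ one has $\langle\xi\circ_1\xi - \xi\circ_2\xi,\; p\circ_i q\rangle = \chi(p)\,\chi(q)$. Indeed, when $i=1$ only the $\xi\circ_1\xi$-part contributes, namely the coefficient $\chi(p)\chi(q)$ of $p\circ_1 q$ in $\xi\circ_1\xi = \sum_{\,\mathrm{idx}(e),\mathrm{idx}(f)\le b} e\circ_1 f$; when $i=2$ only the $-\xi\circ_2\xi$-part contributes, and its intrinsic sign $-1$ cancels the outer minus sign, again yielding $\chi(p)\chi(q)$. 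Consequently, for a relation $\rho$ of the form (first monomial) $-$ (second monomial), the value $\langle\xi\circ_1\xi - \xi\circ_2\xi,\rho\rangle$ is simply the difference of the two corresponding $\chi$-products.

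Finally I would evaluate this difference on the five families. For \eqref{equ:relation_dias_gamma_1_concise} it vanishes at once, the two monomials carrying the same pair of indices $a,a'$. For \eqref{equ:relation_dias_gamma_2_concise}--\eqref{equ:relation_dias_gamma_5_concise}, the two monomials carry the index multisets $\{a,\, a\Max a'\}$ and $\{a,\, a'\}$ (in one order or the other), and the elementary fact that $a\Max a'\le b$ holds precisely when both $a\le b$ and $a'\le b$ shows that both $\chi$-products equal $[a\le b][a'\le b]$; hence every pairing is $0$. Therefore $\xi\circ_1\xi - \xi\circ_2\xi\in\RelDias^\perp = \RelDendr$, which gives $\OpAsDendrA_b\circ_1\OpAsDendrA_b = \OpAsDendrA_b\circ_2\OpAsDendrA_b$, i.e. $\OpAsDendrA_b$ is associative. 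I do not expect a genuine obstacle here; the only point demanding care is keeping the signs in the Koszul scalar product straight — in particular the mildly surprising fact that the pairing with $p\circ_1 q$ and with $p\circ_2 q$ come out equal, which is exactly the feature that forces the difference to collapse. (One could alternatively verify associativity directly inside $\Dendr_\gamma$ using the presentation of Theorem~\ref{thm:presentation_dendr_gamma} together with the mirror antiautomorphism to halve the bookkeeping, but the dual-pairing route above is considerably shorter.)
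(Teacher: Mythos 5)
Your proof is correct, and it reaches the same reduction as the paper — showing that $\xi \circ_1 \xi - \xi \circ_2 \xi$ lies in $\RelDendr$, where $\xi := \sum_{a \in [b]} \GDendrA_a + \DDendrA_a$ — but verifies it by a genuinely different mechanism. The paper works entirely on the $\Dendr_\gamma$ side: it expands $\xi \circ_1 \xi - \xi \circ_2 \xi$ over $a, a' \in [b]$ and observes that the result is precisely a sum of the generating relations \eqref{equ:relation_dendr_gamma_1_alternative}---\eqref{equ:relation_dendr_gamma_7_alternative} of $\RelDendr$, hence is killed by $\pi$. You instead exploit $\RelDendr = \RelDias^\perp$ (under the identification of $\GDendrA_a$, $\DDendrA_a$ with $\GDias_a$, $\DDias_a$) and check orthogonality to the concise spanning family \eqref{equ:relation_dias_gamma_1_concise}---\eqref{equ:relation_dias_gamma_5_concise} of $\RelDias$, via the uniform identity $\left\langle \xi \circ_1 \xi - \xi \circ_2 \xi, \, p \circ_i q \right\rangle = \chi(p)\chi(q)$ together with the multiplicativity $\chi(a \Max a') = \chi(a)\chi(a')$; both sign bookkeeping and the case analysis are right. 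What your route buys is a uniform two-line pairing computation that never touches the variable-length relations \eqref{equ:relation_dendr_gamma_6_alternative} and \eqref{equ:relation_dendr_gamma_7_alternative}, needing only the simple concise relations of $\Dias_\gamma$ and the definition of the Koszul dual; the paper's route is more direct and stays inside the presentation of $\Dendr_\gamma$ once Theorem~\ref{thm:presentation_dendr_gamma} is in hand. Two minor remarks: non-degeneracy of the Koszul pairing is not actually needed, since membership in the annihilator $\RelDias^\perp$ is by definition checked by pairing against a spanning set of $\RelDias$; and your argument only requires the inclusion $\RelDendr \subseteq \ker \pi$ in arity $3$, so the ``if and only if'' (and the injectivity of $\pi$ in arity $2$) is superfluous, though harmless.
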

\begin{proof}
    By setting
    \begin{equation}
        x := \sum_{a \in [b]} \GDendrA_a + \DDendrA_a,
    \end{equation}
    we have
    \begin{multline} \label{equ:operateur_associatif_dendr_gamma_autre_demo}
        x \circ_1 x - x \circ_2 x =
        \GDendrA_a \circ_1 \GDendrA_{a'} +
        \GDendrA_a \circ_1 \DDendrA_{a'}
            + \DDendrA_a \circ_1 \GDendrA_{a'} +
        \DDendrA_a \circ_1 \DDendrA_{a'} \\
            -
            \GDendrA_a \circ_2 \GDendrA_{a'} -
        \GDendrA_a \circ_2 \DDendrA_{a'}
            -
        \DDendrA_a \circ_2 \GDendrA_{a'} -
        \DDendrA_a \circ_2 \DDendrA_{a'}.
    \end{multline}
    We the observe that~\eqref{equ:operateur_associatif_dendr_gamma_autre_demo}
    is the sum of
    elements~\eqref{equ:relation_dendr_gamma_1_alternative}---%
    \eqref{equ:relation_dendr_gamma_7_alternative}
    which generate, by Theorem~\ref{thm:presentation_dendr_gamma}, the
    space of relations of $\Dendr_\gamma$. Therefore, we have
    $\pi(x \circ_1 x - x \circ_2 x) = 0$, implying
    $\OpAsDendrA_b \circ_1 \OpAsDendrA_b -
    \OpAsDendrA_b \circ_2 \OpAsDendrA_b = 0$ and
    the associativity of $\OpAsDendrA_b$.
\end{proof}
\medskip

%%%%%%%%%%%%%%%%%%%%%%%%%%%%%%%%%%%%%%%%%%%%%%%%%%%%%%%%%%%%%%%%%%%%%%%%
\subsubsection{Alternative presentation}
For any integer $\gamma \geq 0$, let $\GDendr_b$ and $\DDendr_b$,
$b \in [\gamma]$, the elements of $\OpLibre\left(\GenDendr\right)$
defined by
\begin{subequations}
\begin{equation} \label{equ:definition_operateur_dendr_gauche}
    \GDendr_b := \sum_{a \in [b]} \GDendrA_a,
\end{equation}
and
\begin{equation} \label{equ:definition_operateur_dendr_droite}
    \DDendr_b := \sum_{a \in [b]} \DDendrA_a.
\end{equation}
\end{subequations}
Then, since for all $b \in [\gamma]$ we have
\begin{subequations}
\begin{equation}
    \GDendrA_b =
    \begin{cases}
        \GDendr_1 & \mbox{if } b = 1, \\
        \GDendr_b - \GDendr_{b - 1} & \mbox{otherwise},
    \end{cases}
\end{equation}
and
\begin{equation}
    \DDendrA_b =
    \begin{cases}
        \DDendr_1 & \mbox{if } b = 1, \\
        \DDendr_b - \DDendr_{b - 1} & \mbox{otherwise},
    \end{cases}
\end{equation}
\end{subequations}
by triangularity, the family
$\GenDendr' := \{\GDendr_b, \DDendr_b : b \in [\gamma]\}$ forms a
basis of $\OpLibre\left(\GenDendr\right)(2)$ and then, generates
$\OpLibre\left(\GenDendr\right)$ as an operad. This change of basis
from $\OpLibre\left(\GenDendr\right)$ to $\OpLibre(\GenDendr')$
is similar to the change of basis from $\OpLibre(\GenDias')$
to $\OpLibre\left(\GenDias\right)$ introduced in
Section~\ref{subsubsec:presentation_dias_gamma_alternative}.
Let us now express a presentation of $\Dendr_\gamma$ through the family
$\GenDendr'$.
\medskip

\begin{Theoreme} \label{thm:autre_presentation_dendr_gamma}
    For any integer $\gamma \geq 0$, the operad $\Dendr_\gamma$
    admits the following presentation. It is generated by $\GenDendr'$
    and its space of relations $\RelDendr'$ is generated by
    \begin{subequations}
    \begin{equation} \label{equ:relation_dendr_gamma_1}
        \GDendr_a \circ_1 \DDendr_{a'} - \DDendr_{a'} \circ_2 \GDendr_a,
        \qquad a, a' \in [\gamma],
    \end{equation}
    \begin{equation} \label{equ:relation_dendr_gamma_2}
        \GDendr_a \circ_1 \GDendr_b
            - \GDendr_a \circ_2 \DDendr_b
            - \GDendr_a \circ_2 \GDendr_a,
        \qquad a < b \in [\gamma],
    \end{equation}
    \begin{equation} \label{equ:relation_dendr_gamma_3}
            \DDendr_a \circ_1 \DDendr_a
            + \DDendr_a \circ_1 \GDendr_b
            - \DDendr_a \circ_2 \DDendr_b,
            \qquad a < b \in [\gamma],
    \end{equation}
    \begin{equation} \label{equ:relation_dendr_gamma_4}
        \GDendr_b \circ_1 \GDendr_a
            - \GDendr_a \circ_2 \GDendr_b
            - \GDendr_a \circ_2 \DDendr_a,
        \qquad a < b \in [\gamma],
    \end{equation}
    \begin{equation} \label{equ:relation_dendr_gamma_5}
        \DDendr_a \circ_1 \GDendr_a
            + \DDendr_a \circ_1 \DDendr_b
            - \DDendr_b \circ_2 \DDendr_a,
        \qquad a < b \in [\gamma],
    \end{equation}
    \begin{equation} \label{equ:relation_dendr_gamma_6}
        \GDendr_a \circ_1 \GDendr_a
            - \GDendr_a \circ_2 \DDendr_a
            - \GDendr_a \circ_2 \GDendr_a,
        \qquad a \in [\gamma],
    \end{equation}
    \begin{equation} \label{equ:relation_dendr_gamma_7}
        \DDendr_a \circ_1 \DDendr_a
            + \DDendr_a \circ_1 \GDendr_a
            - \DDendr_a \circ_2 \DDendr_a,
        \qquad a \in [\gamma].
    \end{equation}
    \end{subequations}
\end{Theoreme}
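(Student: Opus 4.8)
The plan is to obtain this presentation by computing the Koszul dual of $\Dias_\gamma$ from its \emph{alternative} presentation of Proposition~\ref{prop:presentation_alternative_dias_gamma} rather than from the original one of Theorem~\ref{thm:presentation_dias_gamma}. By Theorem~\ref{thm:presentation_dendr_gamma}, $\Dendr_\gamma$ is the Koszul dual $\Dias_\gamma^!$, and the Koszul dual of a binary quadratic operad can be read off from any presentation by generators and relations. Proposition~\ref{prop:presentation_alternative_dias_gamma} gives such a presentation of $\Dias_\gamma$, with generating set $\GenDias'$ and space of relations $\RelLibre'_{\Dias_\gamma}$ spanned by \eqref{equ:relation_dias_gamma_1_alternative}--\eqref{equ:relation_dias_gamma_13_alternative}. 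So I would form the scalar product $\langle -, -\rangle$ on $\OpLibre(\GenDias')(3)$ attached to the basis $\GenDias'$, which is diagonal in the monomial basis $\{x \circ_i y : x, y \in \GenDias', \ i \in \{1, 2\}\}$ (equal to $+1$ on the $\circ_1$ monomials and to $-1$ on the $\circ_2$ monomials), identify $\GDendr_a$ with $\GDiasA_a$ and $\DDendr_a$ with $\DDiasA_a$ (a legitimate identification, since $\GenDendr'$ is dual to $\GenDias'$ and passing to dual bases turns the unitriangular change of basis $\GDias_a = \sum_{a \leq b \in [\gamma]} \GDiasA_b$ of Section~\ref{subsubsec:presentation_dias_gamma_alternative} into the change of basis \eqref{equ:definition_operateur_dendr_gauche}--\eqref{equ:definition_operateur_dendr_droite} used here), and prove that $\left(\RelLibre'_{\Dias_\gamma}\right)^\perp$ equals the space $\RelDendr'$ spanned by \eqref{equ:relation_dendr_gamma_1}--\eqref{equ:relation_dendr_gamma_7}.

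This splits into two steps. First, one shows $\RelDendr' \subseteq \left(\RelLibre'_{\Dias_\gamma}\right)^\perp$: for each of the seven families \eqref{equ:relation_dendr_gamma_1}--\eqref{equ:relation_dendr_gamma_7} and each of the thirteen families \eqref{equ:relation_dias_gamma_1_alternative}--\eqref{equ:relation_dias_gamma_13_alternative}, compute the scalar product and check that it vanishes; since the product is diagonal in the monomial basis, each such computation reduces to matching a few monomials and collecting signs. Second, one counts dimensions. On one side, by the proof of Proposition~\ref{prop:presentation_alternative_dias_gamma}, $\dim \RelLibre'_{\Dias_\gamma} = 5\gamma^2$, so $\dim \left(\RelLibre'_{\Dias_\gamma}\right)^\perp = \dim \OpLibre(\GenDias')(3) - 5\gamma^2 = 8\gamma^2 - 5\gamma^2 = 3\gamma^2$. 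On the other side, the families \eqref{equ:relation_dendr_gamma_1}--\eqref{equ:relation_dendr_gamma_7} contain $\gamma^2 + 4\binom{\gamma}{2} + 2\gamma = 3\gamma^2$ elements, and these are linearly independent because each of them contains a monomial occurring in no other (for instance $\GDendr_a \circ_1 \DDendr_{a'}$ in \eqref{equ:relation_dendr_gamma_1}, $\GDendr_a \circ_1 \GDendr_a$ in \eqref{equ:relation_dendr_gamma_6}, $\DDendr_a \circ_1 \DDendr_a$ in \eqref{equ:relation_dendr_gamma_7}, and a suitable $\GDendr_i \circ_1 \GDendr_j$ or $\DDendr_i \circ_1 \DDendr_j$ with $i \neq j$ for each remaining one). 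The two counts together with the inclusion force $\left(\RelLibre'_{\Dias_\gamma}\right)^\perp = \RelDendr'$, whence the stated presentation of $\Dendr_\gamma$.

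The main obstacle is carrying out the first step cleanly. The relations \eqref{equ:relation_dias_gamma_10_alternative}--\eqref{equ:relation_dias_gamma_13_alternative} of $\Dias_\gamma$ carry sums over $\{b : a \leq b \in [\gamma]\}$, and so do \eqref{equ:relation_dendr_gamma_6}--\eqref{equ:relation_dendr_gamma_7}, so several pairings involve double sums that must be reorganized according to the comparison of the two summation indices, and keeping the $\circ_2$-signs straight there is where care is needed. A fully internal alternative, avoiding Koszul duality, would be to start from the presentation $(\GenDendr, \RelDendr)$ of Theorem~\ref{thm:presentation_dendr_gamma}, observe that the change of basis \eqref{equ:definition_operateur_dendr_gauche}--\eqref{equ:definition_operateur_dendr_droite} is triangular, hence extends to an operad isomorphism $\OpLibre(\GenDendr') \to \OpLibre(\GenDendr)$, and check that it carries the span of \eqref{equ:relation_dendr_gamma_1}--\eqref{equ:relation_dendr_gamma_7} onto $\RelDendr$; this computation has the same flavour, and the same dimension count ($3\gamma^2$ on both sides) together with the one inclusion again finishes it.
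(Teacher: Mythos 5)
Your plan is correct in outline, but your primary route is not the one the paper's proof follows. The paper proves the theorem entirely inside $\OpLibre\left(\GenDendr\right)$: it expands each of the seven families \eqref{equ:relation_dendr_gamma_1}--\eqref{equ:relation_dendr_gamma_7} over the generators $\GDendrA_a$, $\DDendrA_a$ using \eqref{equ:definition_operateur_dendr_gauche}--\eqref{equ:definition_operateur_dendr_droite}, observes that each expansion is a sum of generators of $\RelDendr$ from Theorem~\ref{thm:presentation_dendr_gamma} (giving $\RelDendr' \subseteq \RelDendr$), and concludes by the same dimension count $\dim \RelDendr' = 3\gamma^2 = \dim \RelDendr$; this is exactly the ``fully internal alternative'' you sketch at the end. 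Your main route --- dualizing the $\Ksf$-basis presentation of Proposition~\ref{prop:presentation_alternative_dias_gamma} and computing $\left(\RelLibre'_{\Dias_\gamma}\right)^\perp$ --- is also legitimate, and the paper explicitly endorses it in the remark following the theorem; your dual-basis check that $\GenDendr'$ is the family dual to $\GenDias'$ is precisely the compatibility needed for that route to prove the statement as written, and your count $8\gamma^2 - 5\gamma^2 = 3\gamma^2$ is right since the pairing is nondegenerate. What each approach buys: yours explains where the compact presentation comes from (it is literally the Koszul dual taken over the $\Ksf$-basis), at the price of pairing seven families against thirteen, including the double sums of \eqref{equ:relation_dias_gamma_10_alternative}--\eqref{equ:relation_dias_gamma_13_alternative}; the paper's route avoids all scalar-product and dual-basis bookkeeping and only requires expansion and recognition of elements of $\RelDendr$. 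One small repair to your independence argument: $\DDendr_a \circ_1 \DDendr_a$ is not a valid witness for \eqref{equ:relation_dendr_gamma_7}, since it also occurs in \eqref{equ:relation_dendr_gamma_3}; take $\DDendr_a \circ_2 \DDendr_a$ instead (and for \eqref{equ:relation_dendr_gamma_3} use $\DDendr_a \circ_1 \GDendr_b$), after which the triangularity argument and the total count $3\gamma^2$ go through.
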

\begin{proof}
    Let us show that $\RelDendr'$ is equal to the space of relations
    $\RelDendr$ of $\Dendr_\gamma$ defined in the statement of
    Theorem~\ref{thm:presentation_dendr_gamma}. By this last theorem,
    for any $x \in \OpLibre\left(\GenDendr\right)(3)$, $x$ is in
    $\RelDendr$ if and only if $\pi(x) = 0$ where
    $\pi : \OpLibre\left(\GenDendr\right) \to \Dendr_\gamma$ is the
    canonical surjection map. By straightforward computations, by
    expanding any element $x$ of~\eqref{equ:relation_dendr_gamma_1}---%
    \eqref{equ:relation_dendr_gamma_7} over the elements
    $\GDendrA_a$, $\DDendrA_a$, $a \in [\gamma]$, by
    using~\eqref{equ:definition_operateur_dendr_gauche}
    and~\eqref{equ:definition_operateur_dendr_droite} we obtain that
    $x$ can be expressed as a sum of elements of $\RelDendr$. This
    implies that $\pi(x) = 0$ and hence that $\RelDendr'$ is a subspace
    of $\RelDendr$.
    \smallskip

    Now, one can observe that
    elements~\eqref{equ:relation_dendr_gamma_1}---%
    \eqref{equ:relation_dendr_gamma_6} are linearly independent.
    Then, $\RelDendr'$ has dimension $3\gamma^2$ which is also, by
    Theorem~\ref{thm:presentation_dendr_gamma}, the dimension of
    $\RelDendr$. The statement of the theorem follows.
\end{proof}
\medskip

The presentation of $\Dendr_\gamma$ provided by
Theorem~\ref{thm:autre_presentation_dendr_gamma} is easier to
handle than the one provided by Theorem \ref{thm:presentation_dendr_gamma}.
The main reason is that Relations~\eqref{equ:relation_dendr_gamma_6_alternative}
and~\eqref{equ:relation_dendr_gamma_7_alternative} of the first
presentation involve a nonconstant number of terms, while all relations
of this second presentation always involve only two or three terms. As a
very remarkable fact, it is worthwhile to note that the presentation of
$\Dendr_\gamma$ provided by Theorem~\ref{thm:autre_presentation_dendr_gamma}
can be directly obtained by considering the Koszul dual of $\Dias_\gamma$
over the $\Ksf$-basis (see Sections~\ref{subsubsec:base_K}
and~\ref{subsubsec:presentation_dias_gamma_alternative}). Therefore,
an alternative way to establish this presentation consists in computing
the Koszul dual of $\Dias_\gamma$ seen through the presentation having
$\RelDendr'$ as space of relations, which is made of the relations of
$\Dias_\gamma$ expressed over the $\Ksf$-basis
(see Proposition~\ref{prop:presentation_alternative_dias_gamma}).
\medskip

From now on, $\Min$ denotes the operation $\min$ on integers. Using this
notation, the space of relations $\RelDendr'$ of $\Dendr_\gamma$
exhibited by Theorem~\ref{thm:autre_presentation_dendr_gamma}
can be rephrased in a more compact way as the space generated by
\begin{subequations}
\begin{equation} \label{equ:relation_dendr_gamma_1_concise}
    \GDendr_a \circ_1 \DDendr_{a'} - \DDendr_{a'} \circ_2 \GDendr_a,
    \qquad a, a' \in [\gamma],
\end{equation}
\begin{equation} \label{equ:relation_dendr_gamma_2_concise}
    \GDendr_a \circ_1 \GDendr_{a'}
        - \GDendr_{a \Min a'} \circ_2 \GDendr_a
        - \GDendr_{a \Min a'} \circ_2 \DDendr_{a'},
    \qquad a, a' \in [\gamma],
\end{equation}
\begin{equation} \label{equ:relation_dendr_gamma_3_concise}
    \DDendr_{a \Min a'} \circ_1 \GDendr_{a'}
        + \DDendr_{a \Min a'} \circ_1 \DDendr_a
        - \DDendr_a \circ_2 \DDendr_{a'},
    \qquad a, a' \in [\gamma].
\end{equation}
\end{subequations}
\medskip

Over the family $\GenDendr'$, one can build associative operations in
$\Dendr_\gamma$ in the following way.
\medskip

\begin{Proposition} \label{prop:operateur_associatif_dendr_gamma}
    For any integers $\gamma \geq 0$ and $b \in [\gamma]$, the element
    \begin{equation}
        \OpAsDendr_b := \pi(\GDendr_b + \DDendr_b)
    \end{equation}
    of $\Dendr_\gamma$, where
    $\pi : \OpLibre(\GenDendr') \to \Dendr_\gamma$ is the
    canonical surjection map, is associative.
\end{Proposition}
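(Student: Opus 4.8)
The cleanest route is to observe that $\OpAsDendr_b$ coincides with the element $\OpAsDendrA_b$ whose associativity was already established in Proposition~\ref{prop:operateur_associatif_dendr_gamma_autre}. Indeed, by the defining relations~\eqref{equ:definition_operateur_dendr_gauche} and~\eqref{equ:definition_operateur_dendr_droite}, one has $\GDendr_b = \sum_{a \in [b]} \GDendrA_a$ and $\DDendr_b = \sum_{a \in [b]} \DDendrA_a$, so that in $\OpLibre(\GenDendr)$,
\begin{equation}
    \GDendr_b + \DDendr_b = \sum_{a \in [b]} \GDendrA_a + \DDendrA_a.
\end{equation}
Applying the canonical surjection $\pi$ (and noting that $\OpLibre(\GenDendr')$ and $\OpLibre(\GenDendr)$ are the same free operad under the triangular change of basis) yields $\OpAsDendr_b = \pi(\GDendr_b + \DDendr_b) = \OpAsDendrA_b$. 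Proposition~\ref{prop:operateur_associatif_dendr_gamma_autre} then gives the associativity of $\OpAsDendr_b$ directly.

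\textbf{Alternative, self-contained route.} If one prefers to argue from the presentation of Theorem~\ref{thm:autre_presentation_dendr_gamma} directly, the plan is to mimic the proof of Proposition~\ref{prop:operateur_associatif_dendr_gamma_autre}: set $x := \GDendr_b + \DDendr_b$ and expand
\begin{equation}
    x \circ_1 x - x \circ_2 x
\end{equation}
into its eight terms $\GDendr_b \circ_1 \GDendr_b$, $\GDendr_b \circ_1 \DDendr_b$, $\DDendr_b \circ_1 \GDendr_b$, $\DDendr_b \circ_1 \DDendr_b$ minus the corresponding four $\circ_2$ terms. Then exhibit this as the sum of exactly three generators of $\RelDendr'$: the relation~\eqref{equ:relation_dendr_gamma_1} with $a = a' = b$, the relation~\eqref{equ:relation_dendr_gamma_6} with $a = b$, and the relation~\eqref{equ:relation_dendr_gamma_7} with $a = b$. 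A short check shows these three sum precisely to $x \circ_1 x - x \circ_2 x$, so $\pi(x \circ_1 x - x \circ_2 x) = 0$, whence $\OpAsDendr_b \circ_1 \OpAsDendr_b = \OpAsDendr_b \circ_2 \OpAsDendr_b$.

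\textbf{Main obstacle.} There is essentially no difficulty here: the statement reduces either to a previously proved fact via the change of basis, or to a one-line linear combination of the defining relations. The only point requiring care is the bookkeeping in identifying the correct three relations (and checking the signs match), which is routine; I would present the first route as the proof and perhaps remark on the second.
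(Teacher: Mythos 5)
Your first route is exactly the paper's proof: it expands $\GDendr_b + \DDendr_b$ as $\sum_{a \in [b]} \GDendrA_a + \DDendrA_a$, identifies $\OpAsDendr_b$ with $\OpAsDendrA_b$, and invokes Proposition~\ref{prop:operateur_associatif_dendr_gamma_autre}. Your alternative self-contained computation is also correct (the three generators \eqref{equ:relation_dendr_gamma_1}, \eqref{equ:relation_dendr_gamma_6}, and \eqref{equ:relation_dendr_gamma_7} with $a = a' = b$ do sum to $x \circ_1 x - x \circ_2 x$), but the paper only uses the first argument.
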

\begin{proof}
    By definition of the $\GDendr_b$ and $\DDendr_b$, $b \in [\gamma]$,
    we have
    \begin{equation}
        \GDendr_b + \DDendr_b =
        \sum_{a \in [b]} \GDendrA_a + \DDendrA_a.
    \end{equation}
    We hence observe that $\OpAsDendr_b = \OpAsDendrA_b$, where
    $\OpAsDendrA_b$ is the element of $\Dendr_\gamma$ defined in the
    statement of Proposition~\ref{prop:operateur_associatif_dendr_gamma_autre}.
    Hence, by this latter proposition, $\OpAsDendr_b$ is associative.
\end{proof}
\medskip

\begin{Proposition}
\label{prop:description_operateurs_associatifs_dendr_gamma}
    For any integer $\gamma \geq 0$, any associative element of
    $\Dendr_\gamma$ is proportional to $\OpAsDendr_b$ for a
    $b \in [\gamma]$.
\end{Proposition}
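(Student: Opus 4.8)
The plan is to classify the associative (hence arity-$2$) elements of $\Dendr_\gamma$ directly from the presentation of Theorem~\ref{thm:autre_presentation_dendr_gamma}. I would write the generic arity-$2$ element over the family $\GenDendr'$ as $x := \sum_{a \in [\gamma]} \left(\lambda_a\, \GDendr_a + \rho_a\, \DDendr_a\right)$ with $\lambda_a, \rho_a \in \K$, expand $x \circ_1 x - x \circ_2 x$ in $\OpLibre\left(\GenDendr'\right)(3)$, reduce it modulo $\RelDendr'$ to a fixed basis of $\Dendr_\gamma(3)$, and then read off which coefficient vectors $(\lambda_a)_a, (\rho_a)_a$ make the result vanish.

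The first step is to pin down a workable basis of $\Dendr_\gamma(3)$. Using the concise relations \eqref{equ:relation_dendr_gamma_1_concise}, \eqref{equ:relation_dendr_gamma_2_concise}, \eqref{equ:relation_dendr_gamma_3_concise} as left-to-right rewriting rules, every degree-$2$ tree of the forms $\GDendr_a \circ_1 \DDendr_{a'}$, $\GDendr_a \circ_1 \GDendr_{a'}$, and $\DDendr_a \circ_2 \DDendr_{a'}$ rewrites in a single step into a linear combination of trees from the set $\mathcal{B} := \{\DDendr_a \circ_1 \GDendr_b,\ \DDendr_a \circ_1 \DDendr_b,\ \GDendr_a \circ_2 \GDendr_b,\ \GDendr_a \circ_2 \DDendr_b,\ \DDendr_a \circ_2 \GDendr_b : a, b \in [\gamma]\}$. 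The $3\gamma^2$ reducible monomials and the $5\gamma^2$ elements of $\mathcal{B}$ partition the $8\gamma^2$ monomials of $\OpLibre\left(\GenDendr'\right)(3)$, and since the $3\gamma^2$ concise relations span $\RelDendr'$ (their leading terms being exactly the reducible monomials, they are moreover linearly independent) while $\dim \Dendr_\gamma(3) = \gamma^2 \Cat(3) = 5\gamma^2 = \#\mathcal{B}$ by Proposition~\ref{prop:serie_hilbert_dendr_gamma}, the set $\mathcal{B}$ is a basis of $\Dendr_\gamma(3)$. Expanding $x \circ_1 x - x \circ_2 x$, performing the rewritings, and collecting the coefficient of each element of $\mathcal{B}$ yields a finite system of quadratic equations in the $\lambda_a$ and $\rho_a$.

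The second step analyzes this system, and it is elementary. The coefficient of $\DDendr_p \circ_1 \DDendr_q$ for $p > q$ is $\rho_p \rho_q$, so $\rho_p \rho_q = 0$ whenever $p > q$: the family $(\rho_a)_a$ is supported on at most one index, say $b_2$, with value $r := \rho_{b_2}$. A symmetric inspection of the coefficient of $\GDendr_p \circ_2 \GDendr_q$ forces $(\lambda_a)_a$ to be supported on at most one index $b_1$, with value $\ell := \lambda_{b_1}$. Assuming $\ell r \ne 0$, the coefficients of $\DDendr_p \circ_1 \GDendr_q$ in the subcases $p > q$ and $p < q$ force $b_2 \le b_1$ and $b_1 \le b_2$, hence $b_1 = b_2 =: b$, while the subcase $p = q = b$ gives $r(\ell - r) = 0$, so $\ell = r$ and $x = \ell\,(\GDendr_b + \DDendr_b) = \ell\, \OpAsDendr_b$. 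The degenerate cases $x = r\, \DDendr_{b}$ ($r \ne 0$) and $x = \ell\, \GDendr_{b}$ ($\ell \ne 0$) are excluded by the one-line computations $\DDendr_{b}\circ_1\DDendr_{b} - \DDendr_{b}\circ_2\DDendr_{b} = -\,\DDendr_{b}\circ_1\GDendr_{b}$ and $\GDendr_{b}\circ_1\GDendr_{b} - \GDendr_{b}\circ_2\GDendr_{b} = \GDendr_{b}\circ_2\DDendr_{b}$, both nonzero elements of $\mathcal{B}$; so neither $\DDendr_b$ nor $\GDendr_b$ is associative. Together with the trivial case $x = 0 = 0 \cdot \OpAsDendr_1$, this gives exactly the claimed classification.

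The main obstacle is simply the bookkeeping in the reduction step: extracting the coefficient of a fixed tree $\DDendr_p \circ_1 \DDendr_q$ (or $\DDendr_p \circ_1 \GDendr_q$) from $x \circ_1 x - x \circ_2 x$ requires tracking how the $\Min$ in \eqref{equ:relation_dendr_gamma_2_concise} and \eqref{equ:relation_dendr_gamma_3_concise} pairs up indices, and splitting into the subcases $p < q$, $p = q$, $p > q$. This is routine once $\mathcal{B}$ is fixed, and one never needs the coefficients of $\GDendr \circ_2 \DDendr$ or $\DDendr \circ_2 \GDendr$: as soon as $x$ is identified with $\ell\, \OpAsDendr_b$, its associativity is already guaranteed by Proposition~\ref{prop:operateur_associatif_dendr_gamma}, so the remaining coefficient equations are automatically satisfied.
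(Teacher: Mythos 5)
Your proposal is correct and follows essentially the same route as the paper: write a generic arity-$2$ element over $\GenDendr'$, expand $x \circ_1 x - x \circ_2 x$, and read off quadratic constraints on the coefficients from the concise presentation of Theorem~\ref{thm:autre_presentation_dendr_gamma}, forcing a single common index $b$ with equal coefficients and excluding the pure $\GDendr_b$ or $\DDendr_b$ cases. Your explicit normal-form basis $\mathcal{B}$ of $\Dendr_\gamma(3)$ (justified by the dimension count of Proposition~\ref{prop:serie_hilbert_dendr_gamma}) is just a more systematic way of extracting the same constraints the paper obtains directly from $\RelDendr'$, so no further comparison is needed.
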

\begin{proof}
    Let $\pi : \OpLibre(\GenDendr') \to \Dendr_\gamma$ be the canonical
    surjection map. Consider the element
    \begin{equation}
        x :=
        \sum_{a \in [\gamma]} \alpha_a \GDendr_a + \beta_a \DDendr_a
    \end{equation}
    of $\OpLibre(\GenDendr')$, where $\alpha_a, \beta_a \in \K$ for all
    $a \in [\gamma]$, such that $\pi(x)$ is associative in $\Dendr_\gamma$.
    Since we have $\pi(r) = 0$ for all elements $r$ of $\RelDendr'$
    (see~\eqref{equ:relation_dendr_gamma_1_concise},
    \eqref{equ:relation_dendr_gamma_2_concise},
    and~\eqref{equ:relation_dendr_gamma_3_concise}),
    the fact that $\pi(x \circ_1 x - x \circ_2 x) = 0$ implies the
    constraints
    \begin{equation}\begin{split}
        \alpha_a \, \beta_{a'} = \beta_{a'} \, \alpha_a,
        & \qquad a, a' \in [\gamma], \\
        \alpha_a \, \alpha_{a'} = \alpha_{a \Min a'} \, \alpha_a =
        \alpha_{a \Min a'} \, \beta_{a'}, & \qquad a, a' \in [\gamma], \\
        \beta_{a \Min a'} \, \alpha_{a'} = \beta_{a \Min a'} \, \beta_a =
        \beta_a \, \beta_{a'}, & \qquad a, a' \in [\gamma],
    \end{split}\end{equation}
    on the coefficients intervening in $x$. Moreover, since the syntax
    trees $\DDendr_b \circ_1 \DDendr_a$, $\DDendr_b \circ_1 \GDendr_a$,
    $\GDendr_b \circ_2 \GDendr_a$, and $\GDendr_b \circ_2 \DDendr_a$ do
    not appear in $\RelDendr'$ for all $a < b \in [\gamma]$ , we have the
    further constraints
    \begin{equation}\begin{split}
        \beta_b \, \beta_a & = 0, \qquad a < b \in [\gamma], \\
        \beta_b \, \alpha_a & = 0, \qquad a < b \in [\gamma], \\
        \alpha_b \, \alpha_a & = 0, \qquad a < b \in [\gamma], \\
        \alpha_b \, \beta_a & = 0, \qquad a < b \in [\gamma].
    \end{split}\end{equation}
    These relations imply that there are at most one $c \in [\gamma]$ and
    one $d \in [\gamma]$ such that $\alpha_c \ne 0$ and $\beta_d \ne 0$.
    In this case, these relations imply also that $c = d$, and
    $\alpha_c = \beta_c$. Therefore, $x$ is of the form
    $x = \alpha_a \GDendr_a + \alpha_a \DDendr_a$ for an $a \in [\gamma]$,
    whence the statement of the proposition.
\end{proof}
\medskip

%%%%%%%%%%%%%%%%%%%%%%%%%%%%%%%%%%%%%%%%%%%%%%%%%%%%%%%%%%%%%%%%%%%%%%%%
%%%%%%%%%%%%%%%%%%%%%%%%%%%%%%%%%%%%%%%%%%%%%%%%%%%%%%%%%%%%%%%%%%%%%%%%
\subsection{Category of polydendriform algebras and free objects}
The aim of this section is to describe the category of
$\Dendr_\gamma$-algebras and more particularly the free
$\Dendr_\gamma$-algebra over one generator.
\medskip

%%%%%%%%%%%%%%%%%%%%%%%%%%%%%%%%%%%%%%%%%%%%%%%%%%%%%%%%%%%%%%%%%%%%%%%%
\subsubsection{Polydendriform algebras}
We call {\em $\gamma$-polydendriform algebra} any
$\Dendr_\gamma$-algebra. From the presentation of $\Dendr_\gamma$
provided by Theorem~\ref{thm:presentation_dendr_gamma}, any
$\gamma$-polydendriform algebra is a vector space endowed with linear
operations $\GDendrA_a, \DDendrA_a$, $a \in [\gamma]$, satisfying the
relations encoded by~\eqref{equ:relation_dendr_gamma_1_alternative}---%
\eqref{equ:relation_dendr_gamma_7_alternative}. By considering the
presentation of $\Dendr_\gamma$ exhibited by
Theorem~\ref{thm:autre_presentation_dendr_gamma}, any
$\gamma$-polydendriform algebra is a vector space endowed with linear
operations $\GDendr_a, \DDendr_a$, $a \in [\gamma]$, satisfying the
relations encoded by~\eqref{equ:relation_dendr_gamma_1_concise}---%
\eqref{equ:relation_dendr_gamma_3_concise}.
\medskip

%%%%%%%%%%%%%%%%%%%%%%%%%%%%%%%%%%%%%%%%%%%%%%%%%%%%%%%%%%%%%%%%%%%%%%%%
\subsubsection{Two ways to split associativity}
Like dendriform algebras, which offer a way to split an associative
operation into two parts, $\gamma$-polydendriform algebras propose
two ways to split associativity depending on its chosen presentation.
\medskip

On the one hand, in a $\gamma$-polydendriform algebra $\Dca$ over the
operations $\GDendrA_a$, $\DDendrA_a$, $a \in [\gamma]$,
by Proposition~\ref{prop:operateur_associatif_dendr_gamma_autre}, an
associative operation $\OpAsDendrA$ is split into the $2\gamma$ operations
$\GDendrA_a$, $\DDendrA_a$, $a \in [\gamma]$, so that for all $x, y \in \Dca$,
\begin{equation}
    x \OpAsDendrA y =
    \sum_{a \in [\gamma]}  x \GDendrA_a y + x \DDendrA_a y,
\end{equation}
and all partial sums operations $\OpAsDendrA_b$, $b \in [\gamma]$,
satisfying
\begin{equation}
    x \OpAsDendrA_b y = \sum_{a \in [b]} x \GDendrA_a y + x \DDendrA_a x,
\end{equation}
also are associative.
\medskip

On the other hand, in a $\gamma$-polydendriform algebra over the operations
$\GDendr_a$, $\DDendr_a$, $a \in [\gamma]$, by
Proposition~\ref{prop:operateur_associatif_dendr_gamma}, several
associative operations $\OpAsDendr_a$, $a \in [\gamma]$, are each split
into two operations $\GDendr_a$, $\DDendr_a$, $a \in [\gamma]$, so
that for all $x, y \in \Dca$,
\begin{equation}
    x \OpAsDendr_a y = x \GDendr_a y + x \DDendr_a y.
\end{equation}
\medskip

Therefore, we can observe that $\gamma$-polydendriform algebras over the
operations $\GDendrA_a$, $\DDendrA_a$, $a \in [\gamma]$, are adapted to
study associative algebras (by splitting its single product in the way
we have described above) while $\gamma$-polydendriform algebras over the
operations $\GDendr_a$, $\DDendr_a$, $a \in [\gamma]$, are adapted to
study vectors spaces endowed with several associative products (by
splitting each one in the way we have described above). Algebras with
several associative products will be studied in Section~\ref{sec:as_gamma}.
\medskip

%%%%%%%%%%%%%%%%%%%%%%%%%%%%%%%%%%%%%%%%%%%%%%%%%%%%%%%%%%%%%%%%%%%%%%%%
\subsubsection{Free polydendriform algebras}
From now, in order to simplify and make uniform next definitions, we
consider that in any $\gamma$-edge valued binary tree $\Tfr$, all edges
connecting internal nodes of $\Tfr$ with leaves are labeled by
$\infty$. By convention, for all $a \in [\gamma]$, we have
$a \Min \infty = a = \infty \Min a$.
\medskip

Let us endow the vector space $\AlgLibre_{\Dendr_\gamma}$
of $\gamma$-edge valued binary trees
with linear operations
\begin{equation}
    \GDendr_a, \DDendr_a :
    \AlgLibre_{\Dendr_\gamma} \otimes \AlgLibre_{\Dendr_\gamma}
    \to \AlgLibre_{\Dendr_\gamma},
    \qquad
    a \in [\gamma],
\end{equation}
recursively defined, for any $\gamma$-edge valued binary tree $\Sfr$
and any $\gamma$-edge valued binary trees or leaves $\Tfr_1$ and $\Tfr_2$
by
\begin{equation}
    \Sfr \GDendr_a \Feuille
    := \Sfr =:
    \Feuille \DDendr_a \Sfr,
\end{equation}
\begin{equation}
    \Feuille \GDendr_a \Sfr := 0 =: \Sfr \DDendr_a \Feuille,
\end{equation}
\begin{equation}
    \ArbreBinValue{x}{y}{\Tfr_1}{\Tfr_2}
    \GDendr_a \Sfr :=
    \ArbreBinValue{x}{z}{\Tfr_1}{\Tfr_2 \GDendr_a \Sfr}
    +
    \ArbreBinValue{x}{z}{\Tfr_1}{\Tfr_2 \DDendr_y \Sfr}\,,
    \qquad
    z := a \Min y,
\end{equation}
\begin{equation}
    \begin{split}\Sfr \DDendr_a\end{split}
    \ArbreBinValue{x}{y}{\Tfr_1}{\Tfr_2}
    :=
    \ArbreBinValue{z}{y}{\Sfr \DDendr_a \Tfr_1}{\Tfr_2}
    +
    \ArbreBinValue{z}{y}{\Sfr \GDendr_x \Tfr_1}{\Tfr_2}\,,
    \qquad
    z := a \Min x.
\end{equation}
Note that neither $\Feuille \GDendr_a \Feuille$ nor
$\Feuille \DDendr_a \Feuille$ are defined.
\medskip

For example, we have
\begin{multline}
    \begin{split}
    \begin{tikzpicture}[xscale=.25,yscale=.2]
        \node[Feuille](0)at(0.00,-4.50){};
        \node[Feuille](2)at(2.00,-4.50){};
        \node[Feuille](4)at(4.00,-4.50){};
        \node[Feuille](6)at(6.00,-6.75){};
        \node[Feuille](8)at(8.00,-6.75){};
        \node[Noeud](1)at(1.00,-2.25){};
        \node[Noeud](3)at(3.00,0.00){};
        \node[Noeud](5)at(5.00,-2.25){};
        \node[Noeud](7)at(7.00,-4.50){};
        \draw[Arete](0)--(1);
        \draw[Arete](1)edge[]node[EtiqArete]{\begin{math}1\end{math}}(3);
        \draw[Arete](2)--(1);
        \draw[Arete](4)--(5);
        \draw[Arete](5)edge[]node[EtiqArete]{\begin{math}3\end{math}}(3);
        \draw[Arete](6)--(7);
        \draw[Arete](7)edge[]node[EtiqArete]{\begin{math}1\end{math}}(5);
        \draw[Arete](8)--(7);
        \node(r)at(3.00,1.7){};
        \draw[Arete](r)--(3);
    \end{tikzpicture}
    \end{split}
    \GDendr_2
    \begin{split}
    \begin{tikzpicture}[xscale=.25,yscale=.2]
        \node[Feuille](0)at(0.00,-4.67){};
        \node[Feuille](2)at(2.00,-4.67){};
        \node[Feuille](4)at(4.00,-4.67){};
        \node[Feuille](6)at(6.00,-4.67){};
        \node[Noeud](1)at(1.00,-2.33){};
        \node[Noeud](3)at(3.00,0.00){};
        \node[Noeud](5)at(5.00,-2.33){};
        \draw[Arete](0)--(1);
        \draw[Arete](1)edge[]node[EtiqArete]{\begin{math}1\end{math}}(3);
        \draw[Arete](2)--(1);
        \draw[Arete](4)--(5);
        \draw[Arete](5)edge[]node[EtiqArete]{\begin{math}2\end{math}}(3);
        \draw[Arete](6)--(5);
        \node(r)at(3.00,1.7){};
        \draw[Arete](r)--(3);
    \end{tikzpicture}
    \end{split}
    =
    \begin{split}
    \begin{tikzpicture}[xscale=.22,yscale=.15]
        \node[Feuille](0)at(0.00,-5.00){};
        \node[Feuille](10)at(10.00,-12.50){};
        \node[Feuille](12)at(12.00,-12.50){};
        \node[Feuille](14)at(14.00,-12.50){};
        \node[Feuille](2)at(2.00,-5.00){};
        \node[Feuille](4)at(4.00,-5.00){};
        \node[Feuille](6)at(6.00,-7.50){};
        \node[Feuille](8)at(8.00,-12.50){};
        \node[Noeud](1)at(1.00,-2.50){};
        \node[Noeud](11)at(11.00,-7.50){};
        \node[Noeud](13)at(13.00,-10.00){};
        \node[Noeud](3)at(3.00,0.00){};
        \node[Noeud](5)at(5.00,-2.50){};
        \node[Noeud](7)at(7.00,-5.00){};
        \node[Noeud](9)at(9.00,-10.00){};
        \draw[Arete](0)--(1);
        \draw[Arete](1)edge[]node[EtiqArete]{\begin{math}1\end{math}}(3);
        \draw[Arete](10)--(9);
        \draw[Arete](11)edge[]node[EtiqArete]{\begin{math}2\end{math}}(7);
        \draw[Arete](12)--(13);
        \draw[Arete](13)edge[]node[EtiqArete]{\begin{math}2\end{math}}(11);
        \draw[Arete](14)--(13);
        \draw[Arete](2)--(1);
        \draw[Arete](4)--(5);
        \draw[Arete](5)edge[]node[EtiqArete]{\begin{math}2\end{math}}(3);
        \draw[Arete](6)--(7);
        \draw[Arete](7)edge[]node[EtiqArete]{\begin{math}1\end{math}}(5);
        \draw[Arete](8)--(9);
        \draw[Arete](9)edge[]node[EtiqArete]{\begin{math}1\end{math}}(11);
        \node(r)at(3.00,2.50){};
        \draw[Arete](r)--(3);
    \end{tikzpicture}
    \end{split}
    +
    \begin{split}
    \begin{tikzpicture}[xscale=.22,yscale=.15]
        \node[Feuille](0)at(0.00,-5.00){};
        \node[Feuille](10)at(10.00,-12.50){};
        \node[Feuille](12)at(12.00,-10.00){};
        \node[Feuille](14)at(14.00,-10.00){};
        \node[Feuille](2)at(2.00,-5.00){};
        \node[Feuille](4)at(4.00,-5.00){};
        \node[Feuille](6)at(6.00,-10.00){};
        \node[Feuille](8)at(8.00,-12.50){};
        \node[Noeud](1)at(1.00,-2.50){};
        \node[Noeud](11)at(11.00,-5.00){};
        \node[Noeud](13)at(13.00,-7.50){};
        \node[Noeud](3)at(3.00,0.00){};
        \node[Noeud](5)at(5.00,-2.50){};
        \node[Noeud](7)at(7.00,-7.50){};
        \node[Noeud](9)at(9.00,-10.00){};
        \draw[Arete](0)--(1);
        \draw[Arete](1)edge[]node[EtiqArete]{\begin{math}1\end{math}}(3);
        \draw[Arete](10)--(9);
        \draw[Arete](11)edge[]node[EtiqArete]{\begin{math}1\end{math}}(5);
        \draw[Arete](12)--(13);
        \draw[Arete](13)edge[]node[EtiqArete]{\begin{math}2\end{math}}(11);
        \draw[Arete](14)--(13);
        \draw[Arete](2)--(1);
        \draw[Arete](4)--(5);
        \draw[Arete](5)edge[]node[EtiqArete]{\begin{math}2\end{math}}(3);
        \draw[Arete](6)--(7);
        \draw[Arete](7)edge[]node[EtiqArete]{\begin{math}1\end{math}}(11);
        \draw[Arete](8)--(9);
        \draw[Arete](9)edge[]node[EtiqArete]{\begin{math}1\end{math}}(7);
        \node(r)at(3.00,2.50){};
        \draw[Arete](r)--(3);
    \end{tikzpicture}
    \end{split} \\
    +
    \begin{split}
    \begin{tikzpicture}[xscale=.22,yscale=.15]
        \node[Feuille](0)at(0.00,-5.00){};
        \node[Feuille](10)at(10.00,-10.00){};
        \node[Feuille](12)at(12.00,-10.00){};
        \node[Feuille](14)at(14.00,-10.00){};
        \node[Feuille](2)at(2.00,-5.00){};
        \node[Feuille](4)at(4.00,-5.00){};
        \node[Feuille](6)at(6.00,-12.50){};
        \node[Feuille](8)at(8.00,-12.50){};
        \node[Noeud](1)at(1.00,-2.50){};
        \node[Noeud](11)at(11.00,-5.00){};
        \node[Noeud](13)at(13.00,-7.50){};
        \node[Noeud](3)at(3.00,0.00){};
        \node[Noeud](5)at(5.00,-2.50){};
        \node[Noeud](7)at(7.00,-10.00){};
        \node[Noeud](9)at(9.00,-7.50){};
        \draw[Arete](0)--(1);
        \draw[Arete](1)edge[]node[EtiqArete]{\begin{math}1\end{math}}(3);
        \draw[Arete](10)--(9);
        \draw[Arete](11)edge[]node[EtiqArete]{\begin{math}1\end{math}}(5);
        \draw[Arete](12)--(13);
        \draw[Arete](13)edge[]node[EtiqArete]{\begin{math}2\end{math}}(11);
        \draw[Arete](14)--(13);
        \draw[Arete](2)--(1);
        \draw[Arete](4)--(5);
        \draw[Arete](5)edge[]node[EtiqArete]{\begin{math}2\end{math}}(3);
        \draw[Arete](6)--(7);
        \draw[Arete](7)edge[]node[EtiqArete]{\begin{math}1\end{math}}(9);
        \draw[Arete](8)--(7);
        \draw[Arete](9)edge[]node[EtiqArete]{\begin{math}1\end{math}}(11);
        \node(r)at(3.00,2.50){};
        \draw[Arete](r)--(3);
    \end{tikzpicture}
    \end{split}
    +
    \begin{split}
    \begin{tikzpicture}[xscale=.22,yscale=.15]
        \node[Feuille](0)at(0.00,-5.00){};
        \node[Feuille](10)at(10.00,-12.50){};
        \node[Feuille](12)at(12.00,-7.50){};
        \node[Feuille](14)at(14.00,-7.50){};
        \node[Feuille](2)at(2.00,-5.00){};
        \node[Feuille](4)at(4.00,-7.50){};
        \node[Feuille](6)at(6.00,-10.00){};
        \node[Feuille](8)at(8.00,-12.50){};
        \node[Noeud](1)at(1.00,-2.50){};
        \node[Noeud](11)at(11.00,-2.50){};
        \node[Noeud](13)at(13.00,-5.00){};
        \node[Noeud](3)at(3.00,0.00){};
        \node[Noeud](5)at(5.00,-5.00){};
        \node[Noeud](7)at(7.00,-7.50){};
        \node[Noeud](9)at(9.00,-10.00){};
        \draw[Arete](0)--(1);
        \draw[Arete](1)edge[]node[EtiqArete]{\begin{math}1\end{math}}(3);
        \draw[Arete](10)--(9);
        \draw[Arete](11)edge[]node[EtiqArete]{\begin{math}2\end{math}}(3);
        \draw[Arete](12)--(13);
        \draw[Arete](13)edge[]node[EtiqArete]{\begin{math}2\end{math}}(11);
        \draw[Arete](14)--(13);
        \draw[Arete](2)--(1);
        \draw[Arete](4)--(5);
        \draw[Arete](5)edge[]node[EtiqArete]{\begin{math}1\end{math}}(11);
        \draw[Arete](6)--(7);
        \draw[Arete](7)edge[]node[EtiqArete]{\begin{math}1\end{math}}(5);
        \draw[Arete](8)--(9);
        \draw[Arete](9)edge[]node[EtiqArete]{\begin{math}1\end{math}}(7);
        \node(r)at(3.00,2.50){};
        \draw[Arete](r)--(3);
    \end{tikzpicture}
    \end{split}
    +
    \begin{split}
    \begin{tikzpicture}[xscale=.22,yscale=.15]
        \node[Feuille](0)at(0.00,-5.00){};
        \node[Feuille](10)at(10.00,-10.00){};
        \node[Feuille](12)at(12.00,-7.50){};
        \node[Feuille](14)at(14.00,-7.50){};
        \node[Feuille](2)at(2.00,-5.00){};
        \node[Feuille](4)at(4.00,-7.50){};
        \node[Feuille](6)at(6.00,-12.50){};
        \node[Feuille](8)at(8.00,-12.50){};
        \node[Noeud](1)at(1.00,-2.50){};
        \node[Noeud](11)at(11.00,-2.50){};
        \node[Noeud](13)at(13.00,-5.00){};
        \node[Noeud](3)at(3.00,0.00){};
        \node[Noeud](5)at(5.00,-5.00){};
        \node[Noeud](7)at(7.00,-10.00){};
        \node[Noeud](9)at(9.00,-7.50){};
        \draw[Arete](0)--(1);
        \draw[Arete](1)edge[]node[EtiqArete]{\begin{math}1\end{math}}(3);
        \draw[Arete](10)--(9);
        \draw[Arete](11)edge[]node[EtiqArete]{\begin{math}2\end{math}}(3);
        \draw[Arete](12)--(13);
        \draw[Arete](13)edge[]node[EtiqArete]{\begin{math}2\end{math}}(11);
        \draw[Arete](14)--(13);
        \draw[Arete](2)--(1);
        \draw[Arete](4)--(5);
        \draw[Arete](5)edge[]node[EtiqArete]{\begin{math}1\end{math}}(11);
        \draw[Arete](6)--(7);
        \draw[Arete](7)edge[]node[EtiqArete]{\begin{math}1\end{math}}(9);
        \draw[Arete](8)--(7);
        \draw[Arete](9)edge[]node[EtiqArete]{\begin{math}1\end{math}}(5);
        \node(r)at(3.00,2.50){};
        \draw[Arete](r)--(3);
    \end{tikzpicture}
    \end{split}
    +
    \begin{split}
    \begin{tikzpicture}[xscale=.22,yscale=.15]
        \node[Feuille](0)at(0.00,-5.00){};
        \node[Feuille](10)at(10.00,-7.50){};
        \node[Feuille](12)at(12.00,-7.50){};
        \node[Feuille](14)at(14.00,-7.50){};
        \node[Feuille](2)at(2.00,-5.00){};
        \node[Feuille](4)at(4.00,-10.00){};
        \node[Feuille](6)at(6.00,-12.50){};
        \node[Feuille](8)at(8.00,-12.50){};
        \node[Noeud](1)at(1.00,-2.50){};
        \node[Noeud](11)at(11.00,-2.50){};
        \node[Noeud](13)at(13.00,-5.00){};
        \node[Noeud](3)at(3.00,0.00){};
        \node[Noeud](5)at(5.00,-7.50){};
        \node[Noeud](7)at(7.00,-10.00){};
        \node[Noeud](9)at(9.00,-5.00){};
        \draw[Arete](0)--(1);
        \draw[Arete](1)edge[]node[EtiqArete]{\begin{math}1\end{math}}(3);
        \draw[Arete](10)--(9);
        \draw[Arete](11)edge[]node[EtiqArete]{\begin{math}2\end{math}}(3);
        \draw[Arete](12)--(13);
        \draw[Arete](13)edge[]node[EtiqArete]{\begin{math}2\end{math}}(11);
        \draw[Arete](14)--(13);
        \draw[Arete](2)--(1);
        \draw[Arete](4)--(5);
        \draw[Arete](5)edge[]node[EtiqArete]{\begin{math}3\end{math}}(9);
        \draw[Arete](6)--(7);
        \draw[Arete](7)edge[]node[EtiqArete]{\begin{math}1\end{math}}(5);
        \draw[Arete](8)--(7);
        \draw[Arete](9)edge[]node[EtiqArete]{\begin{math}1\end{math}}(11);
        \node(r)at(3.00,2.50){};
        \draw[Arete](r)--(3);
    \end{tikzpicture}
    \end{split}\,,
\end{multline}
and
\begin{multline}
    \begin{split}
    \begin{tikzpicture}[xscale=.25,yscale=.2]
        \node[Feuille](0)at(0.00,-4.50){};
        \node[Feuille](2)at(2.00,-4.50){};
        \node[Feuille](4)at(4.00,-4.50){};
        \node[Feuille](6)at(6.00,-6.75){};
        \node[Feuille](8)at(8.00,-6.75){};
        \node[Noeud](1)at(1.00,-2.25){};
        \node[Noeud](3)at(3.00,0.00){};
        \node[Noeud](5)at(5.00,-2.25){};
        \node[Noeud](7)at(7.00,-4.50){};
        \draw[Arete](0)--(1);
        \draw[Arete](1)edge[]node[EtiqArete]{\begin{math}1\end{math}}(3);
        \draw[Arete](2)--(1);
        \draw[Arete](4)--(5);
        \draw[Arete](5)edge[]node[EtiqArete]{\begin{math}3\end{math}}(3);
        \draw[Arete](6)--(7);
        \draw[Arete](7)edge[]node[EtiqArete]{\begin{math}1\end{math}}(5);
        \draw[Arete](8)--(7);
        \node(r)at(3.00,1.7){};
        \draw[Arete](r)--(3);
    \end{tikzpicture}
    \end{split}
    \DDendr_2
    \begin{split}
    \begin{tikzpicture}[xscale=.25,yscale=.2]
        \node[Feuille](0)at(0.00,-4.67){};
        \node[Feuille](2)at(2.00,-4.67){};
        \node[Feuille](4)at(4.00,-4.67){};
        \node[Feuille](6)at(6.00,-4.67){};
        \node[Noeud](1)at(1.00,-2.33){};
        \node[Noeud](3)at(3.00,0.00){};
        \node[Noeud](5)at(5.00,-2.33){};
        \draw[Arete](0)--(1);
        \draw[Arete](1)edge[]node[EtiqArete]{\begin{math}1\end{math}}(3);
        \draw[Arete](2)--(1);
        \draw[Arete](4)--(5);
        \draw[Arete](5)edge[]node[EtiqArete]{\begin{math}2\end{math}}(3);
        \draw[Arete](6)--(5);
        \node(r)at(3.00,1.7){};
        \draw[Arete](r)--(3);
    \end{tikzpicture}
    \end{split}
    =
    \begin{split}
    \begin{tikzpicture}[xscale=.22,yscale=.15]
        \node[Feuille](0)at(0.00,-7.50){};
        \node[Feuille](10)at(10.00,-12.50){};
        \node[Feuille](12)at(12.00,-5.00){};
        \node[Feuille](14)at(14.00,-5.00){};
        \node[Feuille](2)at(2.00,-7.50){};
        \node[Feuille](4)at(4.00,-7.50){};
        \node[Feuille](6)at(6.00,-10.00){};
        \node[Feuille](8)at(8.00,-12.50){};
        \node[Noeud](1)at(1.00,-5.00){};
        \node[Noeud](11)at(11.00,0.00){};
        \node[Noeud](13)at(13.00,-2.50){};
        \node[Noeud](3)at(3.00,-2.50){};
        \node[Noeud](5)at(5.00,-5.00){};
        \node[Noeud](7)at(7.00,-7.50){};
        \node[Noeud](9)at(9.00,-10.00){};
        \draw[Arete](0)--(1);
        \draw[Arete](1)edge[]node[EtiqArete]{\begin{math}1\end{math}}(3);
        \draw[Arete](10)--(9);
        \draw[Arete](12)--(13);
        \draw[Arete](13)edge[]node[EtiqArete]{\begin{math}2\end{math}}(11);
        \draw[Arete](14)--(13);
        \draw[Arete](2)--(1);
        \draw[Arete](3)edge[]node[EtiqArete]{\begin{math}1\end{math}}(11);
        \draw[Arete](4)--(5);
        \draw[Arete](5)edge[]node[EtiqArete]{\begin{math}1\end{math}}(3);
        \draw[Arete](6)--(7);
        \draw[Arete](7)edge[]node[EtiqArete]{\begin{math}1\end{math}}(5);
        \draw[Arete](8)--(9);
        \draw[Arete](9)edge[]node[EtiqArete]{\begin{math}1\end{math}}(7);
        \node(r)at(11.00,2.50){};
        \draw[Arete](r)--(11);
    \end{tikzpicture}
    \end{split}
    +
    \begin{split}
    \begin{tikzpicture}[xscale=.22,yscale=.15]
        \node[Feuille](0)at(0.00,-7.50){};
        \node[Feuille](10)at(10.00,-10.00){};
        \node[Feuille](12)at(12.00,-5.00){};
        \node[Feuille](14)at(14.00,-5.00){};
        \node[Feuille](2)at(2.00,-7.50){};
        \node[Feuille](4)at(4.00,-7.50){};
        \node[Feuille](6)at(6.00,-12.50){};
        \node[Feuille](8)at(8.00,-12.50){};
        \node[Noeud](1)at(1.00,-5.00){};
        \node[Noeud](11)at(11.00,0.00){};
        \node[Noeud](13)at(13.00,-2.50){};
        \node[Noeud](3)at(3.00,-2.50){};
        \node[Noeud](5)at(5.00,-5.00){};
        \node[Noeud](7)at(7.00,-10.00){};
        \node[Noeud](9)at(9.00,-7.50){};
        \draw[Arete](0)--(1);
        \draw[Arete](1)edge[]node[EtiqArete]{\begin{math}1\end{math}}(3);
        \draw[Arete](10)--(9);
        \draw[Arete](12)--(13);
        \draw[Arete](13)edge[]node[EtiqArete]{\begin{math}2\end{math}}(11);
        \draw[Arete](14)--(13);
        \draw[Arete](2)--(1);
        \draw[Arete](3)edge[]node[EtiqArete]{\begin{math}1\end{math}}(11);
        \draw[Arete](4)--(5);
        \draw[Arete](5)edge[]node[EtiqArete]{\begin{math}1\end{math}}(3);
        \draw[Arete](6)--(7);
        \draw[Arete](7)edge[]node[EtiqArete]{\begin{math}1\end{math}}(9);
        \draw[Arete](8)--(7);
        \draw[Arete](9)edge[]node[EtiqArete]{\begin{math}1\end{math}}(5);
        \node(r)at(11.00,2.50){};
        \draw[Arete](r)--(11);
    \end{tikzpicture}
    \end{split} \\
    +
    \begin{split}
    \begin{tikzpicture}[xscale=.22,yscale=.15]
        \node[Feuille](0)at(0.00,-7.50){};
        \node[Feuille](10)at(10.00,-7.50){};
        \node[Feuille](12)at(12.00,-5.00){};
        \node[Feuille](14)at(14.00,-5.00){};
        \node[Feuille](2)at(2.00,-7.50){};
        \node[Feuille](4)at(4.00,-10.00){};
        \node[Feuille](6)at(6.00,-12.50){};
        \node[Feuille](8)at(8.00,-12.50){};
        \node[Noeud](1)at(1.00,-5.00){};
        \node[Noeud](11)at(11.00,0.00){};
        \node[Noeud](13)at(13.00,-2.50){};
        \node[Noeud](3)at(3.00,-2.50){};
        \node[Noeud](5)at(5.00,-7.50){};
        \node[Noeud](7)at(7.00,-10.00){};
        \node[Noeud](9)at(9.00,-5.00){};
        \draw[Arete](0)--(1);
        \draw[Arete](1)edge[]node[EtiqArete]{\begin{math}1\end{math}}(3);
        \draw[Arete](10)--(9);
        \draw[Arete](12)--(13);
        \draw[Arete](13)edge[]node[EtiqArete]{\begin{math}2\end{math}}(11);
        \draw[Arete](14)--(13);
        \draw[Arete](2)--(1);
        \draw[Arete](3)edge[]node[EtiqArete]{\begin{math}1\end{math}}(11);
        \draw[Arete](4)--(5);
        \draw[Arete](5)edge[]node[EtiqArete]{\begin{math}3\end{math}}(9);
        \draw[Arete](6)--(7);
        \draw[Arete](7)edge[]node[EtiqArete]{\begin{math}1\end{math}}(5);
        \draw[Arete](8)--(7);
        \draw[Arete](9)edge[]node[EtiqArete]{\begin{math}1\end{math}}(3);
        \node(r)at(11.00,2.50){};
        \draw[Arete](r)--(11);
    \end{tikzpicture}
    \end{split}
    +
    \begin{split}
    \begin{tikzpicture}[xscale=.22,yscale=.15]
        \node[Feuille](0)at(0.00,-10.00){};
        \node[Feuille](10)at(10.00,-5.00){};
        \node[Feuille](12)at(12.00,-5.00){};
        \node[Feuille](14)at(14.00,-5.00){};
        \node[Feuille](2)at(2.00,-10.00){};
        \node[Feuille](4)at(4.00,-10.00){};
        \node[Feuille](6)at(6.00,-12.50){};
        \node[Feuille](8)at(8.00,-12.50){};
        \node[Noeud](1)at(1.00,-7.50){};
        \node[Noeud](11)at(11.00,0.00){};
        \node[Noeud](13)at(13.00,-2.50){};
        \node[Noeud](3)at(3.00,-5.00){};
        \node[Noeud](5)at(5.00,-7.50){};
        \node[Noeud](7)at(7.00,-10.00){};
        \node[Noeud](9)at(9.00,-2.50){};
        \draw[Arete](0)--(1);
        \draw[Arete](1)edge[]node[EtiqArete]{\begin{math}1\end{math}}(3);
        \draw[Arete](10)--(9);
        \draw[Arete](12)--(13);
        \draw[Arete](13)edge[]node[EtiqArete]{\begin{math}2\end{math}}(11);
        \draw[Arete](14)--(13);
        \draw[Arete](2)--(1);
        \draw[Arete](3)edge[]node[EtiqArete]{\begin{math}2\end{math}}(9);
        \draw[Arete](4)--(5);
        \draw[Arete](5)edge[]node[EtiqArete]{\begin{math}3\end{math}}(3);
        \draw[Arete](6)--(7);
        \draw[Arete](7)edge[]node[EtiqArete]{\begin{math}1\end{math}}(5);
        \draw[Arete](8)--(7);
        \draw[Arete](9)edge[]node[EtiqArete]{\begin{math}1\end{math}}(11);
        \node(r)at(11.00,2.50){};
        \draw[Arete](r)--(11);
    \end{tikzpicture}
    \end{split}\,.
\end{multline}
\medskip

\begin{Lemme} \label{lem:produit_gamma_dendriforme}
    For any integer $\gamma \geq 0$, the vector space
    $\AlgLibre_{\Dendr_\gamma}$ of $\gamma$-edge valued binary trees
    endowed with the operations $\GDendr_a$, $\DDendr_a$, $a \in [\gamma]$,
    is a $\gamma$-polydendriform algebra.
\end{Lemme}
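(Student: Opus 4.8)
The plan is to verify directly that the three compact relations presenting $\Dendr_\gamma$ hold for the operations $\GDendr_a$, $\DDendr_a$ on $\AlgLibre_{\Dendr_\gamma}$. Recalling the description of $\gamma$-polydendriform algebras through~\eqref{equ:relation_dendr_gamma_1_concise}--\eqref{equ:relation_dendr_gamma_3_concise}, the task amounts to checking, for all $\gamma$-edge valued binary trees $u$, $v$, $w$ (each with at least one internal node) and all $a, a' \in [\gamma]$, the identities $(u \DDendr_{a'} v) \GDendr_a w = u \DDendr_{a'}(v \GDendr_a w)$, then $(u \GDendr_{a'} v) \GDendr_a w = u \GDendr_{a \Min a'}(v \GDendr_a w) + u \GDendr_{a \Min a'}(v \DDendr_{a'} w)$, and finally $(u \GDendr_{a'} v) \DDendr_{a \Min a'} w + (u \DDendr_a v) \DDendr_{a \Min a'} w = u \DDendr_a(v \DDendr_{a'} w)$, which translate~\eqref{equ:relation_dendr_gamma_1_concise}, \eqref{equ:relation_dendr_gamma_2_concise}, and~\eqref{equ:relation_dendr_gamma_3_concise}. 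First I would check that the $\GDendr_a$ and $\DDendr_a$ are well defined: an induction on the total number of internal nodes of the two arguments shows that the recursion terminates (each recursive call strictly decreases this total) and returns a $\gamma$-edge valued binary tree or $0$, using the convention $a \Min \infty = a$ on the edges incident to leaves; here one also records, for later use, that the undefined products $\Feuille \GDendr_a \Feuille$ and $\Feuille \DDendr_a \Feuille$ are never invoked, because each recursive call leaves unchanged the operand it does not decompose, so that operand stays a genuine tree.

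The three identities will be proved together by strong induction on $|u| + |v| + |w|$, the number of internal nodes, the statements being extended so as to allow $u$, $v$, $w$ to be leaves, with the corresponding products read off from $\Sfr \GDendr_a \Feuille = \Sfr = \Feuille \DDendr_a \Sfr$ and $\Feuille \GDendr_a \Sfr = 0 = \Sfr \DDendr_a \Feuille$. For the first identity I would decompose the middle tree $v$ as the grafting of its two subtrees $v_1$, $v_2$ along edges labeled $x$, $y$, and expand the two sides. On the left, the rule for $\DDendr_{a'}$ (recursion on the right operand) produces a root whose right subtree is $v_2$, still labeled $y$, and whose left subtree is $u \DDendr_{a'} v_1$ or $u \GDendr_x v_1$, labeled $a' \Min x$; then the rule for $\GDendr_a$ (recursion on the left operand) leaves that left subtree alone and installs a right subtree $v_2 \GDendr_a w$ or $v_2 \DDendr_y w$ labeled $a \Min y$. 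On the right, applying $\GDendr_a$ and afterwards $\DDendr_{a'}$ yields exactly the same four $\gamma$-edge valued binary trees. Hence the first identity needs no appeal to the induction hypothesis: it merely records the compatibility of the left recursion of $\GDendr$ with the right recursion of $\DDendr$.

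For the second identity I would instead decompose the leftmost tree $u$ into subtrees $u_1$, $u_2$ along edges labeled $x$, $y$. After expansion, both sides acquire the same subtree $u_1$ along the same left edge $x$, and the same right edge label $a \Min a' \Min y$; it remains to compare the two right subtrees. On the left this is $(u_2 \GDendr_{a'} v) \GDendr_a w + (u_2 \GDendr_{a'} v) \DDendr_{y'} w + (u_2 \DDendr_y v) \GDendr_a w + (u_2 \DDendr_y v) \DDendr_{y'} w$ with $y' := a' \Min y$, and on the right it is $u_2 \GDendr_{a \Min a'}(v \GDendr_a w) + u_2 \GDendr_{a \Min a'}(v \DDendr_{a'} w) + u_2 \DDendr_y(v \GDendr_a w) + u_2 \DDendr_y(v \DDendr_{a'} w)$; these coincide upon applying the induction hypothesis — the second identity to $(u_2, v, w)$, the third identity to $(u_2, v, w)$ (observing $y \Min a' = a' \Min y = y'$), and the first identity to $(u_2, v, w)$ — each of total size strictly below $|u| + |v| + |w|$. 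The third identity is handled with the roles of left and right exchanged, decomposing the rightmost tree $w$ into $w_1$, $w_2$ and reducing the resulting identity of left subtrees to the three identities applied to $(u, v, w_1)$; formally this is the mirror image, under reversal of trees together with the swap $\GDendr_a \leftrightarrow \DDendr_a$, of the argument for the second identity.

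The conceptual content is light — the recursive rules for $\GDendr_a$ and $\DDendr_a$ were designed precisely so that the edge labels produced on the two sides of~\eqref{equ:relation_dendr_gamma_1_concise}--\eqref{equ:relation_dendr_gamma_3_concise} agree — so the main obstacle is simply the bookkeeping. One must track each $\Min$-computation of an edge label and, more tediously, dispose of every degenerate case in which one of $v_1$, $v_2$, $u_1$, $u_2$, $w_1$, $w_2$ is a leaf: then a sub-product collapses to $0$ or to a single operand and an incident edge is labeled $\infty$, and the identity must be rechecked termwise. The fact, noted above, that no product of two leaves is ever required ensures that these degenerate instances are meaningful. Past that, the argument is routine term matching, entirely parallel to Loday's verification that the free dendriform algebra over one generator satisfies the dendriform relations.
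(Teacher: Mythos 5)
Your proposal is correct and follows essentially the same route as the paper: the first compact relation is verified directly by decomposing the middle tree, while the second is proved by decomposing the leftmost tree and pairing the resulting eight terms via the induction hypothesis (together with the already-established first relation), the third following by the left-right mirror argument. The only cosmetic difference is that you run a strong induction allowing leaves as degenerate operands, whereas the paper takes as base case the three one-node trees; both handle the bookkeeping adequately.
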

\begin{proof}
    We have to check that the operations $\GDendr_a$, $\DDendr_a$,
    $a \in [\gamma]$, of $\AlgLibre_{\Dendr_\gamma}$ satisfy
    Relations~\eqref{equ:relation_dendr_gamma_1_concise},
    \eqref{equ:relation_dendr_gamma_2_concise},
    and~\eqref{equ:relation_dendr_gamma_3_concise} of
    $\gamma$-polydendriform algebras. Let $\Rfr$, $\Sfr$, and $\Tfr$ be
    three $\gamma$-edge valued binary trees and $a, a' \in [\gamma]$.
    \smallskip

    Denote by $\Sfr_1$ (resp. $\Sfr_2$) the left subtree (resp. right
    subtree) of $\Sfr$ and by $x$ (resp. $y$) the label of the left
    (resp. right) edge incident to the root of $\Sfr$. We have
    \begin{multline}
        (\Rfr \DDendr_{a'} \Sfr) \GDendr_a \Tfr  =
        \left(\Rfr \DDendr_{a'} \ArbreBinValue{x}{y}{\Sfr_1}{\Sfr_2}\right)
        \GDendr_a \Tfr
        = \left(\ArbreBinValue{z}{y}{\Rfr \DDendr_{a'} \Sfr_1}{\Sfr_2}
        +
        \ArbreBinValue{z}{y}{\Rfr \GDendr_x \Sfr_1}{\Sfr_2}\right)
        \GDendr_a \Tfr
        \displaybreak[0]
        \\
        = \ArbreBinValue{z}{t}{\Rfr \DDendr_{a'} \Sfr_1}
                {\Sfr_2 \GDendr_a \Tfr}
        +
        \ArbreBinValue{z}{t}{\Rfr \DDendr_{a'} \Sfr_1}
                {\Sfr_2 \DDendr_y \Tfr}
        +
        \ArbreBinValue{z}{t}{\Rfr \GDendr_x \Sfr_1}
                {\Sfr_2 \GDendr_a \Tfr}
        +
        \ArbreBinValue{z}{t}{\Rfr \GDendr_x \Sfr_1}
                {\Sfr_2 \DDendr_y \Tfr}
        \displaybreak[0]
        \\
        = \Rfr \DDendr_{a'}
        \left(\ArbreBinValue{x}{t}{\Sfr_1}{\Sfr_2 \GDendr_a \Tfr}
        +
        \ArbreBinValue{x}{t}{\Sfr_1}{\Sfr_2 \DDendr_y \Tfr}\right)
        = \Rfr \DDendr_{a'}
        \left(\ArbreBinValue{x}{y}{\Sfr_1}{\Sfr_2}
        \GDendr_a \Tfr \right) =
        \Rfr \DDendr_{a'} (\Sfr \GDendr_a \Tfr),
    \end{multline}
    where $z := a' \Min x$ and $t := a \Min y$.
    This shows that~\eqref{equ:relation_dendr_gamma_1_concise}
    is satisfied in $\AlgLibre_{\Dendr_\gamma}$.
    \medskip

    We now prove that
    Relations~\eqref{equ:relation_dendr_gamma_2_concise}
    and~\eqref{equ:relation_dendr_gamma_3_concise} hold
    by induction on the sum of the number of internal nodes of $\Rfr$, $\Sfr$,
    and $\Tfr$. Base case holds when all these trees have exactly one
    internal node, and since
    \begin{multline}
        \left(\Noeud \GDendr_{a'} \Noeud\right) \GDendr_a \Noeud
        - \Noeud \GDendr_{a \Min a'} \left( \Noeud \GDendr_a \Noeud \right)
        - \Noeud \GDendr_{a \Min a'} \left( \Noeud \DDendr_{a'} \Noeud \right)
        \displaybreak[0]
        \\
        =
        \begin{split}
        \begin{tikzpicture}[xscale=.3,yscale=.25]
            \node[Feuille](0)at(0.00,-1.67){};
            \node[Feuille](2)at(2.00,-3.33){};
            \node[Feuille](4)at(4.00,-3.33){};
            \node[Noeud](1)at(1.00,0.00){};
            \node[Noeud](3)at(3.00,-1.67){};
            \draw[Arete](0)--(1);
            \draw[Arete](2)--(3);
            \draw[Arete](3)edge[]node[EtiqArete]{\begin{math}a'\end{math}}(1);
            \draw[Arete](4)--(3);
            \node(r)at(1.00,1.67){};
            \draw[Arete](r)--(1);
        \end{tikzpicture}
        \end{split}
        \GDendr_a \Noeud -
        \Noeud \GDendr_{a \Min a'}
        \begin{split}
        \begin{tikzpicture}[xscale=.3,yscale=.25]
            \node[Feuille](0)at(0.00,-1.67){};
            \node[Feuille](2)at(2.00,-3.33){};
            \node[Feuille](4)at(4.00,-3.33){};
            \node[Noeud](1)at(1.00,0.00){};
            \node[Noeud](3)at(3.00,-1.67){};
            \draw[Arete](0)--(1);
            \draw[Arete](2)--(3);
            \draw[Arete](3)edge[]node[EtiqArete]{\begin{math}a\end{math}}(1);
            \draw[Arete](4)--(3);
            \node(r)at(1.00,1.67){};
            \draw[Arete](r)--(1);
        \end{tikzpicture}
        \end{split}
        - \Noeud \GDendr_{a \Min a'}
        \begin{split}
        \begin{tikzpicture}[xscale=.3,yscale=.25]
            \node[Feuille](0)at(0.00,-3.33){};
            \node[Feuille](2)at(2.00,-3.33){};
            \node[Feuille](4)at(4.00,-1.67){};
            \node[Noeud](1)at(1.00,-1.67){};
            \node[Noeud](3)at(3.00,0.00){};
            \draw[Arete](0)--(1);
            \draw[Arete](1)edge[]node[EtiqArete]{\begin{math}a'\end{math}}(3);
            \draw[Arete](2)--(1);
            \draw[Arete](4)--(3);
            \node(r)at(3.00,1.67){};
            \draw[Arete](r)--(3);
        \end{tikzpicture}
        \end{split}
        \\
        =
        \begin{split}
        \begin{tikzpicture}[xscale=.3,yscale=.25]
            \node[Feuille](0)at(0.00,-1.75){};
            \node[Feuille](2)at(2.00,-3.50){};
            \node[Feuille](4)at(4.00,-5.25){};
            \node[Feuille](6)at(6.00,-5.25){};
            \node[Noeud](1)at(1.00,0.00){};
            \node[Noeud](3)at(3.00,-1.75){};
            \node[Noeud](5)at(5.00,-3.50){};
            \draw[Arete](0)--(1);
            \draw[Arete](2)--(3);
            \draw[Arete](3)edge[]node[EtiqArete]{\begin{math}z\end{math}}(1);
            \draw[Arete](4)--(5);
            \draw[Arete](5)edge[]node[EtiqArete]{\begin{math}a\end{math}}(3);
            \draw[Arete](6)--(5);
            \node(r)at(1.00,1.75){};
            \draw[Arete](r)--(1);
        \end{tikzpicture}
        \end{split}
        +
        \begin{split}
        \begin{tikzpicture}[xscale=.3,yscale=.25]
            \node[Feuille](0)at(0.00,-1.75){};
            \node[Feuille](2)at(2.00,-5.25){};
            \node[Feuille](4)at(4.00,-5.25){};
            \node[Feuille](6)at(6.00,-3.50){};
            \node[Noeud](1)at(1.00,0.00){};
            \node[Noeud](3)at(3.00,-3.50){};
            \node[Noeud](5)at(5.00,-1.75){};
            \draw[Arete](0)--(1);
            \draw[Arete](2)--(3);
            \draw[Arete](3)edge[]node[EtiqArete]{\begin{math}a'\end{math}}(5);
            \draw[Arete](4)--(3);
            \draw[Arete](5)edge[]node[EtiqArete]{\begin{math}z\end{math}}(1);
            \draw[Arete](6)--(5);
            \node(r)at(1.00,1.75){};
            \draw[Arete](r)--(1);
        \end{tikzpicture}
        \end{split}
        -
        \begin{split}
        \begin{tikzpicture}[xscale=.3,yscale=.25]
            \node[Feuille](0)at(0.00,-1.75){};
            \node[Feuille](2)at(2.00,-3.50){};
            \node[Feuille](4)at(4.00,-5.25){};
            \node[Feuille](6)at(6.00,-5.25){};
            \node[Noeud](1)at(1.00,0.00){};
            \node[Noeud](3)at(3.00,-1.75){};
            \node[Noeud](5)at(5.00,-3.50){};
            \draw[Arete](0)--(1);
            \draw[Arete](2)--(3);
            \draw[Arete](3)edge[]node[EtiqArete]{\begin{math}z\end{math}}(1);
            \draw[Arete](4)--(5);
            \draw[Arete](5)edge[]node[EtiqArete]{\begin{math}a\end{math}}(3);
            \draw[Arete](6)--(5);
            \node(r)at(1.00,1.75){};
            \draw[Arete](r)--(1);
        \end{tikzpicture}
        \end{split}
        -
        \begin{split}
        \begin{tikzpicture}[xscale=.3,yscale=.25]
            \node[Feuille](0)at(0.00,-1.75){};
            \node[Feuille](2)at(2.00,-5.25){};
            \node[Feuille](4)at(4.00,-5.25){};
            \node[Feuille](6)at(6.00,-3.50){};
            \node[Noeud](1)at(1.00,0.00){};
            \node[Noeud](3)at(3.00,-3.50){};
            \node[Noeud](5)at(5.00,-1.75){};
            \draw[Arete](0)--(1);
            \draw[Arete](2)--(3);
            \draw[Arete](3)edge[]node[EtiqArete]{\begin{math}a'\end{math}}(5);
            \draw[Arete](4)--(3);
            \draw[Arete](5)edge[]node[EtiqArete]{\begin{math}z\end{math}}(1);
            \draw[Arete](6)--(5);
            \node(r)at(1.00,1.75){};
            \draw[Arete](r)--(1);
        \end{tikzpicture}
        \end{split}
        \\ = 0,
    \end{multline}
    where $z := a \Min a'$,
    \eqref{equ:relation_dendr_gamma_2_concise} holds on trees
    with exactly one internal node. For the same arguments, we can show
    that~\eqref{equ:relation_dendr_gamma_3_concise} holds on
    trees with exactly one internal node. Denote now by $\Rfr_1$ (resp.
    $\Rfr_2$) the left subtree (resp. right subtree) of $\Rfr$ and by $x$
    (resp. $y$) the label of the left (resp. right) edge incident to the
    root of $\Rfr$. We have
    \begin{multline} \label{equ:produit_gamma_dendriforme_expr}
        (\Rfr \GDendr_{a'} \Sfr) \GDendr_a \Tfr
        - \Rfr \GDendr_{a \Min a'} (\Sfr \GDendr_a \Tfr)
        - \Rfr \GDendr_{a \Min a'} (\Sfr \DDendr_{a'} \Tfr)
        \\[1em]
        =
        \left(\ArbreBinValue{x}{y}{\Rfr_1}{\Rfr_2}
        \GDendr_{a'} \Sfr \right) \GDendr_a \Tfr
        -
        \ArbreBinValue{x}{y}{\Rfr_1}{\Rfr_2}
        \GDendr_{a \Min a'} (\Sfr \GDendr_a \Tfr)
        -
        \ArbreBinValue{x}{y}{\Rfr_1}{\Rfr_2}
        \GDendr_{a \Min a'} (\Sfr \DDendr_{a'} \Tfr)
        \displaybreak[0]
        \\
        =
        \left(\ArbreBinValue{x}{z}{\Rfr_1}{\Rfr_2 \GDendr_{a'} \Sfr}
        +
        \ArbreBinValue{x}{z}{\Rfr_1}{\Rfr_2 \DDendr_y \Sfr}
        \right) \GDendr_a \Tfr \\
        -
        \ArbreBinValue{x}{y}{\Rfr_1}{\Rfr_2}
        \GDendr_{a \Min a'} (\Sfr \GDendr_a \Tfr)
        -
        \ArbreBinValue{x}{y}{\Rfr_1}{\Rfr_2}
        \GDendr_{a \Min a'} (\Sfr \DDendr_{a'} \Tfr)
        \displaybreak[0]
        \\
        =
        \ArbreBinValue{x}{t}{\Rfr_1}{(\Rfr_2 \GDendr_{a'} \Sfr) \GDendr_a \Tfr}
        +
        \ArbreBinValue{x}{t}{\Rfr_1}{(\Rfr_2 \GDendr_{a'} \Sfr) \DDendr_z \Tfr}
        +
        \ArbreBinValue{x}{t}{\Rfr_1}{(\Rfr_2 \DDendr_y \Sfr) \GDendr_a \Tfr}
        +
        \ArbreBinValue{x}{t}{\Rfr_1}{(\Rfr_2 \DDendr_y \Sfr) \DDendr_z \Tfr} \\
        -
        \ArbreBinValue{x}{t}{\Rfr_1}{\Rfr_2 \GDendr_u (\Sfr \GDendr_a \Tfr)}
        -
        \ArbreBinValue{x}{t}{\Rfr_1}{\Rfr_2 \DDendr_y (\Sfr \GDendr_a \Tfr)}
        -
        \ArbreBinValue{x}{t}{\Rfr_1}{\Rfr_2 \GDendr_u (\Sfr \DDendr_{a'} \Tfr)}
        -
        \ArbreBinValue{x}{t}{\Rfr_1}{\Rfr_2 \DDendr_y (\Sfr \DDendr_{a'} \Tfr)}\,,
    \end{multline}
    where $z := y \Min a'$, $t := z \Min a = y \Min a' \Min a$, and
    $u := a \Min a'$. Now, by induction hypothesis,
    Relation~\eqref{equ:relation_dendr_gamma_2_concise} holds
    on $\Rfr_2$, $\Sfr$, and $\Tfr$. Hence, the sum of the first, fifth,
    and seventh terms of~\eqref{equ:produit_gamma_dendriforme_expr} is
    zero. Again by induction hypothesis,
    Relation~\eqref{equ:relation_dendr_gamma_3_concise} holds
    on $\Rfr_2$, $\Sfr$, and $\Tfr$. Thus, the sum of the second, fourth,
    and last terms of~\eqref{equ:produit_gamma_dendriforme_expr} is zero.
    Finally, by what we just have proven in the first part of this proof,
    the sum of the third and sixth terms
    of~\eqref{equ:relation_dendr_gamma_3_concise} is zero.
    Therefore, \eqref{equ:produit_gamma_dendriforme_expr} is zero
    and~\eqref{equ:relation_dendr_gamma_2_concise}
    is satisfied in $\AlgLibre_{\Dendr_\gamma}$.
    \smallskip

    Finally, for the same arguments, we can show
    that~\eqref{equ:relation_dendr_gamma_3_concise}
    is satisfied in $\AlgLibre_{\Dendr_\gamma}$, implying the statement
    of the lemma.
\end{proof}
\medskip

\begin{Lemme} \label{lem:produit_gamma_engendre_dendriforme}
    For any integer $\gamma \geq 0$, the $\gamma$-pluriassociative
    algebra $\AlgLibre_{\Dendr_\gamma}$ of $\gamma$-edge valued binary
    trees endowed with the operations $\GDendr_a$, $\DDendr_a$,
    $a \in [\gamma]$, is generated by
    \begin{equation}
        \Noeud\,.
    \end{equation}
\end{Lemme}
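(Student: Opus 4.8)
The plan is to prove, by induction on the number $n$ of internal nodes, that every $\gamma$-edge valued binary tree lies in the subalgebra of $\AlgLibre_{\Dendr_\gamma}$ generated by the one-node tree (recall that $\AlgLibre_{\Dendr_\gamma}$ is a $\gamma$-polydendriform algebra by Lemma~\ref{lem:produit_gamma_dendriforme}). Since $\AlgLibre_{\Dendr_\gamma}$ is linearly spanned by its $\gamma$-edge valued binary trees and a subalgebra is in particular a subspace, this is enough. The base case $n = 1$ holds because the one-node tree is the unique $\gamma$-edge valued binary tree with a single internal node.

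The key ingredient is the identity
\begin{equation}
    \ArbreBinValue{x}{y}{\Tfr_1}{\Tfr_2}
        = \Tfr_1 \DDendr_x \left(\Noeud \GDendr_y \Tfr_2\right),
\end{equation}
valid for all $x, y \in [\gamma]$ and all $\gamma$-edge valued binary trees $\Tfr_1$ and $\Tfr_2$ each having at least one internal node. I would obtain it by unfolding the recursive definitions of $\GDendr_a$ and $\DDendr_a$ twice. Viewing the one-node tree as a root with two leaves and applying the rule for $\GDendr_y$, one of the two resulting summands vanishes (it contains a left product of a leaf, which is zero) while the other reproduces $\Tfr_2$ unchanged, and since $y \Min \infty = y$ this shows that $\Noeud \GDendr_y \Tfr_2$ is the tree with empty left subtree, right subtree $\Tfr_2$, and right edge labelled $y$. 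Applying next the rule for $\DDendr_x$ to this tree, one summand again vanishes (it contains a right product of a leaf) and the surviving one, using $x \Min \infty = x$, is precisely $\ArbreBinValue{x}{y}{\Tfr_1}{\Tfr_2}$.

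For the inductive step, let $\Tfr$ be a $\gamma$-edge valued binary tree with $n \geq 2$ internal nodes, written as a root carrying a left subtree $\Tfr_1$ with incident edge labelled $x$ and a right subtree $\Tfr_2$ with incident edge labelled $y$, where at least one of $\Tfr_1, \Tfr_2$ has an internal node. If both do, then $x, y \in [\gamma]$, and the displayed identity expresses $\Tfr$ through $\Tfr_1$, $\Tfr_2$, the operations $\DDendr_x$ and $\GDendr_y$, and the one-node tree; since $\Tfr_1$ and $\Tfr_2$ have strictly fewer internal nodes, the induction hypothesis places them, hence $\Tfr$, in the subalgebra. If $\Tfr_1$ is a leaf (so $x = \infty$), then the first of the two unfoldings above already shows that $\Tfr$ equals the image of the one-node tree and $\Tfr_2$ under $\GDendr_y$, which lies in the subalgebra since $\Tfr_2$ does by induction; symmetrically, if $\Tfr_2$ is a leaf (so $y = \infty$), a dual computation yields $\Tfr$ as $\DDendr_x$ applied to $\Tfr_1$ and the one-node tree. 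These cases are exhaustive and the induction closes.

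The only delicate point is the verification of the displayed identity: the recursive rules for $\GDendr_a$ and $\DDendr_a$ relabel, through $\Min$, the edge incident to the newly created root, so one must check that these relabellings leave exactly $x$ on the left edge and $y$ on the right edge of the resulting tree. This rests on the conventions $a \Min \infty = a = \infty \Min a$ and on the fact that the four boundary cases of the recursion, in which a leaf occurs as one of the two inputs, are independent of the operation index. Everything else is a routine expansion of the definitions.
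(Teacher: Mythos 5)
Your proof is correct and follows essentially the same route as the paper: induction on the number of internal nodes, reconstructing a tree from its two subtrees and the one-node generator via one $\DDendr_x$ and one $\GDendr_y$ (the paper uses the equivalent bracketing $\left(\Tfr_1 \DDendr_x \Gfr\right) \GDendr_y \Tfr_2$ with $\Gfr$ the one-node tree, where yours is $\Tfr_1 \DDendr_x \left(\Gfr \GDendr_y \Tfr_2\right)$). Your explicit case split when a subtree is a leaf is a slightly more careful handling of the boundary conventions than the paper's, but it is not a different argument.
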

\begin{proof}
    First, Lemma~\ref{lem:produit_gamma_dendriforme} shows that
    $\AlgLibre_{\Dendr_\gamma}$ is a $\gamma$-polydendriform algebra.
    Let $\Dca$ be the $\gamma$-polydendriform subalgebra of
    $\AlgLibre_{\Dendr_\gamma}$ generated by $\NoeudTexte$. Let us show
    that any $\gamma$-edge valued binary tree $\Tfr$ is in $\Dca$ by
    induction on the number $n$ of its internal nodes. When $n = 1$,
    $\Tfr = \NoeudTexte$ and hence the property is satisfied. Otherwise,
    let $\Tfr_1$ (resp. $\Tfr_2$) be the left (resp. right) subtree of
    the root of $\Tfr$ and denote by $x$ (resp. $y$) the label of the
    left (resp. right) edge incident to the root of $\Tfr$. Since
    $\Tfr_1$ and $\Tfr_2$ have less internal nodes than $\Tfr$, by
    induction hypothesis, $\Tfr_1$ and~$\Tfr_2$ are in $\Dca$. Moreover,
    by definition of the operations $\GDendr_a$, $\DDendr_a$,
    $a \in [\gamma]$, of $\AlgLibre_{\Dendr_\gamma}$, one has
    \begin{equation}
        \begin{split}\end{split}
        \left(\Tfr_1 \DDendr_x \Noeud\right) \GDendr_y \Tfr_2
        =
        \begin{split}
        \begin{tikzpicture}[xscale=.5,yscale=.4]
            \node(0)at(-.50,-1.50){\begin{math}\Tfr_1\end{math}};
            \node[Feuille](2)at(2.0,-1.00){};
            \node[Noeud](1)at(1.00,.50){};
            \draw[Arete](0)edge[]node[EtiqArete]
                {\begin{math}x\end{math}}(1);
            \draw[Arete](2)--(1);
            \node(r)at(1.00,1.5){};
            \draw[Arete](r)--(1);
        \end{tikzpicture}
        \end{split}
        \GDendr_y \Tfr_2
        =
        \ArbreBinValue{x}{y}{\Tfr_1}{\Tfr_2}
        = \Tfr,
    \end{equation}
    showing that $\Tfr$ also is in $\Dca$. Therefore,
    $\Dca$ is $\AlgLibre_{\Dendr_\gamma}$, showing that
    $\AlgLibre_{\Dendr_\gamma}$
    is generated by~$\NoeudTexte$.
\end{proof}
\medskip

\begin{Theoreme} \label{thm:algebre_dendr_gamma_libre}
    For any integer $\gamma \geq 0$, the vector space
    $\AlgLibre_{\Dendr_\gamma}$ of $\gamma$-edge valued binary trees
    endowed with the operations $\GDendr_a$, $\DDendr_a$, $a \in [\gamma]$,
    is the free $\gamma$-polydendriform algebra over one generator.
\end{Theoreme}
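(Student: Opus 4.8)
The plan is to recognise $\AlgLibre_{\Dendr_\gamma}$ as the abstract free $\gamma$-polydendriform algebra over one generator by transporting the universal property along an explicit morphism and closing the argument with a graded dimension count. Write $\Gca$ for the free $\Dendr_\gamma$-algebra over one generator as recalled in Section~\ref{sec:outils}: as a graded vector space $\Gca = \bigoplus_{n \geq 1} \Dendr_\gamma(n)$, its generator is the unit $\Unite \in \Dendr_\gamma(1)$, each $x \in \Dendr_\gamma(n)$ acts by $x(e_1, \dots, e_n) := x \circ (e_1, \dots, e_n)$, and $\Gca$ enjoys the usual universal property: for any $\gamma$-polydendriform algebra $\Dca$ and any $d \in \Dca$ there is a unique $\gamma$-polydendriform algebra morphism $\Gca \to \Dca$ sending $\Unite$ to $d$.

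First I would invoke Lemma~\ref{lem:produit_gamma_dendriforme}, which tells us that $\AlgLibre_{\Dendr_\gamma}$, equipped with the operations $\GDendr_a$, $\DDendr_a$, $a \in [\gamma]$, is a $\gamma$-polydendriform algebra. Applying the universal property of $\Gca$ to this target algebra with $d := \NoeudTexte$ produces a $\gamma$-polydendriform algebra morphism $\phi : \Gca \to \AlgLibre_{\Dendr_\gamma}$ with $\phi(\Unite) = \NoeudTexte$. Its image is a $\gamma$-polydendriform subalgebra of $\AlgLibre_{\Dendr_\gamma}$ containing $\NoeudTexte$, hence, by Lemma~\ref{lem:produit_gamma_engendre_dendriforme}, it is the whole of $\AlgLibre_{\Dendr_\gamma}$; so $\phi$ is surjective.

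Next I would check that $\phi$ respects the evident gradations (by arity on $\Gca$, by number of internal nodes on $\AlgLibre_{\Dendr_\gamma}$). On $\Gca$ the operations are degree-additive because partial composition of $\Dendr_\gamma$ adds arities; and a direct reading of the recursive definitions of $\GDendr_a$, $\DDendr_a$ on $\AlgLibre_{\Dendr_\gamma}$ shows that combining a tree with $p$ internal nodes with a tree with $q$ internal nodes yields a linear combination of trees with $p + q$ internal nodes. Since $\phi$ commutes with these degree-additive binary operations and sends the degree-$1$ generator $\Unite$ to the degree-$1$ tree $\NoeudTexte$, which together generate both algebras, it maps $\Gca(n) = \Dendr_\gamma(n)$ into $\AlgLibre_{\Dendr_\gamma}(n)$ for every $n \geq 1$.

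Finally I would conclude degree by degree. By Proposition~\ref{prop:serie_hilbert_dendr_gamma} and the discussion following it, $\dim \Dendr_\gamma(n) = \gamma^{n - 1} \Cat(n)$; on the other hand a $\gamma$-edge valued binary tree with $n$ internal nodes has precisely $n - 1$ edges joining two internal nodes, each labelled in $[\gamma]$, so the number of such trees, hence $\dim \AlgLibre_{\Dendr_\gamma}(n)$, is also $\gamma^{n - 1} \Cat(n)$. Thus each restriction $\phi : \Gca(n) \to \AlgLibre_{\Dendr_\gamma}(n)$ is a surjection between vector spaces of the same finite dimension, hence an isomorphism, so $\phi$ is an isomorphism of $\gamma$-polydendriform algebras and $\AlgLibre_{\Dendr_\gamma}$ is the free $\gamma$-polydendriform algebra over one generator. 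The only mildly delicate point is the grading compatibility of $\phi$, which is what legitimises performing the dimension count component by component rather than on the (infinite-dimensional) total spaces; beyond that there is no genuine obstacle once Lemmas~\ref{lem:produit_gamma_dendriforme} and~\ref{lem:produit_gamma_engendre_dendriforme} are available.
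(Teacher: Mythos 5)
Your proposal is correct and follows essentially the same route as the paper: establish via Lemma~\ref{lem:produit_gamma_dendriforme} and Lemma~\ref{lem:produit_gamma_engendre_dendriforme} that $\AlgLibre_{\Dendr_\gamma}$ is a $\gamma$-polydendriform algebra generated by $\NoeudTexte$, then conclude freeness from the coincidence of dimensions given by Proposition~\ref{prop:serie_hilbert_dendr_gamma}. Your explicit canonical surjection $\phi$ from the free algebra and the grading check merely spell out what the paper's terser "no extra relations" dimension argument leaves implicit.
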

\begin{proof}
    By Lemmas~\ref{lem:produit_gamma_dendriforme}
    and~\ref{lem:produit_gamma_engendre_dendriforme},
    $\AlgLibre_{\Dendr_\gamma}$ is a $\gamma$-polydendriform algebra
    over one generator.
    \smallskip

    Moreover, since by Proposition~\ref{prop:serie_hilbert_dendr_gamma},
    for any $n \geq 1$, the dimension of $\AlgLibre_{\Dendr_\gamma}(n)$
    is the same as the dimension of $\Dendr_\gamma(n)$, there cannot be
    relations in $\AlgLibre_{\Dendr_\gamma}(n)$ involving $\Gfr$ that are
    not $\gamma$-polydendriform
    relations (see~\eqref{equ:relation_dendr_gamma_1_concise},
    \eqref{equ:relation_dendr_gamma_2_concise},
    and~\eqref{equ:relation_dendr_gamma_3_concise}). Hence,
    $\AlgLibre_{\Dendr_\gamma}$ is free as a $\gamma$-polydendriform
    algebra over one generator.
\end{proof}
\medskip

%%%%%%%%%%%%%%%%%%%%%%%%%%%%%%%%%%%%%%%%%%%%%%%%%%%%%%%%%%%%%%%%%%%%%%%%
%%%%%%%%%%%%%%%%%%%%%%%%%%%%%%%%%%%%%%%%%%%%%%%%%%%%%%%%%%%%%%%%%%%%%%%%
%%%%%%%%%%%%%%%%%%%%%%%%%%%%%%%%%%%%%%%%%%%%%%%%%%%%%%%%%%%%%%%%%%%%%%%%
\section{Multiassociative operads} \label{sec:as_gamma}
There is a well-known diagram, whose definition is recalled below,
gathering the diassociative, associative, and  dendriform operads.
The main goal of this section is to define a one-parameter nonnegative
integer generalization of the associative operad to obtain a new version
of this diagram, suited to the context of pluriassociative and
polydendriform operads.
\medskip

%%%%%%%%%%%%%%%%%%%%%%%%%%%%%%%%%%%%%%%%%%%%%%%%%%%%%%%%%%%%%%%%%%%%%%%%
%%%%%%%%%%%%%%%%%%%%%%%%%%%%%%%%%%%%%%%%%%%%%%%%%%%%%%%%%%%%%%%%%%%%%%%%
\subsection{Two generalizations of the associative operad}
The associative operad is generated by one binary element. This operad
admits two different generalizations generated by $\gamma$ binary
elements with the particularity that one is the Koszul dual of the other.
We introduce and study in this section these two operads.
\medskip

%%%%%%%%%%%%%%%%%%%%%%%%%%%%%%%%%%%%%%%%%%%%%%%%%%%%%%%%%%%%%%%%%%%%%%%%
\subsubsection{Nonsymmetric associative operad}
Recall that the {\em nonsymmetric associative operad}, or the
{\em associative operad} for short, is the operad $\As$ admitting the
presentation $\left(\GenLibre_\As, \RelLibre_\As\right)$, where
$\GenLibre_\As := \GenLibre_\As(2) := \{\MAs\}$ and $\RelLibre_\As$ is
generated by $\MAs \circ_1 \MAs - \MAs \circ_2 \MAs$. It admits the
following realization. For any $n \geq 1$, $\As(n)$ is the vector space
of dimension one generated by the corolla of arity $n$ and the partial
composition $\Cfr_1 \circ_i \Cfr_2$ where $\Cfr_1$ is the corolla of
arity $n$ and $\Cfr_2$ is the corolla of arity $m$ is the corolla of
arity $n + m - 1$ for all valid $i$.
\medskip

%%%%%%%%%%%%%%%%%%%%%%%%%%%%%%%%%%%%%%%%%%%%%%%%%%%%%%%%%%%%%%%%%%%%%%%%
\subsubsection{Multiassociative operads} \label{subsubsec:as_gamma}
For any integer $\gamma \geq 0$, we define $\As_\gamma$ as the operad
admitting the presentation $\left(\GenAs, \RelAs\right)$, where
$\GenLibre_{\As_\gamma} := \GenLibre_{\As_\gamma}(2)
:= \{\MAs_a : a \in [\gamma]\}$ and $\RelLibre_{\As_\gamma}$ is generated
by
\begin{subequations}
\begin{equation} \label{equ:relation_as_gamma_1}
    \MAs_a \circ_1 \MAs_b - \MAs_b \circ_2 \MAs_b,
    \qquad a \leq b \in [\gamma],
\end{equation}
\begin{equation} \label{equ:relation_as_gamma_2}
    \MAs_b \circ_1 \MAs_a - \MAs_b \circ_2 \MAs_b,
    \qquad a < b \in [\gamma],
\end{equation}
\begin{equation} \label{equ:relation_as_gamma_3}
    \MAs_a \circ_2 \MAs_b - \MAs_b \circ_2 \MAs_b,
    \qquad a < b \in [\gamma],
\end{equation}
\begin{equation} \label{equ:relation_as_gamma_4}
    \MAs_b \circ_2 \MAs_a - \MAs_b \circ_2 \MAs_b,
    \qquad a < b \in [\gamma].
\end{equation}
\end{subequations}
This space of relations can be rephrased in a more compact way as the
space generated by
\begin{subequations}
\begin{equation} \label{equ:relation_as_gamma_1_concise}
    \MAs_a \circ_1 \MAs_{a'} - \MAs_{a \Max a'} \circ_2 \MAs_{a \Max a'},
    \qquad a, a' \in [\gamma],
\end{equation}
\begin{equation} \label{equ:relation_as_gamma_1_concise}
    \MAs_a \circ_2 \MAs_{a'} - \MAs_{a \Max a'} \circ_2 \MAs_{a \Max a'},
    \qquad a, a' \in [\gamma].
\end{equation}
\end{subequations}
We call $\As_\gamma$ the {\em $\gamma$-multiassociative operad}.
\medskip

It follows immediately that $\As_\gamma$ is a set-operad and that it
provides a generalization of the associative operad. The algebras over
$\As_\gamma$ are the $\gamma$-multiassociative algebras introduced in
Section~\ref{subsubsec:algebres_multiassociatives}.
\medskip

Let us now provide a realization of $\As_\gamma$. A {\em $\gamma$-corolla}
is a rooted tree with at most one internal node labeled on $[\gamma]$.
Denote by $\AlgLibre_{\As_\gamma}(n)$ the vector space of $\gamma$-corollas
of arity $n \geq 1$, by $\AlgLibre_{\As_\gamma}$ the graded vector space
of all $\gamma$-corollas, and let
\begin{equation}
    \MAs : \AlgLibre_{\As_\gamma} \otimes \AlgLibre_{\As_\gamma}
    \to \AlgLibre_{\As_\gamma}
\end{equation}
be the linear operation where, for any $\gamma$-corollas $\Cfr_1$ and
$\Cfr_2$, $\Cfr_1 \MAs \Cfr_2$ is the $\gamma$-corolla
with $n + m - 1$ leaves and labeled by $a \Max a'$ where $n$ (resp. $m$)
is the number of leaves of $\Cfr_1$ (resp. $\Cfr_2$) and $a$ (resp. $a'$)
is the label of $\Cfr_1$ (resp. $\Cfr_2$).
\medskip

\begin{Proposition} \label{prop:realisation_koszulite_as_gamma}
    For any integer $\gamma \geq 0$, the operad $\As_\gamma$ is the
    vector space $\AlgLibre_{\As_\gamma}$ of $\gamma$-corollas and its
    partial compositions satisfy, for any $\gamma$-corollas $\Cfr_1$ and
    $\Cfr_2$, $\Cfr_1 \circ_i \Cfr_2 = \Cfr_1 \MAs \Cfr_2$ for all valid
    integer $i$. Besides, $\As_\gamma$ is a Koszul operad and the set of
    right comb syntax trees of $\OpLibre\left(\GenAs\right)$ where all
    internal nodes have a same label forms a Poincaré-Birkhoff-Witt basis
    of~$\As_\gamma$.
\end{Proposition}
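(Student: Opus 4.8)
The plan is to establish the three assertions of Proposition~\ref{prop:realisation_koszulite_as_gamma} in order: first that $\AlgLibre_{\As_\gamma}$ with the stated partial composition is a model of $\As_\gamma$, then Koszulity via a convergent rewrite rule, and finally identification of the PBW basis as the stated family of right comb syntax trees. For the realization, I would first check that $\AlgLibre_{\As_\gamma}$ equipped with $\circ_i := \MAs$ (independently of $i$) really is a well-defined operad: the unit is the corolla with one leaf (no internal node), and the two operad axioms reduce, after unwinding, to the single identity $(a \Max a') \Max a'' = a \Max (a' \Max a'')$ together with commutativity/associativity of $\Max$, which hold trivially. Then I would exhibit an operad morphism $\phi : \OpLibre(\GenAs) \to \AlgLibre_{\As_\gamma}$ sending $\MAs_a$ to the $\gamma$-corolla of arity $2$ labeled by $a$; by construction $\phi$ is surjective, and one checks that each generator of $\RelLibre_{\As_\gamma}$ lies in $\ker\phi$ (each of~\eqref{equ:relation_as_gamma_1}--\eqref{equ:relation_as_gamma_4} maps to a difference of two equal corollas since $a \Max b = b = b \Max b$ when $a \le b$). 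This shows $\phi$ factors through a surjection $\bar\phi : \As_\gamma \twoheadrightarrow \AlgLibre_{\As_\gamma}$. To get injectivity I need the upper bound $\dim \As_\gamma(n) \le \dim \AlgLibre_{\As_\gamma}(n) = \gamma$ for $n \ge 2$ (and $=1$ for $n=1$), which is exactly what the rewrite rule analysis below will provide.

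For Koszulity I would use the criterion recalled in Section~\ref{subsubsec:dual_de_Koszul}: it suffices to present $\As_\gamma$ as $\OpLibre(\GenAs)/_{\Congr_\Recr}$ for a convergent quadratic rewrite rule $\Recr$ on $\OpLibre(\GenAs)$, and then the normal forms give a PBW basis. The natural choice, orienting~\eqref{equ:relation_as_gamma_1}--\eqref{equ:relation_as_gamma_4} so as to push everything toward right combs with a uniform label, is
\begin{equation}
    \MAs_a \circ_1 \MAs_b \enspace \Recr \enspace \MAs_b \circ_2 \MAs_b
    \quad (a \le b), \qquad
    \MAs_b \circ_1 \MAs_a \enspace \Recr \enspace \MAs_b \circ_2 \MAs_b
    \quad (a < b),
\end{equation}
\begin{equation}
    \MAs_a \circ_2 \MAs_b \enspace \Recr \enspace \MAs_b \circ_2 \MAs_b
    \quad (a < b), \qquad
    \MAs_b \circ_2 \MAs_a \enspace \Recr \enspace \MAs_b \circ_2 \MAs_b
    \quad (a < b).
\end{equation}
I would first argue termination: assign to a syntax tree on $\GenAs$ the pair (number of left-leaning internal edges, sum over internal edges of a quantity measuring the drop between a node's label and the maximum of the two labels it touches), ordered lexicographically; every rule strictly decreases this pair. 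Next I would check that the normal forms are precisely the right comb syntax trees all of whose internal nodes carry the same label: no rule applies to such a tree, and conversely any tree not of this shape contains a pattern matching the left-hand side of some rule (either a left-leaning node, or a right-leaning pair with distinct labels). This already gives $\dim \As_\gamma(n) \le \gamma$ for $n \ge 2$, closing the realization argument above. Finally, confluence: since the rule is terminating, by Newman's lemma it suffices to resolve the critical pairs, which arise from overlaps of two quadratic patterns inside an arity-$4$ syntax tree; there are finitely many overlap configurations (the two rewritable edges sharing an internal node, in the various left/right arrangements), and in each I would show both rewriting sequences reach the same right comb with uniform label — typically the unique normal form of arity~$4$ labeled by the maximum of all four labels involved. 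Convergence then yields Koszulity and the PBW basis statement simultaneously.

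The main obstacle I anticipate is the critical-pair analysis: although each individual check is routine, the rewrite system has nine families of rules (counting the constraints $a \le b$ versus $a < b$ separately) and the overlaps involve three labels $a, b, c$ with various order relations, so organizing the case distinction cleanly — rather than drowning in subcases — is where care is needed. A useful shortcut is the observation that every right-hand side is a right comb with a single repeated label, and that applying any further rewriting to such a term only increases that repeated label toward the global maximum; so to confirm confluence of a critical pair it is enough to verify that both branches eventually produce the right comb labeled by $\max$ of all the labels appearing in the starting arity-$4$ tree, which follows from termination together with the fact that the normal forms of a given arity are linearly ordered by their label. I would also remark in passing that termination plus this "monotone toward the max" behavior makes local confluence almost immediate, so the bulk of the writing is really just bookkeeping of which patterns overlap.
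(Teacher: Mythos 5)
Your route is essentially the paper's: you orient \eqref{equ:relation_as_gamma_1}--\eqref{equ:relation_as_gamma_4} into exactly the rewrite rule the paper uses, identify the normal forms as the uniformly labeled right combs, get $\dim \As_\gamma(n) \leq \gamma$ from this, and close the realization by the surjection onto the corolla operad, with Koszulity and the Poincaré-Birkhoff-Witt basis coming from convergence via the criterion of Section~\ref{subsubsec:dual_de_Koszul}. Your confluence shortcut (labels never decrease under a rewriting, the maximal label of the tree is preserved, so the only reachable normal form is the right comb carrying that maximal label) is sound and in fact more explicit than the paper, which merely asserts convergence after checking the shape of the normal forms; you do not even need Newman's lemma or a critical-pair inspection for it, since termination plus uniqueness of the reachable normal form already gives confluence.

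One step fails as written: the termination measure. Apply the rule $\MAs_b \circ_1 \MAs_b \Recr \MAs_b \circ_2 \MAs_b$ (the case $a = b$ of your first family) inside a larger tree in which the middle leaf of the pattern carries an internal subtree. The pattern edge stops being left-leaning, but the edge toward that middle subtree becomes left-leaning, so your first component is unchanged; and since both pattern nodes carry the same label $b$, the multiset of label pairs along edges is unchanged too, so any second component built from label drops along edges is also constant. Hence your lexicographic pair does not strictly decrease on this rewriting. The fix is cheap: take as first component the sum, over internal nodes, of the number of internal nodes in their left subtrees (this strictly drops under both $\circ_1$-rules, whatever is grafted on the pattern's leaves, and is unchanged by the $\circ_2$-rules, which do not move nodes), and for the two $\circ_2$-rules note that they require $a < b$, so exactly one label strictly increases and a sum of label deficits strictly decreases. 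This is the same kind of measure the paper uses for $\Dup_\gamma$ in Lemma~\ref{lem:dup_gamma_reecriture}.
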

\begin{proof}
    In this proof, we consider that $\GenAs$ is totally ordered by the
    relation $\leq$ satisfying $\MAs_a \leq \MAs_b$ whenever
    $a \leq b \in [\gamma]$. It is immediate that the vector space
    $\AlgLibre_{\As_\gamma}$ endowed with the partial compositions
    described in the statement of the proposition is an operad. Let us
    prove that this operad  admits the presentation
    $\left(\GenAs, \RelAs\right)$.
    \smallskip

    For this purpose, consider the quadratic rewrite rule $\Recr_\gamma$ on
    $\OpLibre\left(\GenAs\right)$ satisfying
    \begin{subequations}
    \begin{equation} \label{equ:reecriture_as_gamma_1}
        \MAs_a \circ_1 \MAs_b
        \enspace \Recr_\gamma \enspace
        \MAs_b \circ_2 \MAs_b,
        \qquad a \leq b \in [\gamma],
    \end{equation}
    \begin{equation} \label{equ:reecriture_as_gamma_2}
        \MAs_b \circ_1 \MAs_a
        \enspace \Recr_\gamma \enspace
        \MAs_b \circ_2 \MAs_b,
        \qquad a < b \in [\gamma],
    \end{equation}
    \begin{equation} \label{equ:reecriture_as_gamma_3}
        \MAs_a \circ_2 \MAs_b
        \enspace \Recr_\gamma \enspace
        \MAs_b \circ_2 \MAs_b,
        \qquad a < b \in [\gamma],
    \end{equation}
    \begin{equation} \label{equ:reecriture_as_gamma_4}
        \MAs_b \circ_2 \MAs_a
        \enspace \Recr_\gamma \enspace
        \MAs_b \circ_2 \MAs_b,
        \qquad a < b \in [\gamma].
    \end{equation}
    \end{subequations}
    Observe first that the space induced by the operad congruence induced
    by $\Recr_\gamma$ is $\RelAs$
    (see~\eqref{equ:relation_as_gamma_1}---\eqref{equ:relation_as_gamma_4}).
    Moreover, $\Recr_\gamma$ is a terminating rewrite rule and its normal
    forms are right comb syntax trees of $\OpLibre\left(\GenAs\right)$
    where all internal nodes have a same label. Besides, one can show
    that for any syntax tree $\Tfr$ of $\OpLibre\left(\GenAs\right)$, we
    have $\Tfr \overset{*}{\Recr_\gamma} \Sfr$ with $\Sfr$ is a right
    comb syntax tree where all internal nodes labeled by the greatest
    label of $\Tfr$. Therefore, $\Recr_\gamma$ is a convergent rewrite
    rule and the operad $\As$, admitting by definition the presentation
    $\left(\GenAs, \RelAs\right)$, has bases indexed by such trees.
    \smallskip

    Now, let
    \begin{equation}
        \phi : \As_\gamma \simeq
        \OpLibre\left(\GenAs\right)/_{\left\langle \RelAs \right\rangle}
        \to \AlgLibre_{\As_\gamma}
    \end{equation}
    be the map satisfying $\phi(\pi(\MAs_a)) = \Cfr_a$ where $\Cfr_a$ is
    the $\gamma$-corolla of arity $2$ with internal node labeled by
    $a \in [\gamma]$ and $\pi : \OpLibre\left(\GenAs\right) \to
    \As_\gamma$ is the canonical surjection map. Since we have
    $\phi(\pi(x)) \circ_i \phi(\pi(y)) = \phi(\pi(x')) \circ_{i'} \phi(\pi(y'))$
    for all relations $x \circ_i y \Recr_\gamma x' \circ_{i'} y'$ of
    \eqref{equ:reecriture_as_gamma_1}---\eqref{equ:reecriture_as_gamma_4},
    $\phi$ extends in a unique way into an operad morphism. First, since
    the set $G_\gamma$ of all $\gamma$-corollas of arity two is a
    generating set of $\AlgLibre_{\As_\gamma}$ and the image of $\phi$
    contains $G_\gamma$, $\phi$ is surjective. Second, since by
    definition of $\AlgLibre_{\As_\gamma}$, the bases of
    $\AlgLibre_{\As_\gamma}$ are indexed by $\gamma$-corollas, in
    accordance with what we have shown in the previous paragraph of this
    proof, $\AlgLibre_{\As_\gamma}$ and $\As_\gamma$ are isomorphic
    as graded vector spaces. Hence, $\phi$ is an operad isomorphism,
    showing that $\As_\gamma$ admits the claimed realization.
    \smallskip

    Finally, the existence of the convergent rewrite rule $\Recr_\gamma$
    implies, by the Koszulity criterion~\cite{Hof10,DK10,LV12} we have
    reformulated in Section~\ref{subsubsec:dual_de_Koszul}, that
    $\As_\gamma$ is Koszul and that its Poincaré-Birkhoff-Witt basis is
    the one described in the statement of the proposition.
\end{proof}
\medskip

We have for instance in $\As_3$,
\begin{equation}
    \begin{split}
    \begin{tikzpicture}[xscale=.27,yscale=.23]
        \node[Feuille](0)at(0.00,-1.50){};
        \node[Feuille](2)at(1.00,-1.50){};
        \node[Feuille](3)at(2.00,-1.50){};
        \node[NoeudCor](1)at(1.00,0.00){\begin{math}2\end{math}};
        \draw[Arete](0)--(1);
        \draw[Arete](2)--(1);
        \draw[Arete](3)--(1);
        \node(r)at(1.00,1.7){};
        \draw[Arete](r)--(1);
    \end{tikzpicture}
    \end{split}
    \circ_1
    \begin{split}
    \begin{tikzpicture}[xscale=.27,yscale=.23]
        \node[Feuille](0)at(0.00,-1.50){};
        \node[Feuille](2)at(2.00,-1.50){};
        \node[NoeudCor](1)at(1.00,0.00){\begin{math}1\end{math}};
        \draw[Arete](0)--(1);
        \draw[Arete](2)--(1);
        \node(r)at(1.00,1.7){};
        \draw[Arete](r)--(1);
    \end{tikzpicture}
    \end{split}
    =
    \begin{split}
    \begin{tikzpicture}[xscale=.27,yscale=.23]
        \node[Feuille](0)at(0.00,-1.50){};
        \node[Feuille](1)at(1.00,-1.50){};
        \node[Feuille](3)at(3.00,-1.50){};
        \node[Feuille](4)at(4.00,-1.50){};
        \node[NoeudCor](2)at(2.00,0.00){\begin{math}2\end{math}};
        \draw[Arete](0)--(2);
        \draw[Arete](1)--(2);
        \draw[Arete](3)--(2);
        \draw[Arete](4)--(2);
        \node(r)at(2.00,1.7){};
        \draw[Arete](r)--(2);
    \end{tikzpicture}
    \end{split}\,,
\end{equation}
and
\begin{equation}
    \begin{split}
    \begin{tikzpicture}[xscale=.27,yscale=.23]
        \node[Feuille](0)at(0.00,-1.50){};
        \node[Feuille](2)at(2.00,-1.50){};
        \node[NoeudCor](1)at(1.00,0.00){\begin{math}2\end{math}};
        \draw[Arete](0)--(1);
        \draw[Arete](2)--(1);
        \node(r)at(1.00,1.7){};
        \draw[Arete](r)--(1);
    \end{tikzpicture}
    \end{split}
    \circ_2
    \begin{split}
    \begin{tikzpicture}[xscale=.27,yscale=.23]
        \node[Feuille](0)at(0.00,-1.50){};
        \node[Feuille](2)at(1.00,-1.50){};
        \node[Feuille](3)at(2.00,-1.50){};
        \node[NoeudCor](1)at(1.00,0.00){\begin{math}3\end{math}};
        \draw[Arete](0)--(1);
        \draw[Arete](2)--(1);
        \draw[Arete](3)--(1);
        \node(r)at(1.00,1.7){};
        \draw[Arete](r)--(1);
    \end{tikzpicture}
    \end{split}
    =
    \begin{split}
    \begin{tikzpicture}[xscale=.27,yscale=.23]
        \node[Feuille](0)at(0.00,-1.50){};
        \node[Feuille](1)at(1.00,-1.50){};
        \node[Feuille](3)at(3.00,-1.50){};
        \node[Feuille](4)at(4.00,-1.50){};
        \node[NoeudCor](2)at(2.00,0.00){\begin{math}3\end{math}};
        \draw[Arete](0)--(2);
        \draw[Arete](1)--(2);
        \draw[Arete](3)--(2);
        \draw[Arete](4)--(2);
        \node(r)at(2.00,1.7){};
        \draw[Arete](r)--(2);
    \end{tikzpicture}
    \end{split}\,.
\end{equation}
\medskip

We deduce from Proposition~\ref{prop:realisation_koszulite_as_gamma}
that the Hilbert series of $\As_\gamma$ satisfies
\begin{equation} \label{equ:serie_hilbert_as_gamma}
    \Hca_{\As_\gamma}(t) = \frac{t + (\gamma - 1)t^2}{1 - t}.
\end{equation}
and that for all $n \geq 2$, $\dim \As_\gamma(n) = \gamma$.
\medskip

%%%%%%%%%%%%%%%%%%%%%%%%%%%%%%%%%%%%%%%%%%%%%%%%%%%%%%%%%%%%%%%%%%%%%%%%
\subsubsection{Dual multiassociative operads} \label{subsubsec:das_gamma}
Since $\As_\gamma$ is a binary and quadratic operad, its admits a Koszul
dual, denoted by $\DAs_\gamma$ and called
{\em $\gamma$-dual multiassociative operad}. The presentation of this
operad is provided by next result.
\medskip

\begin{Proposition} \label{prop:presentation_as_gamma_duale}
    For any integer $\gamma \geq 0$, the operad $\DAs_\gamma$ admits the
    following presentation. It is generated by
    $\GenDAs := \GenDAs(2) := \{\MDAsA_a : a \in [\gamma]\}$ and its space
    of relations $\RelDAs$ is generated by
    \begin{equation} \label{equ:relation_das_gamma_alternative}
        \MDAsA_b \circ_1 \MDAsA_b - \MDAsA_b \circ_2 \MDAsA_b
        + \left(\sum_{a < b} \MDAsA_a \circ_1 \MDAsA_b
        + \MDAsA_b \circ_1 \MDAsA_a - \MDAsA_a \circ_2 \MDAsA_b
        - \MDAsA_b \circ_2 \MDAsA_a\right),
        \qquad b \in [\gamma].
    \end{equation}
\end{Proposition}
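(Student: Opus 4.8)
The goal is to compute the Koszul dual of $\As_\gamma$, so the plan is to apply the definition of Koszul duality recalled in Section~\ref{subsubsec:dual_de_Koszul}. Since $\As_\gamma$ is binary and quadratic with generating set $\GenAs = \{\MAs_a : a \in [\gamma]\}$ concentrated in arity $2$ (Proposition~\ref{prop:realisation_koszulite_as_gamma}), its Koszul dual $\DAs_\gamma$ is the operad presented by $(\GenAs, \RelAs^\perp)$, where $\RelAs^\perp$ is the annihilator of $\RelAs$ in $\OpLibre(\GenAs)(3)$ with respect to the scalar product $\langle -, - \rangle$ of Section~\ref{subsubsec:dual_de_Koszul}. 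Renaming the generators from $\MAs_a$ to $\MDAsA_a$ is purely cosmetic. So the whole proof reduces to: (i) identify $\RelAs$ as a subspace of the $2\gamma^2$-dimensional space $\OpLibre(\GenAs)(3)$ spanned by the trees $\MAs_a \circ_1 \MAs_{a'}$ and $\MAs_a \circ_2 \MAs_{a'}$, $a, a' \in [\gamma]$; (ii) compute its orthogonal complement; (iii) check that the $\gamma$ elements displayed in~\eqref{equ:relation_das_gamma_alternative} form a basis of that complement.

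First I would pin down $\dim \RelAs$. From the presentation $(\GenAs, \RelAs)$ of $\As_\gamma$, with relations~\eqref{equ:relation_as_gamma_1}--\eqref{equ:relation_as_gamma_4}, one sees that modulo $\RelAs$ all degree-$2$ trees $\MAs_a \circ_i \MAs_b$ with $\max$ of the labels equal to a fixed value $d$ collapse to the single tree $\MAs_d \circ_2 \MAs_d$; hence the quotient $\OpLibre(\GenAs)(3)/\RelAs$ has dimension $\gamma$ (one class per possible value of the maximal label $d \in [\gamma]$, matching $\dim \As_\gamma(3) = \gamma$ from Proposition~\ref{prop:realisation_koszulite_as_gamma}). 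Therefore $\dim \RelAs = 2\gamma^2 - \gamma$, and consequently $\dim \RelAs^\perp = 2\gamma^2 - (2\gamma^2 - \gamma) = \gamma$. This already tells us that $\RelDAs$ must be $\gamma$-dimensional, which is exactly the count of elements in~\eqref{equ:relation_das_gamma_alternative} (one per $b \in [\gamma]$).

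Next I would verify orthogonality directly. Fix $b \in [\gamma]$ and let $r_b$ denote the element in~\eqref{equ:relation_das_gamma_alternative}. I must check $\langle r_b, s \rangle = 0$ for every generator $s$ of $\RelAs$, i.e.\ for each of~\eqref{equ:relation_as_gamma_1}--\eqref{equ:relation_as_gamma_4}. Writing both $r_b$ and a given relation $s$ in the monomial basis $\{\MAs_a \circ_i \MAs_{a'}\}$ and using that $\langle \MAs_a \circ_1 \MAs_{a'}, \MAs_c \circ_1 \MAs_{c'} \rangle = \delta$ (Kronecker-type, value $1$), $\langle \MAs_a \circ_2 \MAs_{a'}, \MAs_c \circ_2 \MAs_{c'} \rangle = -\delta$, and cross terms vanish, the pairing $\langle r_b, s\rangle$ becomes a small signed sum that one checks is zero by inspecting the cases on the labels appearing in $s$ (whether the max-labels match $b$ or not). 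This is a finite, routine case check; the sign convention $+1$ on $\circ_1$ and $-1$ on $\circ_2$ is what makes the mixed structure of $r_b$ (coefficient $+1$ on the $\circ_1$ trees, $-1$ on the $\circ_2$ trees) come out orthogonal. Finally, I would observe that the $r_b$, $b \in [\gamma]$, are linearly independent --- e.g.\ the monomial $\MDAsA_b \circ_1 \MDAsA_b$ appears in $r_b$ but in no $r_{b'}$ with $b' \ne b$ --- so they span a $\gamma$-dimensional subspace of $\RelAs^\perp$, hence all of it by the dimension count. Therefore $\RelAs^\perp = \RelDAs$ and $\DAs_\gamma$ admits the stated presentation.

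\textbf{Main obstacle.} The only delicate point is bookkeeping: getting the dimension of $\RelAs$ exactly right (the relations~\eqref{equ:relation_as_gamma_1}--\eqref{equ:relation_as_gamma_4} are not all independent, so one should argue via the quotient dimension $\dim \As_\gamma(3) = \gamma$ rather than by counting listed relations) and then correctly expanding the scalar product with the right signs. I expect the sign conventions in the pairing and the mixed $\circ_1$/$\circ_2$ shape of~\eqref{equ:relation_das_gamma_alternative} to be where an error could creep in, but once set up carefully the orthogonality check is mechanical.
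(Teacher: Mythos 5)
Your proposal is correct and follows essentially the same route as the paper, which simply states that a straightforward computation with the scalar product of Section~\ref{subsubsec:dual_de_Koszul} (identifying $\MDAsA_a$ with $\MAs_a$) shows the displayed space and $\RelAs$ are mutual annihilators. Your orthogonality check combined with the dimension count $\dim \RelAs = 2\gamma^2 - \gamma$ (from $\dim \As_\gamma(3) = \gamma$) is exactly the organized form of that "straightforward computation."
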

\begin{proof}
    By a straightforward computation, and by identifying $\MDAsA_a$ with
    $\MAs_a$ for any $a \in [\gamma]$, we obtain that the space $\RelDAs$
    of the statement of the proposition satisfies $\RelDAs^\perp = \RelAs$.
    Hence, $\DAs$ admits the claimed presentation.
\end{proof}
\medskip

For any integer $\gamma \geq 0$, let $\MDAs_b$, $b \in [\gamma]$, the
elements of $\OpLibre\left(\GenDAs\right)$ defined by
\begin{equation} \label{equ:definition_operateur_das_gamma}
    \MDAs_b := \sum_{a \in [b]} \MDAsA_a.
\end{equation}
Then, since for all $b \in [\gamma]$ we have
\begin{equation}
    \MDAsA_b =
    \begin{cases}
        \MDAs_1 & \mbox{if } b = 1, \\
        \MDAs_b - \MDAs_{b - 1} & \mbox{otherwise},
    \end{cases}
\end{equation}
by triangularity, the family $\GenDAs' := \{\MDAs_b : b \in [\gamma]\}$
forms a basis of $\OpLibre\left(\GenDAs\right)(2)$ and then, generates
$\OpLibre\left(\GenDAs\right)$ as an operad. Let us now express a
presentation of $\DAs_\gamma$ through the family $\GenDAs'$.

\begin{Proposition} \label{prop:autre_presentation_das_gamma}
    For any integer $\gamma \geq 0$, the operad $\DAs_\gamma$ admits the
    following presentation. It is generated by $\GenDAs'$ and its space
    of relations $\RelDAs'$ is generated by
    \begin{equation} \label{equ:relation_das_gamma}
        \MDAs_a \circ_1 \MDAs_a - \MDAs_a \circ_2 \MDAs_a,
        \qquad a \in [\gamma].
    \end{equation}
\end{Proposition}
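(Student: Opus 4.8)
The plan is to derive the presentation from the one already established in Proposition~\ref{prop:presentation_as_gamma_duale}, by transporting the space of relations $\RelDAs$ along the triangular change of generating set $\MDAs_b = \sum_{a \in [b]} \MDAsA_a$ fixed in~\eqref{equ:definition_operateur_das_gamma}, exactly as was done for $\Dendr_\gamma$ in the proof of Theorem~\ref{thm:autre_presentation_dendr_gamma}. Since this change of generators is triangular, $\OpLibre\left(\GenDAs\right)$ and $\OpLibre(\GenDAs')$ are identified as operads and the operad ideals generated by $\RelDAs$ and by $\RelDAs'$ coincide as soon as $\RelDAs = \RelDAs'$ as subspaces of the degree-two component; thus it suffices to prove this equality, which yields $\DAs_\gamma \simeq \OpLibre(\GenDAs')/_{\langle \RelDAs' \rangle}$.

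First I would expand, using~\eqref{equ:definition_operateur_das_gamma}, each generator of $\RelDAs'$ over the $\MDAsA_a$'s:
\begin{equation*}
    \MDAs_a \circ_1 \MDAs_a - \MDAs_a \circ_2 \MDAs_a
    = \sum_{b, c \in [a]} \left(\MDAsA_b \circ_1 \MDAsA_c
    - \MDAsA_b \circ_2 \MDAsA_c\right),
    \qquad a \in [\gamma].
\end{equation*}
On the other hand, writing $r_b$ for the generator~\eqref{equ:relation_das_gamma_alternative} of $\RelDAs$ indexed by $b \in [\gamma]$, a direct inspection of the three cases $x = y = b$, $x < y = b$, and $y < x = b$ shows that $r_b$ is nothing but $\sum_{x \Max y = b} \left(\MDAsA_x \circ_1 \MDAsA_y - \MDAsA_x \circ_2 \MDAsA_y\right)$. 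Summing over $b \in [a]$ then gives $\sum_{b \in [a]} r_b = \sum_{x, y \in [a]} \left(\MDAsA_x \circ_1 \MDAsA_y - \MDAsA_x \circ_2 \MDAsA_y\right)$, which is precisely $\MDAs_a \circ_1 \MDAs_a - \MDAs_a \circ_2 \MDAs_a$. Hence every generator of $\RelDAs'$ lies in $\RelDAs$, so $\RelDAs' \subseteq \RelDAs$; and inverting the identity $\sum_{b \in [a]} r_b = \MDAs_a \circ_1 \MDAs_a - \MDAs_a \circ_2 \MDAs_a$ triangularly ($r_1 = \MDAs_1 \circ_1 \MDAs_1 - \MDAs_1 \circ_2 \MDAs_1$, and $r_a$ equals the difference of the consecutive generators for $a \geq 2$) shows conversely that each $r_b$ lies in $\RelDAs'$, whence $\RelDAs = \RelDAs'$. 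Alternatively, one may simply count dimensions: $\dim \OpLibre\left(\GenAs\right)(3) = 2\gamma^2$ and $\dim \As_\gamma(3) = \gamma$ by Proposition~\ref{prop:realisation_koszulite_as_gamma}, so $\dim \RelDAs = \dim \RelAs^\perp = \gamma$, which matches the number of generators of $\RelDAs'$; these are visibly linearly independent since the trees $\MDAs_a \circ_1 \MDAs_a$, $a \in [\gamma]$, are pairwise distinct, and the inclusion $\RelDAs' \subseteq \RelDAs$ already obtained then forces equality.

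I do not expect a genuine obstacle here: the whole argument is the same triangular-change-of-basis bookkeeping used for $\Dias_\gamma$ and $\Dendr_\gamma$ earlier in the paper. The only points deserving care are the combinatorial identification of the single-index generator~\eqref{equ:relation_das_gamma_alternative} with the sum $\sum_{x \Max y = b}(\MDAsA_x \circ_1 \MDAsA_y - \MDAsA_x \circ_2 \MDAsA_y)$, and the remark that, the change of generating set being triangular, passing from $\GenDAs$ to $\GenDAs'$ leaves the generated operad ideal — and therefore the quotient operad $\DAs_\gamma$ — unchanged.
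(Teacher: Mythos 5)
Your proposal is correct and follows essentially the same route as the paper: expand the elements $\MDAs_a \circ_1 \MDAs_a - \MDAs_a \circ_2 \MDAs_a$ over the $\MDAsA_a$'s to see they lie in $\RelDAs$, then conclude equality of the two spaces of relations (the paper does this by the dimension count $\dim \RelDAs = \gamma = \dim \RelDAs'$, which you also give as an alternative). Your explicit identification $r_b = \sum_{x \Max y = b}\left(\MDAsA_x \circ_1 \MDAsA_y - \MDAsA_x \circ_2 \MDAsA_y\right)$ and the resulting triangular inversion are a correct, slightly more constructive way of obtaining the reverse inclusion, but the argument is the same change-of-generators bookkeeping.
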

\begin{proof}
    Let us show that $\RelDAs'$ is equal to the space of relations
    $\RelDAs$ of $\DAs_\gamma$ defined in the statement of
    Proposition~\ref{prop:presentation_as_gamma_duale}. By this last
    proposition, for any $x \in \OpLibre\left(\GenDAs\right)(3)$,
    $x$ is in $\RelDAs$ if and only if $\pi(x) = 0$ where
    $\pi : \OpLibre\left(\GenDAs\right) \to \DAs$ is the canonical
    surjection map. By a straightforward computation, by
    expanding~\eqref{equ:relation_das_gamma} over the elements $\MDAsA_a$,
    $a \in [\gamma]$, by
    using~\eqref{equ:definition_operateur_das_gamma} we obtain
    that~\eqref{equ:relation_das_gamma} can be expressed as a sum
    of elements of $\RelDAs$. This implies that $\pi(x) = 0$ and hence
    that $\RelDAs'$ is a subspace of $\RelDAs$.
    \smallskip

    Now, one can observe that for all $a \in [\gamma]$, the
    elements~\eqref{equ:relation_das_gamma} are linearly independent.
    Then, $\RelDAs'$ has dimension $\gamma$ which is also, by
    Proposition~\ref{prop:presentation_as_gamma_duale}, the dimension of
    $\RelDAs$. The statement of the proposition follows.
\end{proof}
\medskip

Observe, from the presentation provided by
Proposition~\ref{prop:autre_presentation_das_gamma} of $\DAs_\gamma$,
that $\DAs_2$ is the operad denoted by $\DeuxAs$ in \cite{LR06}.
\medskip

Notice that the Koszul dual of $\DAs_\gamma$ through its presentation
$\left(\GenDAs', \RelDAs'\right)$ of
Proposition~\ref{prop:autre_presentation_das_gamma} gives rise to
the following presentation for $\As_\gamma$. This last operad
admits the presentation $\left(\GenAs', \RelAs'\right)$ where
$\GenAs' := \GenAs'(2) := \{\MAsA_a : a \in [\gamma]\}$
and $\RelAs'$ is generated by
\begin{subequations}
\begin{equation}
    \MAsA_a \circ_1 \MAsA_{a'}, \qquad a \ne a' \in [\gamma],
\end{equation}
\begin{equation}
    \MAsA_a \circ_2 \MAsA_{a'}, \qquad a \ne a' \in [\gamma],
\end{equation}
\begin{equation}
    \MAsA_a \circ_1 \MAsA_a -
    \MAsA_a \circ_2 \MAsA_a, \qquad a \in [\gamma].
\end{equation}
\end{subequations}
Indeed, $\RelAs'$ is the space $\RelAs$ through the identification
\begin{equation}
    \MAsA_a =
    \begin{cases}
        \MAs_\gamma & \mbox{if } a = \gamma, \\
        \MAs_a - \MAs_{a + 1} & \mbox{otherwise}.
    \end{cases}
\end{equation}
\medskip

\begin{Proposition} \label{prop:serie_hilbert_das_gamma}
    For any integer $\gamma \geq 0$, the Hilbert series
    $\Hca_{\DAs_\gamma}(t)$ of the operad $\DAs_\gamma$ satisfies
    \begin{equation} \label{equ:serie_hilbert_das_gamma}
        \Hca_{\DAs_\gamma}(t) =
        t + t\, \Hca_{\DAs_\gamma}(t) +
        (\gamma - 1) \, \Hca_{\DAs_\gamma}(t)^2.
    \end{equation}
\end{Proposition}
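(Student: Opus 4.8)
The statement to prove is the functional equation
\begin{equation}
    \Hca_{\DAs_\gamma}(t) =
    t + t\, \Hca_{\DAs_\gamma}(t) +
    (\gamma - 1) \, \Hca_{\DAs_\gamma}(t)^2 .
\end{equation}
The plan is to invoke the Koszul duality machinery already set up in the excerpt rather than to compute the dimensions of $\DAs_\gamma(n)$ directly. The key ingredients are: (i) $\As_\gamma$ is a Koszul operad by Proposition~\ref{prop:realisation_koszulite_as_gamma}; (ii) therefore its Koszul dual $\DAs_\gamma$ is also Koszul (a standard fact about Koszul duality of operads, referenced via~\cite{GK94,LV12}); (iii) both operads admit Hilbert series, since $\dim \As_\gamma(n) = \gamma$ for $n \geq 2$ and, from the presentation $(\GenDAs', \RelDAs')$ of Proposition~\ref{prop:autre_presentation_das_gamma} together with the forthcoming functional equation, the components of $\DAs_\gamma$ are finite-dimensional; and (iv) consequently the relation~\eqref{equ:relation_series_hilbert_operade_duale}, namely $\Hca_{\As_\gamma}(-\Hca_{\DAs_\gamma}(-t)) = t$, holds.

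First I would recall from~\eqref{equ:serie_hilbert_as_gamma} that
\begin{equation}
    \Hca_{\As_\gamma}(t) = \frac{t + (\gamma - 1) t^2}{1 - t}.
\end{equation}
Then I set $\bar \Hca_{\As_\gamma}(t) := \Hca_{\As_\gamma}(-t)$ and $\bar \Hca_{\DAs_\gamma}(t) := \Hca_{\DAs_\gamma}(-t)$, so that~\eqref{equ:relation_series_hilbert_operade_duale} reads $\bar \Hca_{\As_\gamma}(\bar \Hca_{\DAs_\gamma}(t)) = t$; that is, $\bar \Hca_{\As_\gamma}$ and $\bar \Hca_{\DAs_\gamma}$ are mutually inverse for series composition. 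Writing $y := \bar \Hca_{\DAs_\gamma}(t)$, the identity $\bar \Hca_{\As_\gamma}(y) = t$ becomes, after substituting $t \mapsto -t$ in the formula for $\Hca_{\As_\gamma}$,
\begin{equation}
    \frac{-y + (\gamma - 1) y^2}{1 + y} = t .
\end{equation}
Clearing the denominator gives $-y + (\gamma-1)y^2 = t(1+y)$, i.e. $y = -t - ty + (\gamma - 1) y^2$. Now I substitute back $y = \Hca_{\DAs_\gamma}(-t)$ and replace $t$ by $-t$: this yields exactly $\Hca_{\DAs_\gamma}(t) = t + t\,\Hca_{\DAs_\gamma}(t) + (\gamma - 1)\,\Hca_{\DAs_\gamma}(t)^2$, which is~\eqref{equ:serie_hilbert_das_gamma}.

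The only genuine subtlety — the step I would present with the most care — is justifying that $\DAs_\gamma$ actually has a well-defined Hilbert series, so that~\eqref{equ:relation_series_hilbert_operade_duale} applies; for a quadratic operad presented by finitely many generators in arity two this is automatic, since $\OpLibre(\GenDAs')(n)$ is finite-dimensional and $\DAs_\gamma(n)$ is a quotient, so each $\DAs_\gamma(n)$ is finite-dimensional. Everything else is routine algebraic manipulation of formal power series, and no convergence or analytic issues arise since all the series have zero constant term and the composition is well-defined in $\K[[t]]$. Once the functional equation is in hand one can of course extract a closed form and the dimensions, but that is not needed for the present statement.
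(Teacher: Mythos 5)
Your proposal is correct and follows essentially the same route as the paper: both invoke the Koszulity of $\As_\gamma$ (Proposition~\ref{prop:realisation_koszulite_as_gamma}) together with~\eqref{equ:relation_series_hilbert_operade_duale}, use the explicit formula~\eqref{equ:serie_hilbert_as_gamma}, and identify $\bar\Hca_{\DAs_\gamma}$ as the compositional inverse of $\bar\Hca_{\As_\gamma}$, the only difference being that the paper verifies that the series defined by~\eqref{equ:serie_hilbert_das_gamma} is this inverse while you solve the inverse relation to derive~\eqref{equ:serie_hilbert_das_gamma} -- an equivalent rearrangement of the same computation.
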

\begin{proof}
    By setting $\bar \Hca_{\DAs_\gamma}(t) := \Hca_{\DAs_\gamma}(-t)$,
    from~\eqref{equ:serie_hilbert_das_gamma}, we obtain
    \begin{equation}
        t =
        \frac{-\bar \Hca_{\DAs_\gamma}(t) +
            (\gamma - 1)\bar \Hca_{\DAs_\gamma}(t)^2}
             {1 + \bar \Hca_{\DAs_\gamma}(t)}.
    \end{equation}
    Moreover, by setting
    $\bar \Hca_{\As_\gamma}(t) := \Hca_{\As_\gamma}(-t)$, where
    $\Hca_{\As_\gamma}(t)$ is defined by~\eqref{equ:serie_hilbert_as_gamma},
    we have
    \begin{equation} \label{equ:serie_hilbert_das_gamma_demo}
        \bar \Hca_{\As_\gamma}\left(\bar \Hca_{\DAs_\gamma}(t)\right) =
        \frac{-\bar \Hca_{\DAs_\gamma}(t) +
            (\gamma - 1)\bar \Hca_{\DAs_\gamma}(t)^2}
             {1 + \bar \Hca_{\DAs_\gamma}(t)}
        = t,
    \end{equation}
    showing that $\bar \Hca_{\As_\gamma}(t)$ and $\bar \Hca_{\DAs_\gamma}(t)$
    are the inverses for each other for series composition.
    \smallskip

    Now, since by Proposition~\ref{prop:realisation_koszulite_as_gamma},
    $\As_\gamma$ is a Koszul operad and its Hilbert series is
    $\Hca_{\As_\gamma}(t)$, and since $\DAs_\gamma$ is by definition
    the Koszul dual of $\As_\gamma$, the Hilbert series of these two
    operads satisfy~\eqref{equ:relation_series_hilbert_operade_duale}.
    Therefore, \eqref{equ:serie_hilbert_das_gamma_demo} implies that
    the Hilbert series of $\DAs_\gamma$ is $\Hca_{\DAs_\gamma}(t)$.
\end{proof}
\medskip

A {\em Schröder tree}~\cite{Sta01,Sta11} is a planar rooted tree such
that internal nodes have two of more children. By examining the expression
for $\Hca_{\DAs_\gamma}(t)$ of the statement of
Proposition~\ref{prop:serie_hilbert_das_gamma}, we observe that for any
$n \geq 1$, $\DAs_\gamma(n)$ can be seen as the vector space
$\AlgLibre_{\DAs_\gamma}(n)$ of Schröder trees with $n$ internal nodes,
all labeled on $[\gamma]$ such that the label of an internal node is
different from the labels of its children that are internal nodes. We
call these trees {\em $\gamma$-alternating Schröder trees}. Let us also
denote by $\AlgLibre_{\DAs_\gamma}$the graded vector space of all
$\gamma$-alternating Schröder trees. For instance,
\begin{equation}
    \begin{split}
    \begin{tikzpicture}[xscale=.27,yscale=.13]
        \node[Feuille](0)at(0.00,-14.40){};
        \node[Feuille](10)at(8.00,-19.20){};
        \node[Feuille](12)at(10.00,-19.20){};
        \node[Feuille](13)at(11.00,-19.20){};
        \node[Feuille](15)at(13.00,-19.20){};
        \node[Feuille](17)at(15.00,-14.40){};
        \node[Feuille](19)at(17.00,-19.20){};
        \node[Feuille](2)at(1.00,-14.40){};
        \node[Feuille](21)at(18.00,-19.20){};
        \node[Feuille](22)at(19.00,-19.20){};
        \node[Feuille](23)at(20.00,-4.80){};
        \node[Feuille](3)at(2.00,-14.40){};
        \node[Feuille](5)at(4.00,-9.60){};
        \node[Feuille](6)at(5.00,-4.80){};
        \node[Feuille](8)at(7.00,-14.40){};
        \node[NoeudSchr](1)at(1.00,-9.60){\begin{math}2\end{math}};
        \node[NoeudSchr](11)at(9.00,-14.40){\begin{math}1\end{math}};
        \node[NoeudSchr](14)at(12.00,-14.40){\begin{math}3\end{math}};
        \node[NoeudSchr](16)at(14.00,-4.80){\begin{math}3\end{math}};
        \node[NoeudSchr](18)at(16.00,-9.60){\begin{math}2\end{math}};
        \node[NoeudSchr](20)at(18.00,-14.40){\begin{math}1\end{math}};
        \node[NoeudSchr](4)at(3.00,-4.80){\begin{math}3\end{math}};
        \node[NoeudSchr](7)at(6.00,0.00){\begin{math}1\end{math}};
        \node[NoeudSchr](9)at(9.00,-9.60){\begin{math}2\end{math}};
        \draw[Arete](0)--(1);
        \draw[Arete](1)--(4);
        \draw[Arete](10)--(11);
        \draw[Arete](11)--(9);
        \draw[Arete](12)--(11);
        \draw[Arete](13)--(14);
        \draw[Arete](14)--(9);
        \draw[Arete](15)--(14);
        \draw[Arete](16)--(7);
        \draw[Arete](17)--(18);
        \draw[Arete](18)--(16);
        \draw[Arete](19)--(20);
        \draw[Arete](2)--(1);
        \draw[Arete](20)--(18);
        \draw[Arete](21)--(20);
        \draw[Arete](22)--(20);
        \draw[Arete](23)--(7);
        \draw[Arete](3)--(1);
        \draw[Arete](4)--(7);
        \draw[Arete](5)--(4);
        \draw[Arete](6)--(7);
        \draw[Arete](8)--(9);
        \draw[Arete](9)--(16);
        \node(r)at(6.00,3){};
        \draw[Arete](r)--(7);
    \end{tikzpicture}
    \end{split}
\end{equation}
is a $3$-alternating Schröder tree and a basis element of $\DAs_3(9)$.
\medskip

We deduce also from Proposition~\ref{prop:serie_hilbert_das_gamma} that
\begin{equation}
    \Hca_{\DAs_\gamma}(t) =
    \frac{1 - \sqrt{1 - (4\gamma - 2)t + t^2} - t}{2(\gamma - 1)}.
\end{equation}
By denoting by $\Nar(n, k)$ the {\em Narayana number}~\cite{Nar55}
defined by
\begin{equation} \label{equ:definition_narayana}
    \Nar(n, k) := \frac{1}{k + 1} \binom{n - 1}{k} \binom{n}{k},
\end{equation}
we obtain that for all $n \geq 1$,
\begin{equation}
    \dim \DAs_\gamma(n) =
    \sum_{k = 0}^{n - 2}
    \gamma^{k + 1} (\gamma - 1)^{n - k - 2} \, \Nar(n - 1, k).
\end{equation}
This formula is a consequence of the fact that $\Nar(n - 1, k)$ is the
number of binary trees with $n$ leaves and with exactly $k$ internal
nodes having a internal node as a left child, the fact that  the number
$\Schr(n)$ of Schröder trees with $n$ leaves expresses as
\begin{equation}
    \Schr(n) = \sum_{k = 0}^{n - 2} 2^k \, \Nar(n - 1, k),
\end{equation}
and the fact that any Schröder tree $\Sfr$ with $n$ leaves can be encoded
by a binary tree $\Tfr$ with $n$ leaves where any left oriented edge
connecting two internal nodes of $\Tfr$ is labeled on $[2]$ ($\Sfr$ is
obtained from $\Tfr$ by contracting all edges labeled by $2$).
\medskip

For instance, the first dimensions of  $\DAs_1$, $\DAs_2$, $\DAs_3$, and
$\DAs_4$ are respectively
\begin{equation}
    1, 1, 1, 1, 1, 1, 1, 1, 1, 1, 1,
\end{equation}
\begin{equation}
    1, 2, 6, 22, 90, 394, 1806, 8558, 41586, 206098, 1037718,
\end{equation}
\begin{equation}
    1, 3, 15, 93, 645, 4791, 37275, 299865, 2474025, 20819307, 178003815,
\end{equation}
\begin{equation}
    1, 4, 28, 244, 2380, 24868, 272188, 3080596, 35758828, 423373636, 5092965724.
\end{equation}
The second one is Sequence~\Sloane{A006318}, the third one is
Sequence~\Sloane{A103210}, and the last one is Sequence~\Sloane{A103211}
of~\cite{Slo}.
\medskip

Let us now establish a realization of $\DAs_\gamma$.
\medskip

\begin{Proposition} \label{prop:realisation_das_gamma}
    For any nonnegative integer $\gamma$, the operad $\DAs_\gamma$ is
    the vector space $\AlgLibre_{\DAs_\gamma}$ of $\gamma$-alternating
    Schröder trees. Moreover, for any $\gamma$-alternating Schröder
    trees $\Sfr$ and $\Tfr$, $\Sfr \circ_i \Tfr$ is the $\gamma$-alternating
    Schröder tree obtained by grafting the root of $\Tfr$ on the $i$th
    leaf $x$ of $\Sfr$ and then, if the father $y$ of $x$ and the root
    $z$ of $\Tfr$ have a same label, by contracting the edge connecting
    $y$ and $z$.
\end{Proposition}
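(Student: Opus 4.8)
The plan is to mimic the proof of Proposition~\ref{prop:realisation_koszulite_as_gamma} and to use the rewrite rule $\Recr_\gamma$ on $\OpLibre(\GenDAs')$ underlying the presentation $(\GenDAs', \RelDAs')$ of $\DAs_\gamma$ given by Proposition~\ref{prop:autre_presentation_das_gamma}. First I would introduce the quadratic rewrite rule $\Recr_\gamma$ on $\OpLibre(\GenDAs')$ orienting the relations~\eqref{equ:relation_das_gamma}, say $\MDAs_a \circ_1 \MDAs_a \enspace \Recr_\gamma \enspace \MDAs_a \circ_2 \MDAs_a$ for all $a \in [\gamma]$. Since $\Recr_\gamma$ only rewrites a left comb of two equally-labeled nodes into the corresponding right comb, it is terminating: each rewriting strictly decreases, say, the number of internal nodes that are a left child of an internal node with the same label (one checks the single critical pair $\MDAs_a \circ_1 (\MDAs_a \circ_1 \MDAs_a)$ resolves). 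Hence $\Recr_\gamma$ is convergent, its normal forms are exactly the syntax trees on $\GenDAs'$ in which no internal node is the left child of an internal node with the same label, and these normal forms are in bijection with $\gamma$-alternating Schröder trees (contract all $2$-labeled... here: an internal $\GenDAs'$-node of label $a$ together with its same-label left descendants collapses to a single Schröder-node of label $a$, and conversely any $\gamma$-alternating Schröder tree expands uniquely by splitting each node of arity $k+1$ into a left comb of $k$ same-label binary nodes). This already recovers $\dim \DAs_\gamma(n) = \dim \AlgLibre_{\DAs_\gamma}(n)$, consistently with Proposition~\ref{prop:serie_hilbert_das_gamma}.

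Next I would define the candidate operad structure on $\AlgLibre_{\DAs_\gamma}$ by the partial composition described in the statement (graft, then contract if the father of the grafting leaf and the root of the grafted tree share a label) and verify that it is indeed an operad: the associativity axioms for $\circ_i$ follow from a direct case analysis on whether the two grafting sites interact, with the only subtlety being that the "contract if labels coincide" step is associative and well-defined — this is a routine but slightly tedious check on Schröder trees that I would not grind through. Having the operad $\AlgLibre_{\DAs_\gamma}$ in hand, I would then define the map $\phi : \DAs_\gamma \simeq \OpLibre(\GenDAs')/_{\langle \RelDAs' \rangle} \to \AlgLibre_{\DAs_\gamma}$ sending $\pi(\MDAs_a)$ to the $\gamma$-alternating corolla of arity $2$ labeled by $a$; since for each relation $\MDAs_a \circ_1 \MDAs_a - \MDAs_a \circ_2 \MDAs_a$ of $\RelDAs'$ the images agree in $\AlgLibre_{\DAs_\gamma}$ (grafting a label-$a$ node onto a label-$a$ node contracts, giving the same label-$a$ arity-$3$ corolla on both sides), $\phi$ extends uniquely to an operad morphism. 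Surjectivity is clear because the arity-$2$ $\gamma$-alternating corollas generate $\AlgLibre_{\DAs_\gamma}$, and injectivity follows by comparing graded dimensions, which match by the previous paragraph; so $\phi$ is an operad isomorphism.

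The main obstacle I anticipate is verifying that the composition map on $\AlgLibre_{\DAs_\gamma}$ satisfies the operad axioms, specifically the sequential composition axiom $(x \circ_i y) \circ_{i+j-1} z = x \circ_i (y \circ_j z)$ when the grafting of $y$ onto $x$ triggers a contraction at the site where $z$ is subsequently grafted, or vice versa: one has to check that the "contract on equal labels" rule is confluent with respect to the order in which grafts are performed. I would handle this by interpreting composition in $\AlgLibre_{\DAs_\gamma}$ through the expansion/contraction bijection with $\Recr_\gamma$-normal forms: grafting and then contracting on Schröder trees corresponds exactly to grafting in $\OpLibre(\GenDAs')$ and then taking the $\Recr_\gamma$-normal form, and since $\Recr_\gamma$ is convergent the result is independent of the order of rewritings, which forces the operad axioms. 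In other words, rather than checking the Schröder-tree axioms by hand, I would transport the already-established operad structure of $\OpLibre(\GenDAs')/_{\Congr_{\Recr_\gamma}}$ across the bijection, which both proves $\AlgLibre_{\DAs_\gamma}$ is an operad and immediately identifies it with $\DAs_\gamma$.
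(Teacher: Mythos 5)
Your proposal ends exactly where the paper's proof does, and its second half coincides with it: you define $\phi$ on the generators by sending $\pi(\MDAs_a)$ to the $a$-labelled corolla of arity $2$, kill the unique relation $\MDAs_a \circ_1 \MDAs_a - \MDAs_a \circ_2 \MDAs_a$ by noting that both sides graft-and-contract to the same $a$-labelled corolla of arity $3$, get surjectivity from the generating corollas, and injectivity from a dimension comparison. Where you genuinely diverge is in how the two prerequisites are obtained. The paper simply asserts that the graft-and-contract partial compositions make $\AlgLibre_{\DAs_\gamma}$ an operad ("it is immediate") and takes the dimension count from Proposition~\ref{prop:serie_hilbert_das_gamma}, that is from Koszul duality applied to the Hilbert series of $\As_\gamma$. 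You instead orient the relations of Proposition~\ref{prop:autre_presentation_das_gamma} into a quadratic convergent rewrite rule on $\OpLibre(\GenDAs')$, identify its normal forms with $\gamma$-alternating Schröder trees, and transport the operad structure of the quotient across this bijection, in the style of the paper's own treatment of $\As_\gamma$ in Proposition~\ref{prop:realisation_koszulite_as_gamma}. This buys the dimension count and the operad axioms for $\AlgLibre_{\DAs_\gamma}$ in one stroke (and, as a bonus, a Poincaré-Birkhoff-Witt basis and a direct proof that $\DAs_\gamma$ is Koszul, which the paper only gets through duality), at the cost of the compatibility check "graft-then-contract on Schröder trees equals graft-then-normalize on normal forms", which you rightly single out as the crux and which does go through since contraction is exactly the quotient by the per-label associativity that $\Recr_\gamma$ generates.

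Two details of your first paragraph need repair, though neither is fatal. First, your termination measure is not strictly decreasing: if a redex $\MDAs_a \circ_1 \MDAs_a$ is rewritten while the subtree grafted on its middle leaf has a root labelled $\MDAs_a$, the same-labelled left-child pair destroyed at the redex is recreated between the new right-hand node and that subtree, so the count stays constant. Use instead the standard associativity measure, for instance the sum over all internal nodes of the number of leaves of their left subtrees, which strictly drops at each rewriting; confluence then follows from the single self-overlap critical pair per label, as you say. Second, with your orientation (rewriting $\circ_1$-patterns into $\circ_2$-patterns) the normal forms avoid same-labelled \emph{left} children, so equal-label chains run along right edges: a Schröder node of arity $k+1$ labelled $a$ expands into a \emph{right} comb of $k$ binary $a$-labelled nodes, not a left comb, and the contraction collapses maximal same-label right chains. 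With these corrections the bijection with $\gamma$-alternating Schröder trees, the dimension count, and the transport argument all work as you intend.
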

\begin{proof}
    First, it is immediate that the vector space $\AlgLibre_{\DAs_\gamma}$
    endowed with the partial compositions described in the statement of
    the proposition is an operad.
    \smallskip

    Let
    \begin{equation}
        \phi : \DAs_\gamma \simeq \OpLibre\left(\GenDAs'\right)
        /_{\left\langle \RelDAs' \right\rangle} \to \AlgLibre_{\DAs_\gamma}
    \end{equation}
    be the map satisfying $\phi(\pi(\MDAs_a)) := \Cfr_a$ where $\Cfr_a$
    is the $\gamma$-alternating Schröder with two leaves and one internal
    node labeled by $a \in [\gamma]$ and
    $\pi : \OpLibre(\GenDAs') \to \DAs_\gamma$ is the canonical
    surjection map. Since we have
    $\phi(\pi(\MDAs_a)) \circ_1 \phi(\pi(\MDAs_a)) =
    \phi(\pi(\MDAs_a)) \circ_2 \phi(\pi(\MDAs_a))$
    for all $a \in [\gamma]$, $\phi$ extends in a unique way into an
    operad morphism. First, since the set $G_\gamma$ of all
    $\gamma$-alternating Schröder trees with two leaves and one internal
    node is a generating set of $\AlgLibre_{\DAs_\gamma}$ and the image
    of $\phi$ contains $G_\gamma$, $\phi$ is surjective. Second, since
    by definition of $\AlgLibre_{\DAs_\gamma}$, the bases of
    $\AlgLibre_{\DAs_\gamma}$ are indexed by $\gamma$-alternating
    Schröder trees, by Proposition~\ref{prop:serie_hilbert_das_gamma},
    $\AlgLibre_{\DAs_\gamma}$ and $\DAs_\gamma$ are isomorphic as graded
    vector spaces. Hence, $\phi$ is an operad isomorphism, showing that
    $\DAs_\gamma$ admits the claimed realization.
\end{proof}
\medskip

We have for instance in $\DAs_3$,
\begin{equation} \label{equ:exemple_composition_as_gamma_duale}
    \begin{split}
    \begin{tikzpicture}[xscale=.23,yscale=.13]
        \node[Feuille](0)at(0.00,-6.50){};
        \node[Feuille](10)at(8.00,-9.75){};
        \node[Feuille](12)at(10.00,-9.75){};
        \node[Feuille](2)at(2.00,-9.75){};
        \node[Feuille](4)at(4.00,-9.75){};
        \node[Feuille](6)at(5.00,-3.25){};
        \node[Feuille](7)at(6.00,-6.50){};
        \node[Feuille](9)at(7.00,-6.50){};
        \node[NoeudSchr](1)at(1.00,-3.25){\begin{math}1\end{math}};
        \node[NoeudSchr](11)at(9.00,-6.50){\begin{math}1\end{math}};
        \node[NoeudSchr](3)at(3.00,-6.50){\begin{math}2\end{math}};
        \node[NoeudSchr](5)at(5.00,0.00){\begin{math}2\end{math}};
        \node[NoeudSchr](8)at(7.00,-3.25){\begin{math}3\end{math}};
        \draw[Arete](0)--(1);
        \draw[Arete](1)--(5);
        \draw[Arete](10)--(11);
        \draw[Arete](11)--(8);
        \draw[Arete](12)--(11);
        \draw[Arete](2)--(3);
        \draw[Arete](3)--(1);
        \draw[Arete](4)--(3);
        \draw[Arete](6)--(5);
        \draw[Arete](7)--(8);
        \draw[Arete](8)--(5);
        \draw[Arete](9)--(8);
        \node(r)at(5.00,2.7){};
        \draw[Arete](r)--(5);
    \end{tikzpicture}
    \end{split}
    \circ_4
    \begin{split}
    \begin{tikzpicture}[xscale=.2,yscale=.17]
        \node[Feuille](0)at(0.00,-3.33){};
        \node[Feuille](2)at(2.00,-3.33){};
        \node[Feuille](4)at(4.00,-1.67){};
        \node[NoeudSchr](1)at(1.00,-1.67){\begin{math}2\end{math}};
        \node[NoeudSchr](3)at(3.00,0.00){\begin{math}3\end{math}};
        \draw[Arete](0)--(1);
        \draw[Arete](1)--(3);
        \draw[Arete](2)--(1);
        \draw[Arete](4)--(3);
        \node(r)at(3.00,2){};
        \draw[Arete](r)--(3);
    \end{tikzpicture}
    \end{split}
    =
    \begin{split}
    \begin{tikzpicture}[xscale=.2,yscale=.10]
        \node[Feuille](0)at(0.00,-8.50){};
        \node[Feuille](10)at(9.00,-8.50){};
        \node[Feuille](11)at(10.00,-8.50){};
        \node[Feuille](13)at(11.00,-8.50){};
        \node[Feuille](14)at(12.00,-12.75){};
        \node[Feuille](16)at(14.00,-12.75){};
        \node[Feuille](2)at(2.00,-12.75){};
        \node[Feuille](4)at(4.00,-12.75){};
        \node[Feuille](6)at(5.00,-12.75){};
        \node[Feuille](8)at(7.00,-12.75){};
        \node[NoeudSchr](1)at(1.00,-4.25){1};
        \node[NoeudSchr](12)at(11.00,-4.25){\begin{math}3\end{math}};
        \node[NoeudSchr](15)at(13.00,-8.50){\begin{math}1\end{math}};
        \node[NoeudSchr](3)at(3.00,-8.50){\begin{math}2\end{math}};
        \node[NoeudSchr](5)at(7.00,0.00){\begin{math}2\end{math}};
        \node[NoeudSchr](7)at(6.00,-8.50){\begin{math}2\end{math}};
        \node[NoeudSchr](9)at(8.00,-4.25){\begin{math}3\end{math}};
        \draw[Arete](0)--(1);
        \draw[Arete](1)--(5);
        \draw[Arete](10)--(9);
        \draw[Arete](11)--(12);
        \draw[Arete](12)--(5);
        \draw[Arete](13)--(12);
        \draw[Arete](14)--(15);
        \draw[Arete](15)--(12);
        \draw[Arete](16)--(15);
        \draw[Arete](2)--(3);
        \draw[Arete](3)--(1);
        \draw[Arete](4)--(3);
        \draw[Arete](6)--(7);
        \draw[Arete](7)--(9);
        \draw[Arete](8)--(7);
        \draw[Arete](9)--(5);
        \node(r)at(7.00,3.5){};
        \draw[Arete](r)--(5);
        \end{tikzpicture}
    \end{split}\,,
\end{equation}
and
\begin{equation}
    \begin{split}
    \begin{tikzpicture}[xscale=.23,yscale=.13]
        \node[Feuille](0)at(0.00,-6.50){};
        \node[Feuille](10)at(8.00,-9.75){};
        \node[Feuille](12)at(10.00,-9.75){};
        \node[Feuille](2)at(2.00,-9.75){};
        \node[Feuille](4)at(4.00,-9.75){};
        \node[Feuille](6)at(5.00,-3.25){};
        \node[Feuille](7)at(6.00,-6.50){};
        \node[Feuille](9)at(7.00,-6.50){};
        \node[NoeudSchr](1)at(1.00,-3.25){\begin{math}1\end{math}};
        \node[NoeudSchr](11)at(9.00,-6.50){\begin{math}1\end{math}};
        \node[NoeudSchr](3)at(3.00,-6.50){\begin{math}2\end{math}};
        \node[NoeudSchr](5)at(5.00,0.00){\begin{math}2\end{math}};
        \node[NoeudSchr](8)at(7.00,-3.25){\begin{math}3\end{math}};
        \draw[Arete](0)--(1);
        \draw[Arete](1)--(5);
        \draw[Arete](10)--(11);
        \draw[Arete](11)--(8);
        \draw[Arete](12)--(11);
        \draw[Arete](2)--(3);
        \draw[Arete](3)--(1);
        \draw[Arete](4)--(3);
        \draw[Arete](6)--(5);
        \draw[Arete](7)--(8);
        \draw[Arete](8)--(5);
        \draw[Arete](9)--(8);
        \node(r)at(5.00,2.7){};
        \draw[Arete](r)--(5);
    \end{tikzpicture}
    \end{split}
    \circ_5
    \begin{split}
    \begin{tikzpicture}[xscale=.2,yscale=.17]
        \node[Feuille](0)at(0.00,-3.33){};
        \node[Feuille](2)at(2.00,-3.33){};
        \node[Feuille](4)at(4.00,-1.67){};
        \node[NoeudSchr](1)at(1.00,-1.67){\begin{math}2\end{math}};
        \node[NoeudSchr](3)at(3.00,0.00){\begin{math}3\end{math}};
        \draw[Arete](0)--(1);
        \draw[Arete](1)--(3);
        \draw[Arete](2)--(1);
        \draw[Arete](4)--(3);
        \node(r)at(3.00,2){};
        \draw[Arete](r)--(3);
    \end{tikzpicture}
    \end{split}
    =
    \begin{split}
    \begin{tikzpicture}[xscale=.2,yscale=.10]
        \node[Feuille](0)at(0.00,-8.00){};
        \node[Feuille](10)at(9.00,-8.00){};
        \node[Feuille](12)at(11.00,-8.00){};
        \node[Feuille](13)at(12.00,-12.00){};
        \node[Feuille](15)at(14.00,-12.00){};
        \node[Feuille](2)at(2.00,-12.00){};
        \node[Feuille](4)at(4.00,-12.00){};
        \node[Feuille](6)at(5.00,-4.00){};
        \node[Feuille](7)at(6.00,-12.00){};
        \node[Feuille](9)at(8.00,-12.00){};
        \node[NoeudSchr](1)at(1.00,-4.00){1};
        \node[NoeudSchr](11)at(10.00,-4.00){3};
        \node[NoeudSchr](14)at(13.00,-8.00){1};
        \node[NoeudSchr](3)at(3.00,-8.00){2};
        \node[NoeudSchr](5)at(5.00,0.00){2};
        \node[NoeudSchr](8)at(7.00,-8.00){2};
        \draw[Arete](0)--(1);
        \draw[Arete](1)--(5);
        \draw[Arete](10)--(11);
        \draw[Arete](11)--(5);
        \draw[Arete](12)--(11);
        \draw[Arete](13)--(14);
        \draw[Arete](14)--(11);
        \draw[Arete](15)--(14);
        \draw[Arete](2)--(3);
        \draw[Arete](3)--(1);
        \draw[Arete](4)--(3);
        \draw[Arete](6)--(5);
        \draw[Arete](7)--(8);
        \draw[Arete](8)--(11);
        \draw[Arete](9)--(8);
        \node(r)at(5.00,3.5){};
        \draw[Arete](r)--(5);
    \end{tikzpicture}
    \end{split}\,.
\end{equation}
\medskip

%%%%%%%%%%%%%%%%%%%%%%%%%%%%%%%%%%%%%%%%%%%%%%%%%%%%%%%%%%%%%%%%%%%%%%%%
%%%%%%%%%%%%%%%%%%%%%%%%%%%%%%%%%%%%%%%%%%%%%%%%%%%%%%%%%%%%%%%%%%%%%%%%
\subsection{A diagram of operads}%
\label{subsec:diagramme_dias_as_dendr_gamma}
We now define morphisms between the operads $\Dias_\gamma$, $\As_\gamma$,
$\DAs_\gamma$, and $\Dendr_\gamma$ to obtain a generalization of a
classical diagram involving the diassociative, associative, and
dendriform operads.
\medskip

%%%%%%%%%%%%%%%%%%%%%%%%%%%%%%%%%%%%%%%%%%%%%%%%%%%%%%%%%%%%%%%%%%%%%%%%
\subsubsection{Relating the diassociative and dendriform operads}
The diagram
\begin{equation} \label{equ:diagramme_dendr_as_dias}
    \begin{split}
    \begin{tikzpicture}[yscale=.65]
        \node(Dendr)at(0,0){\begin{math} \Dendr \end{math}};
        \node(As)at(2,0){\begin{math} \As \end{math}};
        \node(Dias)at(4,0){\begin{math} \Dias \end{math}};
        \draw[->](Dias)
            edge node[anchor=south]{\begin{math} \eta \end{math}}(As);
        \draw[->](As)
            edge node[anchor=south]{\begin{math} \zeta \end{math}}(Dendr);
        \draw[<->,dotted,loop above,looseness=13](As)
            edge node[anchor=south]{\begin{math} ! \end{math}}(As);
        \draw[<->,dotted,loop above,looseness=1.5](Dendr)
            edge node[anchor=south]{\begin{math} ! \end{math}}(Dias);
    \end{tikzpicture}
    \end{split}
\end{equation}
is a well-known diagram of operads, being a part of the so-called
{\em operadic butterfly} \cite{Lod01,Lod06} and summarizing in a nice way
the links between the dendriform, associative, and diassociative operads.
The operad $\As$, being at the center of the diagram, is it own Koszul
dual, while $\Dias$ and $\Dendr$ are Koszul dual one of the other.
\medskip

The operad morphisms $\eta : \Dias \to \As$ and $\zeta : \As \to \Dendr$
are linearly defined through the realizations of $\Dias$ and $\Dendr$
recalled in Section~\ref{subsec:dias_et_dendr} by
\begin{equation}\begin{split}\end{split}
    \eta(\Efr_{2, 1}) :=
    \begin{split}
    \begin{tikzpicture}[xscale=.2,yscale=.15]
        \node[Feuille](0)at(0.00,-1.50){};
        \node[Feuille](2)at(2.00,-1.50){};
        \node[NoeudCor](1)at(1.00,0.00){};
        \draw[Arete](0)--(1);
        \draw[Arete](2)--(1);
        \node(r)at(1.00,2){};
        \draw[Arete](r)--(1);
    \end{tikzpicture}
    \end{split}
     =: \eta(\Efr_{2, 2})\,,
\end{equation}
and
\begin{equation}\begin{split}\end{split}
    \zeta\left(
    \begin{split}
    \begin{tikzpicture}[xscale=.2,yscale=.15]
        \node[Feuille](0)at(0.00,-1.50){};
        \node[Feuille](2)at(2.00,-1.50){};
        \node[NoeudCor](1)at(1.00,0.00){};
        \draw[Arete](0)--(1);
        \draw[Arete](2)--(1);
        \node(r)at(1.00,2){};
        \draw[Arete](r)--(1);
    \end{tikzpicture}
    \end{split}
    \right) :=
    \begin{split}
    \begin{tikzpicture}[xscale=.22,yscale=.17]
        \node[Feuille](0)at(0.00,-3.33){};
        \node[Feuille](2)at(2.00,-3.33){};
        \node[Feuille](4)at(4.00,-1.67){};
        \node[Noeud](1)at(1.00,-1.67){};
        \node[Noeud](3)at(3.00,0.00){};
        \draw[Arete](0)--(1);
        \draw[Arete](1)--(3);
        \draw[Arete](2)--(1);
        \draw[Arete](4)--(3);
        \node(r)at(3.00,1.67){};
        \draw[Arete](r)--(3);
    \end{tikzpicture}
    \end{split}
    +
    \begin{split}
    \begin{tikzpicture}[xscale=.22,yscale=.17]
        \node[Feuille](0)at(0.00,-1.67){};
        \node[Feuille](2)at(2.00,-3.33){};
        \node[Feuille](4)at(4.00,-3.33){};
        \node[Noeud](1)at(1.00,0.00){};
        \node[Noeud](3)at(3.00,-1.67){};
        \draw[Arete](0)--(1);
        \draw[Arete](2)--(3);
        \draw[Arete](3)--(1);
        \draw[Arete](4)--(3);
        \node(r)at(1.00,1.67){};
        \draw[Arete](r)--(1);
    \end{tikzpicture}
    \end{split}\,.
\end{equation}
Since $\Dias$ is generated by $\Efr_{2, 1}$ and $\Efr_{2, 2}$, and since
$\As$ is generated by
$\raisebox{-.4em}{\begin{tikzpicture}[xscale=.2,yscale=.15]
    \node[Feuille](0)at(0.00,-1.50){};
    \node[Feuille](2)at(2.00,-1.50){};
    \node[NoeudCor](1)at(1.00,0.00){};
    \draw[Arete](0)--(1);
    \draw[Arete](2)--(1);
    \node(r)at(1.00,2){};
    \draw[Arete](r)--(1);
\end{tikzpicture}}$,
$\eta$ and $\zeta$ are wholly defined.
\medskip

%%%%%%%%%%%%%%%%%%%%%%%%%%%%%%%%%%%%%%%%%%%%%%%%%%%%%%%%%%%%%%%%%%%%%%%%
\subsubsection{Relating the pluriassociative and polydendriform operads}
\begin{Proposition} \label{prop:morphisme_dias_gamma_vers_as_gamma}
    For any integer $\gamma \geq 0$, the map
    $\eta_\gamma : \Dias_\gamma \to \As_\gamma$ satisfying
    \begin{equation}
        \begin{split}\end{split}
        \eta_\gamma(0a) =
        \begin{split}
        \begin{tikzpicture}[xscale=.25,yscale=.2]
            \node[Feuille](0)at(0.00,-1.50){};
            \node[Feuille](2)at(2.00,-1.50){};
            \node[NoeudCor](1)at(1.00,0.00){\begin{math}a\end{math}};
            \draw[Arete](0)--(1);
            \draw[Arete](2)--(1);
            \node(r)at(1.00,1.7){};
            \draw[Arete](r)--(1);
        \end{tikzpicture}
        \end{split}
        = \eta_\gamma(a0),
        \qquad a \in [\gamma],
    \end{equation}
    extends in a unique way into an operad morphism. Moreover, this
    morphism is surjective.
\end{Proposition}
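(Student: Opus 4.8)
The plan is to use the presentation of $\Dias_\gamma$ given by Theorem~\ref{thm:presentation_dias_gamma}, under which $\Dias_\gamma \simeq \OpLibre(\GenDias)/_{\langle \RelDias \rangle}$ with $\GenDias = \{\GDias_a, \DDias_a : a \in [\gamma]\}$ and $\GDias_a$, $\DDias_a$ identified with the words $0a$, $a0$. Since these elements generate $\Dias_\gamma$, any operad morphism out of $\Dias_\gamma$ is determined by their images, which already yields the uniqueness assertion. For existence, I would first introduce the operad morphism $\OpLibre(\GenDias) \to \As_\gamma$ sending both $\GDias_a$ and $\DDias_a$ to the generator $\MAs_a$ of $\As_\gamma$ — the $\gamma$-corolla of arity $2$ labeled by $a$, in the realization of Proposition~\ref{prop:realisation_koszulite_as_gamma} — and then check that this morphism factors through the quotient by $\langle \RelDias \rangle$. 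Because $\RelDias$ is, as an operad ideal, generated by the degree-two elements \eqref{equ:relation_dias_gamma_1_concise}--\eqref{equ:relation_dias_gamma_5_concise}, it suffices to verify that the image of each of these five families lies in the space of relations $\RelAs$ of $\As_\gamma$.

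The verification is a routine computation based on the defining relation \eqref{equ:relation_as_gamma_1_concise} of $\As_\gamma$, which asserts that in $\As_\gamma$ both $\MAs_a \circ_1 \MAs_{a'}$ and $\MAs_a \circ_2 \MAs_{a'}$ equal $\MAs_{a \Max a'} \circ_2 \MAs_{a \Max a'}$. Applying $\eta_\gamma$ to each of \eqref{equ:relation_dias_gamma_1_concise}--\eqref{equ:relation_dias_gamma_5_concise} produces in every case a difference of two degree-two syntax trees; using the relation above, each of the two terms reduces to $\MAs_{a \Max a'} \circ_2 \MAs_{a \Max a'}$, so the difference vanishes in $\As_\gamma$. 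For example, \eqref{equ:relation_dias_gamma_2_concise} is sent to $\MAs_a \circ_1 \MAs_{a \Max a'} - \MAs_a \circ_2 \MAs_{a'}$, and since $a \Max (a \Max a') = a \Max a'$ both terms collapse to $\MAs_{a \Max a'} \circ_2 \MAs_{a \Max a'}$; the remaining four relations are handled identically. This shows that $\eta_\gamma$ is a well-defined operad morphism.

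Finally, surjectivity is immediate: by Proposition~\ref{prop:realisation_koszulite_as_gamma}, $\As_\gamma$ is generated by its $\gamma$-corollas of arity $2$, that is, by the $\MAs_a$ with $a \in [\gamma]$, and each $\MAs_a = \eta_\gamma(0a)$ lies in the image of $\eta_\gamma$. I do not anticipate any genuine difficulty here; the only point demanding a little care is keeping track of the $\Max$ operations when reducing each of the five relations, together with confirming that the identification $\GDias_a \leftrightarrow 0a$, $\DDias_a \leftrightarrow a0$ that is implicit in Theorem~\ref{thm:presentation_dias_gamma} is exactly the one used to write the concise relations \eqref{equ:relation_dias_gamma_1_concise}--\eqref{equ:relation_dias_gamma_5_concise}.
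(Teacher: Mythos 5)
Your proposal is correct and follows essentially the same route as the paper: interpret $\eta_\gamma$ through the presentation of $\Dias_\gamma$ from Theorem~\ref{thm:presentation_dias_gamma}, send both $\GDias_a$ and $\DDias_a$ to $\MAs_a$, check that the generating relations of $\RelDias$ are killed in $\As_\gamma$, and get surjectivity because the arity-$2$ $\gamma$-corollas generate $\As_\gamma$. The only difference is that you carry out the five relation checks explicitly, whereas the paper leaves them as a straightforward verification.
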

\begin{proof}
    Theorem~\ref{thm:presentation_dias_gamma} and
    Proposition~\ref{prop:realisation_das_gamma} allow to interpret
    the map $\eta_\gamma$ over the presentations of $\Dias_\gamma$
    and $\As_\gamma$. Then, via this interpretation, one has
    \begin{equation}
        \eta_\gamma(\pi(\GDias_a)) = \pi'(\MAs_a) = \eta_\gamma(\pi(\DDias_a)),
        \qquad a \in [\gamma],
    \end{equation}
    where $\pi : \OpLibre\left(\GenDias\right) \to \Dias_\gamma$ and
    $\pi' : \OpLibre\left(\GenAs\right) \to \As_\gamma$ are canonical
    surjection maps. Now, for any element $x$ of $\OpLibre\left(\GenDias\right)$
    generating the space of relations $\RelDias$ of $\Dias_\gamma$, we
    can check that $\eta_\gamma(\pi(x)) = 0$. This shows that $\eta_\gamma$
    extends in a unique way into an operad morphism. Finally, this
    morphism is a surjection since its image contains the set of all
    $\gamma$-corollas of arity $2$, which is a generating set of
    $\As_\gamma$.
\end{proof}
\medskip

By Proposition~\ref{prop:morphisme_dias_gamma_vers_as_gamma}, the map
$\eta_\gamma$, whose definition is only given in arity $2$, defines an
operad morphism. Nevertheless, by induction on the arity, one can prove
that for any word $x$ of $\Dias_\gamma$, $\eta_\gamma(x)$ is the
$\gamma$-corolla of arity $|x|$ labeled by the greatest letter of $x$.
\medskip

\begin{Proposition} \label{prop:morphisme_das_gamma_vers_dendr_gamma}
    For any integer $\gamma \geq 0$, the map
    $\zeta_\gamma : \DAs_\gamma \to \Dendr_\gamma$ satisfying
    \begin{equation}
        \begin{split}\end{split}
        \zeta_\gamma\left(
        \begin{split}
        \begin{tikzpicture}[xscale=.25,yscale=.2]
            \node[Feuille](0)at(0.00,-1.50){};
            \node[Feuille](2)at(2.00,-1.50){};
            \node[NoeudSchr](1)at(1.00,0.00){\begin{math}a\end{math}};
            \draw[Arete](0)--(1);
            \draw[Arete](2)--(1);
            \node(r)at(1.00,1.7){};
            \draw[Arete](r)--(1);
        \end{tikzpicture}
        \end{split}
        \right)
        = \ArbreBinGDeux{a} + \ArbreBinDDeux{a},
        \qquad a \in [\gamma],
    \end{equation}
    extends in a unique way into an operad morphism.
\end{Proposition}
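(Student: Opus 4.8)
The plan is to prove that $\zeta_\gamma$ is well-defined as an operad morphism by interpreting it at the level of presentations, exactly as was done for $\eta_\gamma$ in the proof of Proposition~\ref{prop:morphisme_dias_gamma_vers_as_gamma}. First I would fix the presentation $(\GenDAs', \RelDAs')$ of $\DAs_\gamma$ provided by Proposition~\ref{prop:autre_presentation_das_gamma}, whose only relations are $\MDAs_a \circ_1 \MDAs_a - \MDAs_a \circ_2 \MDAs_a$ for $a \in [\gamma]$, and the presentation $(\GenDendr', \RelDendr')$ of $\Dendr_\gamma$ provided by Theorem~\ref{thm:autre_presentation_dendr_gamma}. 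Since $\DAs_\gamma$ is generated by the $\gamma$-alternating Schröder trees with two leaves and one internal node (equivalently by the $\MDAs_a$, $a \in [\gamma]$), it suffices to define $\zeta_\gamma$ on these generators and check it is compatible with the relations $\RelDAs'$; the uniqueness of the extension is then automatic from the universal property of $\OpLibre(\GenDAs')/_{\langle \RelDAs' \rangle}$.

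Concretely, I would set $\zeta_\gamma(\pi(\MDAs_a)) := \pi'(\GDendr_a + \DDendr_a)$, where $\pi : \OpLibre(\GenDAs') \to \DAs_\gamma$ and $\pi' : \OpLibre(\GenDendr') \to \Dendr_\gamma$ are the canonical surjections; one checks that under the identification $\MDAs_a \mapsto a$-labeled Schröder corolla, this is the same map as the one written in the statement, because $\GDendr_a + \DDendr_a = \sum_{b \in [a]} \GDendrA_b + \DDendrA_b$ corresponds (via the realization of $\Dendr_\gamma$ by $\gamma$-edge valued binary trees) to $\ArbreBinGDeux{a} + \ArbreBinDDeux{a}$. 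The key computation is then to verify, for each $a \in [\gamma]$, that
\begin{equation}
    \zeta_\gamma\bigl(\pi(\MDAs_a \circ_1 \MDAs_a - \MDAs_a \circ_2 \MDAs_a)\bigr)
    = \pi'\bigl((\GDendr_a + \DDendr_a) \circ_1 (\GDendr_a + \DDendr_a)
        - (\GDendr_a + \DDendr_a) \circ_2 (\GDendr_a + \DDendr_a)\bigr) = 0.
\end{equation}
But this last element is precisely $\OpAsDendr_a \circ_1 \OpAsDendr_a - \OpAsDendr_a \circ_2 \OpAsDendr_a$ in the notation of Proposition~\ref{prop:operateur_associatif_dendr_gamma}, and that proposition states that $\OpAsDendr_a$ is associative, hence this vanishes in $\Dendr_\gamma$. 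So the compatibility check reduces entirely to an already-proven fact, and $\zeta_\gamma$ extends uniquely to an operad morphism.

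The main (and only) potential obstacle is the bookkeeping needed to confirm that the generator-level definition in the statement matches $\pi'(\GDendr_a + \DDendr_a)$ and that the image of each relation of $\RelDAs'$ under the extended map is indeed a combination of the defining relations of $\Dendr_\gamma$; but since $\RelDAs'$ has the remarkably compact form of Proposition~\ref{prop:autre_presentation_das_gamma} and the associativity of $\OpAsDendr_a$ is available off the shelf, this is routine rather than delicate. I would note in passing, as was done after Proposition~\ref{prop:morphisme_dias_gamma_vers_as_gamma}, that one can describe $\zeta_\gamma$ explicitly on all of $\DAs_\gamma$ by induction on arity: $\zeta_\gamma$ sends a $\gamma$-alternating Schröder tree to a sum of $\gamma$-edge valued binary trees obtained by resolving each internal node of arity $k$ into binary trees with $k$ leaves in all compatible ways, propagating the node labels onto the internal edges, which also makes transparent why $\zeta_1$ recovers the classical morphism $\zeta : \As \to \Dendr$ of diagram~\eqref{equ:diagramme_dendr_as_dias}.
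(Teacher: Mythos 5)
Your proposal is correct and follows essentially the same route as the paper: interpret $\zeta_\gamma$ through the presentations $\left(\GenDAs', \RelDAs'\right)$ and $\left(\GenDendr', \RelDendr'\right)$, send $\pi(\MDAs_a)$ to $\pi'(\GDendr_a + \DDendr_a) = \OpAsDendr_a$, and observe that the single family of relations of $\RelDAs'$ is killed precisely because $\OpAsDendr_a$ is associative (Proposition~\ref{prop:operateur_associatif_dendr_gamma}), so the extension exists and is unique. The closing remark giving an explicit tree-level description of $\zeta_\gamma$ is an unverified aside not needed for the argument, but it does not affect the proof.
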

\begin{proof}
    Propositions~\ref{prop:autre_presentation_das_gamma}
    and~\ref{prop:realisation_das_gamma}, and
    Theorem~\ref{thm:autre_presentation_dendr_gamma} allow to interpret
    the map $\zeta_\gamma$ over the presentations of $\DAs_\gamma$ and
    $\Dendr_\gamma$. Then, via this interpretation, one has
    \begin{equation}
        \zeta_\gamma(\pi(\MDAs_a)) = \pi'(\GDendr_a + \DDendr_a),
        \qquad a \in [\gamma],
    \end{equation}
    where $\pi : \OpLibre(\GenDAs') \to \DAs_\gamma$ and
    $\pi' : \OpLibre\left(\GenDendr'\right) \to \Dendr_\gamma$ are
    canonical surjection maps. We now observe that the image of
    $\pi(\MDAs_a)$ is $\OpAsDendr_a$, where $\OpAsDendr_a$ is the element
    of $\Dendr_\gamma$ defined in the statement of
    Proposition~\ref{prop:operateur_associatif_dendr_gamma}. Then, since
    by this last proposition this element is associative, for any element
    $x$ of $\OpLibre(\GenDAs')$ generating the space of relations of
    $\RelDAs'$ of $\DAs_\gamma$, $\zeta_\gamma(\pi(x)) = 0$. This shows
    that $\zeta_\gamma$ extends in a unique way into an operad morphism.
\end{proof}
\medskip

We have to observe that the morphism $\zeta_\gamma$ defined in the
statement of Proposition~\ref{prop:morphisme_das_gamma_vers_dendr_gamma}
is injective only for $\gamma \leq 1$. Indeed, when $\gamma \geq 2$,
we have the relation
\begin{equation}\begin{split}\end{split}
    \zeta_2\left(
    \begin{split}
    \begin{tikzpicture}[xscale=.2,yscale=.2]
        \node[Feuille](0)at(0.00,-5.25){};
        \node[Feuille](2)at(2.00,-5.25){};
        \node[Feuille](4)at(4.00,-3.50){};
        \node[Feuille](6)at(6.00,-1.75){};
        \node[NoeudSchr](1)at(1.00,-3.50){\begin{math}1\end{math}};
        \node[NoeudSchr](3)at(3.00,-1.75){\begin{math}2\end{math}};
        \node[NoeudSchr](5)at(5.00,0.00){\begin{math}1\end{math}};
        \draw[Arete](0)--(1);
        \draw[Arete](1)--(3);
        \draw[Arete](2)--(1);
        \draw[Arete](3)--(5);
        \draw[Arete](4)--(3);
        \draw[Arete](6)--(5);
        \node(r)at(5.00,1.75){};
        \draw[Arete](r)--(5);
    \end{tikzpicture}
    \end{split}\right)
    \begin{split} \; + \; \end{split}
    \zeta_2\left(
    \begin{split}
    \begin{tikzpicture}[xscale=.2,yscale=.2]
        \node[Feuille](0)at(0.00,-1.75){};
        \node[Feuille](2)at(2.00,-3.50){};
        \node[Feuille](4)at(4.00,-5.25){};
        \node[Feuille](6)at(6.00,-5.25){};
        \node[NoeudSchr](1)at(1.00,0.00){\begin{math}1\end{math}};
        \node[NoeudSchr](3)at(3.00,-1.75){\begin{math}2\end{math}};
        \node[NoeudSchr](5)at(5.00,-3.50){\begin{math}1\end{math}};
        \draw[Arete](0)--(1);
        \draw[Arete](2)--(3);
        \draw[Arete](3)--(1);
        \draw[Arete](4)--(5);
        \draw[Arete](5)--(3);
        \draw[Arete](6)--(5);
        \node(r)at(1.00,1.75){};
        \draw[Arete](r)--(1);
    \end{tikzpicture}
    \end{split}\right)
    \begin{split} \enspace = \enspace \end{split}
    \zeta_2\left(
    \begin{split}
    \begin{tikzpicture}[xscale=.25,yscale=.2]
        \node[Feuille](0)at(0.00,-2.00){};
        \node[Feuille](2)at(1.00,-4.00){};
        \node[Feuille](4)at(3.00,-4.00){};
        \node[Feuille](5)at(4.00,-2.00){};
        \node[NoeudSchr](1)at(2.00,0.00){\begin{math}1\end{math}};
        \node[NoeudSchr](3)at(2.00,-2.00){\begin{math}2\end{math}};
        \draw[Arete](0)--(1);
        \draw[Arete](2)--(3);
        \draw[Arete](3)--(1);
        \draw[Arete](4)--(3);
        \draw[Arete](5)--(1);
        \node(r)at(2.00,2.00){};
        \draw[Arete](r)--(1);
    \end{tikzpicture}
    \end{split}\right)
    \begin{split} \; + \; \end{split}
    \zeta_2\left(
    \begin{split}
    \begin{tikzpicture}[xscale=.22,yscale=.17]
        \node[Feuille](0)at(0.00,-4.67){};
        \node[Feuille](2)at(2.00,-4.67){};
        \node[Feuille](4)at(4.00,-4.67){};
        \node[Feuille](6)at(6.00,-4.67){};
        \node[NoeudSchr](1)at(1.00,-2.33){\begin{math}1\end{math}};
        \node[NoeudSchr](3)at(3.00,0.00){\begin{math}2\end{math}};
        \node[NoeudSchr](5)at(5.00,-2.33){\begin{math}1\end{math}};
        \draw[Arete](0)--(1);
        \draw[Arete](1)--(3);
        \draw[Arete](2)--(1);
        \draw[Arete](4)--(5);
        \draw[Arete](5)--(3);
        \draw[Arete](6)--(5);
        \node(r)at(3.00,2.33){};
        \draw[Arete](r)--(3);
    \end{tikzpicture}
    \end{split}\right)\,.
\end{equation}
\medskip

\begin{Theoreme} \label{thm:diagramme_dias_as_das_dendr_gamma}
    For any integer $\gamma \geq 0$, the operads $\Dias_\gamma$,
    $\Dendr_\gamma$, $\As_\gamma$, and $\DAs_\gamma$ fit into the
    diagram
    \begin{equation} \label{equ:diagramme_dendr_gamma_as_gamma_dias_gamma}
        \begin{split}
        \begin{tikzpicture}[yscale=.6]
            \node(Dendr)at(0,0){\begin{math} \Dendr_\gamma \end{math}};
            \node(DAs)at(2,0){\begin{math} \DAs_\gamma \end{math}};
            \node(As)at(4,0){\begin{math} \As_\gamma \end{math}};
            \node(Dias)at(6,0){\begin{math} \Dias_\gamma \end{math}};
            \draw[->](Dias)
                edge node[anchor=south]{\begin{math} \eta_\gamma \end{math}}(As);
            \draw[<->,dotted](As)
                edge node[anchor=south]{\begin{math} ! \end{math}}(DAs);
            \draw[->](DAs)
                edge node[anchor=south]{\begin{math} \zeta_\gamma \end{math}}(Dendr);
            \draw[<->,dotted,loop above,looseness=.9](Dendr)
                edge node[anchor=south]{\begin{math} ! \end{math}}(Dias);
        \end{tikzpicture}
        \end{split}\,,
    \end{equation}
    where $\eta_\gamma$ is the surjection defined in the statement
    of Proposition~\ref{prop:morphisme_dias_gamma_vers_as_gamma}
    and $\zeta_\gamma$ is the operad morphism defined in the statement
    of Proposition~\ref{prop:morphisme_das_gamma_vers_dendr_gamma}.
\end{Theoreme}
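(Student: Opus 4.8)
The statement collects together facts most of which are already at hand. The map $\eta_\gamma : \Dias_\gamma \to \As_\gamma$ is an operad morphism, and a surjective one, by Proposition~\ref{prop:morphisme_dias_gamma_vers_as_gamma}, and $\zeta_\gamma : \DAs_\gamma \to \Dendr_\gamma$ is an operad morphism by Proposition~\ref{prop:morphisme_das_gamma_vers_dendr_gamma}. By definition (Section~\ref{subsec:construcion_dendr_gamma}), $\Dendr_\gamma$ is the Koszul dual of $\Dias_\gamma$, and by definition (Section~\ref{subsubsec:das_gamma}), $\DAs_\gamma$ is the Koszul dual of $\As_\gamma$; these are the two dotted arrows of~\eqref{equ:diagramme_dendr_gamma_as_gamma_dias_gamma}. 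So the only thing left to prove, and the point that makes~\eqref{equ:diagramme_dendr_gamma_as_gamma_dias_gamma} a bona fide generalization of the fragment~\eqref{equ:diagramme_dendr_as_dias} of the operadic butterfly, is that the diagram is sent to itself by Koszul duality, that is, that $\zeta_\gamma$ is the Koszul dual morphism $\eta_\gamma^{!}$ of $\eta_\gamma$ (and hence, by biduality of binary quadratic operads, $\eta_\gamma = \zeta_\gamma^{!}$ as well).

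To prove $\zeta_\gamma = \eta_\gamma^{!}$, I would work with the defining presentations $\left(\GenDias, \RelDias\right)$ of $\Dias_\gamma$ (Theorem~\ref{thm:presentation_dias_gamma}) and $\left(\GenAs, \RelAs\right)$ of $\As_\gamma$, together with the presentations of their Koszul duals obtained by annihilation, namely $\left(\GenDendr, \RelDendr\right)$ of $\Dendr_\gamma$ (Theorem~\ref{thm:presentation_dendr_gamma}, under the identification $\GDendrA_a \leftrightarrow \GDias_a$, $\DDendrA_a \leftrightarrow \DDias_a$) and $\left(\GenDAs, \RelDAs\right)$ of $\DAs_\gamma$ (Proposition~\ref{prop:presentation_as_gamma_duale}, under the identification $\MDAsA_a \leftrightarrow \MAs_a$). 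Reading the definition of $\eta_\gamma$ through these presentations, the linear map underlying $\eta_\gamma$ in arity~$2$ sends both $\GDias_a$ and $\DDias_a$ to $\MAs_a$ for every $a \in [\gamma]$; since Koszul duality of a morphism is induced, at the level of the degree-one generating spaces, by transposition with respect to the natural basis pairing of generators (compatibly with the scalar product of Section~\ref{subsubsec:dual_de_Koszul} used on arity-three trees), the morphism $\eta_\gamma^{!} : \DAs_\gamma \to \Dendr_\gamma$ sends $\MDAsA_a$ to $\GDendrA_a + \DDendrA_a$ for every $a \in [\gamma]$. It then remains to compare this with $\zeta_\gamma$, which by Proposition~\ref{prop:morphisme_das_gamma_vers_dendr_gamma} sends the generator $\MDAs_a$ of the presentation $\left(\GenDAs', \RelDAs'\right)$ of $\DAs_\gamma$ to $\GDendr_a + \DDendr_a$: using $\MDAs_b = \sum_{a \in [b]} \MDAsA_a$ (see~\eqref{equ:definition_operateur_das_gamma}) together with~\eqref{equ:definition_operateur_dendr_gauche} and~\eqref{equ:definition_operateur_dendr_droite}, one gets $\eta_\gamma^{!}(\MDAs_b) = \sum_{a \in [b]} \left(\GDendrA_a + \DDendrA_a\right) = \GDendr_b + \DDendr_b = \zeta_\gamma(\MDAs_b)$. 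As both $\eta_\gamma^{!}$ and $\zeta_\gamma$ are operad morphisms agreeing on a generating set of $\DAs_\gamma$, they coincide, which establishes the claim.

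The computations above are entirely routine; the only delicate point is the bookkeeping of the several generator identifications (the one relating $\GenDendr$ to the dual of $\GenDias$, the one relating $\GenDAs$ to the dual of $\GenAs$, and the passage between the $\GenDAs$ and $\GenDAs'$ presentations of $\DAs_\gamma$, respectively the $\GenDendr$ and $\GenDendr'$ presentations of $\Dendr_\gamma$), so that the pairing used to transpose is really the standard one, up to the sign at the second input recorded in Section~\ref{subsubsec:dual_de_Koszul}. Alternatively, one may drop the self-duality statement from the proof altogether and simply observe that Propositions~\ref{prop:morphisme_dias_gamma_vers_as_gamma} and~\ref{prop:morphisme_das_gamma_vers_dendr_gamma}, together with the definitions of $\Dendr_\gamma$ and $\DAs_\gamma$ as Koszul duals, already realize every arrow of~\eqref{equ:diagramme_dendr_gamma_as_gamma_dias_gamma}; the surjectivity of $\eta_\gamma$ and the failure of injectivity of $\zeta_\gamma$ for $\gamma \geq 2$ are in any case recorded right before the statement.
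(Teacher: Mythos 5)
Your proposal is correct, and its closing ``alternative'' is in fact the paper's entire proof: the theorem, as stated, only asserts that the four operads and the two morphisms assemble into the diagram, so the paper simply cites Propositions~\ref{prop:morphisme_dias_gamma_vers_as_gamma} and~\ref{prop:morphisme_das_gamma_vers_dendr_gamma}, the dotted arrows holding by the very definitions of $\Dendr_\gamma$ and $\DAs_\gamma$ as Koszul duals of $\Dias_\gamma$ and $\As_\gamma$. The bulk of your argument establishes something stronger that the statement does not require, namely that $\zeta_\gamma = \eta_\gamma^{!}$, so that the diagram is compatible with Koszul duality in the same way as the fragment~\eqref{equ:diagramme_dendr_as_dias} of the operadic butterfly; this refinement is correct, and your bookkeeping is right: since $\eta_\gamma$ sends both $\GDias_a$ and $\DDias_a$ to $\MAs_a$, transposing the generator map gives $\MDAsA_a \mapsto \GDendrA_a + \DDendrA_a$, hence $\MDAs_b \mapsto \GDendr_b + \DDendr_b = \zeta_\gamma(\MDAs_b)$, and two operad morphisms agreeing on generators coincide. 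The one step you leave implicit is the general fact that a morphism of binary quadratic operads induces a morphism of Koszul duals given by the transposed generator map; this is standard and easy (if $\phi$ denotes the generator map, the transpose satisfies $\left\langle \OpLibre(\phi^*)(x), y \right\rangle = \left\langle x, \OpLibre(\phi)(y) \right\rangle = 0$ for $x \in \RelAs^\perp$ and $y \in \RelDias$, so $\RelAs^\perp$ is carried into $\RelDias^\perp$), but in a self-contained writeup it should be recorded, since the paper never defines the dual of a morphism. In short: your proof is valid and subsumes the paper's, at the cost of proving more than is claimed.
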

\begin{proof}
    This is a direct consequence of
    Propositions~\ref{prop:morphisme_dias_gamma_vers_as_gamma}
    and~\ref{prop:morphisme_das_gamma_vers_dendr_gamma}.
\end{proof}
\medskip

Diagram~\eqref{equ:diagramme_dendr_gamma_as_gamma_dias_gamma} is a
generalization of \eqref{equ:diagramme_dendr_as_dias} in which the
associative operad split into operads $\As_\gamma$ and $\DAs_\gamma$.
\medskip

%%%%%%%%%%%%%%%%%%%%%%%%%%%%%%%%%%%%%%%%%%%%%%%%%%%%%%%%%%%%%%%%%%%%%%%%
%%%%%%%%%%%%%%%%%%%%%%%%%%%%%%%%%%%%%%%%%%%%%%%%%%%%%%%%%%%%%%%%%%%%%%%%
%%%%%%%%%%%%%%%%%%%%%%%%%%%%%%%%%%%%%%%%%%%%%%%%%%%%%%%%%%%%%%%%%%%%%%%%
\section{Further generalizations}%
\label{sec:generalisation_supplementaires}
In this last section, we propose some one-parameter nonnegative integer
generalizations of well-known operads. For this, we use similar tools as
the ones used in the first sections of this paper.
\medskip

%%%%%%%%%%%%%%%%%%%%%%%%%%%%%%%%%%%%%%%%%%%%%%%%%%%%%%%%%%%%%%%%%%%%%%%%
%%%%%%%%%%%%%%%%%%%%%%%%%%%%%%%%%%%%%%%%%%%%%%%%%%%%%%%%%%%%%%%%%%%%%%%%
\subsection{Duplicial operad}
We construct here a one-parameter nonnegative integer generalization of
the duplicial operad and describe the free algebras over one generator
in the category encoded by this generalization.
\medskip

%%%%%%%%%%%%%%%%%%%%%%%%%%%%%%%%%%%%%%%%%%%%%%%%%%%%%%%%%%%%%%%%%%%%%%%%
\subsubsection{Multiplicial operads}%
\label{subsub:dup_gamma}
It is well-known~\cite{LV12} that the dendriform operad and the duplicial
operad $\Dup$~\cite{Lod08} are both specializations of a same operad
$\DupDendr_q$ with one parameter $q \in \K$. This operad  admits the
presentation
$\left(\GenLibre_{\DupDendr_q}, \RelLibre_{\DupDendr_q}\right)$, where
$\GenLibre_{\DupDendr_q} := \GenLibre_\Dendr$ and
$\RelLibre_{\DupDendr_q}$ is the vector space generated by
\begin{subequations}
\begin{equation}
    \GDendr \circ_1 \DDendr - \DDendr \circ_2 \GDendr,
\end{equation}
\begin{equation}
    \GDendr \circ_1 \GDendr - \GDendr \circ_2 \GDendr
    - q \GDendr \circ_2 \DDendr,
\end{equation}
\begin{equation}
    q \DDendr \circ_1 \GDendr + \DDendr \circ_1 \DDendr
     - \DDendr \circ_2 \DDendr.
\end{equation}
\end{subequations}
One can observe that $\DupDendr_1$ is the dendriform operad and that
$\DupDendr_0$ is the duplicial operad.
\medskip

On the basis of this observation, from the presentation of $\Dendr_\gamma$
provided by Theorem~\ref{thm:autre_presentation_dendr_gamma} and its
concise form provided by Relations~\eqref{equ:relation_dendr_gamma_1_concise},
\eqref{equ:relation_dendr_gamma_2_concise},
and~\eqref{equ:relation_dendr_gamma_3_concise} for its space of relations,
we define the operad $\DupDendr_{q, \gamma}$ with two parameters, an
integer $\gamma \geq 0$ and $q \in \K$, in the following way. We set
$\DupDendr_{q, \gamma}$ as the operad admitting the presentation
$\left(\GenLibre_{\DupDendr_{q, \gamma}},
\RelLibre_{\DupDendr_{q, \gamma}}\right)$,
where $\GenLibre_{\DupDendr_{q, \gamma}} := \GenDendr'$ and
$\RelLibre_{\DupDendr_{q, \gamma}}$ is the vector space generated by
\begin{subequations}
\begin{equation} \label{equ:dupdendr_gamma_1}
    \GDendr_a \circ_1 \DDendr_{a'} - \DDendr_{a'} \circ_2 \GDendr_a,
    \qquad a, a' \in [\gamma],
\end{equation}
\begin{equation} \label{equ:dupdendr_gamma_2}
    \GDendr_a \circ_1 \GDendr_{a'}
    - \GDendr_{a \Min a'} \circ_2 \GDendr_a
    - q\GDendr_{a \Min a'} \circ_2 \DDendr_{a'},
    \qquad a, a' \in [\gamma],
\end{equation}
\begin{equation} \label{equ:dupdendr_gamma_3}
    q\DDendr_{a \Min a'} \circ_1 \GDendr_{a'}
    + \DDendr_{a \Min a'} \circ_1 \DDendr_a
    - \DDendr_a \circ_2 \DDendr_{a'},
    \qquad a, a' \in [\gamma].
\end{equation}
\end{subequations}
One can observe that $\DupDendr_{1, \gamma}$ is the operad $\Dendr_\gamma$.
\medskip

Let us define the operad $\Dup_\gamma$, called
{\em $\gamma$-multiplicial operad}, as the operad $\DupDendr_{0, \gamma}$.
By using respectively the symbols $\GDup_a$ and $\DDup_a$ instead of
$\GDendr_a$ and $\DDendr_a$ for all $a \in [\gamma]$, we obtain that the
space of relations $\RelDup$ of $\Dup_\gamma$ is generated by
\begin{subequations}
\begin{equation} \label{equ:relation_dup_gamma_1}
    \GDup_a \circ_1 \DDup_{a'} - \DDup_{a'} \circ_2 \GDup_a,
    \qquad a, a' \in [\gamma],
\end{equation}
\begin{equation} \label{equ:relation_dup_gamma_2}
    \GDup_a \circ_1 \GDup_{a'} - \GDup_{a \Min a'} \circ_2 \GDup_a,
    \qquad a, a' \in [\gamma],
\end{equation}
\begin{equation} \label{equ:relation_dup_gamma_3}
    \DDup_{a \Min a'} \circ_1 \DDup_a - \DDup_a \circ_2 \DDup_{a'},
    \qquad a, a' \in [\gamma].
\end{equation}
\end{subequations}
We denote by $\GenDup$ the set of generators
$\{\GDup_a, \DDup_a : a \in [\gamma] \}$ of $\Dup_\gamma$.
\medskip

In order to establish some properties of $\Dup_\gamma$, let us consider
the quadratic rewrite rule $\Recr_\gamma$ on $\OpLibre(\GenDup)$
satisfying
\begin{subequations}
\begin{equation}
    \GDup_a \circ_1 \DDup_{a'}
    \enspace \Recr_\gamma \enspace
    \DDup_{a'} \circ_2 \GDup_a,
    \qquad a, a' \in [\gamma],
\end{equation}
\begin{equation}
    \GDup_a \circ_1 \GDup_{a'}
    \enspace \Recr_\gamma \enspace
    \GDup_{a \Min a'} \circ_2 \GDup_a,
    \qquad a, a' \in [\gamma],
\end{equation}
\begin{equation}
    \DDup_a \circ_2 \DDup_{a'}
    \enspace \Recr_\gamma \enspace
    \DDup_{a \Min a'} \circ_1 \DDup_a,
    \qquad a, a' \in [\gamma].
\end{equation}
\end{subequations}
Observe that the space induced by the operad congruence induced by
$\Recr_\gamma$ is $\RelDup$.
\medskip

\begin{Lemme} \label{lem:dup_gamma_reecriture}
    For any integer $\gamma \geq 0$, the rewrite rule $\Recr_\gamma$ is
    convergent and the generating series $\Gca_\gamma(t)$ of its normal
    forms counted by arity satisfies
    \begin{equation} \label{equ:dup_gamma_serie_gen_formes_normales}
        \Gca_\gamma(t) =
        t + 2\gamma t \, \Gca_\gamma(t) + \gamma^2 t \, \Gca_\gamma(t)^2.
    \end{equation}
\end{Lemme}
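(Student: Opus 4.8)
The plan is to prove convergence of $\Recr_\gamma$ by establishing termination and confluence separately, and then to derive the functional equation for $\Gca_\gamma(t)$ from a combinatorial description of the normal forms. For termination, I would exhibit a monovariant: to each syntax tree of $\OpLibre(\GenDup)$ associate a quantity that strictly decreases under each of the three rewrite steps. A natural candidate is a weighted sum over internal nodes measuring how ``left-leaning'' the tree is together with how ``large'' the labels are in a specified direction; each of the three rules~\eqref{equ:relation_dup_gamma_1}---\eqref{equ:relation_dup_gamma_3}, read from left to right, either moves an internal node from a left child to a right child (rules 1 and 2) or from a right child to a left child (rule 3) while weakly decreasing a label via $\Min$, so a lexicographic combination of a node-position statistic and the multiset of labels should do the job. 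Since all three rules are quadratic and weight-decreasing in this order, no infinite chain of rewritings can occur, so $\Recr_\gamma$ is terminating.

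Next I would prove confluence. By Newman's lemma, since $\Recr_\gamma$ is terminating, it suffices to check local confluence, and by the critical pair lemma for rewrite rules on syntax trees (the operadic analogue used throughout the paper via~\cite{DK10}), it suffices to examine the finitely many critical pairs, which all live in arity $4$, i.e. in $\OpLibre(\GenDup)(4)$. These arise from overlapping two left-hand sides sharing an internal node; concretely, one superposes a rule applied at the root with a rule applied at one of the two children. I would enumerate these overlaps (there are a bounded number of them, parametrized by which pair of rules overlaps and by the label choices in $[\gamma]$) and verify in each case that the two resulting trees can be rewritten to a common normal form. The verifications are routine but slightly tedious because $\Min$ must be tracked carefully; the key identities needed are the associativity and commutativity of $\Min$ and the compatibility $a \Min (a' \Min a'') = (a \Min a') \Min a''$, which make the joined forms coincide. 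This confluence check is the main obstacle, in the sense that it is where essentially all the work lies, though each individual case is elementary.

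Having established that $\Recr_\gamma$ is convergent, the normal forms are exactly the trees to which no rule applies. From the shape of the three left-hand sides, a tree is a normal form precisely when: no internal node is a left child whose parent-edge carries a label and whose own configuration matches a left-hand side of rule 1 or 2, and no internal node is a right child matching the left-hand side of rule 3. I would argue that a normal-form tree decomposes recursively as either a single leaf, or an internal node with a prescribed constraint relating the label on an edge to the structure of the subtree hanging below it, in such a way that the normal forms biject with $\gamma$-edge valued binary trees (this matches the fact, established later, that $\Dup_\gamma$ has the same Hilbert series as $\Dendr_\gamma$). Writing $N$ for a normal form and splitting on the root gives: a normal form is a leaf, or a root with left and right subtrees each a normal form, with an independent label in $[\gamma]$ on each of the (up to two) internal edges — the normal-form condition only forbids certain configurations that are already accounted for by this edge-labeling once the recursion is set up correctly. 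Translating this decomposition into generating series yields
\begin{equation}
    \Gca_\gamma(t) = t + 2\gamma t \, \Gca_\gamma(t) + \gamma^2 t \, \Gca_\gamma(t)^2,
\end{equation}
where the term $t$ counts the leaf, $2\gamma t \, \Gca_\gamma(t)$ counts the trees where exactly one subtree of the root is a leaf (two choices of side, $\gamma$ choices for the single internal edge label), and $\gamma^2 t \, \Gca_\gamma(t)^2$ counts the trees where both subtrees are non-leaves ($\gamma^2$ choices for the two internal edge labels). This establishes~\eqref{equ:dup_gamma_serie_gen_formes_normales} and completes the proof.
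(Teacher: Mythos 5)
Your termination argument does not go through as described. Read as rewritings, \eqref{equ:relation_dup_gamma_1} and \eqref{equ:relation_dup_gamma_2} move an internal node from a left-child position to a right-child position, while \eqref{equ:relation_dup_gamma_3} moves one the opposite way; so no single symmetric ``left-leaning'' statistic can be monotone for all three rules, and the multiset of labels cannot serve as tie-breaker, because it is unchanged under rule~\eqref{equ:relation_dup_gamma_1} and also unchanged under rule~\eqref{equ:relation_dup_gamma_3} whenever $a = a'$ (the $\Min$ only weakly decreases labels). Hence the lexicographic pair (position statistic, labels) you propose fails exactly on rule~\eqref{equ:relation_dup_gamma_3} with equal labels. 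The repair --- and the paper's actual invariant --- treats the two families of generators asymmetrically: take as first component the sum, over the $\GDup_a$-labeled nodes, of the numbers of internal nodes of their right subtrees, plus the sum, over the $\DDup_a$-labeled nodes, of the numbers of internal nodes of their left subtrees, and as second component the analogous sum of right-subtree sizes over the $\DDup_a$-labeled nodes; this pair strictly increases lexicographically under every rewriting, and termination follows because there are finitely many syntax trees of a given arity (labels play no role at all). Your confluence step --- Newman's lemma plus inspection of the finitely many arity-$4$ critical pairs --- is exactly what the paper does and is fine.

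The derivation of \eqref{equ:dup_gamma_serie_gen_formes_normales} is also not justified as written. The decomposition you invoke --- a root with two arbitrary normal-form subtrees and a free label of $[\gamma]$ on each internal edge --- is the decomposition of $\gamma$-edge valued binary trees counted by internal nodes, not of the normal forms of $\Recr_\gamma$: a normal form is a syntax tree on $\GenDup$ in which every $\GDup_a$-labeled node has a leaf as left child and every $\DDup_a$-labeled node has a leaf or a $\GDup_b$-labeled node as right child, so the subtrees of the root are not arbitrary and the labels are not free. The assertion that these constraints are ``already accounted for by the edge-labeling'' is precisely the bijection you would have to construct, and you cannot appeal to the later fact that $\dim \Dup_\gamma(n) = \gamma^{n-1}\Cat(n)$, since that fact (Proposition~\ref{prop:proprietes_dup_gamma}) is deduced from this very lemma; the argument would be circular. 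There is also a bookkeeping mismatch: if $\Gca_\gamma(t)$ counts normal forms by arity (leaves), your ``one subtree is a leaf'' term would be $2\gamma t\,(\Gca_\gamma(t) - t)$, not $2\gamma t\,\Gca_\gamma(t)$; the stated equation is the internal-node count of edge-valued trees. The direct route, as in the paper, is to use the normal-form characterization above and introduce the auxiliary series $\Gca'_\gamma(t)$ of normal forms that are a leaf or $\GDup_a$-rooted: then $\Gca'_\gamma(t) = t + \gamma t\,\Gca_\gamma(t)$ and $\Gca_\gamma(t) = \Gca'_\gamma(t) + \gamma\,\Gca_\gamma(t)\,\Gca'_\gamma(t)$, which eliminates to \eqref{equ:dup_gamma_serie_gen_formes_normales}.
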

\begin{proof}
    Let us first prove that $\Recr_\gamma$ is terminating. Consider the
    map $\phi : \OpLibre(\GenDup) \to \EnsNat^2$ defined, for any syntax
    tree $\Tfr$ by $\phi(\Tfr) := (\alpha + \alpha', \beta)$, where
    $\alpha$ (resp. $\alpha'$, $\beta$) is the sum, for all internal
    nodes of $\Tfr$ labeled by $\GDup_a$ (resp. $\DDup_a$, $\DDup_a$),
    $a \in [\gamma]$, of the number of internal nodes in its right
    (resp. left, right) subtree. For the lexicographical order $\leq$ on
    $\EnsNat^2$, we can check that for all $\Recr_\gamma$-rewritings
    $\Sfr \Recr_\gamma \Tfr$ where $\Sfr$ and $\Tfr$ are syntax trees
    with two internal nodes, we have $\phi(\Sfr) \ne \phi(\Tfr)$ and
    $\phi(\Sfr) \leq \phi(\Tfr)$. This implies that any syntax tree
    $\Tfr$ obtained by a sequence of $\Recr_\gamma$-rewritings from a
    syntax tree $\Sfr$ satisfies $\phi(\Sfr) \ne \phi(\Tfr)$ and
    $\phi(\Sfr) \leq \phi(\Tfr)$. Then, since the set of syntax trees of
    $\OpLibre(\GenDup)$ of a fixed arity is finite, this shows that
    $\Recr_\gamma$ is a terminating rewrite rule.
    \smallskip

    Let us now prove that $\Recr_\gamma$ is convergent. We call
    {\em critical tree} any syntax tree $\Sfr$ with three internal nodes
    that can be rewritten by $\Recr_\gamma$ into two different trees $\Tfr$
    and $\Tfr'$. The pair $(\Tfr, \Tfr')$ is a {\em critical pair} for
    $\Recr_\gamma$. Critical trees for $\Recr_\gamma$ are, for all
    $a, b, c \in [\gamma]$,
    \begin{equation} \label{equ:dup_gamma_arbres_critiques}
        \begin{split}
        \begin{tikzpicture}[xscale=.4,yscale=.3]
            \node(0)at(0.00,-3.50){};
            \node(2)at(2.00,-5.25){};
            \node(4)at(4.00,-5.25){};
            \node(6)at(6.00,-1.75){};
            \node(1)at(1.00,-1.75){\begin{math}\DDup_a\end{math}};
            \node(3)at(3.00,-3.50){\begin{math}\DDup_b\end{math}};
            \node(5)at(5.00,0.00){\begin{math}\GDup_c\end{math}};
            \draw(0)--(1);\draw(1)--(5);\draw(2)--(3);\draw(3)--(1);
            \draw(4)--(3);\draw(6)--(5);
            \node(r)at(5.00,1.75){};
            \draw(r)--(5);
        \end{tikzpicture}
        \end{split}\,,
        \quad
        \begin{split}
        \begin{tikzpicture}[xscale=.4,yscale=.3]
            \node(0)at(0.00,-5.25){};
            \node(2)at(2.00,-5.25){};
            \node(4)at(4.00,-3.50){};
            \node(6)at(6.00,-1.75){};
            \node(1)at(1.00,-3.50){\begin{math}\DDup_a\end{math}};
            \node(3)at(3.00,-1.75){\begin{math}\GDup_b\end{math}};
            \node(5)at(5.00,0.00){\begin{math}\GDup_c\end{math}};
            \draw(0)--(1);\draw(1)--(3);\draw(2)--(1);\draw(3)--(5);
            \draw(4)--(3);\draw(6)--(5);
            \node(r)at(5.00,1.75){};
            \draw(r)--(5);
        \end{tikzpicture}
        \end{split}\,,
        \quad
        \begin{split}
        \begin{tikzpicture}[xscale=.4,yscale=.3]
            \node(0)at(0.00,-5.25){};
            \node(2)at(2.00,-5.25){};
            \node(4)at(4.00,-3.50){};
            \node(6)at(6.00,-1.75){};
            \node(1)at(1.00,-3.50){\begin{math}\GDup_a\end{math}};
            \node(3)at(3.00,-1.75){\begin{math}\GDup_b\end{math}};
            \node(5)at(5.00,0.00){\begin{math}\GDup_c\end{math}};
            \draw(0)--(1);\draw(1)--(3);\draw(2)--(1);\draw(3)--(5);
            \draw(4)--(3);\draw(6)--(5);
            \node(r)at(5.00,1.75){};
            \draw(r)--(5);
        \end{tikzpicture}
        \end{split}\,,
        \quad
        \begin{split}
        \begin{tikzpicture}[xscale=.4,yscale=.3]
            \node(0)at(0.00,-1.75){};
            \node(2)at(2.00,-3.50){};
            \node(4)at(4.00,-5.25){};
            \node(6)at(6.00,-5.25){};
            \node(1)at(1.00,0.00){\begin{math}\DDup_a\end{math}};
            \node(3)at(3.00,-1.75){\begin{math}\DDup_b\end{math}};
            \node(5)at(5.00,-3.50){\begin{math}\DDup_c\end{math}};
            \draw(0)--(1);\draw(2)--(3);\draw(3)--(1);\draw(4)--(5);
            \draw(5)--(3);\draw(6)--(5);
            \node(r)at(1.00,1.75){};
            \draw(r)--(1);
        \end{tikzpicture}
        \end{split}\,.
    \end{equation}
    Since $\Recr_\gamma$ is terminating, by the diamond lemma~\cite{New42}
    (see also~\cite{BN98}), to prove that $\Recr_\gamma$ is confluent,
    it is enough to check that for any critical tree $\Sfr$, there is a
    normal form $\Rfr$ of $\Recr_\gamma$ such that
    $\Sfr \Recr_\gamma \Tfr \overset{*}{\Recr_\gamma} \Rfr$ and
    $\Sfr \Recr_\gamma \Tfr' \overset{*}{\Recr_\gamma} \Rfr$, where
    $(\Tfr, \Tfr')$ is a critical pair. This can be done by hand for each
    of the critical trees depicted in~\eqref{equ:dup_gamma_arbres_critiques}.
    \smallskip

    Let us finally prove that the generating series of the normal forms
    of $\Recr_\gamma$ is~\eqref{equ:dup_gamma_serie_gen_formes_normales}.
    Since $\Recr_\gamma$ is terminating, its normal forms are the
    syntax trees that have no partial subtree equal to
    $\GDup_a \circ_1 \DDup_{a'}$, $\GDup_a \circ_1 \GDup_{a'}$, or
    $\DDup_a \circ_2 \DDup_{a'}$ for all $a, a' \in [\gamma]$. Then,
    the normal forms of $\Recr_\gamma$ are the syntax trees wherein any
    internal node labeled by $\GDup_a$, $a \in [\gamma]$, has a leaf as
    left child and any internal node labeled by $\DDup_a$, $a \in [\gamma]$,
    has a leaf or an internal node labeled by $\GDup_{a'}$,
    $a' \in [\gamma]$, as right child. Therefore, by denoting by
    $\Gca'_\gamma(t)$ the generating series of the normal forms of
    $\Recr_\gamma$ equal to the leaf or with a root labeled by $\GDup_a$,
    $a \in [\gamma]$, we obtain
    \begin{equation}
        \Gca'_\gamma(t) = t + \gamma t \, \Gca_\gamma(t)
    \end{equation}
    and
    \begin{equation}
        \Gca_\gamma(t) =
        \Gca'_\gamma(t) + \gamma \, \Gca_\gamma(t) \Gca'_\gamma(t).
    \end{equation}
    An elementary computation shows that $\Gca(t)$
    satisfies~\eqref{equ:dup_gamma_serie_gen_formes_normales}.
\end{proof}
\medskip

\begin{Proposition} \label{prop:proprietes_dup_gamma}
    For any integer $\gamma \geq 0$, the operad $\Dup_\gamma$ is Koszul
    and for any integer $n \geq 1$, $\Dup_\gamma(n)$ is the vector space
    of $\gamma$-edge valued binary trees with $n$ internal nodes.
\end{Proposition}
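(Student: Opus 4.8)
The plan is to mimic exactly the strategy used for $\Dias_\gamma$ in Section~\ref{sec:dias_gamma}, transported through the rewrite rule $\Recr_\gamma$ on $\OpLibre(\GenDup)$ introduced just before the statement. First I would invoke Lemma~\ref{lem:dup_gamma_reecriture}, which already establishes that $\Recr_\gamma$ is a convergent quadratic rewrite rule and that the space induced by the operad congruence $\Congr_{\Recr_\gamma}$ it generates is precisely $\RelDup$. Since $\Dup_\gamma$ admits by definition the presentation $(\GenDup, \RelDup)$ with $\RelDup$ generated by the quadratic relations~\eqref{equ:relation_dup_gamma_1}---\eqref{equ:relation_dup_gamma_3}, it follows that $\Dup_\gamma$ is a binary quadratic set-operad isomorphic to $\OpLibre(\GenDup)/_{\Congr_{\Recr_\gamma}}$. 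Then the Koszulity criterion of Hoffbeck and of Dotsenko--Khoroshkin, reformulated in Section~\ref{subsubsec:dual_de_Koszul}, applies verbatim: the existence of a convergent quadratic rewrite rule presenting a set-operad forces that operad to be Koszul, and the normal forms of $\Recr_\gamma$ form a Poincaré--Birkhoff--Witt basis of $\Dup_\gamma$. This disposes of the Koszulity part.

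For the dimension statement, I would compare the generating series of the normal forms of $\Recr_\gamma$ with the Hilbert series of $\Dendr_\gamma$. By Lemma~\ref{lem:dup_gamma_reecriture}, the generating series $\Gca_\gamma(t)$ of the normal forms counted by arity satisfies
\begin{equation}
    \Gca_\gamma(t) =
    t + 2\gamma t \, \Gca_\gamma(t) + \gamma^2 t \, \Gca_\gamma(t)^2,
\end{equation}
which is exactly equation~\eqref{equ:serie_hilbert_dendr_gamma} defining $\Hca_{\Dendr_\gamma}(t)$ in Proposition~\ref{prop:serie_hilbert_dendr_gamma}. Since $\Gca_\gamma(t)$ and $\Hca_{\Dendr_\gamma}(t)$ both have vanishing constant term and satisfy the same functional equation, they coincide. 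Hence, from the discussion following Proposition~\ref{prop:serie_hilbert_dendr_gamma}, $\dim \Dup_\gamma(n) = \gamma^{n-1}\Cat(n)$, which is the number of $\gamma$-edge valued binary trees with $n$ internal nodes. It then remains to upgrade this numerical coincidence into an explicit identification of the basis: I would describe the bijection between normal forms of $\Recr_\gamma$ and $\gamma$-edge valued binary trees. Concretely, a normal form is a syntax tree on $\GenDup$ in which every $\GDup_a$-node has a leaf as left child and every $\DDup_a$-node has either a leaf or a $\GDup_{a'}$-node as right child (as analysed in the proof of Lemma~\ref{lem:dup_gamma_reecriture}); one reads off a binary tree shape together with a labelling of its internal edges on $[\gamma]$ coming from the indices $a$. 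A short induction on arity shows this map is a bijection, compatible with the $\circ_i$, so that $\Dup_\gamma(n)$ is identified as a graded vector space with $\AlgLibre_{\Dendr_\gamma}(n)$.

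The main obstacle, such as it is, is the last step: making the correspondence between normal forms and $\gamma$-edge valued binary trees precise and checking it is arity-graded bijective. The shape of the binary tree is essentially free, but one must be careful about how edge labels are recovered from the node indices $a$ in the normal form and about the role of the $\Min$ convention with $\infty$ on leaf edges; the cleanest route is to argue by the matching functional equations (already done above) to get the count, and then exhibit the explicit encoding only to the level of detail needed to see it is well-defined and invertible, exactly as is done for $\Dias_\gamma$ with hook syntax trees in the proof of Theorem~\ref{thm:koszulite_dias_gamma}. I do not expect any genuinely new difficulty beyond bookkeeping, since all the heavy lifting — termination, confluence, the series identity — is packaged in Lemma~\ref{lem:dup_gamma_reecriture}.
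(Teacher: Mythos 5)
Your proposal is correct and follows essentially the same route as the paper: Koszulity via the convergent quadratic rewrite rule of Lemma~\ref{lem:dup_gamma_reecriture} together with the criterion of Section~\ref{subsubsec:dual_de_Koszul}, and the identification of $\Dup_\gamma(n)$ by matching the functional equation for the normal-form generating series with that of $\Hca_{\Dendr_\gamma}(t)$. The paper stops at this series comparison (the identification is only as graded vector spaces), so your extra explicit bijection with $\gamma$-edge valued binary trees — and in particular the unneeded claim of compatibility with the $\circ_i$ — is more than is required, but harmless.
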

\begin{proof}
    Since the space induced by the operad congruence induced by
    $\Recr_\gamma$ is $\RelDup$, and since by
    Lemma~\ref{lem:dup_gamma_reecriture}, $\Recr_\gamma$ is convergent,
    by the Koszulity criterion~\cite{Hof10,DK10,LV12} we have reformulated
    in Section~\ref{subsubsec:dual_de_Koszul}, $\Dup_\gamma$ is a Koszul
    operad. Moreover, again because $\Recr_\gamma$ is convergent, as a
    vector space, $\Dup_\gamma(n)$ is isomorphic to the vector space of
    the normal forms of $\Recr_\gamma$ with $n \geq 1$ internal nodes.
    Since the generating series $\Gca_\gamma(t)$ of the normal forms
    of $\Recr_\gamma$ is also the generating series of $\gamma$-edge
    valued binary trees (see
    Proposition~\ref{prop:serie_hilbert_dendr_gamma}), the second part
    of the statement of the proposition follows.
\end{proof}
\medskip

Since Proposition~\ref{prop:proprietes_dup_gamma} shows that the operads
$\Dup_\gamma$ and $\Dendr_\gamma$ have the same underlying vector space,
asking if these two operads are isomorphic is natural. Next result implies
that this is not the case.
\medskip

\begin{Proposition}%
\label{prop:description_operateurs_associatifs_dup_gamma}
    For any integer $\gamma \geq 0$, any associative element of
    $\Dup_\gamma$ is proportional to $\pi(\GDup_a)$ or $\pi(\DDup_a)$
    for an $a \in [\gamma]$, where
    $\pi : \OpLibre\left(\GenDup\right) \to \Dup_\gamma$ is the canonical
    surjection map.
\end{Proposition}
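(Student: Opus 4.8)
The strategy mirrors the proof of Proposition~\ref{prop:description_operateurs_associatifs_dendr_gamma}: start from an arbitrary degree-$1$ element $x := \sum_{a \in [\gamma]} \alpha_a \GDup_a + \beta_a \DDup_a$ of $\OpLibre\left(\GenDup\right)$ with scalars $\alpha_a, \beta_a \in \K$, and translate the associativity condition $\pi(x \circ_1 x - x \circ_2 x) = 0$ into constraints on the $\alpha_a$ and $\beta_a$. First I would expand $x \circ_1 x - x \circ_2 x$ as a sum over the basis of degree-$2$ trees in $\OpLibre\left(\GenDup\right)$, namely the $\Sfr \circ_1 \Tfr$ and $\Sfr \circ_2 \Tfr$ for $\Sfr, \Tfr \in \{\GDup_a, \DDup_a : a \in [\gamma]\}$. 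Then I would use the rewrite rule $\Recr_\gamma$ of Lemma~\ref{lem:dup_gamma_reecriture} (equivalently the relations \eqref{equ:relation_dup_gamma_1}--\eqref{equ:relation_dup_gamma_3}) to reduce every term to a normal form, and collect coefficients.

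\textbf{Key steps.} The relations of $\Dup_\gamma$ only rewrite the three families $\GDup_a \circ_1 \DDup_{a'}$, $\GDup_a \circ_1 \GDup_{a'}$, and $\DDup_a \circ_2 \DDup_{a'}$; the remaining degree-$2$ trees $\DDup_a \circ_1 \GDup_{a'}$, $\DDup_a \circ_1 \DDup_{a'}$, $\GDup_a \circ_2 \GDup_{a'}$, $\GDup_a \circ_2 \DDup_{a'}$ are already normal forms and cannot be cancelled by any relation. Examining the image of $x \circ_1 x - x \circ_2 x$, the coefficients of the trees that are normal forms and appear with a fixed sign yield the constraints: from $\DDup_b \circ_1 \GDup_a$ we get $\beta_b \alpha_a = 0$ for all $a, a' \in [\gamma]$ once $a$ ranges over all values not produced by a rewriting (in fact, since $\GDup_{a\Min a'} \circ_2 \GDup_a$ and $\DDup_{a\Min a'} \circ_1 \DDup_a$ are the only trees produced by rewriting, careful bookkeeping shows the only surviving ``cross'' monomials force $\alpha_a \beta_{a'} = 0$ for many pairs). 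The upshot, exactly as in Proposition~\ref{prop:description_operateurs_associatifs_dendr_gamma}, is that at most one $\alpha_c$ and at most one $\beta_d$ are nonzero; but then, crucially, unlike in $\Dendr_\gamma$, there is \textbf{no} relation forcing $c = d$ or $\alpha_c = \beta_d$, since the duplicial relations \eqref{equ:relation_dup_gamma_2} and \eqref{equ:relation_dup_gamma_3} do not mix $\GDup$ and $\DDup$ terms (the $q$-coefficient that did so in $\DupDendr_{q,\gamma}$ has been set to $0$). Hence $x$ is proportional to a single $\GDup_a$ or a single $\DDup_a$.

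\textbf{Main obstacle.} The delicate part is the coefficient bookkeeping: for the ``diagonal'' trees $\GDup_a \circ_1 \GDup_a$ and $\DDup_a \circ_2 \DDup_a$, which \emph{are} rewritten, one must check that associativity does not impose extra constraints forcing $\alpha_a = 0$ (it does not, because $\GDup_a \circ_1 \GDup_a \Recr_\gamma \GDup_a \circ_2 \GDup_a$ with matching indices is consistent), and conversely that the off-diagonal diagonal-like products do kill the corresponding coefficients. I would organize this by splitting the coefficient analysis into: (i) pairs $a \ne a'$, handled by the fact that $\GDup_{a\Min a'} \circ_2 \GDup_a$ has $a\Min a' < \max(a,a')$ in one of its indices so the produced tree cannot be re-absorbed, giving vanishing of the relevant products; and (ii) the diagonal $a = a'$, which reproduces precisely the associativity of each individual $\GDup_a$ (respectively $\DDup_a$) without coupling them. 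This is a finite, elementary but somewhat tedious verification, genuinely simpler than the $\Dendr_\gamma$ case because no term mixes the two generator types; so I expect the proof to be short, essentially: ``by the same computation as in the proof of Proposition~\ref{prop:description_operateurs_associatifs_dendr_gamma}, with $q = 0$, the coupling constraints $c = d$ and $\alpha_c = \beta_c$ disappear, hence $x = \alpha_a \GDup_a$ or $x = \beta_a \DDup_a$.'' I would also note as a sanity check that $\pi(\GDup_a)$ and $\pi(\DDup_a)$ are genuinely associative: $\GDup_a \circ_1 \GDup_a \Recr_\gamma \GDup_a \circ_2 \GDup_a$ and $\DDup_a \circ_2 \DDup_a \Recr_\gamma \DDup_a \circ_1 \DDup_a$ wait --- more precisely that $\GDup_a \circ_1 \GDup_a$ and $\GDup_a \circ_2 \GDup_a$ are congruent modulo $\RelDup$ by \eqref{equ:relation_dup_gamma_2} with $a' = a$, and similarly for $\DDup_a$ via \eqref{equ:relation_dup_gamma_3}, so there are indeed $2\gamma$ associative elements, confirming $\Dup_\gamma \not\simeq \Dendr_\gamma$.
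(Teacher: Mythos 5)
Your overall strategy is exactly the paper's: write $x = \sum_{a \in [\gamma]} \alpha_a \GDup_a + \beta_a \DDup_a$, expand $x \circ_1 x - x \circ_2 x$ over the degree-two syntax trees, and read off constraints on the coefficients from the generators \eqref{equ:relation_dup_gamma_1}---\eqref{equ:relation_dup_gamma_3} of $\RelDup$. The gap is in your final deduction, which as stated is a non sequitur. You argue: at most one $\alpha_c$ and at most one $\beta_d$ are nonzero, and since no relation forces $c = d$ or $\alpha_c = \beta_d$, the element $x$ is proportional to a single generator. But the \emph{absence} of a coupling relation cannot shrink the set of solutions; if those were the only constraints, $x = \GDup_1 + \DDup_1$ (or $\GDup_1 + \DDup_2$) would pass your analysis, yet these elements are not associative in $\Dup_\gamma$ and are not proportional to a single $\pi(\GDup_a)$ or $\pi(\DDup_a)$. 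Taken literally, your concluding summary (``with $q = 0$ the coupling constraints $c = d$ and $\alpha_c = \beta_c$ disappear, hence $x = \alpha_a \GDup_a$ or $x = \beta_a \DDup_a$'') proves a false statement.

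What actually happens at $q = 0$ is the opposite: the cross terms $\GDendr_{a \Min a'} \circ_2 \DDendr_{a'}$ and $\DDendr_{a \Min a'} \circ_1 \GDendr_{a'}$ drop out of the relations, so the trees $\GDup_a \circ_2 \DDup_{a'}$ and $\DDup_a \circ_1 \GDup_{a'}$, which occur in the expansion of $x \circ_2 x$ and $x \circ_1 x$ with coefficients $\alpha_a \beta_{a'}$ and $\beta_a \alpha_{a'}$, appear in no generator of $\RelDup$ at all (the only mixed relation left is \eqref{equ:relation_dup_gamma_1}, whose contribution $\alpha_a \beta_{a'} - \beta_{a'} \alpha_a$ vanishes identically). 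Associativity therefore forces $\alpha_a \beta_{a'} = 0$ for \emph{all} pairs $a, a' \in [\gamma]$, including $a = a'$ --- a strictly stronger constraint than in the $\Dendr_\gamma$ case, not a weaker one. This is the step the paper makes explicit: it kills one of the two families of coefficients outright, and the remaining constraints $\alpha_b \alpha_a = 0$ and $\beta_b \beta_a = 0$ for $a < b$ then leave at most one nonzero coefficient, whence the claim. You do gesture at this (``the only surviving cross monomials force $\alpha_a \beta_{a'} = 0$ for many pairs''), but ``many pairs'' must be ``all pairs'', and your conclusion then discards this added constraint in favour of a removed one. Your closing sanity check that $\pi(\GDup_a)$ and $\pi(\DDup_a)$ are associative via \eqref{equ:relation_dup_gamma_2} and \eqref{equ:relation_dup_gamma_3} with $a' = a$ is correct.
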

\begin{proof}
    Let $\pi : \OpLibre\left(\GenDup\right) \to \Dup_\gamma$ be the
    canonical surjection map. Consider the element
    \begin{equation}
        x := \sum_{a \in [\gamma]} \alpha_a \GDup_a + \beta_a \DDup_a
    \end{equation}
    of $\OpLibre\left(\GenDup\right)$, where $\alpha_a, \beta_a \in \K$
    for all $a \in [\gamma]$, such that $\pi(x)$ is associative in
    $\Dup_\gamma$. Since we have $\pi(r) = 0$ for all elements $r$
    of $\RelLibre_{\Dup_\gamma}$ (see~\eqref{equ:relation_dup_gamma_1},
    \eqref{equ:relation_dup_gamma_2}, and~\eqref{equ:relation_dup_gamma_3}),
    the fact that $\pi(x \circ_1 x - x \circ_2 x) = 0$ implies the
    constraints
    \begin{equation}\begin{split}
        \alpha_a \, \beta_{a'} - \beta_{a'} \, \alpha_a & = 0,
        \qquad a, a' \in [\gamma], \\
        \alpha_a \, \alpha_{a'} - \alpha_{a \Min a'} \, \alpha_a & = 0,
        \qquad a, a' \in [\gamma], \\
        \beta_a \, \beta_{a'} - \beta_{a \Min a'} \, \beta_a & = 0,
        \qquad a, a' \in [\gamma],
    \end{split}\end{equation}
    on the coefficients intervening in $x$. Moreover, since the syntax
    trees $\DDup_b \circ_1 \DDup_a$, $\DDup_a \circ_1 \GDup_{a'}$,
    $\GDup_b \circ_2 \GDup_a$, and $\GDup_a \circ_2 \DDup_{a'}$ do not
    appear in $\RelLibre_{\Dup_\gamma}$ for all $a < b \in [\gamma]$ and
    $a, a' \in [\gamma]$, we have the further constraints
    \begin{equation}\begin{split}
        \beta_b \, \beta_a & = 0, \qquad a < b \in [\gamma], \\
        \beta_a \, \alpha_{a'} & = 0, \qquad a, a' \in [\gamma], \\
        \alpha_b \, \alpha_a & = 0, \qquad a < b \in [\gamma], \\
        \alpha_a \, \beta_{a'} & = 0, \qquad a, a' \in [\gamma].
    \end{split}\end{equation}
    These relations imply that there are at most one $c \in [\gamma]$ and
    one $d \in [\gamma]$ such that $\alpha_c \ne 0$ and $\beta_d \ne 0$.
    In this case, the relations imply also that $\alpha_c = 0$ or
    $\beta_d = 0$, or both. Therefore, $x$ is of the form
    $x = \alpha_a \GDup_a$ or $x = \beta_a \DDup_a$ for an $a \in [\gamma]$,
    whence the statement of the proposition.
\end{proof}
\medskip

By Proposition~\ref{prop:description_operateurs_associatifs_dup_gamma}
there are exactly $2\gamma$ nonproportional associative operations in
$\Dup_\gamma$ while, by
Proposition~\ref{prop:description_operateurs_associatifs_dendr_gamma}
there are exactly $\gamma$ such operations in $\Dendr_\gamma$.
Therefore, $\Dup_\gamma$ and $\Dendr_\gamma$ are not isomorphic.
\medskip

%%%%%%%%%%%%%%%%%%%%%%%%%%%%%%%%%%%%%%%%%%%%%%%%%%%%%%%%%%%%%%%%%%%%%%%%
\subsubsection{Free multiplicial algebras}
We call {\em $\gamma$-multiplicial algebra} any $\Dup_\gamma$-algebra.
From the definition of $\Dup_\gamma$, any $\gamma$-multiplicial algebra
is a vector space endowed with linear operations $\GDup_a, \DDup_a$,
$a \in [\gamma]$, satisfying the relations encoded
by~\eqref{equ:relation_dup_gamma_1}---\eqref{equ:relation_dup_gamma_3}.
\medskip

In order the simplify and make uniform next definitions, we consider
that in any $\gamma$-edge valued binary tree $\Tfr$, all edges
connecting internal nodes of $\Tfr$ with leaves are labeled by $\infty$.
By convention, for all $a \in [\gamma]$, we have
$a \Min \infty = a = \infty \Min a$. Let us endow the vector space
$\AlgLibre_{\Dup_\gamma}$ of $\gamma$-edge valued binary trees with
linear operations
\begin{equation}
    \GDup_a, \DDup_a :
    \AlgLibre_{\Dup_\gamma} \otimes \AlgLibre_{\Dup_\gamma}
    \to \AlgLibre_{\Dup_\gamma},
    \qquad a \in [\gamma],
\end{equation}
recursively defined, for any $\gamma$-edge valued binary tree $\Sfr$
and any $\gamma$-edge valued binary trees or leaves $\Tfr_1$ and
$\Tfr_2$ by
\begin{equation}
    \Sfr \GDup_a \Feuille
    := \Sfr =:
    \Feuille \DDup_a \Sfr,
\end{equation}
\begin{equation}
    \Feuille \GDup_a \Sfr := 0 =: \Sfr \DDup_a \Feuille,
\end{equation}
\begin{equation}
    \ArbreBinValue{x}{y}{\Tfr_1}{\Tfr_2}
    \GDup_a \Sfr :=
    \ArbreBinValue{x}{z}{\Tfr_1}{\Tfr_2 \GDup_a \Sfr}\,,
    \qquad
    z := a \Min y,
\end{equation}
\begin{equation}
    \begin{split}\Sfr \DDup_a\end{split}
    \ArbreBinValue{x}{y}{\Tfr_1}{\Tfr_2}
    :=
    \ArbreBinValue{z}{y}{\Sfr \DDup_a \Tfr_1}{\Tfr_2}\,,
    \qquad
    z := a \Min x.
\end{equation}
Note that neither $\Feuille \GDendr_a \Feuille$ nor
$\Feuille \DDup_a \Feuille$ are defined.
\medskip

These recursive definitions for the operations $\GDup_a$, $\DDup_a$,
$a \in [\gamma]$, lead to the following direct reformulations. If $\Sfr$
and $\Tfr$ are two $\gamma$-edge valued binary trees, $\Tfr \GDup_a \Sfr$
(resp. $\Sfr \DDup_a \Tfr$) is obtained by replacing each label $y$
(resp. $x$) of any edge in the rightmost (resp. leftmost) path of $\Tfr$
by $a \Min y$ (resp. $a \Min x$) to obtain a tree $\Tfr'$, and by
grafting the root of $\Sfr$ on the rightmost (resp. leftmost) leaf of
$\Tfr'$. These two operations are respective generalizations of the
operations {\em under} and {\em over} on binary trees introduced by
Loday and Ronco~\cite{LR02}.
\medskip

For example, we have
\begin{equation}
    \begin{split}
    \begin{tikzpicture}[xscale=.25,yscale=.2]
        \node[Feuille](0)at(0.00,-4.50){};
        \node[Feuille](2)at(2.00,-4.50){};
        \node[Feuille](4)at(4.00,-4.50){};
        \node[Feuille](6)at(6.00,-6.75){};
        \node[Feuille](8)at(8.00,-6.75){};
        \node[Noeud](1)at(1.00,-2.25){};
        \node[Noeud](3)at(3.00,0.00){};
        \node[Noeud](5)at(5.00,-2.25){};
        \node[Noeud](7)at(7.00,-4.50){};
        \draw[Arete](0)--(1);
        \draw[Arete](1)edge[]node[EtiqArete]{\begin{math}1\end{math}}(3);
        \draw[Arete](2)--(1);
        \draw[Arete](4)--(5);
        \draw[Arete](5)edge[]node[EtiqArete]{\begin{math}3\end{math}}(3);
        \draw[Arete](6)--(7);
        \draw[Arete](7)edge[]node[EtiqArete]{\begin{math}1\end{math}}(5);
        \draw[Arete](8)--(7);
        \node(r)at(3.00,2){};
        \draw[Arete](r)--(3);
    \end{tikzpicture}
    \end{split}
    \GDup_2
    \begin{split}
    \begin{tikzpicture}[xscale=.25,yscale=.2]
        \node[Feuille](0)at(0.00,-4.67){};
        \node[Feuille](2)at(2.00,-4.67){};
        \node[Feuille](4)at(4.00,-4.67){};
        \node[Feuille](6)at(6.00,-4.67){};
        \node[Noeud](1)at(1.00,-2.33){};
        \node[Noeud](3)at(3.00,0.00){};
        \node[Noeud](5)at(5.00,-2.33){};
        \draw[Arete](0)--(1);
        \draw[Arete](1)edge[]node[EtiqArete]{\begin{math}1\end{math}}(3);
        \draw[Arete](2)--(1);
        \draw[Arete](4)--(5);
        \draw[Arete](5)edge[]node[EtiqArete]{\begin{math}2\end{math}}(3);
        \draw[Arete](6)--(5);
        \node(r)at(3.00,2){};
        \draw[Arete](r)--(3);
    \end{tikzpicture}
    \end{split}
    =
    \begin{split}
    \begin{tikzpicture}[xscale=.22,yscale=.15]
        \node[Feuille](0)at(0.00,-5.00){};
        \node[Feuille](10)at(10.00,-12.50){};
        \node[Feuille](12)at(12.00,-12.50){};
        \node[Feuille](14)at(14.00,-12.50){};
        \node[Feuille](2)at(2.00,-5.00){};
        \node[Feuille](4)at(4.00,-5.00){};
        \node[Feuille](6)at(6.00,-7.50){};
        \node[Feuille](8)at(8.00,-12.50){};
        \node[Noeud](1)at(1.00,-2.50){};
        \node[Noeud](11)at(11.00,-7.50){};
        \node[Noeud](13)at(13.00,-10.00){};
        \node[Noeud](3)at(3.00,0.00){};
        \node[Noeud](5)at(5.00,-2.50){};
        \node[Noeud](7)at(7.00,-5.00){};
        \node[Noeud](9)at(9.00,-10.00){};
        \draw[Arete](0)--(1);
        \draw[Arete](1)edge[]node[EtiqArete]{\begin{math}1\end{math}}(3);
        \draw[Arete](10)--(9);
        \draw[Arete](11)edge[]node[EtiqArete]{\begin{math}2\end{math}}(7);
        \draw[Arete](12)--(13);
        \draw[Arete](13)edge[]node[EtiqArete]{\begin{math}2\end{math}}(11);
        \draw[Arete](14)--(13);
        \draw[Arete](2)--(1);
        \draw[Arete](4)--(5);
        \draw[Arete](5)edge[]node[EtiqArete]{\begin{math}2\end{math}}(3);
        \draw[Arete](6)--(7);
        \draw[Arete](7)edge[]node[EtiqArete]{\begin{math}1\end{math}}(5);
        \draw[Arete](8)--(9);
        \draw[Arete](9)edge[]node[EtiqArete]{\begin{math}1\end{math}}(11);
        \node(r)at(3.00,2.50){};
        \draw[Arete](r)--(3);
    \end{tikzpicture}
    \end{split}\,,
\end{equation}
and
\begin{equation}
    \begin{split}
    \begin{tikzpicture}[xscale=.25,yscale=.2]
        \node[Feuille](0)at(0.00,-4.50){};
        \node[Feuille](2)at(2.00,-4.50){};
        \node[Feuille](4)at(4.00,-4.50){};
        \node[Feuille](6)at(6.00,-6.75){};
        \node[Feuille](8)at(8.00,-6.75){};
        \node[Noeud](1)at(1.00,-2.25){};
        \node[Noeud](3)at(3.00,0.00){};
        \node[Noeud](5)at(5.00,-2.25){};
        \node[Noeud](7)at(7.00,-4.50){};
        \draw[Arete](0)--(1);
        \draw[Arete](1)edge[]node[EtiqArete]{\begin{math}1\end{math}}(3);
        \draw[Arete](2)--(1);
        \draw[Arete](4)--(5);
        \draw[Arete](5)edge[]node[EtiqArete]{\begin{math}3\end{math}}(3);
        \draw[Arete](6)--(7);
        \draw[Arete](7)edge[]node[EtiqArete]{\begin{math}1\end{math}}(5);
        \draw[Arete](8)--(7);
        \node(r)at(3.00,2){};
        \draw[Arete](r)--(3);
    \end{tikzpicture}
    \end{split}
    \DDup_2
    \begin{split}
    \begin{tikzpicture}[xscale=.25,yscale=.2]
        \node[Feuille](0)at(0.00,-4.67){};
        \node[Feuille](2)at(2.00,-4.67){};
        \node[Feuille](4)at(4.00,-4.67){};
        \node[Feuille](6)at(6.00,-4.67){};
        \node[Noeud](1)at(1.00,-2.33){};
        \node[Noeud](3)at(3.00,0.00){};
        \node[Noeud](5)at(5.00,-2.33){};
        \draw[Arete](0)--(1);
        \draw[Arete](1)edge[]node[EtiqArete]{\begin{math}1\end{math}}(3);
        \draw[Arete](2)--(1);
        \draw[Arete](4)--(5);
        \draw[Arete](5)edge[]node[EtiqArete]{\begin{math}2\end{math}}(3);
        \draw[Arete](6)--(5);
        \node(r)at(3.00,2){};
        \draw[Arete](r)--(3);
    \end{tikzpicture}
    \end{split}
    =
    \begin{split}
    \begin{tikzpicture}[xscale=.22,yscale=.15]
        \node[Feuille](0)at(0.00,-10.00){};
        \node[Feuille](10)at(10.00,-5.00){};
        \node[Feuille](12)at(12.00,-5.00){};
        \node[Feuille](14)at(14.00,-5.00){};
        \node[Feuille](2)at(2.00,-10.00){};
        \node[Feuille](4)at(4.00,-10.00){};
        \node[Feuille](6)at(6.00,-12.50){};
        \node[Feuille](8)at(8.00,-12.50){};
        \node[Noeud](1)at(1.00,-7.50){};
        \node[Noeud](11)at(11.00,0.00){};
        \node[Noeud](13)at(13.00,-2.50){};
        \node[Noeud](3)at(3.00,-5.00){};
        \node[Noeud](5)at(5.00,-7.50){};
        \node[Noeud](7)at(7.00,-10.00){};
        \node[Noeud](9)at(9.00,-2.50){};
        \draw[Arete](0)--(1);
        \draw[Arete](1)edge[]node[EtiqArete]{\begin{math}1\end{math}}(3);
        \draw[Arete](10)--(9);
        \draw[Arete](12)--(13);
        \draw[Arete](13)edge[]node[EtiqArete]{\begin{math}2\end{math}}(11);
        \draw[Arete](14)--(13);
        \draw[Arete](2)--(1);
        \draw[Arete](3)edge[]node[EtiqArete]{\begin{math}2\end{math}}(9);
        \draw[Arete](4)--(5);
        \draw[Arete](5)edge[]node[EtiqArete]{\begin{math}3\end{math}}(3);
        \draw[Arete](6)--(7);
        \draw[Arete](7)edge[]node[EtiqArete]{\begin{math}1\end{math}}(5);
        \draw[Arete](8)--(7);
        \draw[Arete](9)edge[]node[EtiqArete]{\begin{math}1\end{math}}(11);
        \node(r)at(11.00,2.50){};
        \draw[Arete](r)--(11);
    \end{tikzpicture}
    \end{split}\,.
\end{equation}
\medskip

\begin{Lemme} \label{lem:produit_gamma_duplicial}
    For any integer $\gamma \geq 0$, the vector space
    $\AlgLibre_{\Dup_\gamma}$ of $\gamma$-edge valued binary trees
    endowed with the operations $\GDup_a$, $\DDup_a$, $a \in [\gamma]$,
    is a $\gamma$-multiplicial algebra.
\end{Lemme}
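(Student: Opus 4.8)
The plan is to verify directly that the operations $\GDup_a$, $\DDup_a$, $a \in [\gamma]$, on $\AlgLibre_{\Dup_\gamma}$ satisfy Relations~\eqref{equ:relation_dup_gamma_1}, \eqref{equ:relation_dup_gamma_2}, and~\eqref{equ:relation_dup_gamma_3} of $\gamma$-multiplicial algebras. The argument runs parallel to the proof of Lemma~\ref{lem:produit_gamma_dendriforme}, and is in fact lighter, since each recursive rule defining $\GDup_a$ and $\DDup_a$ expands into a single term rather than a sum of two. Throughout I would fix three $\gamma$-edge valued binary trees $\Rfr$, $\Sfr$, $\Tfr$ and $a, a' \in [\gamma]$, and use freely that $\Min$ is associative and idempotent with $a \Min \infty = a = \infty \Min a$. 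First I would settle Relation~\eqref{equ:relation_dup_gamma_1}, that is $(\Rfr \DDup_{a'} \Sfr) \GDup_a \Tfr = \Rfr \DDup_{a'} (\Sfr \GDup_a \Tfr)$, by a direct expansion: decomposing $\Sfr$ at its root (left and right subtrees $\Sfr_1$, $\Sfr_2$, edge labels $x$, $y$), both sides reduce, via the recursive rules, to the single $\gamma$-edge valued binary tree having $\Rfr \DDup_{a'} \Sfr_1$ as left subtree with left edge label $a' \Min x$ and $\Sfr_2 \GDup_a \Tfr$ as right subtree with right edge label $a \Min y$. The point is that $\GDup_a$ only rewrites edge labels along the rightmost path of its left operand whereas $\DDup_{a'}$ only rewrites edge labels along the leftmost path of its right operand, so on $\Sfr$ the two operations act on disjoint branches and commute; the leaf conventions cover the degenerate subcases.

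Next I would prove Relation~\eqref{equ:relation_dup_gamma_2}, that is $(\Rfr \GDup_{a'} \Sfr) \GDup_a \Tfr = \Rfr \GDup_{a \Min a'} (\Sfr \GDup_a \Tfr)$, by induction on the number of internal nodes of $\Rfr$, the recursive rule for $\GDup$ carrying the computation into the right subtree of $\Rfr$. The base case, $\Rfr$ with a single internal node, is a short explicit verification whose only ingredient is $a \Min (a \Min a') = a \Min a'$. For the inductive step I would decompose $\Rfr$ at its root, push the occurrences of $\GDup$ through the recursive rules so that they act on the right subtree $\Rfr_2$ of $\Rfr$ together with $\Sfr$ and $\Tfr$, apply the induction hypothesis to $(\Rfr_2, \Sfr, \Tfr)$, and then check that the edge labels produced on the two sides coincide, which reduces to identities of the shape $a \Min (a' \Min y) = (a \Min a') \Min y$ from associativity of $\Min$; the subcase where $\Rfr_2$ is a leaf is handled directly by the unit conventions.

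Finally, Relation~\eqref{equ:relation_dup_gamma_3} is the mirror image of Relation~\eqref{equ:relation_dup_gamma_2}: writing $\mu$ for the left--right mirror map on $\gamma$-edge valued binary trees, the grafting reformulation of the products gives $\mu(\Tfr \GDup_a \Sfr) = \mu(\Sfr) \DDup_a \mu(\Tfr)$ and $\mu(\Sfr \DDup_a \Tfr) = \mu(\Tfr) \GDup_a \mu(\Sfr)$, so applying $\mu$ to both sides of the claimed identity transforms Relation~\eqref{equ:relation_dup_gamma_3} on $(\Rfr, \Sfr, \Tfr)$ into Relation~\eqref{equ:relation_dup_gamma_2} on $(\mu(\Tfr), \mu(\Sfr), \mu(\Rfr))$, which has just been established; this yields the lemma. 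I expect the only real work, hence the main obstacle, to be the edge-label bookkeeping in the inductive step of Relation~\eqref{equ:relation_dup_gamma_2}, together with a careful but routine treatment of the subcases in which some of the involved subtrees are leaves; everything else is formal.
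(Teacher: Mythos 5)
Your proposal is correct and follows essentially the paper's own proof: Relation~\eqref{equ:relation_dup_gamma_1} by direct expansion at the root of $\Sfr$, and Relation~\eqref{equ:relation_dup_gamma_2} by induction after decomposing $\Rfr$ at its root (inducting on $\Rfr$ alone is equivalent to the paper's induction on the total size, since the hypothesis is only invoked on $(\Rfr_2, \Sfr, \Tfr)$). The only variation is your treatment of Relation~\eqref{equ:relation_dup_gamma_3} via the left--right mirror map, which sends $\GDup_a$ to $\DDup_a$ with reversed arguments; this is a clean and valid formalization of the symmetry that the paper invokes implicitly when it asserts that the third relation follows ``for the same arguments''.
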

\begin{proof}
    We have to check that the operations $\GDup_a$, $\DDup_a$,
    $a \in [\gamma]$, of $\AlgLibre_{\Dup_\gamma}$ satisfy
    Relations~\eqref{equ:relation_dup_gamma_1},
    \eqref{equ:relation_dup_gamma_2},
    and~\eqref{equ:relation_dup_gamma_3} of
    $\gamma$-multiplicial algebras. Let $\Rfr$, $\Sfr$, and $\Tfr$ be
    three $\gamma$-edge valued binary trees and $a, a' \in [\gamma]$.
    \smallskip

    Denote by $\Sfr_1$ (resp. $\Sfr_2$) the left subtree (resp. right
    subtree) of $\Sfr$ and by $x$ (resp. $y$) the label of the left
    (resp. right) edge incident to the root of $\Sfr$. We have
    \begin{multline}
        (\Rfr \DDup_{a'} \Sfr) \GDup_a \Tfr  =
        \left(\Rfr \DDup_{a'} \ArbreBinValue{x}{y}{\Sfr_1}{\Sfr_2}\right)
        \GDup_a \Tfr
        = \left(\ArbreBinValue{z}{y}{\Rfr \DDup_{a'} \Sfr_1}{\Sfr_2}
        \right)
        \GDup_a \Tfr
        \displaybreak[0]
        \\
        = \ArbreBinValue{z}{t}{\Rfr \DDup_{a'} \Sfr_1}
                {\Sfr_2 \GDup_a \Tfr}
        \displaybreak[0]
        \\
        = \Rfr \DDup_{a'}
        \left(\ArbreBinValue{x}{t}{\Sfr_1}{\Sfr_2 \GDup_a \Tfr}\right)
        = \Rfr \DDup_{a'}
        \left(\ArbreBinValue{x}{y}{\Sfr_1}{\Sfr_2}
        \GDup_a \Tfr \right) =
        \Rfr \DDup_{a'} (\Sfr \GDup_a \Tfr),
    \end{multline}
    where $z := a' \Min x$ and $t := a \Min y$. This shows
    that~\eqref{equ:relation_dup_gamma_1} is satisfied in
    $\AlgLibre_{\Dup_\gamma}$.
    \medskip

    We now prove that Relations~\eqref{equ:relation_dup_gamma_2}
    and~\eqref{equ:relation_dup_gamma_3} hold by induction on the sum of
    the number of internal nodes of $\Rfr$, $\Sfr$, and $\Tfr$. Base
    case holds when all these trees have exactly one internal node, and
    since
    \begin{multline}
        \left(\Noeud \GDup_{a'} \Noeud\right) \GDup_a \Noeud
        - \Noeud \GDup_{a \Min a'} \left( \Noeud \GDup_a \Noeud \right)
        \displaybreak[0]
        \\
        =
        \begin{split}
        \begin{tikzpicture}[xscale=.3,yscale=.25]
            \node[Feuille](0)at(0.00,-1.67){};
            \node[Feuille](2)at(2.00,-3.33){};
            \node[Feuille](4)at(4.00,-3.33){};
            \node[Noeud](1)at(1.00,0.00){};
            \node[Noeud](3)at(3.00,-1.67){};
            \draw[Arete](0)--(1);
            \draw[Arete](2)--(3);
            \draw[Arete](3)edge[]node[EtiqArete]{\begin{math}a'\end{math}}(1);
            \draw[Arete](4)--(3);
            \node(r)at(1.00,1.67){};
            \draw[Arete](r)--(1);
        \end{tikzpicture}
        \end{split}
        \GDup_a \Noeud -
        \Noeud \GDup_{a \Min a'}
        \begin{split}
        \begin{tikzpicture}[xscale=.3,yscale=.25]
            \node[Feuille](0)at(0.00,-1.67){};
            \node[Feuille](2)at(2.00,-3.33){};
            \node[Feuille](4)at(4.00,-3.33){};
            \node[Noeud](1)at(1.00,0.00){};
            \node[Noeud](3)at(3.00,-1.67){};
            \draw[Arete](0)--(1);
            \draw[Arete](2)--(3);
            \draw[Arete](3)edge[]node[EtiqArete]{\begin{math}a\end{math}}(1);
            \draw[Arete](4)--(3);
            \node(r)at(1.00,1.67){};
            \draw[Arete](r)--(1);
        \end{tikzpicture}
        \end{split}
        \displaybreak[0]
        \\
        =
        \begin{split}
        \begin{tikzpicture}[xscale=.3,yscale=.25]
            \node[Feuille](0)at(0.00,-1.75){};
            \node[Feuille](2)at(2.00,-3.50){};
            \node[Feuille](4)at(4.00,-5.25){};
            \node[Feuille](6)at(6.00,-5.25){};
            \node[Noeud](1)at(1.00,0.00){};
            \node[Noeud](3)at(3.00,-1.75){};
            \node[Noeud](5)at(5.00,-3.50){};
            \draw[Arete](0)--(1);
            \draw[Arete](2)--(3);
            \draw[Arete](3)edge[]node[EtiqArete]{\begin{math}z\end{math}}(1);
            \draw[Arete](4)--(5);
            \draw[Arete](5)edge[]node[EtiqArete]{\begin{math}a\end{math}}(3);
            \draw[Arete](6)--(5);
            \node(r)at(1.00,1.75){};
            \draw[Arete](r)--(1);
        \end{tikzpicture}
        \end{split}
        -
        \begin{split}
        \begin{tikzpicture}[xscale=.3,yscale=.25]
            \node[Feuille](0)at(0.00,-1.75){};
            \node[Feuille](2)at(2.00,-3.50){};
            \node[Feuille](4)at(4.00,-5.25){};
            \node[Feuille](6)at(6.00,-5.25){};
            \node[Noeud](1)at(1.00,0.00){};
            \node[Noeud](3)at(3.00,-1.75){};
            \node[Noeud](5)at(5.00,-3.50){};
            \draw[Arete](0)--(1);
            \draw[Arete](2)--(3);
            \draw[Arete](3)edge[]node[EtiqArete]{\begin{math}z\end{math}}(1);
            \draw[Arete](4)--(5);
            \draw[Arete](5)edge[]node[EtiqArete]{\begin{math}a\end{math}}(3);
            \draw[Arete](6)--(5);
            \node(r)at(1.00,1.75){};
            \draw[Arete](r)--(1);
        \end{tikzpicture}
        \end{split}
        = 0,
    \end{multline}
    where $z := a \Min a'$, \eqref{equ:relation_dup_gamma_2}
    holds on trees with one internal node. For the same arguments,
    we can show that~\eqref{equ:relation_dup_gamma_3} holds
    on trees with exactly one internal node. Denote now by $\Rfr_1$
    (resp. $\Rfr_2$) the left subtree (resp. right subtree) of $\Rfr$
    and by $x$ (resp. $y$) the label of the left (resp. right) edge
    incident to the root of $\Rfr$. We have
    \begin{multline} \label{equ:produit_gamma_dup_expr}
        (\Rfr \GDup_{a'} \Sfr) \GDup_a \Tfr
        - \Rfr \GDup_{a \Min a'} (\Sfr \GDup_a \Tfr)
        \displaybreak[0]
        \\[1em]
        =
        \left(\ArbreBinValue{x}{y}{\Rfr_1}{\Rfr_2}
        \GDup_{a'} \Sfr \right) \GDup_a \Tfr
        -
        \ArbreBinValue{x}{y}{\Rfr_1}{\Rfr_2}
        \GDup_{a \Min a'} (\Sfr \GDup_a \Tfr)
        \displaybreak[0]
        \\
        =
        \left(\ArbreBinValue{x}{z}{\Rfr_1}{\Rfr_2 \GDup_{a'} \Sfr}
        \right) \GDup_a \Tfr
        -
        \ArbreBinValue{x}{y}{\Rfr_1}{\Rfr_2}
        \GDup_{a \Min a'} (\Sfr \GDup_a \Tfr)
        \displaybreak[0]
        \\
        =
        \ArbreBinValue{x}{t}{\Rfr_1}{(\Rfr_2 \GDup_{a'} \Sfr) \GDup_a \Tfr}
        -
        \ArbreBinValue{x}{t}{\Rfr_1}{\Rfr_2 \GDup_u (\Sfr \GDup_a \Tfr)}\,,
    \end{multline}
    where $z := y \Min a'$, $t := z \Min a = y \Min a' \Min a$,
    and $u := a \Min a'$. Now, since by induction hypothesis
    Relation~\eqref{equ:relation_dup_gamma_2} holds on $\Rfr_2$, $\Sfr$,
    and $\Tfr$, \eqref{equ:produit_gamma_dup_expr} is zero. Therefore,
    \eqref{equ:relation_dup_gamma_2} is satisfied
    in~$\AlgLibre_{\Dup_\gamma}$.
    \smallskip

    Finally, for the same arguments, we can show
    that~\eqref{equ:relation_dup_gamma_3} is satisfied in
    $\AlgLibre_{\Dup_\gamma}$, implying the statement of the lemma.
\end{proof}
\medskip

\begin{Lemme} \label{lem:produit_gamma_engendre_duplicial}
    For any integer $\gamma \geq 0$, the $\gamma$-multiplicial algebra
    $\AlgLibre_{\Dup_\gamma}$ of $\gamma$-edge valued binary trees
    endowed with the operations $\GDup_a$, $\DDup_a$, $a \in [\gamma]$,
    is generated by
    \begin{equation}
        \Noeud\,.
    \end{equation}
\end{Lemme}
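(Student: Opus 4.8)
The plan is to mimic exactly the proof of Lemma~\ref{lem:produit_gamma_engendre_dendriforme}, since the statement and setup are perfectly parallel: we want to show that the $\gamma$-multiplicial algebra $\AlgLibre_{\Dup_\gamma}$ of $\gamma$-edge valued binary trees is generated, as a $\Dup_\gamma$-algebra, by the single tree $\NoeudTexte$ with one internal node. First I would invoke Lemma~\ref{lem:produit_gamma_duplicial}, which already tells us that $\AlgLibre_{\Dup_\gamma}$ endowed with the operations $\GDup_a$, $\DDup_a$, $a \in [\gamma]$, is a $\gamma$-multiplicial algebra; so it makes sense to speak of the $\Dup_\gamma$-subalgebra $\Dca$ of $\AlgLibre_{\Dup_\gamma}$ generated by $\NoeudTexte$, and it suffices to prove $\Dca = \AlgLibre_{\Dup_\gamma}$.

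The core is an induction on the number $n$ of internal nodes of an arbitrary $\gamma$-edge valued binary tree $\Tfr$, showing $\Tfr \in \Dca$. The base case $n = 1$ is immediate since then $\Tfr = \NoeudTexte$. For $n \geq 2$, write $\Tfr_1$ (resp.\ $\Tfr_2$) for the left (resp.\ right) subtree of the root of $\Tfr$, and let $x$ (resp.\ $y$) be the label of the left (resp.\ right) edge incident to the root. Both $\Tfr_1$ and $\Tfr_2$ have strictly fewer internal nodes than $\Tfr$, so by the induction hypothesis they lie in $\Dca$. The key computation, using the recursive definition of the operations $\GDup_a$ and $\DDup_a$ on $\AlgLibre_{\Dup_\gamma}$, is that grafting reconstructs $\Tfr$: one checks
\begin{equation}
    \begin{split}\end{split}
    \left(\Tfr_1 \DDup_x \Noeud\right) \GDup_y \Tfr_2
    =
    \begin{split}
    \begin{tikzpicture}[xscale=.5,yscale=.4]
        \node(0)at(-.50,-1.50){\begin{math}\Tfr_1\end{math}};
        \node[Feuille](2)at(2.0,-1.00){};
        \node[Noeud](1)at(1.00,.50){};
        \draw[Arete](0)edge[]node[EtiqArete]
            {\begin{math}x\end{math}}(1);
        \draw[Arete](2)--(1);
        \node(r)at(1.00,1.5){};
        \draw[Arete](r)--(1);
    \end{tikzpicture}
    \end{split}
    \GDup_y \Tfr_2
    =
    \ArbreBinValue{x}{y}{\Tfr_1}{\Tfr_2}
    = \Tfr,
\end{equation}
where the first equality is the definition of $\DDup_x$ applied to the tree with root, left subtree $\Tfr_1$, left edge label $\infty$ (with $x \Min \infty = x$), right edge label, and the second equality is the definition of $\GDup_y$ applied to the resulting tree (with $y \Min \infty = y$). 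Since $\Tfr_1, \Tfr_2 \in \Dca$ and $\NoeudTexte \in \Dca$, and $\Dca$ is stable under $\DDup_x$ and $\GDup_y$, this displays $\Tfr$ as an element of $\Dca$, completing the induction.

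I expect no serious obstacle here; the only point demanding care is the verification that the two displayed equalities are precisely the two recursive clauses defining $\DDup_a$ and $\GDup_a$ on trees, with the $\infty$ convention making $z := a \Min \infty$ collapse to $a$ in each clause. One should therefore state the $\infty$ convention for edges incident to leaves (already in force from the preamble of this subsection) before carrying out the computation. Concluding, $\Dca = \AlgLibre_{\Dup_\gamma}$, so $\AlgLibre_{\Dup_\gamma}$ is generated by $\NoeudTexte$ as a $\gamma$-multiplicial algebra, which is the statement of the lemma.
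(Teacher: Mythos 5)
Your proposal is correct and follows essentially the same route as the paper's own proof: appeal to Lemma~\ref{lem:produit_gamma_duplicial}, induct on the number of internal nodes, and reconstruct any $\gamma$-edge valued binary tree as $\left(\Tfr_1 \DDup_x \NoeudTexte\right) \GDup_y \Tfr_2$ inside the subalgebra generated by $\NoeudTexte$. No gaps to report.
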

\begin{proof}
    First, Lemma~\ref{lem:produit_gamma_duplicial} shows that
    $\AlgLibre_{\Dup_\gamma}$ is a $\gamma$-multiplicial algebra.
    Let $\Mca$ be the $\gamma$-multiplicial subalgebra of
    $\AlgLibre_{\Dup_\gamma}$ generated by $\NoeudTexte$. Let us show
    that any $\gamma$-edge valued binary tree $\Tfr$ is in $\Mca$
    by induction on the number $n$ of its internal nodes. When $n = 1$,
    $\Tfr = \NoeudTexte$ and hence the property is satisfied. Otherwise,
    let $\Tfr_1$ (resp. $\Tfr_2$) be the left (resp. right) subtree of
    the root of $\Tfr$ and denote by $x$ (resp. $y$) the label of the
    left (resp. right) edge incident to the root of $\Tfr$. Since $\Tfr_1$
    and~$\Tfr_2$ have less internal nodes than $\Tfr$, by induction
    hypothesis, $\Tfr_1$ and $\Tfr_2$ are in $\Mca$. Moreover, by
    definition of the operations $\GDup_a$, $\DDup_a$, $a \in [\gamma]$,
    of $\AlgLibre_{\Dup_\gamma}$, one has
    \begin{equation}
        \begin{split}\end{split}
        \left(\Tfr_1 \DDup_x \Noeud\right) \GDup_y \Tfr_2
        =
        \begin{split}
        \begin{tikzpicture}[xscale=.5,yscale=.4]
            \node(0)at(-.50,-1.50){\begin{math}\Tfr_1\end{math}};
            \node[Feuille](2)at(2.0,-1.00){};
            \node[Noeud](1)at(1.00,.50){};
            \draw[Arete](0)edge[]node[EtiqArete]
                {\begin{math}x\end{math}}(1);
            \draw[Arete](2)--(1);
            \node(r)at(1.00,1.5){};
            \draw[Arete](r)--(1);
        \end{tikzpicture}
        \end{split}
        \GDup_y \Tfr_2
        =
        \ArbreBinValue{x}{y}{\Tfr_1}{\Tfr_2} = \Tfr,
    \end{equation}
    showing that $\Tfr$ also is in $\Mca$. Therefore, $\Mca$ is
    $\AlgLibre_{\Dup_\gamma}$, showing that $\AlgLibre_{\Dup_\gamma}$ is
    generated by $\NoeudTexte$.
\end{proof}
\medskip

\begin{Theoreme} \label{thm:algebre_dup_gamma_libre}
    For any integer $\gamma \geq 0$, the vector space
    $\AlgLibre_{\Dup_\gamma}$ of $\gamma$-valued binary trees endowed
    with the operations $\GDup_a$, $\DDup_a$, $a \in [\gamma]$, is the
    free $\gamma$-multiplicial algebra over one generator.
\end{Theoreme}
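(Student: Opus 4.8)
The plan is to mirror exactly the proof strategy already used for the free $\gamma$-polydendriform algebra (Theorem~\ref{thm:algebre_dendr_gamma_libre}). By Lemmas~\ref{lem:produit_gamma_duplicial} and~\ref{lem:produit_gamma_engendre_duplicial}, the vector space $\AlgLibre_{\Dup_\gamma}$ of $\gamma$-edge valued binary trees, equipped with the operations $\GDup_a$, $\DDup_a$, $a \in [\gamma]$, is a $\gamma$-multiplicial algebra generated by the single tree $\NoeudTexte$. Thus it only remains to show that no relations beyond the defining relations~\eqref{equ:relation_dup_gamma_1}---\eqref{equ:relation_dup_gamma_3} of $\gamma$-multiplicial algebras hold in $\AlgLibre_{\Dup_\gamma}$, that is, that the surjection from the free $\gamma$-multiplicial algebra over one generator onto $\AlgLibre_{\Dup_\gamma}$ is also injective.

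The key observation is a dimension count. Since the free $\gamma$-multiplicial algebra over one generator is, by definition, the free $\Dup_\gamma$-algebra over one generator, its homogeneous component of degree $n$ has dimension $\dim \Dup_\gamma(n)$. By Proposition~\ref{prop:proprietes_dup_gamma}, we know that $\Dup_\gamma(n)$ is the vector space of $\gamma$-edge valued binary trees with $n$ internal nodes, so $\dim \Dup_\gamma(n)$ equals $\dim \AlgLibre_{\Dup_\gamma}(n)$. Therefore the canonical surjective $\gamma$-multiplicial algebra morphism from the free $\gamma$-multiplicial algebra over one generator onto $\AlgLibre_{\Dup_\gamma}$ is a surjection between graded vector spaces of the same (finite) dimension in each degree, hence an isomorphism. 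Consequently $\AlgLibre_{\Dup_\gamma}$ is free as a $\gamma$-multiplicial algebra over one generator, which is the statement of the theorem.

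I would therefore write the proof in two sentences: first invoke Lemmas~\ref{lem:produit_gamma_duplicial} and~\ref{lem:produit_gamma_engendre_duplicial} to conclude that $\AlgLibre_{\Dup_\gamma}$ is a $\gamma$-multiplicial algebra generated by one element; then invoke Proposition~\ref{prop:proprietes_dup_gamma} to match dimensions degree by degree with $\Dup_\gamma$, ruling out any extra relations. The only mild subtlety—and the place where one must be slightly careful—is to make explicit that "free $\gamma$-multiplicial algebra over one generator" means precisely "free $\Dup_\gamma$-algebra over one generator", whose degree-$n$ component is $\Dup_\gamma(n)$ by the general construction of free algebras over an operad recalled in Section~\ref{sec:outils}; once that identification is in place, there is no real obstacle, since everything reduces to the already-established facts that $\AlgLibre_{\Dup_\gamma}$ is a quotient of the free object and has the same Hilbert series. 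No new computation is needed beyond what the cited lemmas and proposition provide.
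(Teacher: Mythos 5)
Your proposal is correct and follows essentially the same route as the paper's own proof: first Lemmas~\ref{lem:produit_gamma_duplicial} and~\ref{lem:produit_gamma_engendre_duplicial} to get a one-generator $\gamma$-multiplicial algebra structure on $\AlgLibre_{\Dup_\gamma}$, then the dimension comparison with $\Dup_\gamma(n)$ from Proposition~\ref{prop:proprietes_dup_gamma} to exclude any extra relations. Your explicit phrasing of the last step as a graded surjection between spaces of equal finite dimensions, hence an isomorphism, is just a slightly more detailed rendering of the paper's argument, not a different one.
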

\begin{proof}
    By Lemmas~\ref{lem:produit_gamma_duplicial}
    and~\ref{lem:produit_gamma_engendre_duplicial},
    $\AlgLibre_{\Dup_\gamma}$ is a $\gamma$-multiplicial algebra over
    one generator.
    \smallskip

    Moreover, since by Proposition~\ref{prop:proprietes_dup_gamma}, for
    any $n \geq 1$, the dimension of $\AlgLibre_{\Dup_\gamma}(n)$ is the
    same as the dimension of $\Dup_\gamma(n)$, there cannot be relations
    in $\AlgLibre_{\Dup_\gamma}(n)$ involving $\Gfr$ that are not
    $\gamma$-multiplicial relations (see~\eqref{equ:relation_dup_gamma_1},
    \eqref{equ:relation_dup_gamma_2}, and~\eqref{equ:relation_dup_gamma_3}).
    Hence, $\AlgLibre_{\Dup_\gamma}$ is free as a $\gamma$-multiplicial
    algebra over one generator.
\end{proof}
\medskip

%%%%%%%%%%%%%%%%%%%%%%%%%%%%%%%%%%%%%%%%%%%%%%%%%%%%%%%%%%%%%%%%%%%%%%%%
%%%%%%%%%%%%%%%%%%%%%%%%%%%%%%%%%%%%%%%%%%%%%%%%%%%%%%%%%%%%%%%%%%%%%%%%
\subsection{Triassociative and tridendriform operads}
Our original idea of using the $\T$ construction (see
Sections~\ref{subsec:monoides_vers_operades}
and~\ref{subsubsec:construction_dias_gamma}) to obtain a generalization
of the diassociative operad admits an analogue in the context of the
triassociative operad~\cite{LR04}. We describe in this section a
one-parameter nonnegative integer generalization of the triassociative
operad and of its Koszul dual, the tridendriform operad.
\medskip

Since the proofs of the results contained in this section are very
similar to the ones of Sections~\ref{sec:dias_gamma}
and~\ref{sec:dendr_gamma}, we omit proofs here.
\medskip

%%%%%%%%%%%%%%%%%%%%%%%%%%%%%%%%%%%%%%%%%%%%%%%%%%%%%%%%%%%%%%%%%%%%%%%%
\subsubsection{Pluritriassociative operads} \label{subsubsec:trias_gamma}
For any integer $\gamma \geq 0$, we define $\Trias_\gamma$ as the
suboperad of $\Mca_\gamma$ generated by
\begin{equation} \label{equ:generateurs_trias_gamma}
   \{0a, 00, a0 : a \in [\gamma]\}.
\end{equation}
By definition, $\Trias_\gamma$ is the vector space of words that can be
obtained by partial compositions of words
of~\eqref{equ:generateurs_trias_gamma}. We have, for instance,
\begin{equation}
    \Trias_2(1) = \Vect(\{0\}),
\end{equation}
\begin{equation}
    \Trias_2(2) = \Vect(\{00, 01, 02, 10, 20\}),
\end{equation}
\begin{multline}
    \Trias_2(3)
    = \Vect(\{000, 001, 002, 010, 011, 012, 020, 021, \\
        022, 100, 101, 102, 110, 120, 200, 201, 202, 210, 220\}),
\end{multline}
\medskip

It follows immediately from the definition of $\Trias_\gamma$ as a
suboperad of $\T \Mca_\gamma$ that $\Trias_\gamma$ is a set-operad.
Moreover, one can observe that  $\Trias_\gamma$ is generated by the same
generators as the ones of $\Dias_\gamma$
(see~\eqref{equ:generateurs_dias_gamma}), plus the word $00$. Therefore,
$\Dias_\gamma$ is a suboperad of $\Trias_\gamma$. Besides, note that
$\Trias_0$ is the associative operad and that $\Trias_\gamma$ is a
suboperad of $\Trias_{\gamma + 1}$. We call $\Trias_\gamma$ the
{\em $\gamma$-pluritriassociative operad}.
\medskip

%%%%%%%%%%%%%%%%%%%%%%%%%%%%%%%%%%%%%%%%%%%%%%%%%%%%%%%%%%%%%%%%%%%%%%%%
\subsubsection{Elements and dimensions}

\begin{Proposition} \label{prop:elements_trias_gamma}
    For any integer $\gamma \geq 0$, as a set-operad, the underlying set
    of $\Trias_\gamma$ is the set of the words on the alphabet
    $\{0\} \cup [\gamma]$ containing at least one occurrence of $0$.
\end{Proposition}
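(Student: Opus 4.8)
The plan is to mirror closely the proof of Proposition~\ref{prop:elements_dias_gamma}, replacing ``exactly one occurrence of $0$'' by ``at least one occurrence of $0$'' throughout, and using the extra generator $00$ where appropriate. The statement has two inclusions to establish, and each is handled by induction on the length $n$ of the word in question.

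First I would show that any word $x$ in $\Trias_\gamma$ contains at least one $0$, arguing by induction on its length $n$. For $n = 1$ the only element is the unit $0$, so the claim holds. For $n \geq 2$, there is a word $y$ of $\Trias_\gamma$ of length $n - 1$ and a generator $g \in \{0a, 00, a0 : a \in [\gamma]\}$ with $x = y \circ_i g$ for some $i$. By induction hypothesis $y$ contains a $0$, say at position $j$. The partial composition replaces the $i$th letter $b$ of $y$ by the factor $(b \Max g_1)(b \Max g_2)$. If $j \neq i$, the $0$ of $y$ survives (possibly shifted) in $x$, so $x$ contains a $0$. If $j = i$, then $b = 0$, and the inserted factor is $(0 \Max g_1)(0 \Max g_2) = g_1 g_2 = g$; since each generator $g$ contains at least one $0$, so does $x$. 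This shows the underlying set of $\Trias_\gamma$ is contained in the set of words on $\{0\} \cup [\gamma]$ with at least one $0$.

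Conversely I would show that any word $x$ on $\{0\} \cup [\gamma]$ with at least one $0$ belongs to $\Trias_\gamma$, again by induction on its length $n$. For $n = 1$ we have $x = 0$, the unit. For $n \geq 2$, pick a position $i$ of a $0$ in $x$. Consider the factor $x_i x_{i+1}$ (or $x_{i-1} x_i$ if $i$ is the last position); without loss of generality work with $x_i x_{i+1}$ where $x_i = 0$. This factor is of the form $0a$ with $a \in [\gamma]$, or $00$, i.e.\ in all cases a generator $g$ of $\Trias_\gamma$. Let $y$ be the word obtained from $x$ by erasing position $i$; then $y$ still contains a $0$ — here one must be slightly careful: if the only $0$ of $x$ is the one at position $i$ and $x_{i+1} = 0$ is impossible, but when $g = 0a$ with $a \neq 0$ and the $0$ at position $i$ was the unique $0$, erasing it would leave $y$ with no $0$. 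This is the one genuine subtlety, and the fix is to choose the adjacent pair so that $y$ retains a $0$: if $x$ has at least two $0$'s, erasing one still leaves one; if $x$ has exactly one $0$, at position $i$, then its neighbour is a nonzero letter $a$, and we instead write $x = y \circ_i g$ where $g = a0$ or $0a$ in such a way that the $0$ reappears in $y$ — concretely, set $y := x_1 \dots x_{i-1}\, a\, x_{i+2} \dots x_n$ (replacing the length-$2$ window by the single nonzero letter $a$), so $y$ has length $n-1$ and contains no $0$; but then $y \notin \Trias_\gamma$. The correct resolution is that when $x$ has a unique $0$, it is already an element of $\Dias_\gamma \subseteq \Trias_\gamma$ by Proposition~\ref{prop:elements_dias_gamma}. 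So I would split the converse into two cases: if $x$ has exactly one $0$, invoke Proposition~\ref{prop:elements_dias_gamma}; if $x$ has at least two $0$'s, then erasing one chosen adjacent-to-a-$0$ letter leaves a word $y$ with at least one $0$ and length $n-1$, with $x = y \circ_i g$ for a generator $g$, and apply the induction hypothesis to $y$.

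The main obstacle is exactly this bookkeeping in the converse direction: ensuring that the word $y$ obtained by ``removing one letter'' still lies in $\Trias_\gamma$, which forces the case distinction on whether $x$ has one or at least two zeros, and the use of Proposition~\ref{prop:elements_dias_gamma} to close the single-zero case. Once that is handled, the induction goes through routinely, and the two inclusions together give the claimed description of the underlying set of $\Trias_\gamma$ as a set-operad. As a side remark, this description immediately yields $\dim \Trias_\gamma(n) = (\gamma+1)^n - \gamma^n$, matching the entry in Table~\ref{tab:operades_introduites}.
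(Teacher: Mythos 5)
Your proof is correct and follows essentially the argument the paper intends, namely the double induction of Proposition~\ref{prop:elements_dias_gamma} adapted to the generators $\{0a, 00, a0 : a \in [\gamma]\}$ (the paper omits this proof, stating only that it is analogous). One small remark: the case split in the converse is avoidable, since erasing the \emph{neighbour} of a chosen $0$ (rather than the $0$ itself) always leaves a word $y$ containing a $0$ with $x = y \circ_i g$ for $g \in \{0c, c0, 00\}$, whether $x$ has one zero or several, so the appeal to Proposition~\ref{prop:elements_dias_gamma} for the single-zero case is unnecessary, though harmless.
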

\medskip

We deduce from Proposition~\ref{prop:elements_trias_gamma} that the
Hilbert series of $\Trias_\gamma$ satisfies
\begin{equation}
    \Hca_{\Trias_\gamma}(t) =
    \frac{t}{(1 - \gamma t)(1 - \gamma t - t)}
\end{equation}
and that for all $n \geq 1$,
$\dim \Trias_\gamma(n) = (\gamma + 1)^n - \gamma^n$. For instance, the
first dimensions of  $\Trias_1$, $\Trias_2$, $\Trias_3$, and $\Trias_4$
are respectively
\begin{equation}
    1, 3, 7, 15, 31, 63, 127, 255, 511, 1023, 2047,
\end{equation}
\begin{equation}
    1, 5, 19, 65, 211, 665, 2059, 6305, 19171, 58025, 175099,
\end{equation}
\begin{equation}
    1, 7, 37, 175, 781, 3367, 14197, 58975, 242461, 989527, 4017157,
\end{equation}
\begin{equation}
    1, 9, 61, 369, 2101, 11529, 61741, 325089, 1690981, 8717049, 44633821.
\end{equation}
The first one is Sequence~\Sloane{A000225}, the second one is
Sequence~\Sloane{A001047}, the third one is Sequence~\Sloane{A005061},
and the last one is Sequence~\Sloane{A005060} of~\cite{Slo}.
\medskip

%%%%%%%%%%%%%%%%%%%%%%%%%%%%%%%%%%%%%%%%%%%%%%%%%%%%%%%%%%%%%%%%%%%%%%%%
\subsubsection{Presentation and Koszulity}
We follow the same strategy as the one used in
Section~\ref{subsec:presentation_dias_gamma} to establish a presentation
by generators and relations of $\Trias_\gamma$ and prove that it is a
Koszul operad. As announced above, we omit complete proofs here but
we describe the analogue for $\Trias_\gamma$ of the maps $\Mot_\gamma$
and $\Equerre_\gamma$ defined in
Section~\ref{subsec:presentation_dias_gamma} for the operad $\Dias_\gamma$.
\medskip

For any integer $\gamma \geq 0$, let $\GenTrias := \GenTrias(2)$ be the
graded set where
\begin{equation}
    \GenTrias(2) := \{\GDias_a, \MTrias, \DDias_a : a \in [\gamma] \}.
\end{equation}
\medskip

Let $\Tfr$ be a syntax tree of $\OpLibre\left(\GenTrias\right)$ and $x$ be
a leaf of $\Tfr$. We say that an integer $a \in \{0\} \cup [\gamma]$ is
{\em eligible} for $x$ if $a = 0$ or there is an ancestor $y$ of $x$
labeled by $\GDias_a$ (resp. $\DDias_a$) and $x$ is in the right (resp.
left) subtree of $y$. The {\em image} of $x$ is its greatest eligible
integer. Moreover, let
\begin{equation}
    \MotT_\gamma : \OpLibre\left(\GenTrias\right)(n) \to \Trias_\gamma(n),
    \qquad n \geq 1,
\end{equation}
the map where $\MotT_\gamma(\Tfr)$ is the word obtained by considering,
from left to right, the images of the leaves of $\Tfr$
(see Figure~\ref{fig:exemple_mot_gamma_trias_gamma}).
\begin{figure}[ht]
    \centering
     \begin{tikzpicture}[xscale=.28,yscale=.18]
        \node(0)at(0.00,-15.33){};
        \node(10)at(10.00,-11.50){};
        \node(12)at(12.00,-15.33){};
        \node(14)at(14.00,-15.33){};
        \node(16)at(16.00,-19.17){};
        \node(18)at(18.00,-19.17){};
        \node(2)at(2.00,-15.33){};
        \node(20)at(20.00,-15.33){};
        \node(22)at(22.00,-11.50){};
        \node(4)at(4.00,-11.50){};
        \node(6)at(6.00,-11.50){};
        \node(8)at(8.00,-11.50){};
        \node(1)at(1.00,-11.50){\begin{math}\GDias_1\end{math}};
        \node(3)at(3.00,-7.67){\begin{math}\DDias_3\end{math}};
        \node(5)at(5.00,-3.83){\begin{math}\GDias_4\end{math}};
        \node(7)at(7.00,-7.67){\begin{math}\MTrias\end{math}};
        \node(9)at(9.00,0.00){\begin{math}\DDias_2\end{math}};
        \node(11)at(11.00,-7.67){\begin{math}\GDias_3\end{math}};
        \node(13)at(13.00,-11.50){\begin{math}\DDias_4\end{math}};
        \node(15)at(15.00,-3.83){\begin{math}\MTrias\end{math}};
        \node(17)at(17.00,-15.33){\begin{math}\DDias_3\end{math}};
        \node(19)at(19.00,-11.50){\begin{math}\DDias_2\end{math}};
        \node(21)at(21.00,-7.67){\begin{math}\GDias_1\end{math}};
        \draw(0)--(1);\draw(1)--(3);\draw(10)--(11);\draw(11)--(15);
        \draw(12)--(13);\draw(13)--(11);\draw(14)--(13);\draw(15)--(9);
        \draw(16)--(17);\draw(17)--(19);\draw(18)--(17);\draw(19)--(21);
        \draw(2)--(1);\draw(20)--(19);\draw(21)--(15);\draw(22)--(21);
        \draw(3)--(5);\draw(4)--(3);\draw(5)--(9);\draw(6)--(7);
        \draw(7)--(5);\draw(8)--(7);
        \node(r)at(9.00,3){};
        \draw(r)--(9);
        \node[below of=0,node distance=3mm]
            {\small \begin{math}\textcolor{Bleu}{3}\end{math}};
        \node[below of=2,node distance=3mm]
            {\small \begin{math}\textcolor{Bleu}{3}\end{math}};
        \node[below of=4,node distance=3mm]
            {\small \begin{math}\textcolor{Bleu}{2}\end{math}};
        \node[below of=6,node distance=3mm]
            {\small \begin{math}\textcolor{Bleu}{4}\end{math}};
        \node[below of=8,node distance=3mm]
            {\small \begin{math}\textcolor{Bleu}{4}\end{math}};
        \node[below of=10,node distance=3mm]
            {\small \begin{math}\textcolor{Bleu}{0}\end{math}};
        \node[below of=12,node distance=3mm]
            {\small \begin{math}\textcolor{Bleu}{4}\end{math}};
        \node[below of=14,node distance=3mm]
            {\small \begin{math}\textcolor{Bleu}{3}\end{math}};
        \node[below of=16,node distance=3mm]
            {\small \begin{math}\textcolor{Bleu}{3}\end{math}};
        \node[below of=18,node distance=3mm]
            {\small \begin{math}\textcolor{Bleu}{2}\end{math}};
        \node[below of=20,node distance=3mm]
            {\small \begin{math}\textcolor{Bleu}{0}\end{math}};
        \node[below of=22,node distance=3mm]
            {\small \begin{math}\textcolor{Bleu}{1}\end{math}};
    \end{tikzpicture}
    \caption{A syntax tree $\Tfr$ of $\OpLibre\left(\GenTrias\right)$
    where images of its leaves are shown. This tree satisfies
    $\MotT_\gamma(\Tfr) = \textcolor{Bleu}{332440433201}$.}
    \label{fig:exemple_mot_gamma_trias_gamma}
\end{figure}
Observe that $\MotT_\gamma$ is an extension of $\Mot_\gamma$
(see~\eqref{equ:application_mot_gamma}).
\medskip

Consider now the map
\begin{equation}
    \EquerreT_\gamma :
    \Trias_\gamma(n) \to \OpLibre\left(\GenTrias\right)(n),
    \qquad n \geq 1,
\end{equation}
defined for any word $x$ of $\Trias_\gamma$ by
\begin{equation} \label{equ:definition_equerre_trias_gamma}
    \begin{split}\EquerreT_\gamma(x)\end{split} :=
    \begin{split}
    \begin{tikzpicture}[xscale=.5,yscale=.45]
        \node(0)at(0.00,-5.40){};
        \node(2)at(3.00,-7.20){};
        \node(4)at(5.00,-7.20){};
        \node(6)at(6.00,-3.60){};
        \node(8)at(9.00,-1.80){};
        \node(1)at(3.00,-4){\begin{math}\Equerre_\gamma(u)\end{math}};
        \node(5)at(5.00,-2){\begin{math}\MTrias\end{math}};
        \node(7)at(12.00,2.00){\begin{math}\MTrias\end{math}};
        \node(9)at(7.5,-4){\begin{math}\GDias_{v^{(1)}_{k^{(1)}}}\end{math}};
        \node(10)at(5,-6.5){\begin{math}\GDias_{v^{(1)}_1}\end{math}};
        \node(11)at(14.5,0){\begin{math}\GDias_{v^{(\ell)}_{k^{(\ell)}}}\end{math}};
        \node(12)at(12,-2.5){\begin{math}\GDias_{v^{(\ell)}_1}\end{math}};
        \node(r)at(12,3.5){};
        \draw(1)--(5);
        \draw[densely dashed](5)--(7);
        \draw(9)--(5);
        \draw(11)--(7);
        \draw(7)--(r);
        \draw[densely dashed](9)--(10);
        \draw[densely dashed](11)--(12);
        \node(f1)at(4,-8){}; \draw(f1)--(10);
        \node(f2)at(6,-8){}; \draw(f2)--(10);
        \node(f3)at(8.5,-5.5){}; \draw(f3)--(9);
        \node(f4)at(11,-4){}; \draw(f4)--(12);
        \node(f5)at(13,-4){}; \draw(f5)--(12);
        \node(f6)at(15.5,-1.5){}; \draw(f6)--(11);
    \end{tikzpicture}
    \end{split}\,,
\end{equation}
where $x$ decomposes, by Proposition~\ref{prop:elements_trias_gamma},
uniquely in $x = u0v^{(1)}\dots 0v^{(\ell)}$ where $u$ is a word of
$\Dias_\gamma$ and for all $i \in [\ell]$, the $v^{(i)}$ are words on
the alphabet $[\gamma]$. The length $|v^{(i)}|$ of any $v_i$ is denoted
by $k^{(i)}$. The dashed edges denote left comb trees wherein internal
nodes are labeled as specified. Observe that $\EquerreT_\gamma$ is an
extension of $\Equerre_\gamma$ (see~\eqref{equ:application_equerre_gamma}).
We shall call any syntax tree of the
form~\eqref{equ:definition_equerre_trias_gamma} an
{\em extended hook syntax tree}.
\medskip

\begin{Theoreme} \label{thm:presentation_trias_gamma}
    For any integer $\gamma \geq 0$, the operad $\Trias_\gamma$ admits
    the following presentation. It is generated by $\GenTrias$ and its
    space of relations $\RelTrias$ is the space induced by the
    equivalence relation $\Rel_\gamma$ satisfying
    \begin{subequations}
    \begin{equation}\label{equ:relation_presentation_trias_gamma_1}
        \MTrias \circ_1 \MTrias
        \enspace \Rel_\gamma \enspace
        \MTrias \circ_2 \MTrias,
    \end{equation}
    \begin{equation}\label{equ:relation_presentation_trias_gamma_2}
        \GDias_a \circ_1 \MTrias
        \enspace \Rel_\gamma \enspace
        \MTrias \circ_2 \GDias_a,
        \qquad a \in [\gamma],
    \end{equation}
    \begin{equation}\label{equ:relation_presentation_trias_gamma_3}
        \MTrias \circ_1 \DDias_a
        \enspace \Rel_\gamma \enspace
        \DDias_a \circ_2 \MTrias,
        \qquad a \in [\gamma],
    \end{equation}
    \begin{equation}\label{equ:relation_presentation_trias_gamma_4}
        \MTrias \circ_1 \GDias_a
        \enspace \Rel_\gamma \enspace
        \MTrias \circ_2 \DDias_a,
        \qquad a \in [\gamma],
    \end{equation}
    \begin{equation}\label{equ:relation_presentation_trias_gamma_5}
        \GDias_a \circ_1 \DDias_{a'}
        \enspace \Rel_\gamma \enspace
        \DDias_{a'} \circ_2 \GDias_a,
        \qquad a, a' \in [\gamma],
    \end{equation}
    \begin{equation}\label{equ:relation_presentation_trias_gamma_6}
        \GDias_a \circ_1 \GDias_b
        \enspace \Rel_\gamma \enspace
        \GDias_a \circ_2 \DDias_b,
        \qquad a < b \in [\gamma],
    \end{equation}
    \begin{equation}\label{equ:relation_presentation_trias_gamma_7}
        \DDias_a \circ_1 \GDias_b
        \enspace \Rel_\gamma \enspace
        \DDias_a \circ_2 \DDias_b,
        \qquad a < b \in [\gamma],
    \end{equation}
    \begin{equation}\label{equ:relation_presentation_trias_gamma_8}
        \GDias_b \circ_1 \GDias_a
        \enspace \Rel_\gamma \enspace
        \GDias_a \circ_2 \GDias_b,
        \qquad a < b \in [\gamma],
    \end{equation}
    \begin{equation}\label{equ:relation_presentation_trias_gamma_9}
        \DDias_a \circ_1 \DDias_b
        \enspace \Rel_\gamma \enspace
        \DDias_b \circ_2 \DDias_a,
        \qquad a < b \in [\gamma],
    \end{equation}
    \begin{equation}\label{equ:relation_presentation_trias_gamma_10}
        \GDias_d \circ_1 \GDias_d
        \enspace \Rel_\gamma \enspace
        \GDias_d \circ_2 \MTrias
        \enspace \Rel_\gamma \enspace
        \GDias_d \circ_2 \GDias_c
        \enspace \Rel_\gamma \enspace
        \GDias_d \circ_2 \DDias_c,
        \qquad c \leq d \in [\gamma],
    \end{equation}
    \begin{equation}\label{equ:relation_presentation_trias_gamma_11}
        \DDias_d \circ_1 \GDias_c
        \enspace \Rel_\gamma \enspace
        \DDias_d \circ_1 \DDias_c
        \enspace \Rel_\gamma \enspace
        \DDias_d \circ_1 \MTrias
        \enspace \Rel_\gamma \enspace
        \DDias_d \circ_2 \DDias_d,
        \qquad c \leq d \in [\gamma].
    \end{equation}
    \end{subequations}
\end{Theoreme}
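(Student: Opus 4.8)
The plan is to mimic, step by step, the proof of Theorem~\ref{thm:presentation_dias_gamma} for the larger generating set $\GenTrias$. First I would define the analogue of the congruence: let $\Recr_\gamma$ be the rewrite rule on $\OpLibre\left(\GenTrias\right)$ obtained by orienting all relations~\eqref{equ:relation_presentation_trias_gamma_1}---\eqref{equ:relation_presentation_trias_gamma_11} from left to right (with \eqref{equ:relation_presentation_trias_gamma_10} and \eqref{equ:relation_presentation_trias_gamma_11} split into their component binary rewritings, as was done with \eqref{equ:reecriture_dias_gamma_6}---\eqref{equ:reecriture_dias_gamma_9}), and let $\Congr_\gamma$ be the induced operad congruence. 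I would then show, exactly as in Lemma~\ref{lem:mot_gamma_morphisme}, that $\MotT_\gamma$ is a well-defined surjective operad morphism onto $\Trias_\gamma$: well-definedness uses that following the directions dictated by the internal labels (left for $\GDias_a$, right for $\DDias_a$, and \emph{either} branch for $\MTrias$, which is where ``at least one $0$'' rather than ``exactly one $0$'' comes in — $\MTrias$ contributes no constraint so a $\MTrias$-node produces a new $0$ in the output word); the morphism property uses the alternative description replacing each label $\GDias_a$, $\MTrias$, $\DDias_a$ by the words $0a$, $00$, $a0$ and evaluating in $\T\Mca_\gamma$, together with Proposition~\ref{prop:elements_trias_gamma}.

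Next I would prove the analogue of Lemma~\ref{lem:mot_gamma_stable_classes_equivalence}: each elementary $\Rel_\gamma$-rewriting between degree-two trees preserves, leaf by leaf, the set of eligible integers, hence the images, so $\Tfr_1\Congr_\gamma\Tfr_2$ implies $\MotT_\gamma(\Tfr_1)=\MotT_\gamma(\Tfr_2)$ and $\MotT_\gamma$ descends to $\bar\MotT_\gamma$ on the quotient. The core technical step is the analogue of Lemma~\ref{lem:reecriture_dias_gamma}: every syntax tree of $\OpLibre\left(\GenTrias\right)$ rewrites under $\Recr_\gamma$ to a unique extended hook syntax tree, namely $\EquerreT_\gamma(\MotT_\gamma(\Tfr))$. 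I would argue by induction on arity, treating a tree as a root connected to two subtrees already in extended-hook form; relations~\eqref{equ:relation_presentation_trias_gamma_1}---\eqref{equ:relation_presentation_trias_gamma_4} let one push $\MTrias$-nodes upward and comb the remaining structure, while \eqref{equ:relation_presentation_trias_gamma_5}---\eqref{equ:relation_presentation_trias_gamma_11} are precisely the $\Dias_\gamma$-rewritings already analyzed in Lemma~\ref{lem:reecriture_dias_gamma} applied inside each ``tooth'' $\Equerre_\gamma(u)$ and each left comb $\GDias_{v^{(i)}_j}$. Uniqueness follows because two distinct extended hook syntax trees have distinct images under $\MotT_\gamma$, which one reads off directly from the shape \eqref{equ:definition_equerre_trias_gamma}. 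From this, the analogue of Lemma~\ref{lem:mot_gamma_quotient_bijection} — $\bar\MotT_\gamma$ is a bijection in each arity — is immediate (injectivity from the common normal form, surjectivity from Lemma's surjective-morphism part), and since $\bar\MotT_\gamma$ is an operad morphism it is an operad isomorphism $\OpLibre\left(\GenTrias\right)/_{\Congr_\gamma}\to\Trias_\gamma$; as $\RelLibre_{\Trias_\gamma}$ is by construction the space induced by $\Congr_\gamma$, the stated presentation follows.

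The main obstacle I anticipate is the bookkeeping in the extended-hook normal-form lemma: unlike the $\Dias_\gamma$ case, where a normal form is a single hook, here the normal form has a whole right spine of $\MTrias$-nodes each carrying a left comb, and one must check that the combination of moves \eqref{equ:relation_presentation_trias_gamma_1}---\eqref{equ:relation_presentation_trias_gamma_4} (reordering $\MTrias$ relative to the $\GDias_a$'s and $\DDias_a$'s) together with the nine $\Dias_\gamma$-type moves is confluent when applied to an arbitrary tree. In particular the critical-pair analysis must account for new overlaps such as $\MTrias\circ_i\GDias_a$ versus $\MTrias\circ_j\DDias_b$ and $\MTrias\circ_1\MTrias$ interacting with a $\GDias$ below. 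Since the excerpt explicitly says proofs in this section are omitted as being ``very similar'' to those of Sections~\ref{sec:dias_gamma} and~\ref{sec:dendr_gamma}, for the write-up I would state the three lemmas above as the $\Trias_\gamma$-analogues of Lemmas~\ref{lem:mot_gamma_morphisme}, \ref{lem:mot_gamma_stable_classes_equivalence}, \ref{lem:reecriture_dias_gamma}, and \ref{lem:mot_gamma_quotient_bijection}, indicate that the extra moves \eqref{equ:relation_presentation_trias_gamma_1}---\eqref{equ:relation_presentation_trias_gamma_4} handle the propagation of $\MTrias$-nodes while the rest reduces to the already-settled $\Dias_\gamma$ computation, and conclude exactly as in the proof of Theorem~\ref{thm:presentation_dias_gamma}.
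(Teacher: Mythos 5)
Your proposal follows exactly the route the paper intends: the paper omits the proof precisely because it mirrors that of Theorem~\ref{thm:presentation_dias_gamma}, and it already supplies the ingredients you use ($\MotT_\gamma$ extending $\Mot_\gamma$, $\EquerreT_\gamma$ extending $\Equerre_\gamma$, and the extended hook syntax trees as normal forms), so your analogues of Lemmas~\ref{lem:mot_gamma_morphisme}, \ref{lem:mot_gamma_stable_classes_equivalence}, \ref{lem:reecriture_dias_gamma}, and~\ref{lem:mot_gamma_quotient_bijection} are the intended argument. Your handling of the extra $\MTrias$-moves and the resulting right spine of $\MTrias$-nodes in the normal form is the correct adaptation, and the conclusion via the induced bijective operad morphism is the same as in the paper.
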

\medskip

Observe that, by Theorem~\ref{thm:presentation_trias_gamma}, $\Trias_1$
and the triassociative operad~\cite{LR04} admit the same presentation.
Then, for all integers $\gamma \geq 0$, the operads $\Trias_\gamma$ are
generalizations of the triassociative operad.
\medskip

\begin{Theoreme} \label{thm:koszulite_trias_gamma}
    For any integer $\gamma \geq 0$, $\Trias_\gamma$ is a Koszul operad.
    Moreover, the set of extended hook syntax trees of
    $\OpLibre\left(\GenTrias\right)$ forms a Poincaré-Birkhoff-Witt
    basis of $\Trias_\gamma$.
\end{Theoreme}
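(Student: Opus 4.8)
The plan is to mirror the argument used for $\Dias_\gamma$ in Section~\ref{sec:dias_gamma} (Lemmas~\ref{lem:mot_gamma_morphisme}--\ref{lem:mot_gamma_quotient_bijection} and Theorem~\ref{thm:koszulite_dias_gamma}), now in the setting of $\OpLibre\left(\GenTrias\right)$, $\MotT_\gamma$, $\EquerreT_\gamma$, and extended hook syntax trees. First I would introduce the quadratic rewrite rule $\Recr_\gamma$ on $\OpLibre\left(\GenTrias\right)$ obtained by orienting each relation of Theorem~\ref{thm:presentation_trias_gamma}: the rules~\eqref{equ:reecriture_dias_gamma_1}--\eqref{equ:reecriture_dias_gamma_9} already present for $\Dias_\gamma$, plus the rules coming from~\eqref{equ:relation_presentation_trias_gamma_1}--\eqref{equ:relation_presentation_trias_gamma_4} and the new two-term relations in~\eqref{equ:relation_presentation_trias_gamma_10} and~\eqref{equ:relation_presentation_trias_gamma_11} involving $\MTrias$, oriented so that $\MTrias$ is pushed toward the root and the generators $\GDias_a$ are pushed to the left (and $\DDias_a$ to the right) inside the combs. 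Exactly as in the $\Dias_\gamma$ case, the space induced by the operad congruence $\Congr_\gamma$ attached to $\Recr_\gamma$ is $\RelTrias$, so by Theorem~\ref{thm:presentation_trias_gamma} it suffices to exhibit a convergent quadratic rewrite rule presenting $\Trias_\gamma$ and then invoke the Koszulity criterion of Section~\ref{subsubsec:dual_de_Koszul} (Hoffbeck, Dotsenko--Khoroshkin).

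The core of the work is the analogue of Lemma~\ref{lem:reecriture_dias_gamma}: every syntax tree of $\OpLibre\left(\GenTrias\right)$ rewrites by $\Recr_\gamma$ to a unique extended hook syntax tree, namely $\EquerreT_\gamma(\MotT_\gamma(\Tfr))$. For termination I would set up a monovariant by induction on arity, as before, showing that the only irreducible trees are the extended hook shapes of~\eqref{equ:definition_equerre_trias_gamma}; the key new phenomenon is that the $\MTrias$ nodes split the tree along a maximal path into blocks, each of which reduces to a hook piece by the old $\Dias_\gamma$ argument, and the $\MTrias$ nodes themselves get combed (using~\eqref{equ:relation_presentation_trias_gamma_1}) and float above the $\GDias$ combs via~\eqref{equ:relation_presentation_trias_gamma_2}--\eqref{equ:relation_presentation_trias_gamma_4}. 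Then, exactly as in Lemma~\ref{lem:mot_gamma_stable_classes_equivalence}, one checks that a single $\Recr_\gamma$-step preserves the eligible integers of all leaves (the orientations in~\eqref{equ:relation_presentation_trias_gamma_1}--\eqref{equ:relation_presentation_trias_gamma_4} and~\eqref{equ:relation_presentation_trias_gamma_10}--\eqref{equ:relation_presentation_trias_gamma_11} were chosen precisely so that $\MotT_\gamma$ is constant on $\Congr_\gamma$-classes), so $\MotT_\gamma$ descends to $\bar\MotT_\gamma$; combined with the fact that $\MotT_\gamma(\EquerreT_\gamma(x))=x$ and that distinct extended hook trees have distinct $\MotT_\gamma$-images, this forces confluence (every tree has a unique normal form, namely the extended hook tree determined by $\MotT_\gamma$) and simultaneously gives that $\bar\MotT_\gamma$ is a bijection onto $\Trias_\gamma$.

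Having established that $\Recr_\gamma$ is convergent and quadratic and presents $\Trias_\gamma$ (the latter via Proposition~\ref{prop:elements_trias_gamma}, Theorem~\ref{thm:presentation_trias_gamma}, and the bijection $\bar\MotT_\gamma$), the Koszulity criterion of Section~\ref{subsubsec:dual_de_Koszul} yields that $\Trias_\gamma$ is Koszul and that the set of normal forms of $\Recr_\gamma$ --- that is, the set of extended hook syntax trees of $\OpLibre\left(\GenTrias\right)$ --- forms a Poincaré--Birkhoff--Witt basis of $\Trias_\gamma$, which is the statement. The main obstacle I anticipate is the verification of local confluence for the critical pairs of $\Recr_\gamma$ involving $\MTrias$ together with a $\GDias_d$ or $\DDias_d$ in the borderline cases $c=d$ of~\eqref{equ:relation_presentation_trias_gamma_10}--\eqref{equ:relation_presentation_trias_gamma_11}: one must check by hand that the overlaps of an $\MTrias$-rule with the idempotent-type $\GDias_d\circ_2\GDias_d$ / $\DDias_d\circ_2\DDias_d$ rules close up, and this is where the careful orientation choices really matter. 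As in Section~\ref{subsubsec:trias_gamma}, rather than grinding through all these critical pairs explicitly I would lean on the uniqueness-of-normal-form argument supplied by $\MotT_\gamma$ and $\EquerreT_\gamma$, which bypasses a direct critical-pair analysis altogether.
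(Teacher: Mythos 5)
Your proposal is correct and is essentially the proof the paper intends: the paper explicitly omits the argument for Theorem~\ref{thm:koszulite_trias_gamma}, stating it is very similar to the one for $\Dias_\gamma$, and your plan (orienting the relations of Theorem~\ref{thm:presentation_trias_gamma} into a quadratic rewrite rule, showing via $\MotT_\gamma$ and $\EquerreT_\gamma$ that every syntax tree of $\OpLibre\left(\GenTrias\right)$ rewrites to the extended hook syntax tree $\EquerreT_\gamma(\MotT_\gamma(\Tfr))$ as its unique normal form, and then invoking the Koszulity criterion of Section~\ref{subsubsec:dual_de_Koszul}) reproduces exactly the strategy of Lemmas~\ref{lem:mot_gamma_morphisme}--\ref{lem:mot_gamma_quotient_bijection} and Theorem~\ref{thm:koszulite_dias_gamma}. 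In particular, deducing confluence from the invariance of $\MotT_\gamma$ and the uniqueness of the extended hook normal form, rather than through an explicit critical-pair analysis, is the same shortcut the paper uses in the $\Dias_\gamma$ case.
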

\medskip

%%%%%%%%%%%%%%%%%%%%%%%%%%%%%%%%%%%%%%%%%%%%%%%%%%%%%%%%%%%%%%%%%%%%%%%%
\subsubsection{Polytridendriform operads} \label{subsubsec:tdendr_gamma}
Theorem~\ref{thm:presentation_trias_gamma}, by exhibiting a presentation
of $\Trias_\gamma$, shows that this operad is binary and quadratic.
It then admits a Koszul dual, denoted by $\TDendr_\gamma$ and called
{\em $\gamma$-polytridendriform operad}.
\medskip

\begin{Theoreme} \label{thm:presentation_tdendr_gamma}
    For any integer $\gamma \geq 0$, the operad $\TDendr_\gamma$ admits
    the following presentation. It is generated by
    $\GenTDendr := \GenTDendr(2) :=
    \{\GDendrA_a, \MTDendr, \DDendrA_a : a \in [\gamma]\}$ and its space
    of relations $\RelTDendr$ is generated by
    \begin{subequations}
    \begin{equation}\label{equ:relation_presentation_tdendr_gamma_1}
        \MTDendr \circ_1 \MTDendr
         -
        \MTDendr \circ_2 \MTDendr,
    \end{equation}
    \begin{equation}\label{equ:relation_presentation_tdendr_gamma_2}
        \GDendrA_a \circ_1 \MTDendr
         -
        \MTDendr \circ_2 \GDendrA_a,
        \qquad a \in [\gamma],
    \end{equation}
    \begin{equation}\label{equ:relation_presentation_tdendr_gamma_3}
        \MTDendr \circ_1 \DDendrA_a
         -
        \DDendrA_a \circ_2 \MTDendr,
        \qquad a \in [\gamma],
    \end{equation}
    \begin{equation}\label{equ:relation_presentation_tdendr_gamma_4}
        \MTDendr \circ_1 \GDendrA_a
         -
        \MTDendr \circ_2 \DDendrA_a,
        \qquad a \in [\gamma],
    \end{equation}
    \begin{equation}\label{equ:relation_presentation_tdendr_gamma_5}
        \GDendrA_a \circ_1 \DDendrA_{a'}
         -
        \DDendrA_{a'} \circ_2 \GDendrA_a,
        \qquad a, a' \in [\gamma],
    \end{equation}
    \begin{equation}\label{equ:relation_presentation_tdendr_gamma_6}
        \GDendrA_a \circ_1 \GDendrA_b
         -
        \GDendrA_a \circ_2 \DDendrA_b,
        \qquad a < b \in [\gamma],
    \end{equation}
    \begin{equation}\label{equ:relation_presentation_tdendr_gamma_7}
        \DDendrA_a \circ_1 \GDendrA_b
         -
        \DDendrA_a \circ_2 \DDendrA_b,
        \qquad a < b \in [\gamma],
    \end{equation}
    \begin{equation}\label{equ:relation_presentation_tdendr_gamma_8}
        \GDendrA_b \circ_1 \GDendrA_a
         -
        \GDendrA_a \circ_2 \GDendrA_b,
        \qquad a < b \in [\gamma],
    \end{equation}
    \begin{equation}\label{equ:relation_presentation_tdendr_gamma_9}
        \DDendrA_a \circ_1 \DDendrA_b
         -
        \DDendrA_b \circ_2 \DDendrA_a,
        \qquad a < b \in [\gamma],
    \end{equation}
    \begin{equation}\label{equ:relation_presentation_tdendr_gamma_10}
        \GDendrA_d \circ_1 \GDendrA_d -
        \GDendrA_d \circ_2 \MTDendr -
        \left(\sum_{c \in [d]}
        \GDendrA_d \circ_2 \GDendrA_c +
        \GDendrA_d \circ_2 \DDendrA_c
        \right),
        \qquad d \in [\gamma],
    \end{equation}
    \begin{equation}\label{equ:relation_presentation_tdendr_gamma_11}
        \left(\sum_{c \in [d]}
        \DDendrA_d \circ_1 \GDendrA_c +
        \DDendrA_d \circ_1 \DDendrA_c
        \right) +
        \DDendrA_d \circ_1 \MTDendr -
        \DDendrA_d \circ_2 \DDendrA_d,
        \qquad d \in [\gamma].
    \end{equation}
    \end{subequations}
\end{Theoreme}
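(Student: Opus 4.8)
The strategy is identical to the one used to obtain the presentation of $\Dendr_\gamma$ from that of $\Dias_\gamma$ in Theorem~\ref{thm:presentation_dendr_gamma}: since Theorem~\ref{thm:presentation_trias_gamma} already exhibits $\Trias_\gamma$ as a binary and quadratic operad with explicit space of relations $\RelTrias$, its Koszul dual $\TDendr_\gamma$ admits, by the definition of Koszul duality recalled in Section~\ref{subsubsec:dual_de_Koszul}, the presentation $\left(\GenTrias, \RelTrias^\perp\right)$ (after identifying the generators of $\Trias_\gamma$ with those of $\TDendr_\gamma$ in the obvious way: $\GDias_a \leftrightarrow \GDendrA_a$, $\DDias_a \leftrightarrow \DDendrA_a$, and $\MTrias \leftrightarrow \MTDendr$). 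So the whole content of the theorem is the computation of the orthogonal complement $\RelTrias^\perp$ inside $\OpLibre\left(\GenTrias\right)(3)$ with respect to the scalar product of Section~\ref{subsubsec:dual_de_Koszul}, and the verification that the elements~\eqref{equ:relation_presentation_tdendr_gamma_1}--\eqref{equ:relation_presentation_tdendr_gamma_11} form a basis of that complement.

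First I would fix notation: $\OpLibre\left(\GenTrias\right)(3)$ has dimension $2(2\gamma+1)^2$, with basis the syntax trees $x \circ_1 y$ and $x \circ_2 y$ for $x, y \in \GenTrias(2)$, and the scalar product is the diagonal form that pairs $x \circ_1 y$ with itself to $+1$, pairs $x \circ_2 y$ with itself to $-1$, and annihilates everything else. I would then read off from Theorem~\ref{thm:presentation_trias_gamma} a spanning set of $\RelTrias$ (the equivalence relation $\Rel_\gamma$ in relations~\eqref{equ:relation_presentation_trias_gamma_1}--\eqref{equ:relation_presentation_trias_gamma_11} induces, as a space, differences of basis trees), extract a basis of $\RelTrias$, and compute $\dim \RelTrias$; since $\Trias_\gamma$ is quadratic, $\dim \RelTrias^\perp = 2(2\gamma+1)^2 - \dim \RelTrias$. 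The same bookkeeping done for $\Dias_\gamma$ gives $\dim \RelLibre_{\Dias_\gamma} = 5\gamma^2$ (Proposition~\ref{prop:presentation_alternative_dias_gamma}) and $\dim \RelLibre_{\Dendr_\gamma} = 3\gamma^2$; here the three extra symbols involving $\MTrias$ in relations~\eqref{equ:relation_presentation_trias_gamma_1}--\eqref{equ:relation_presentation_trias_gamma_4}, \eqref{equ:relation_presentation_trias_gamma_10}, and~\eqref{equ:relation_presentation_trias_gamma_11} contribute a bounded number of additional independent relations, so the arithmetic is a straightforward enumeration. Then, exactly as in the proof of Theorem~\ref{thm:presentation_dendr_gamma}, I would verify by a direct computation that each of~\eqref{equ:relation_presentation_tdendr_gamma_1}--\eqref{equ:relation_presentation_tdendr_gamma_11} is orthogonal to every generator of $\RelTrias$ — i.e.\ lies in $\RelTrias^\perp$ — using the sign rule of the scalar product; this reduces to checking, term by term, that the supports of a $\TDendr_\gamma$-relation and a $\Trias_\gamma$-relation either do not overlap or overlap with cancelling signs. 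Finally, I would observe that the listed elements~\eqref{equ:relation_presentation_tdendr_gamma_1}--\eqref{equ:relation_presentation_tdendr_gamma_11} are linearly independent (their leading terms $\MTDendr \circ_1 \MTDendr$, $\GDendrA_a \circ_1 \MTDendr$, $\MTDendr \circ_1 \DDendrA_a$, $\MTDendr \circ_1 \GDendrA_a$, $\GDendrA_a \circ_1 \DDendrA_{a'}$, $\GDendrA_a \circ_1 \GDendrA_b$, $\DDendrA_a \circ_1 \GDendrA_b$, $\GDendrA_b \circ_1 \GDendrA_a$, $\DDendrA_a \circ_1 \DDendrA_b$, $\GDendrA_d \circ_1 \GDendrA_d$, $\DDendrA_d \circ_2 \DDendrA_d$ are pairwise distinct basis trees), so that they span a subspace of $\RelTrias^\perp$ whose dimension equals $\dim \RelTrias^\perp$; hence they form a basis of it, and $\TDendr_\gamma$ admits the claimed presentation.

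The main obstacle is purely organizational rather than conceptual: one must correctly normalize the spanning set of $\RelTrias$ coming from the chains of $\Rel_\gamma$ in~\eqref{equ:relation_presentation_trias_gamma_10} and~\eqref{equ:relation_presentation_trias_gamma_11} (each chain of $k$ equivalent trees contributes $k-1$ independent relations, and there is overlap between the ``$a<b$'' families~\eqref{equ:relation_presentation_trias_gamma_6}--\eqref{equ:relation_presentation_trias_gamma_9} and the ``$c\le d$'' chains when indices coincide), so that the dimension count $\dim \RelTrias$ — and therefore $\dim\RelTrias^\perp = 2(2\gamma+1)^2 - \dim\RelTrias$, which should come out to the number of elements in~\eqref{equ:relation_presentation_tdendr_gamma_1}--\eqref{equ:relation_presentation_tdendr_gamma_11}, namely $3 + 4\gamma + 3\gamma^2 + 3(\gamma^2-\gamma) = 3 + \gamma + 6\gamma^2$ once one accounts for the ranges — is exactly right. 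Since the proof is strictly parallel to that of Theorem~\ref{thm:presentation_dendr_gamma} and the paper explicitly states that the proofs in this section are omitted as being very similar to those of Sections~\ref{sec:dias_gamma} and~\ref{sec:dendr_gamma}, the verification of orthogonality and of the dimension match is the entire substance, and it is routine.
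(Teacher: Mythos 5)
Your strategy is the same as the paper's: the paper omits the proof, but the intended argument is exactly the one used for Theorem~\ref{thm:presentation_dendr_gamma}, namely a direct computation of the annihilator $\RelTrias^\perp$ inside $\OpLibre\left(\GenTrias\right)(3)$ for the scalar product of Section~\ref{subsubsec:dual_de_Koszul}, followed by the observation that the listed elements are orthogonal to $\RelTrias$, linearly independent, and in the right number. So the plan is sound and parallel to the paper.

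However, the one piece of arithmetic you actually carry out is wrong, and since the dimension match is the load-bearing step of your plan you should redo it. The ambient space has dimension $2(2\gamma+1)^2 = 8\gamma^2 + 8\gamma + 2$. Counting the equivalence classes of $\Rel_\gamma$ in Theorem~\ref{thm:presentation_trias_gamma} (each class of size $k$ contributes $k-1$; the chains in~\eqref{equ:relation_presentation_trias_gamma_10} and~\eqref{equ:relation_presentation_trias_gamma_11} have $2d+2$ elements each, hence contribute $\sum_{d \in [\gamma]} (2d+1) = \gamma^2 + 2\gamma$ apiece, and they do not overlap with the $a<b$ families) gives $\dim \RelTrias = 1 + 5\gamma + 5\gamma^2$, so $\dim \RelTrias^\perp = 3\gamma^2 + 3\gamma + 1$. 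The number of elements listed in~\eqref{equ:relation_presentation_tdendr_gamma_1}--\eqref{equ:relation_presentation_tdendr_gamma_11} is $1 + 3\gamma + \gamma^2 + 4\binom{\gamma}{2} + 2\gamma = 3\gamma^2 + 3\gamma + 1$, not the $3 + \gamma + 6\gamma^2$ you state; with the correct count the two numbers agree and your argument closes as intended. (As a sanity check, $\gamma = 1$ gives $7$ relations, matching the tridendriform operad.)
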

\medskip

\begin{Proposition} \label{prop:serie_hilbert_tdendr_gamma}
    For any integer $\gamma \geq 0$, the Hilbert series
    $\Hca_{\TDendr_\gamma}(t)$ of the operad $\TDendr_\gamma$ satisfies
    \begin{equation}
        \Hca_{\TDendr_\gamma}(t) =
        t + (2\gamma + 1) t \, \Hca_{\TDendr_\gamma}(t) +
        \gamma (\gamma + 1) t \, \Hca_{\TDendr_\gamma}(t)^2.
    \end{equation}
\end{Proposition}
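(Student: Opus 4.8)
The plan is to mimic exactly the proof of Proposition~\ref{prop:serie_hilbert_dendr_gamma}, using the fact that $\TDendr_\gamma$ is by definition the Koszul dual of $\Trias_\gamma$, together with the Koszulity of $\Trias_\gamma$ established in Theorem~\ref{thm:koszulite_trias_gamma} and the known Hilbert series of $\Trias_\gamma$. First I would recall from Proposition~\ref{prop:elements_trias_gamma} and the subsequent computation that
\begin{equation}
    \Hca_{\Trias_\gamma}(t) = \frac{t}{(1 - \gamma t)(1 - \gamma t - t)}.
\end{equation}
Then, since $\Trias_\gamma$ is a binary, quadratic, Koszul operad (Theorems~\ref{thm:presentation_trias_gamma} and~\ref{thm:koszulite_trias_gamma}) and $\TDendr_\gamma$ is its Koszul dual, both admit Hilbert series and these satisfy relation~\eqref{equ:relation_series_hilbert_operade_duale}, namely $\Hca_{\Trias_\gamma}\left(-\Hca_{\TDendr_\gamma}(-t)\right) = t$.

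The core of the argument is a routine series-inversion computation. Setting $\bar\Hca_{\Trias_\gamma}(t) := \Hca_{\Trias_\gamma}(-t)$ and $\bar\Hca_{\TDendr_\gamma}(t) := \Hca_{\TDendr_\gamma}(-t)$, relation~\eqref{equ:relation_series_hilbert_operade_duale} reads $\bar\Hca_{\Trias_\gamma}\left(\bar\Hca_{\TDendr_\gamma}(t)\right) = t$, so $\bar\Hca_{\TDendr_\gamma}$ is the compositional inverse of $\bar\Hca_{\Trias_\gamma}$. From the explicit form of $\Hca_{\Trias_\gamma}$ one has
\begin{equation}
    \bar\Hca_{\Trias_\gamma}(u) = \frac{-u}{(1 + \gamma u)(1 + \gamma u + u)}.
\end{equation}
Substituting $u = \bar\Hca_{\TDendr_\gamma}(t)$ and imposing that the result equals $t$ gives an algebraic equation for $\bar\Hca_{\TDendr_\gamma}(t)$; clearing denominators yields
\begin{equation}
    t\left(1 + \gamma\,\bar\Hca_{\TDendr_\gamma}(t)\right)
    \left(1 + (\gamma + 1)\,\bar\Hca_{\TDendr_\gamma}(t)\right)
    = -\bar\Hca_{\TDendr_\gamma}(t).
\end{equation}
Expanding the product and replacing $\bar\Hca_{\TDendr_\gamma}(t)$ by $-\Hca_{\TDendr_\gamma}(-t)$, then substituting $-t$ for $t$, converts this into the claimed functional equation $\Hca_{\TDendr_\gamma}(t) = t + (2\gamma + 1) t\,\Hca_{\TDendr_\gamma}(t) + \gamma(\gamma + 1) t\,\Hca_{\TDendr_\gamma}(t)^2$. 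This is exactly analogous to the passage~\eqref{equ:serie_hilbert_dendr_gamma_demo} in the proof of Proposition~\ref{prop:serie_hilbert_dendr_gamma}, where the factor $(1 + \gamma u)^2$ of the diassociative case is here replaced by $(1 + \gamma u)(1 + (\gamma+1)u)$.

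There is really no serious obstacle: every ingredient is already in place. The only point requiring minor care is bookkeeping the signs correctly in the two substitutions $u \mapsto \bar\Hca_{\TDendr_\gamma}(t)$ and $t \mapsto -t$, and checking that the quadratic equation one obtains indeed has the desired combinatorial series as its unique power-series solution with zero constant term (which is automatic since $\bar\Hca_{\Trias_\gamma}$ has nonzero linear coefficient, hence a well-defined compositional inverse). One then concludes exactly as in the proof of Proposition~\ref{prop:serie_hilbert_dendr_gamma}: since $\Trias_\gamma$ is Koszul with Hilbert series $\Hca_{\Trias_\gamma}(t)$ and $\TDendr_\gamma$ is its Koszul dual, the Hilbert series of $\TDendr_\gamma$ must be the unique series satisfying~\eqref{equ:relation_series_hilbert_operade_duale}, which we have just shown satisfies the stated equation.
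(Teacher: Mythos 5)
Your proposal is correct and follows exactly the route the paper intends: the paper omits this proof precisely because it is the same computation as in Proposition~\ref{prop:serie_hilbert_dendr_gamma}, namely combining the Hilbert series of $\Trias_\gamma$, its Koszulity (Theorem~\ref{thm:koszulite_trias_gamma}), and relation~\eqref{equ:relation_series_hilbert_operade_duale}, with the factor $(1 + \gamma u)^2$ replaced by $(1 + \gamma u)\left(1 + (\gamma + 1)u\right)$. Your sign bookkeeping and the resulting functional equation check out.
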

\medskip

By examining the expression for $\Hca_{\TDendr_\gamma}(t)$ of the
statement of Proposition~\ref{prop:serie_hilbert_tdendr_gamma}, we
observe that for any $n \geq 1$, $\TDendr(n)$ can be seen as the
vector space $\AlgLibre_{\TDendr_\gamma}(n)$ of Schröder trees
with $n + 1$ leaves wherein its edges connecting two internal nodes
are labeled on $[\gamma]$. We call these trees
{\em $\gamma$-edge valued Schröder trees}. For instance,
\begin{equation}
    \begin{split}
    \begin{tikzpicture}[xscale=.25,yscale=.10]
        \node[Feuille](0)at(0.00,-21.60){};
        \node[Feuille](11)at(10.00,-10.80){};
        \node[Feuille](13)at(11.00,-16.20){};
        \node[Feuille](15)at(12.00,-16.20){};
        \node[Feuille](16)at(13.00,-16.20){};
        \node[Feuille](18)at(14.00,-10.80){};
        \node[Feuille](19)at(15.00,-16.20){};
        \node[Feuille](2)at(2.00,-21.60){};
        \node[Feuille](21)at(17.00,-21.60){};
        \node[Feuille](23)at(19.00,-21.60){};
        \node[Feuille](24)at(20.00,-10.80){};
        \node[Feuille](26)at(22.00,-10.80){};
        \node[Feuille](4)at(3.00,-21.60){};
        \node[Feuille](6)at(5.00,-21.60){};
        \node[Feuille](7)at(6.00,-21.60){};
        \node[Feuille](9)at(8.00,-21.60){};
        \node[Noeud](1)at(1.00,-16.20){};
        \node[Noeud](10)at(9.00,-5.40){};
        \node[Noeud](12)at(15.00,0.00){};
        \node[Noeud](14)at(12.00,-10.80){};
        \node[Noeud](17)at(14.00,-5.40){};
        \node[Noeud](20)at(16.00,-10.80){};
        \node[Noeud](22)at(18.00,-16.20){};
        \node[Noeud](25)at(21.00,-5.40){};
        \node[Noeud](3)at(4.00,-10.80){};
        \node[Noeud](5)at(4.00,-16.20){};
        \node[Noeud](8)at(7.00,-16.20){};
        \draw[Arete](0)--(1);
        \draw[Arete](1)edge[]node[EtiqArete]{\begin{math}4\end{math}}(3);
        \draw[Arete](10)edge[]node[EtiqArete]{\begin{math}2\end{math}}(12);
        \draw[Arete](11)--(10);
        \draw[Arete](13)--(14);
        \draw[Arete](14)edge[]node[EtiqArete]{\begin{math}1\end{math}}(17);
        \draw[Arete](15)--(14);
        \draw[Arete](16)--(14);
        \draw[Arete](17)edge[]node[EtiqArete]{\begin{math}4\end{math}}(12);
        \draw[Arete](18)--(17);
        \draw[Arete](19)--(20);
        \draw[Arete](2)--(1);
        \draw[Arete](20)edge[]node[EtiqArete]{\begin{math}2\end{math}}(17);
        \draw[Arete](21)--(22);
        \draw[Arete](22)edge[]node[EtiqArete]{\begin{math}4\end{math}}(20);
        \draw[Arete](23)--(22);
        \draw[Arete](24)--(25);
        \draw[Arete](25)edge[]node[EtiqArete]{\begin{math}4\end{math}}(12);
        \draw[Arete](26)--(25);
        \draw[Arete](3)edge[]node[EtiqArete]{\begin{math}2\end{math}}(10);
        \draw[Arete](4)--(5);
        \draw[Arete](5)edge[]node[EtiqArete]{\begin{math}1\end{math}}(3);
        \draw[Arete](6)--(5);
        \draw[Arete](7)--(8);
        \draw[Arete](8)edge[]node[EtiqArete]{\begin{math}4\end{math}}(3);
        \draw[Arete](9)--(8);
        \node(r)at(15.00,3.5){};
        \draw[Arete](r)--(12);
    \end{tikzpicture}
    \end{split}
\end{equation}
is a $4$-edge valued Schröder tree and a basis element of $\TDendr_4(16)$.
\medskip

We deduce from Proposition~\ref{prop:serie_hilbert_tdendr_gamma} that
\begin{equation}
    \Hca_{\TDendr_\gamma}(t) =
    \frac{1 - \sqrt{1 - (4\gamma + 2)t + t^2} - (2\gamma + 1)t}
    {2(\gamma + \gamma^2)t}.
\end{equation}
Moreover, we obtain that for all $n \geq 1$,
\begin{equation}
    \dim \TDendr_\gamma(n) =
    \sum_{k = 0}^{n - 1} (\gamma + 1)^k \gamma^{n - k - 1} \, \Nar(n, k),
\end{equation}
where $\Nar(n, k)$ is defined in~\eqref{equ:definition_narayana}.
For instance, the first dimensions of $\TDendr_1$, $\TDendr_2$,
$\TDendr_3$, and $\TDendr_4$ are respectively
\begin{equation}
    1, 3, 11, 45, 197, 903, 4279, 20793, 103049, 518859, 2646723,
\end{equation}
\begin{equation}
    1, 5, 31, 215, 1597, 12425, 99955, 824675, 6939769, 59334605, 513972967,
\end{equation}
\begin{equation}
    1, 7, 61, 595, 6217, 68047, 770149, 8939707, 105843409,
    1273241431, 15517824973,
\end{equation}
\begin{equation}
    1, 9, 101, 1269, 17081, 240849, 3511741, 52515549, 801029681,
    12414177369, 194922521301.
\end{equation}
The first one is Sequence~\Sloane{A001003} of~\cite{Slo}. The others
sequences are not listed in~\cite{Slo} at this time.
\medskip

%%%%%%%%%%%%%%%%%%%%%%%%%%%%%%%%%%%%%%%%%%%%%%%%%%%%%%%%%%%%%%%%%%%%%%%%
%%%%%%%%%%%%%%%%%%%%%%%%%%%%%%%%%%%%%%%%%%%%%%%%%%%%%%%%%%%%%%%%%%%%%%%%
\subsection{Operads of the operadic butterfly}
The {\em operadic butterfly}~\cite{Lod01,Lod06} is a diagram gathering
seven famous operads. We have seen in
Section~\ref{subsec:diagramme_dias_as_dendr_gamma} that this diagram
gathers the diassociative, associative, and dendriform operads. It
involves also the {\em commutative operad} $\Com$, the {\em Lie operad}
$\Lie$, the {\em Zinbiel operad} $\Zin$~\cite{Lod95}, and
the {\em Leibniz operad} $\Leib$~\cite{Lod93}. It is of the form
\begin{equation} \label{equ:diagramme_papillon}
    \begin{split}
    \begin{tikzpicture}[xscale=.7,yscale=.65]
        \node(Dendr)at(-2,2){\begin{math}\Dendr\end{math}};
        \node(As)at(0,0){\begin{math}\As\end{math}};
        \node(Dias)at(2,2){\begin{math}\Dias\end{math}};
        \node(Com)at(-2,-2){\begin{math}\Com\end{math}};
        \node(Lie)at(2,-2){\begin{math}\Lie\end{math}};
        \node(Zin)at(-4,0){\begin{math}\Zin\end{math}};
        \node(Leib)at(4,0){\begin{math}\Leib\end{math}};
        \draw[->](Dias)--(As);
        \draw[->](As)--(Dendr);
        \draw[->](Dendr)--(Zin);
        \draw[->](Leib)--(Dias);
        \draw[->](Com)--(Zin);
        \draw[->](As)--(Com);
        \draw[->](Lie)--(As);
        \draw[->](Leib)--(Lie);
        \draw[<->,dotted,loop above,looseness=13](As)
            edge node[anchor=south]{\begin{math} ! \end{math}}(As);
        \draw[<->,dotted](Dendr)
            edge node[anchor=south]{\begin{math} ! \end{math}}(Dias);
        \draw[<->,dotted](Com)
            edge node[anchor=south]{\begin{math} ! \end{math}}(Lie);
        \draw[<->,dotted,loop below,looseness=.3](Zin)
            edge node[anchor=south]{\begin{math} ! \end{math}}(Leib);
    \end{tikzpicture}
    \end{split}
\end{equation}
and as it shows, some operads are Koszul dual of some others
(in particular, $\Com^! = \Lie$ and $\Zin^! = \Leib$).
\medskip

We have to emphasize the fact the operads $\Com$, $\Lie$, $\Zin$, and
$\Leib$ of the operadic butterfly are symmetric operads. The computation
of the Koszul dual of a symmetric operad does not follows what we have
presented in Section~\ref{subsubsec:dual_de_Koszul}. We invite the
reader to consult~\cite{GK94} or~\cite{LV12} for a complete
description.
\medskip

For simplicity, in what follows, we shall consider algebras over
symmetric operads instead of symmetric operads.
\medskip

%%%%%%%%%%%%%%%%%%%%%%%%%%%%%%%%%%%%%%%%%%%%%%%%%%%%%%%%%%%%%%%%%%%%%%%%
\subsubsection{A generalization of the operadic butterfly}
A possible continuation to this work consists in constructing a diagram
\begin{equation} \label{equ:diagramme_papillon_gamma}
    \begin{split}
    \begin{tikzpicture}[xscale=.7,yscale=.65]
        \node(Dendr)at(-4,2){\begin{math}\Dendr_\gamma\end{math}};
        \node(As)at(1.5,0){\begin{math}\As_\gamma\end{math}};
        \node(DAs)at(-1.5,0){\begin{math}\DAs_\gamma\end{math}};
        \node(Dias)at(4,2){\begin{math}\Dias_\gamma\end{math}};
        \node(Com)at(-4,-2){\begin{math}\Com_\gamma\end{math}};
        \node(Lie)at(4,-2){\begin{math}\Lie_\gamma\end{math}};
        \node(Zin)at(-6,0){\begin{math}\Zin_\gamma\end{math}};
        \node(Leib)at(6,0){\begin{math}\Leib_\gamma\end{math}};
        \draw[->](Dias)--(As);
        \draw[->](DAs)--(Dendr);
        \draw[->](Dendr)--(Zin);
        \draw[->](Leib)--(Dias);
        \draw[->](Com)--(Zin);
        \draw[->](DAs)--(Com);
        \draw[->](Lie)--(As);
        \draw[->](Leib)--(Lie);
        \draw[<->,dotted](As)
            edge node[anchor=south]{\begin{math} ! \end{math}}(DAs);
        \draw[<->,dotted](Dendr)
            edge node[anchor=south]{\begin{math} ! \end{math}}(Dias);
        \draw[<->,dotted](Com)
            edge node[anchor=south]{\begin{math} ! \end{math}}(Lie);
        \draw[<->,dotted,loop below,looseness=.2](Zin)
            edge node[anchor=south]{\begin{math} ! \end{math}}(Leib);
    \end{tikzpicture}
    \end{split}
\end{equation}
where $\DAs_\gamma$ is the $\gamma$-dual multiassociative operad
defined in Section~\ref{subsubsec:das_gamma} and $\Com_\gamma$,
$\Lie_\gamma$, $\Zin_\gamma$, and $\Leib_\gamma$, respectively are
one-parameter nonnegative integer generalizations of the operads $\Com$,
$\Lie$, $\Zin$, and $\Leib$. Let us now define these operads.
\medskip

%%%%%%%%%%%%%%%%%%%%%%%%%%%%%%%%%%%%%%%%%%%%%%%%%%%%%%%%%%%%%%%%%%%%%%%%
\subsubsection{Commutative and Lie operads} \label{subsubsec:com_gamma}
The symmetric operad $\Com$ is the symmetric operad describing the
category of algebras $\Cca$ with one binary operation $\MDAs$, subjected
for any elements $x$, $y$, and $z$ of $\Cca$ to the two relations
\begin{subequations}
\begin{equation}
    x \MDAs y = y \MDAs x,
\end{equation}
\begin{equation}
    (x \MDAs y) \MDAs z = x \MDAs (y \MDAs z).
\end{equation}
\end{subequations}
This operad has the property to be a commutative version of $\As = \DAs_1$.
\medskip

We define the symmetric operad $\Com_\gamma$ by using the same idea of
being a commutative version of $\DAs_\gamma$. Therefore, $\Com_\gamma$
is the symmetric operad describing the category of algebras $\Cca$ with
binary operations $\MDAs_a$, $a \in [\gamma]$, subjected for any elements
$x$, $y$, and $z$ of $\Cca$ to the two sorts of relations
\begin{subequations}
\begin{equation} \label{equ:relation_com_gamma_1}
    x \MDAs_a y = y \MDAs_a x,
    \qquad a \in [\gamma],
\end{equation}
\begin{equation} \label{equ:relation_com_gamma_2}
    (x \MDAs_a y) \MDAs_a z = x \MDAs_a (y \MDAs_a z),
    \qquad a \in [\gamma].
\end{equation}
\end{subequations}
Moreover, we define the symmetric operad $\Lie_\gamma$ as the Koszul dual
of $\Com_\gamma$.
\medskip

%%%%%%%%%%%%%%%%%%%%%%%%%%%%%%%%%%%%%%%%%%%%%%%%%%%%%%%%%%%%%%%%%%%%%%%%
\subsubsection{Zinbiel and Leibniz operads} \label{subsubsec:zin_gamma}
The symmetric operad $\Zin$ is the symmetric operad describing
the category of algebras $\Zca$ with one generating binary operation
$\ProdZin$, subjected for any elements $x$, $y$, and $z$ of $\Zca$ to
the relation
\begin{equation} \label{equ:relation_zinbiel}
    (x \ProdZin y) \ProdZin z =
    x \ProdZin (y \ProdZin z) + x \ProdZin (z \ProdZin y).
\end{equation}
This operad has the property to be a commutative version of
$\Dendr = \Dendr_1$. Indeed, Relation~\eqref{equ:relation_zinbiel} is
obtained from Relations~\eqref{equ:relation_dendr_1},
\eqref{equ:relation_dendr_2}, and~\eqref{equ:relation_dendr_3} of
dendriform algebras with the condition that for any elements $x$ and $y$,
$x \GDendr y = y \DDendr x$, and by setting $x \ProdZin y := x \GDendr y$.
\medskip

We define the symmetric operad $\Zin_\gamma$ by using the same idea of
having the property to be a commutative version of $\Dendr_\gamma$.
Therefore, $\Zin_\gamma$ is the symmetric operad describing the
category of algebras $\Zca$ with binary operations $\ProdZin_a$,
$a \in [\gamma]$, subjected for any elements $x$, $y$, and $z$ of $\Zca$
to the relation
\begin{equation} \label{equ:relation_zinbiel_gamma}
    (x \ProdZin_{a'} y) \ProdZin_a z =
    x \ProdZin_{a \Min a'} (y \ProdZin_a z) +
    x \ProdZin_{a \Min a'} (z \ProdZin_{a'} y),
    \qquad a, a' \in [\gamma].
\end{equation}
Relation~\eqref{equ:relation_zinbiel_gamma} is obtained
from Relations~\eqref{equ:relation_dendr_gamma_1_concise},
\eqref{equ:relation_dendr_gamma_2_concise},
and~\eqref{equ:relation_dendr_gamma_3_concise} of
$\gamma$-polydendriform algebras with the condition that for any elements
$x$ and $y$ and $a \in [\gamma]$, $x \GDendr_a y = y \DDendr_a x$, and
by setting $x \ProdZin_a y := x \GDendr_a y$. Moreover, we define the
symmetric operad $\Leib_\gamma$ as the Koszul dual of $\Zin_\gamma$.
\medskip

\begin{Proposition} \label{prop:morphism_com_zin_gamma}
    For any integer $\gamma \geq 0$ and any $\Zin_\gamma$-algebra
    $\Zca$, the binary operations $\MDAs_a$, $a \in [\gamma]$, defined
    for all elements $x$ and $y$ of $\Zca$ by
    \begin{equation}
        x \MDAs_a y := x \ProdZin_a y + y \ProdZin_a x,
        \qquad a \in [\gamma],
    \end{equation}
    endow $\Zca$ with a $\Com_\gamma$-algebra structure.
\end{Proposition}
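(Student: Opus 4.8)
The plan is to verify directly that the operations $\MDAs_a$, $a \in [\gamma]$, defined by symmetrization of the $\ProdZin_a$ satisfy the two families of relations~\eqref{equ:relation_com_gamma_1} and~\eqref{equ:relation_com_gamma_2} defining a $\Com_\gamma$-algebra. The commutativity relation~\eqref{equ:relation_com_gamma_1} is immediate from the definition, since $x \MDAs_a y = x \ProdZin_a y + y \ProdZin_a x = y \MDAs_a x$ for all $a \in [\gamma]$. So the whole content lies in checking the associativity relation~\eqref{equ:relation_com_gamma_2}, namely that $(x \MDAs_a y) \MDAs_a z = x \MDAs_a (y \MDAs_a z)$ for each fixed $a$.

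First I would expand both sides using the definition of $\MDAs_a$. The left-hand side $(x \MDAs_a y) \MDAs_a z$ becomes a sum of four terms: $(x \ProdZin_a y) \ProdZin_a z + (y \ProdZin_a x) \ProdZin_a z + z \ProdZin_a (x \ProdZin_a y) + z \ProdZin_a (y \ProdZin_a x)$, and symmetrically for the right-hand side. Each of the terms of the form $u \ProdZin_a (v \ProdZin_a w)$ or $(u \ProdZin_a v) \ProdZin_a w$ can be rewritten using the $\Zin_\gamma$-relation~\eqref{equ:relation_zinbiel_gamma}, specialized to the case $a = a'$, which reads $(x \ProdZin_a y) \ProdZin_a z = x \ProdZin_a (y \ProdZin_a z) + x \ProdZin_a (z \ProdZin_a y)$. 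The strategy is then to apply this rule to rewrite every left-nested product as a sum of right-nested products, so that both sides of~\eqref{equ:relation_com_gamma_2} become explicit $\K$-linear combinations of the six expressions $w \ProdZin_a (w' \ProdZin_a w'')$ where $\{w, w', w''\}$ ranges over the orderings of $\{x, y, z\}$, and finally to check that these two linear combinations coincide term by term.

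In more detail: after rewriting, the left-hand side yields $x \ProdZin_a (y \ProdZin_a z) + x \ProdZin_a (z \ProdZin_a y) + y \ProdZin_a (x \ProdZin_a z) + y \ProdZin_a (z \ProdZin_a x)$ from the first two terms, plus $z \ProdZin_a (x \ProdZin_a y) + z \ProdZin_a (y \ProdZin_a x)$ from the last two, giving the symmetric sum over all six right-nested products. Performing the same expansion on the right-hand side $x \MDAs_a (y \MDAs_a z)$ — here only the term $x \ProdZin_a (\text{something})$ and $(\text{something}) \ProdZin_a x$ appear, and the latter must be rewritten — one again obtains the full symmetric sum over the six orderings. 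Since both computations terminate at the same totally symmetric expression, relation~\eqref{equ:relation_com_gamma_2} holds, and $\Zca$ equipped with the $\MDAs_a$ is a $\Com_\gamma$-algebra.

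The main (and really only) obstacle is bookkeeping: one must be careful that the $\Zin_\gamma$-relation used to linearize $(u \ProdZin_a v) \ProdZin_a w$ is exactly the $a = a'$ instance of~\eqref{equ:relation_zinbiel_gamma} (the $\Min$ of equal indices being that index), and that no term is double-counted when rewriting the mixed terms like $(y \ProdZin_a x) \ProdZin_a z$ on the left and $(y \MDAs_a z) \ProdZin_a x$ on the right. Once the two sides are laid out as explicit sums of the six canonical monomials, the equality is a direct inspection. I do not expect any deeper difficulty; in particular the parameter $a$ stays fixed throughout, so the subtleties of the $a \ne a'$ case of~\eqref{equ:relation_zinbiel_gamma} never intervene here.
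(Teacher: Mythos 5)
Your proposal is correct and takes essentially the same route as the paper: commutativity is immediate from the definition, and associativity is verified by expanding everything in terms of the operations $\ProdZin_a$ and rewriting left-nested products via the $a = a'$ instance of~\eqref{equ:relation_zinbiel_gamma}. The paper cancels terms inside the difference of the two sides rather than normalizing each side to the symmetric sum of the six right-nested monomials, but this is the same computation.
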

\begin{proof}
    Since for all $a \in [\gamma]$ and all elements $x$ and $y$ of $\Zca$,
    by~\eqref{equ:relation_zinbiel_gamma}, we have
    \begin{equation}
        x \MDAs_a y - y \MDAs_a x =
        x \ProdZin_a y + y \ProdZin_a x - y \ProdZin_a x - x \ProdZin_a y
        = 0,
    \end{equation}
    the operations $\MDAs_a$ satisfy Relation~\eqref{equ:relation_com_gamma_1}
    of $\Com_\gamma$-algebras. Moreover, since for all $a \in [\gamma]$
    and all elements $x$, $y$, and $z$ of $\Zca$,
    by~\eqref{equ:relation_zinbiel_gamma}, we have
    \begin{equation}\begin{split}
        (x \MDAs_a y) \MDAs_a z
        & - x \MDAs_a (y \MDAs_a z) \\
        & =
        (x \ProdZin_a y + y \ProdZin_a x) \ProdZin_a z +
        z \ProdZin_a (x \ProdZin_a y + y \ProdZin_a x) \\
        & \qquad - x \ProdZin_a (y \ProdZin_a z + z \ProdZin_a y)
        - (y \ProdZin_a z + z \ProdZin_a y) \ProdZin_a x \\
        & = (x \ProdZin_a y) \ProdZin_a z + (y \ProdZin_a x) \ProdZin_a z
        + z \ProdZin_a (x \ProdZin_a y) + z \ProdZin_a (y \ProdZin_a x) \\
        & \qquad - x \ProdZin_a (y \ProdZin_a z) - x \ProdZin_a (z \ProdZin_a y)
        - (y \ProdZin_a z) \ProdZin_a x - (z \ProdZin_a y) \ProdZin_a x \\
        & = (y \ProdZin_a x) \ProdZin_a z - (y \ProdZin_a z) \ProdZin_a x \\
        & = y \ProdZin_a (x \ProdZin_a z) + y \ProdZin_a (z \ProdZin_a x)
        - y \ProdZin_a (z \ProdZin_a x) - y \ProdZin_a (x \ProdZin_a z) \\
        & = 0,
    \end{split}\end{equation}
    the operations $\MDAs_a$ satisfy Relation~\eqref{equ:relation_com_gamma_2}
    of $\Com_\gamma$-algebras.
    Hence, $\Zca$ is a $\Com_\gamma$-algebra.
\end{proof}
\medskip

\begin{Proposition} \label{prop:morphism_dendr_zin_gamma}
    For any integer $\gamma \geq 0$, and any $\Zin_\gamma$-algebra
    $\Zca$, the binary operations $\GDendr_a$, $\DDendr_a$,
    $a \in [\gamma]$ defined for all elements $x$ and $y$ of $\Zca$ by
    \begin{equation}
        x \GDendr_a y := x \ProdZin_a y,
        \qquad a \in [\gamma],
    \end{equation}
    and
    \begin{equation}
        x \DDendr_a y := y \ProdZin_a x,
        \qquad a \in [\gamma],
    \end{equation}
    endow $\Zca$ with a $\gamma$-polydendriform algebra structure.
\end{Proposition}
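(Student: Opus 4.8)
The plan is to verify directly that the operations $\GDendr_a$ and $\DDendr_a$, $a \in [\gamma]$, obtained by restricting the Zinbiel operations $\ProdZin_a$ of $\Zca$ via $x \GDendr_a y := x \ProdZin_a y$ and $x \DDendr_a y := y \ProdZin_a x$, satisfy Relations~\eqref{equ:relation_dendr_gamma_1_concise}, \eqref{equ:relation_dendr_gamma_2_concise}, and~\eqref{equ:relation_dendr_gamma_3_concise}. By the discussion of $\gamma$-polydendriform algebras following Theorem~\ref{thm:autre_presentation_dendr_gamma}, a vector space endowed with operations $\GDendr_a$, $\DDendr_a$ satisfying exactly these three families of relations is a $\gamma$-polydendriform algebra, so this is all that needs to be checked.

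First I would translate each of the three target relations into statements purely about the $\ProdZin_a$. For~\eqref{equ:relation_dendr_gamma_1_concise}, $\GDendr_a \circ_1 \DDendr_{a'} = \DDendr_{a'} \circ_2 \GDendr_a$ becomes the identity $(y \ProdZin_{a'} x) \ProdZin_a z = y \ProdZin_{a'} (x \ProdZin_a z)$ applied to suitable arguments; more precisely, evaluating $x \DDendr_{a'} y$ first and then acting on the left by $\GDendr_a$ gives $(y \ProdZin_{a'} x) \ProdZin_a z$, whereas the right-hand composition gives $z \DDendr_{a'} (x \GDendr_a y)$, which unwinds to... — in any case each side is a single $\ProdZin$-expression and the equality is an instance of~\eqref{equ:relation_zinbiel_gamma} once the bookkeeping of which operand plays which role is sorted out. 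For~\eqref{equ:relation_dendr_gamma_2_concise}, the left term $\GDendr_a \circ_1 \GDendr_{a'}$ becomes $(x \ProdZin_{a'} y) \ProdZin_a z$, the two right terms $\GDendr_{a \Min a'} \circ_2 \GDendr_a$ and $\GDendr_{a \Min a'} \circ_2 \DDendr_{a'}$ become $x \ProdZin_{a \Min a'} (y \ProdZin_a z)$ and $x \ProdZin_{a \Min a'} (z \ProdZin_{a'} y)$, and their difference being zero is literally~\eqref{equ:relation_zinbiel_gamma}. For~\eqref{equ:relation_dendr_gamma_3_concise}, the terms $\DDendr_{a \Min a'} \circ_1 \GDendr_{a'}$, $\DDendr_{a \Min a'} \circ_1 \DDendr_a$, and $\DDendr_a \circ_2 \DDendr_{a'}$ translate (using $x \DDendr_b y = y \ProdZin_b x$ throughout) to $z \ProdZin_{a \Min a'}(x \ProdZin_{a'} y)$, $(z \ProdZin_a x)\ProdZin_{a \Min a'} y$ — wait, one must be careful here — and the combination again collapses to~\eqref{equ:relation_zinbiel_gamma} after relabeling variables. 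The cleanest way to organize this is to note, as the paper's own remark preceding the proposition states, that $\Zin_\gamma$-relations are obtained from $\gamma$-polydendriform relations by imposing $x \GDendr_a y = y \DDendr_a x$; the proposition is essentially the formal inverse of that observation, so the computation is guaranteed to close.

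I would structure the written proof as three short blocks of display-math computation, one per relation, each ending in $= 0$, mirroring exactly the style of the proof of Lemma~\ref{lem:produit_gamma_dendriforme} and the proof of Proposition~\ref{prop:morphism_com_zin_gamma} just above. Each block substitutes the definitions of $\GDendr_a$ and $\DDendr_a$, expands, applies~\eqref{equ:relation_zinbiel_gamma} once, and cancels. The main obstacle — if there is one at all — is purely notational: keeping straight, in $x \DDendr_a y = y \ProdZin_a x$, that the Zinbiel product reverses the order of operands, so that a composition like $\DDendr_{a \Min a'} \circ_1 \DDendr_a$ does not simply read off as an iterated $\ProdZin$ in the naive left-to-right order but requires tracking which element ends up outermost. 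Once one fixes the convention that an element $w$ acted on by $\DDendr_b$ on the left produces $(\cdot) \ProdZin_b w$ and by $\GDendr_b$ on the left produces $w \ProdZin_b (\cdot)$, every line is mechanical. No genuine mathematical difficulty is expected; the result is a direct verification, and I would conclude that $\Zca$ endowed with the $\GDendr_a$, $\DDendr_a$ is a $\gamma$-polydendriform algebra, which is exactly the assertion.

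Finally I would remark, to match the level of detail of the surrounding text, that this proposition together with Proposition~\ref{prop:morphism_com_zin_gamma} supplies the arrows $\Dendr_\gamma \to \Zin_\gamma$ and $\DAs_\gamma \to \Com_\gamma$ in the generalized operadic butterfly diagram~\eqref{equ:diagramme_papillon_gamma}, completing the construction of that diagram at the level of algebras.
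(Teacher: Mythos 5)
Your proposal follows the same route as the paper's proof: substitute $x \GDendr_a y = x \ProdZin_a y$ and $x \DDendr_a y = y \ProdZin_a x$ into the three concise relations \eqref{equ:relation_dendr_gamma_1_concise}--\eqref{equ:relation_dendr_gamma_3_concise} and close each block with \eqref{equ:relation_zinbiel_gamma}, which is exactly the paper's three-display verification. The only caveats are the bookkeeping slips you yourself flag: the right-hand side of the first relation unwinds to $(y \ProdZin_a z) \ProdZin_{a'} x$ (so that relation needs \eqref{equ:relation_zinbiel_gamma} applied to both sides and a cancellation, not a single instance), and the middle term of the third relation is $z \ProdZin_{a \Min a'} (y \ProdZin_a x)$ rather than $(z \ProdZin_a x) \ProdZin_{a \Min a'} y$ — corrections of detail, not of method.
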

\begin{proof}
    Since, for all $a, a' \in [\gamma]$ and all elements $x$, $y$,
    and $z$ of $\Zca$, by~\eqref{equ:relation_zinbiel_gamma}, we have
    \begin{equation}\begin{split}
        (x \DDendr_{a'} y) \GDendr_a z
            & - x \DDendr_{a'} (y \GDendr_a z) \\
        & = (y \ProdZin_{a'} x) \ProdZin_a z
            - (y \ProdZin_a z) \ProdZin_{a'} x \\
        & = y \ProdZin_{a \Min a'} (x \ProdZin_a z)
            + y \ProdZin_{a \Min a'} (z \ProdZin_{a'} x)
            - y \ProdZin_{a \Min a'} (z \ProdZin_{a'} x)
            - y \ProdZin_{a \Min a'} (x \ProdZin_a z) \\
        & = 0,
    \end{split}\end{equation}
    the operations $\GDendr_a$ and $\DDendr_a$ satisfy
    Relation~\eqref{equ:relation_dendr_gamma_1_concise} of
    $\gamma$-polydendriform algebras. Moreover, since for all
    $a, a' \in [\gamma]$ and all elements $x$, $y$,
    and $z$ of $\Zca$, by~\eqref{equ:relation_zinbiel_gamma}, we have
    \begin{equation}\begin{split}
        (x \GDendr_{a'} y) \GDendr_a z
            & - x \GDendr_{a \Min a'} (y \GDendr_a z)
            - x \GDendr_{a \Min a'} (y \DDendr_{a'} z) \\
        & = (x \ProdZin_{a'} y) \ProdZin_a z
            - x \ProdZin_{a \Min a'} (y \ProdZin_a z)
            - x \ProdZin_{a \Min a'} (z \ProdZin_{a'} y) \\
        & = x \ProdZin_{a \Min a'} (y \ProdZin_a z)
            + x \ProdZin_{a \Min a'} (z \ProdZin_{a'} y)
            - x \ProdZin_{a \Min a'} (y \ProdZin_a z)
            - x \ProdZin_{a \Min a'} (z \ProdZin_{a'} y) \\
        & = 0,
    \end{split}\end{equation}
    the operations $\GDendr_a$ and $\DDendr_a$ satisfy
    Relation~\eqref{equ:relation_dendr_gamma_2_concise} of
    $\gamma$-polydendriform algebras. Finally, since for all
    $a, a' \in [\gamma]$ and all elements $x$, $y$,
    and $z$ of $\Zca$, we have
    \begin{equation}\begin{split}
        (x \GDendr_{a'} y) \DDendr_{a \Min a'} z
            & + (x \DDendr_a y) \DDendr_{a \Min a'} z
            - x \DDendr_a (y \DDendr_{a'} z) \\
        & = z \ProdZin_{a \Min a'} (x \ProdZin_{a'} y)
            + z \ProdZin_{a \Min a'} (y \ProdZin_a x)
            - (z \ProdZin_{a'} y) \ProdZin_a x \\
        & = z \ProdZin_{a \Min a'} (x \ProdZin_{a'} y)
            + z \ProdZin_{a \Min a'} (y \ProdZin_a x)
            - z \ProdZin_{a \Min a'} (y \ProdZin_a x)
            - z \ProdZin_{a \Min a'} (x \ProdZin_{a'} y) \\
        & = 0,
    \end{split}\end{equation}
    the operations $\GDendr_a$ and $\DDendr_a$ satisfy
    Relation~\eqref{equ:relation_dendr_gamma_3_concise} of
    $\gamma$-polydendriform algebras. Hence $\Zca$ is a
    $\gamma$-polydendriform algebra.
\end{proof}
\medskip

The constructions stated by Propositions~\ref{prop:morphism_com_zin_gamma}
and~\ref{prop:morphism_dendr_zin_gamma} producing from a
$\Zin_\gamma$-algebra respectively a $\Com_\gamma$-algebra and a
$\gamma$-polydendriform algebra are functors from the category of
$\Zin_\gamma$-algebras respectively to the category of
$\Com_\gamma$-algebras and the category of $\gamma$-polydendriform algebras.
These functors respectively translate into symmetric operad morphisms
from $\Com_\gamma$ to $\Zin_\gamma$ and from $\Dendr_\gamma$ to
$\Zin_\gamma$. These morphisms are generalizations of known morphisms
between $\Com$, $\Dendr$, and $\Zin$ of~\eqref{equ:diagramme_papillon}
(see~\cite{Lod01,Lod06,Zin12}).
\medskip

A complete study of the operads $\Com_\gamma$, $\Lie_\gamma$,
$\Zin_\gamma$, and $\Leib_\gamma$, and suitable definitions for all the
morphisms intervening in~\eqref{equ:diagramme_papillon_gamma} is worth
to interest for future works.
\medskip

%%%%%%%%%%%%%%%%%%%%%%%%%%%%%%%%%%%%%%%%%%%%%%%%%%%%%%%%%%%%%%%%%%%%%%%%
%%%%%%%%%%%%%%%%%%%%%%%%%%%%%%%%%%%%%%%%%%%%%%%%%%%%%%%%%%%%%%%%%%%%%%%%
%%%%%%%%%%%%%%%%%%%%%%%%%%%%%%%%%%%%%%%%%%%%%%%%%%%%%%%%%%%%%%%%%%%%%%%%
\bibliographystyle{alpha}
\bibliography{Bibliographie}

\end{document}